\theoremstyle{definition}
\newtheorem{thm}{Theorem}[subsection]
\newtheorem{prop}[thm]{Proposition}
\newtheorem{cor}[thm]{Corollary}
\newtheorem{lem}[thm]{Lemma}
\newtheorem{ex}[thm]{Example}
\newtheorem{rem}[thm]{Remark}
\numberwithin{equation}{subsection}
\def\cal#1{\text{$\mathcal{#1}$}}
\def\ord#1^#2{#1$^{\text{#2}}$}
\def\lie#1{\mathfrak{#1}}
\def\tlie#1{\tilde{\mathfrak{#1}}}
\def\hlie#1{\hat{\mathfrak{#1}}}
\def\uqr#1^#2{\text{$U_q^{#2}(\lie #1)$}}
\def\uqhr#1^#2{\text{$U_q^{#2}(\hlie #1)$}}
\def\us#1^#2{\text{$U_{\xi}^{#2}(\lie #1)$}}
\def\ush#1^#2{\text{$U_{\xi}^{#2}(\hlie #1)$}}
\def\dus#1^#2{\text{$\dot{U}_{\xi}^{#2}(\lie #1)$}}
\def\dush#1^#2{\text{$\dot{U}_{\xi}^{#2}(\hlie #1)$}}
\def\gb#1{{\mbox{\boldmath $#1$}}}
\def\wtl{{\rm wt}_\ell}
\def\wt{{\rm wt}}
\def\soc{{\rm soc}}
\def\head{{\rm head}}
\def\top{{\rm Top}}
\def\supp{{\rm supp}}
\def\ch{{\rm ch}}
\def\qch{{\rm qch}}
\def\opl_#1^#2{\text{\scriptsize$\bigoplus\limits_{\text{\normalsize$#1$}}^{\text{\normalsize$#2$}}$}}
\def\otm_#1^#2{\text{\scriptsize$\bigotimes\limits_{\text{\footnotesize$#1$}}^{\text{\footnotesize$#2$}}$}}
\def\wcal#1{{\mbox{$\widetilde{\cal #1}$}}}
\def\tqbinom#1#2{\text{$\left[\begin{smallmatrix} #1\\#2\end{smallmatrix}\right]$}}
\def\bs#1{\boldsymbol{#1}}
\def\endd{\hfill$\diamond$}
\def\ffbox#1{\setbox9=\hbox{$\scriptstyle\overline{1}$}\framebox[0.55cm][c]{\rule{0mm}{\ht9}${\scriptstyle #1}$}}
\newcommand{\g}{\mathfrak{g}}
\newcommand{\C}{\mathbb{C}}
\newcommand{\Z}{\mathbb{Z}}
\def\stab{{\rm STab}}
\begin{document}
\title[On Tensor Products of Minimal Affinization with KR Modules]{On Tensor Products of a Minimal Affinization with an  Extreme Kirillov-Reshetikhin Module for type $A$}
\author[Adriano Moura and Fernanda Pereira]{Adriano Moura and Fernanda Pereira}

\address{Departamento de Matemática, Universidade Estadual de Campinas, Campinas - SP - Brazil, 13083-859.}
\email{aamoura@ime.unicamp.br}

\address{Departamento de Matemática, Divisão de Ciências Fundamentais, Instituto Tecnológico de Aeronáutica, São José dos Campos - SP - Brazil, 12228-900.}
\email{fpereira@ita.br}
\thanks{Part of this work was developed while the second author was a visiting Ph.D. student at Institut de Mathématiques de Jussieu working under the supervision of David Hernandez. She thanks David Hernandez for his guidance during that period and Université Paris 7 for the hospitality. She also thanks Fapesp (grant 2009/16309-5) for the financial support. The work of the first author was partially supported by CNPq grant 304477/2014-1 and Fapesp grant 2014/09310-5. We thank the two anonymous referees for their detailed reports and specially thank one of them for sending us suggestions that significantly simplified the proofs of Corollary \ref{c:main}, the second statement of Proposition \ref{p:multiplicity1A}, and Proposition \ref{prop 1,..,n-1-dom}.}

\begin{abstract}
For a quantum affine algebra of type $A$, we describe the composition series of the tensor product of a general minimal affinization with a Kirillov-Resehtikhin module associated to an extreme node of the Dynkin diagram of the underlying simple Lie algebra.
\end{abstract}

\maketitle

\section{Introduction}

Unraveling the intricate structure of the category of finite-dimensional representations of quantum affine algebras has drawn the attention of experts in representation theoretic Lie theory since the early 1990s. One of the interesting problems to be addressed is that of understanding the class of irreducible affinizations of a given simple module for the quantum group $U_q(\lie g)$ associated to the underlying finite-dimensional simple Lie algebra $\lie g$. In particular, one wants to classify and describe the structure of the minimal such affinizations in the sense defined by Chari in \cite{cha:minr2}. The early work of Chari and Pressley in this direction \cite{cp:minsl,cp:minnsl} describes the classification of the Drinfeld polynomials corresponding to such minimal affinizations in the case that $\lie g$ is not of types $D$ or $E$. It turns out that, in those cases, there exists essentially one minimal affinization for any given simple module of $U_q(\lie g)$, i.e., all minimal affinizations of that module are isomorphic as modules for $U_q(\lie g)$ (affinizations which are isomorphic as modules for $U_q(\lie g)$ are said to be equivalent). These papers also classify the minimal affinizations of the so called regular representations in types $D$ and $E$. Namely, those for which the support of corresponding highest weight either does not bound a subdiagram of type $D$ or, if it does, then the trivalent node belongs to the support. The number of equivalence classes of minimal affinzations in types $D$ and $E$ depends on the highest weight. If its support does not bound a subdiagram of type $D$, there is only one class as before, but if it does, then there are typically 3 equivalent classes (essentially coming from the symmetry of the subdiagram of type $D_4$, even if the support is not symmetric).

Thus, as long as classification goes, it remains to study the irregular minimal affinizations in types $D$ and $E$.
This paper is the first part of our work towards the classification of minimal affinizations in type $D$. It contains results in the case that $\lie g$ is of type $A$ which are crucial to obtain such classification. Namely, by looking at the 3 connected components of the diagram of type $D$ after removing the trivalent node, we have a diagram of type $A_n, n\ge 1$, and two of type $A_1$. The minimal affinizations are realizable as a simple factor of the tensor product of simple modules supported in each of these connected components. The strategy is to compare ``all'' possible such tensor products to pinpoint which ones give rise to minimal affinizations. The tensor product of only two of the factors can be regarded as a module for a diagram-subalgebra of type $A$. This partially  explains our interest in tensor products of a general minimal affinization with a Kirillov-Reshetikhin module associated to an extreme node of the Dynkin diagram for type $A$ (a Kirillov-Reshetikhin module is a minimal affinization of a simple module whose highest weight is a multiple of a fundamental weight). Since these results are interesting in their own right and have strong potential to be useful for studying other aspects of the category of finite-dimensional representations of quantum affine algebras (in all types) and the proofs are quite long, we present them here by themselves. The classification of minimal affinizations for type $D$ will appear in a forthcoming publication (see also \cite{fer:thesis}).

It is important to remark that tensor products of irreducible representations of quantum affine algebras in general, and of minimal affinizations in particular, is a relevant topic not only to the understanding of the underlying category of finite-dimensional representations, but it also has very important applications or deep connections to other areas such as integrable systems in mathematical systems, combinatorics, and cluster algebras. The most studied case is that of tensor products of Kirillov-Reshetikhin modules. Such tensor products give rise to a remarkable family of short exact sequences which can be encoded in a set of recurrence relations, called $T$-systems, which have many applications in integrable system. The literature in this direction is vast and we refer the reader to \cite{her:KRconj,muyou:tsystem} for more details and references. On the other hand, the connection to cluster algebras was first discovered in \cite{hele:cluster}. More recently, it has been shown in \cite{hele:KRcluster} that the $T$-systems can be interpreted as cluster transformations in a cluster algebra having KR-modules correspond to an initial seed. This then lead to an algorithm for computing the qcharacters of KR-modules by successive approximations via the combinatorics of cluster algebras.  It would then be interesting to eventually study the results of the present paper from the perspective of $T$-systems and cluster algebras. The connection of graded limits of tensor products of finite-dimensional representations of quantum affine algebras with the notion of fusion products in the sense of \cite{fl:kosver} is also another topic of recent interest (see for instance \cite{bcm,bp,naoi:fus,naoi:krfus} and references therein).
Thus, it should also be interesting to study the graded limits of the tensor products studied here in that context as well.

We now describe the organization and the main results of this paper. In Section \ref{s:pre}, we fix the notation related to the study of finite-dimensional representations of quantum affine algebras, review the basic facts about such representations as well as the relevant known facts about minimal affinizations, and state  the main result of the paper (the combination of Theorem \ref{t:main} with Corollary \ref{c:main}). The statement can be informally described as follows. The tensor product  of an ``increasing'' minimal affinization with a KR-module supported at the last node of the Dynkin diagram is an indecomposable module of length at most $2$. We describe precisely the conditions on the Drinfeld polynomials of the tensor factors which give rise to a length-$2$ tensor product and write down an explicit formula for the Drinfeld polynomial of the ``extra'' irreducible factor. Moreover, we precisely describe the socle and the head for both orders of the tensor factors.  The other possibilities of tensor products (replacing the minimal affinization by a ``decreasing'' one or the KR-module by one supported at the first node) can be obtained from the case established in Theorem \ref{t:main} by certain duality arguments and the  precise explanation and statements are given in Sections \ref{ss:dual} and \ref{ss:Gen}. In Section \ref{s:char}, we review results about the main tool we shall use in the proof of Theorem \ref{t:main}: the theory of qcharacters. In particular, and very importantly, in Section \ref{ss:qchminA}, we review the description of the qcharacter of a minimal affinization in type $A$ in terms of semi-standard tableaux. The core of our proof is based on combinatorial analysis of ``products'' of such tableaux. The proof of Theorem \ref{t:main} is given in Section \ref{s:proof}. After explaining the general scheme of the proof in Section \ref{ss:scheme}, we proceed by describing the dominant $\ell$-weights of the tensor product in Sections \ref{ss:Jdom} and \ref{ss:domtpA}. It turns out that the set of such dominant $\ell$-weights is totally ordered and the corresponding $\ell$-weight spaces are one-dimensional (this is the statement of Proposition \ref{p:multiplicity1A}, which is also an interesting result by itself and can be considered the second most important result of the paper). In the last step of the proof, performed in Section \ref{ss:sftpA}, we start by explicitly describing which of these dominant $\ell$-weights are $\ell$-weights of the obvious irreducible factor of the tensor product (the one whose Drinfeld polynomial is the product of those of the two tensor factors). Under certain conditions on the Drinfeld polynomials of the tensor factors (the conditions in the statement of Theorem \ref{t:main}), we see that not all the dominant $\ell$-weights are $\ell$-weights of the obvious irreducible factor. Hence, the highest of the remaining ones must be the Drinfeld polynomial of an extra irreducible factor and we show that all remaining dominant $\ell$-weights  are $\ell$-weights of this extra irreducible factor. The proof of Corollary \ref{c:main}, about the dependence of the socle and the head on the order of the tensor factors, is given in Section \ref{ss:sh}.

In principle, the methods we employed here could be used to obtain similar information about the tensor product of any two minimal affinizations.
However, the combinatorics would be substantially more complicated and it is unclear if it would be manageable to obtain results as precise as we did. Also, it is unlikely that Proposition \ref{p:multiplicity1A} remains valid and multiplicity issues could turn the arguments we employed here insufficient. In light of our remarks about $T$-system and cluster algebras above, our main theorem may be regarded as a step towards studying short exact sequences related to tensor products of minimal affinizations beyond KR-modules (see also \cite{muyou:tsystem}) and, hopefully, the machinery of cluster algebras may eventually provide more tools to expand the scope of the study initiated here.

\section{Basic Notation and the Main Theorem}\label{s:pre}

Throughout the paper, let $\mathbb C, \mathbb R,\mathbb Z,\mathbb
Z_{\ge m}$ denote the sets of complex numbers, reals, integers,  and
integers bigger or equal $m$, respectively. Given a ring $\mathbb
A$, the underlying multiplicative group of units is denoted by
$\mathbb A^\times$. The dual of a vector space $V$ is denoted by
$V^*$. The symbol $\cong$ means ``isomorphic to''.

\subsection{The Algebras}\label{ss:clalg}

Let $\lie g=\lie{sl}_{n+1}(\mathbb C)$, $I=\{1,\dots,n\}$, and $\lie h$ the standard Cartan subalgebra, i.e., $\lie h$ is the span of $h_1,\dots, h_n$ with $h_i=e_{i,i}-e_{i+1,i+1}, i\in I$, where $e_{i,j}$ is the matrix whose $(i,j)$ entry is $\delta_{ij}$. Fix the set of positive roots $R^+$ so that positive root vectors are upper triangular matrices and let
$$\lie n^\pm = \bigoplus_{\alpha\in R^+}^{} \lie g_{\pm\alpha}
\quad\text{where}\quad \lie g_{\pm\alpha} = \{x\in\lie g:
[h,x]=\pm\alpha(h)x, \ \forall \ h\in\lie h\}.$$ The simple roots
will be denoted by $\alpha_i$ and the fundamental weights by
$\omega_i$, $i\in I$. $Q,P,Q^+,P^+$ will denote the root and weight
lattices with corresponding positive cones, respectively.
Equip $\lie h^*$ with the partial order $\lambda\le \mu$ iff
$\mu-\lambda\in Q^+$. Let $C = (c_{ij})_{i,j\in I}$ be the Cartan
matrix of $\lie g$, i.e., $c_{ij}=\alpha_j(h_i)$. The Weyl group is
denoted by $\cal W$.

If  $\lie a$ is a Lie algebra over $\mathbb C$, define its loop
algebra to be $\tlie a=\lie a\otimes  \mathbb
C[t,t^{-1}]$ with bracket given by $[x \otimes t^r,y \otimes
t^s]=[x,y] \otimes t^{r+s}$. Clearly $\lie a\otimes 1$ is a
subalgebra of $\tlie a$ isomorphic to $\lie a$ and, by abuse of
notation, we will continue denoting its elements by $x$ instead of
$x\otimes 1$. Then $\tlie g = \tlie n^-\oplus \tlie h\oplus \tlie
n^+$ and $\tlie h$ is an abelian subalgebra.

Fix $q\in \C^\times$ which is not a root of unity and set
\begin{equation*}
[m]=\frac{q^m -q^{-m}}{q -q^{-1}},\ \ \ \ [m]! =[m][m-1]\ldots
[2][1],\ \ \ \ \tqbinom{m}{r} = \frac{[m]!}{[r]![m-r]!},
\end{equation*}
for $r,m\in\mathbb Z_{\ge 0}$, $m\ge r$.

The quantum loop algebra  $U_q(\tlie g)$ is  the
associative $\mathbb C$-algebra with generators $x_{i,r}^{{}\pm{}}$
($i\in I$, $r\in\Z$), $k_i^{{}\pm 1}$ ($i\in I$), $h_{i,r}$ ($i\in
I$, $r\in \Z\backslash\{0\}$) and the following defining relations:

\begin{gather*}
k_ik_i^{-1} = k_i^{-1}k_i=1, \ \  k_ik_j =k_jk_i, \ \ k_ih_{j,r} =h_{j,r}k_i,\ \ [h_{i,r},h_{j,s}]=0,\\
k_ix_{j,r}^\pm k_i^{-1} = q^{{}\pm c_{ij}}x_{j,r}^{{}\pm{}},\ \ [h_{i,r} , x_{j,s}^{{}\pm{}}] = \pm\frac1r[rc_{ij}]x_{j,r+s}^{{}\pm{}},
\end{gather*}
\begin{gather*}
x_{i,r+1}^{{}\pm{}}x_{j,s}^{{}\pm{}} -q^{{}\pm c_{ij}}x_{j,s}^{{}\pm{}}x_{i,r+1}^{{}\pm{}} =
q^{{}\pm     c_{ij}}x_{i,r}^{{}\pm{}}x_{j,s+1}^{{}\pm{}} -x_{j,s+1}^{{}\pm{}}x_{i,r}^{{}\pm{}},\\ 
[x_{i,r}^\pm , x_{j,s}^\pm] = 0,\ \ \text{if $c_{ij}=0$}, \text{ whlie,}\ \ \text{if $c_{ij}=-1$},\\
x_{i,r_1}^\pm x_{i,r_2}^\pm x_{j,s}^\pm + x_{i,r_2}^\pm x_{i,r_1}^\pm x_{j,s}^\pm + x_{j,s}^\pm x_{i,r_1}^\pm x_{i,r_2}^\pm + x_{j,s}^\pm x_{i,r_2}^\pm x_{i,r_1}^\pm = [2]\left(x_{i,r_1}^\pm x_{j,s}^\pm x_{i,r_2}^\pm + x_{i,r_2}^\pm x_{j,s}^\pm x_{i,r_1}^\pm\right),\\
[x_{i,r}^+ , x_{j,s}^-]=\delta_{i,j}  \frac{ \psi_{i,r+s}^+ - \psi_{i,r+s}^-}{q - q^{-1}},
\end{gather*}
where the $\psi_{i,r}^{\pm}$ are determined by equating powers of $u$ in the formal power series
$$\Psi_i^\pm(u) = \sum_{r\in\mathbb Z}\psi_{i,\pm r}^{\pm}u^r = k_i^{\pm 1} \exp\left(\pm(q-q^{-1})\sum_{s=1}^{\infty}h_{i,\pm s}
u^s\right).$$
In particular, $\psi_{i,\pm r}^\pm=0$ if $r<0$.

Denote by $U_q(\tlie n^\pm), U_q(\tlie h)$  the subalgebras of
$U_q(\tlie g)$ generated by $\{x_{i,r}^\pm\}, \{k_i^{\pm1},
h_{i,s}\}$, respectively. Let  $U_q(\lie g)$  be  the subalgebra
generated by $x_i^\pm:=x_{i,0}^\pm, k_i^{\pm 1}, i\in I,$ and define
$U_q(\lie n^\pm), U_q(\lie h)$ in the obvious way.  $U_q(\lie g)$ is
a subalgebra of $U_q(\tlie g)$ and multiplication establishes
isomorphisms of vectors spaces:
$$U_q(\lie g) \cong U_q(\lie n^-) \otimes U_q(\lie h) \otimes U_q(\lie n^+)
\qquad\text{and}\qquad U_q(\tlie g) \cong U_q(\tlie n^-) \otimes
U_q(\tlie h) \otimes U_q(\tlie n^+).$$

For $i\in I, r\in \mathbb Z, k\in\mathbb Z_{\ge 0}$, define
$(x_{i,r}^\pm)^{(k)} = \frac{(x_{i,r}^\pm)^k}{[k]!}$. Define also
elements $\Lambda_{i,r}, i\in I, r\in\mathbb Z$ by equating powers of $u$ in
the formal power series
\begin{equation}\label{e:Lambdad}
\Lambda_i^\pm(u)=\sum_{r=0}^\infty \Lambda_{i,\pm r} u^{r}=
\exp\left(-\sum_{s=1}^\infty\frac{h_{i,\pm s}}{[s]}u^s\right).
\end{equation}
The elements $\Lambda_{i,\pm r}$ together with $k_i^{\pm1}$, $i\in
I,$ $r\in \Z$, generate $U_q(\tlie h)$ as an algebra.

 The following assignments,
$$\Delta(x_i^+) = x_i^+ \otimes 1 + k_i \otimes x_i^+,\quad
\Delta(x_i^-) = x_i^-\otimes k_i^{-1} + 1 \otimes x_i^-,  \quad
\Delta(k_i^{\pm 1}) = k_i^{\pm 1}\otimes k_i^{\pm 1},$$
$$S(x_i^+) = - k_i^{-1}x_i^+, \quad S(x_i^-) = -x_i^-k_i, \quad S(k_i^{\pm 1}) = k_i^{\mp 1},$$
$$\varepsilon (x_i^{\pm}) = 0, \quad \varepsilon(k_i^{\pm 1}) = 1,$$
for all $i\in I$, define a structure of Hopf algebra in $U_q(\g)$,
where $\Delta$ is the co-multiplication, $\varepsilon$ is the
co-unity and $S$ is the antipode. The algebra $U_q(\tlie g)$ is also
a Hopf algebra and the structure maps can be described exactly as
above using the Chevalley-Kac generators. However, a precise
expression for the comultiplication in terms of the generators
$x^\pm_{i,r}, h_{i,r}, k_i^{\pm1}$ is not known (see
\cite{cp:book} and references therein).  $U_q(\g)$ is a Hopf subalgebra of $U_q(\tlie g)$.

\subsection{The $\ell$-Weight Lattice}\label{ss:llattice}
Consider the multiplicative group $\mathcal P$ of $n$-tuples of rational
functions $\bs\mu = (\bs\mu_1(u),\ldots, \bs\mu_n(u))$ with values
in $\mathbb C$  such that $\bs\mu_i(0)=1$ for all $i\in I$. We shall refer to $\mathcal P$ as  the $\ell$-weight lattice of $U_q(\tlie g)$, to the elements of $\mathcal P$ as $\ell$-weight and to elements of the submonoid $\cal P^+$ of
$\cal P$ consisting of $n$-tuples of polynomials as dominant $\ell$-weights. Given
$a\in\mathbb C^\times$ and $i\in I$, define the fundamental $\ell$-weight $\bs\omega_{i,a}\in \cal P$ by
$$(\bs\omega_{i,a})_j(u) = 1-\delta_{i,j}au.$$
Clearly, $\cal P$ is the free abelian group generated by these
elements. If
\begin{equation}\label{e:appears}
\bs\mu = \prod_{(i,a)\in I\times\mathbb C^\times}
\bs\omega_{i,a}^{p_{i,a}}
\end{equation}
we shall say that $\bs\omega_{i,a}$ (respectively,
$\bs\omega_{i,a}^{-1}$) appears in $\bs\mu$ if $p_{i,a}>0$
(respectively, $p_{i,a}<0$).

Consider the group homomorphism (weight map) $\wt:\cal P \to P$ by
setting $\wt(\bs\omega_{i,a})=\omega_i$.  Given
$\bs\omega\in\cal P^+$ with $\bs\omega_i(u) = \prod_j (1-a_{i,j}u)$,
where $a_{i,j}\in \C$, let $\bs\omega^-\in\cal P^+$ be defined by
$\bs\omega^-_i(u) = \prod_j (1-a_{i,j}^{-1}u)$. For convenience, we will sometimes  use
the notation $\bs\omega^+ = \bs\omega$.  Given $\bs\mu\in\cal P$,
say $\bs\mu = \bs\omega\bs\varpi^{-1}$ with
$\bs\omega,\bs\varpi\in\cal P^+$, define a $\mathbb C$-algebra
homomorphism $\bs\Psi_{\bs\mu}:U_q(\tlie h)\to \mathbb C$ by setting
$\bs\Psi_{\bs\mu}(k_i^{\pm 1}) = q^{\pm \wt(\bs\mu)(h_i)}$ and
\begin{equation}\label{e:PsiLambda}
\bs\Psi_{\bs\mu}(\Lambda_i^\pm(u)) =
\frac{(\bs\omega^{\pm})_i(u)}{(\bs\varpi^{\pm})_i(u)}.
\end{equation}
One easily checks that the map $\bs\Psi:\cal P\to (U_q(\tlie h))^*$
given by $\bs\mu\mapsto \bs\Psi_{\bs\mu}$ is injective. From now on
we will identify $\cal P$  with its image in $(U_q(\tlie h))^*$
under $\bs\Psi$.

It will be convenient to introduce the following notation. Given
$i\in I, a\in\mathbb C^\times, m\in\mathbb Z_{\ge 0}$, define
\begin{equation*}
\bs\omega_{i,a,m} = \prod_{j=0}^{m-1} \bs\omega_{i,aq^{m-1-2j}}.
\end{equation*}
Also, following \cite{cm:qblock}, define
\begin{equation*}
\bs\alpha_{i,a} = \bs\omega_{i,aq,2}\prod_{j\ne i}
\bs\omega_{j,aq,-c_{j,i}}^{-1} = \bs\omega_{i,a}\bs\omega_{i,aq^2}\prod_{j:|j-i|=1} \bs\omega_{j,aq}^{-1}.
\end{equation*}
We shall refer to $\bs\alpha_{i,a}$ as a simple $\ell$-root. The
subgroup of $\cal P$ generated by the simple $\ell$-roots is called
the $\ell$-root lattice of $U_q(\tlie g)$ and will be denoted by
$\cal Q$. Let also $\cal Q^+$ be the submonoid generated by the
simple $\ell$-roots. Quite clearly $\wt(\bs\alpha_{i,a})=\alpha_i$.
Define a partial order on $\cal P$ by
$$\bs\mu\le\bs\omega \qquad{if}\qquad \bs\omega\bs\mu^{-1}\in\cal Q^+.$$
It is well-known that the elements $\bs\alpha_{i,a}$ are
multiplicatively independent, i.e., if $(i_j,a_j), j=1,\dots,m$, is
a family of distinct elements of $I\times \mathbb C^\times$, then
\begin{equation}\label{e:lrootsind}
\prod_{j=1}^m\bs\alpha_{i_j,a_j}^{k_j} = 1\ \Leftrightarrow\ k_j=0
\text{ for all }  j=1,\dots,m.
\end{equation}
For further convenience of notation, given $1\le i\le j\le n, a\in\mathbb C^\times$, we set
\begin{equation}
\bs\alpha_{i,j,a}=\prod_{k=i}^j \bs\alpha_{k,aq^{k-i}} \quad \text{and} \quad
\bs\alpha_{j,i,a}=\prod_{k=i}^j \bs\alpha_{j+i-k,aq^{k-i}}.
\end{equation}

\subsection{Finite-Dimensional Representations}
We start by reviewing some basic facts
about finite-dimensional representations of $U_q(\lie g)$. For
the details see \cite{cp:book} for instance.

Given a $U_q(\lie g)$-module $V$ and $\mu\in P$, let
$$V_\mu=\{v\in V: k_iv=q^{\mu(h_i)}v \text{ for all } i\in I\}.$$
A nonzero vector $v\in V_\mu$ is called a weight vector of weight
$\mu$. If $v$ is a weight vector such that $x_i^+v = 0$ for all
$i\in I$, then $v$ is called a highest-weight vector. If $V$ is
generated by a highest-weight vector of weight $\lambda$, then $V$
is said to be a highest-weight module of highest weight $\lambda$. A
$U_q(\lie g)$-module $V$ is said to be a weight module if
$V=\bigoplus_{\mu\in P}^{} V_\mu$.  Denote by $\cal C_q$ be the
category of all finite-dimensional weight modules of $U_q(\lie g)$.
The following theorem summarizes the basic facts about $\cal C_q$.

\begin{thm}\label{t:ciuqg} Let $V$ be an object of $\cal C_q$. Then:
\begin{enumerate}[(a)]
\item $\dim V_\mu = \dim V_{w\mu}$ for all $w\in\cal W$.
\item $V$ is completely reducible.
\item For each $\lambda\in P^+$ the $U_q(\lie g)$-module $V_q(\lambda)$ generated by a vector $v$ satisfying
$$x_i^+v=0, \qquad k_iv=q^{\lambda(h_i)}v, \qquad (x_i^-)^{\lambda(h_i)+1}v=0,\quad\forall\ i\in I,$$
is nonzero, irreducible, and finite-dimensional. If $V\in\cal C_q$ is
irreducible, then $V$ is isomorphic to $V_q(\lambda)$  for a unique
$\lambda\in P^+$.\endd
\end{enumerate}
\end{thm}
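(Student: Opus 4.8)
\emph{Strategy.} The plan is to deduce all three statements from two standard ingredients: the representation theory of quantum $\lie{sl}_2$, applied to the root subalgebras $U_q^{(i)}:=\langle x_i^\pm,k_i^{\pm1}\rangle\cong U_q(\lie{sl}_2)$, and the quantum Casimir element. The weight-module condition in the definition of $\cal C_q$ is exactly the requirement that each $U_q^{(i)}$ act in ``type $1$'', so I may use the basic fact that a finite-dimensional $U_q(\lie{sl}_2)$-module on which the Cartan generator acts with eigenvalues in $q^{\Z}$ is a direct sum of the simple modules $V(m)$, $m\in\Z_{\ge0}$, whose eigenvalues form the symmetric string $q^m,q^{m-2},\dots,q^{-m}$; complete reducibility here follows from the $\lie{sl}_2$-Casimir, which separates the $V(m)$ when $q$ is not a root of unity, together with a highest-weight argument that splits self-extensions.

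\emph{Part (a).} Fix $i\in I$ and regard $V\in\cal C_q$ as a $U_q^{(i)}$-module; since $k_i$ acts on $V_\mu$ by $q^{\mu(h_i)}\in q^{\Z}$, this module is of type $1$, so $V|_{U_q^{(i)}}\cong\bigoplus_j V(m_j)$. A summand $V(m_j)$ contributes one dimension to $V_\mu$ precisely when $\mu(h_i)\in\{-m_j,-m_j+2,\dots,m_j\}$, a condition left unchanged by $\mu(h_i)\mapsto-\mu(h_i)=(s_i\mu)(h_i)$; hence $\dim V_\mu=\dim V_{s_i\mu}$, and as the simple reflections generate $\cal W$, iteration gives $\dim V_\mu=\dim V_{w\mu}$ for every $w\in\cal W$.

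\emph{Part (c).} Fix $\lambda\in P^+$ and let $M(\lambda)$ be the highest-weight module generated by $v$ subject only to $x_i^+v=0$ and $k_iv=q^{\lambda(h_i)}v$, so that $M(\lambda)\cong U_q(\lie n^-)$ as a $U_q(\lie n^-)$-module. A short computation — trivial for $x_j^+$ with $j\ne i$, and the quantum $\lie{sl}_2$ identity with a vanishing $[0]$-factor for $x_i^+$ — shows each $(x_i^-)^{\lambda(h_i)+1}v$ is a highest-weight vector of weight $\lambda-(\lambda(h_i)+1)\alpha_i<\lambda$, so the submodule they generate lies in $\bigoplus_{\mu<\lambda}M(\lambda)_\mu$; hence the module $V_q(\lambda)$ of the statement, a quotient of $M(\lambda)$ by such vectors, is nonzero. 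For finite-dimensionality, the set of $w\in V_q(\lambda)$ generating a finite-dimensional $U_q^{(i)}$-module is a $U_q(\g)$-submodule (the quantum Serre relations bound how far $x_j^\pm$ can move $w$ along $i$-strings) containing $v$, hence all of $V_q(\lambda)$; so each $U_q^{(i)}$ acts locally finitely, the weight set of $V_q(\lambda)$ is $\cal W$-stable and contained in $\lambda-Q^+$ — therefore finite — and each weight space has dimension at most $\dim M(\lambda)_\mu<\infty$. For irreducibility, observe that the simple highest-weight quotient $L(\lambda)=M(\lambda)/\mathrm{rad}$ is finite-dimensional by the same argument, and that in it the image of $v$ satisfies all three displayed relations (a nonzero $(x_i^-)^{\lambda(h_i)+1}v$ would be a highest-weight vector of weight $\ne\lambda$, impossible in a simple highest-weight module); thus $L(\lambda)$ is a nonzero, irreducible, finite-dimensional module generated by such a $v$, and that the \emph{universal} such module $V_q(\lambda)$ already equals $L(\lambda)$ follows from the nondegeneracy of the contravariant form, which one checks by specialization to $q=1$ (or via the quantum Shapovalov determinant). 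Finally, for the classification: if $V\in\cal C_q$ is irreducible, choose $\lambda$ maximal among its finitely many weights and $0\ne v\in V_\lambda$; then $x_i^+v\in V_{\lambda+\alpha_i}=0$, so $v$ is highest-weight and $V=U_q(\g)v$; restriction to each $U_q^{(i)}$ forces $\lambda(h_i)\ge0$, i.e. $\lambda\in P^+$, and $(x_i^-)^{\lambda(h_i)+1}v$, being highest-weight of weight $\ne\lambda$, is $0$. Hence $V$ is a nonzero quotient of the simple module $V_q(\lambda)$, so $V\cong V_q(\lambda)$, and $\lambda$ — the highest weight — is uniquely determined by $V$.

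\emph{Part (b), and the main obstacle.} This is the quantum analogue of Weyl's complete reducibility theorem; by induction on length it suffices to prove $\mathrm{Ext}^1_{\cal C_q}(V_q(\lambda),V_q(\mu))=0$ for all $\lambda,\mu\in P^+$. Given an extension $0\to V_q(\mu)\to E\to V_q(\lambda)\to0$, lift the highest-weight vector of $V_q(\lambda)$ to a weight vector $\tilde v\in E_\lambda$. If $\lambda=\mu$ (with $\mu$ now denoting the submodule) or if $\lambda$ and $\mu$ are incomparable, then no $\lambda+\alpha_i$ is a weight of $E$, so $x_i^+\tilde v=0$ for every $i$ and $\tilde v$ is a highest-weight vector; moreover $(x_i^-)^{\lambda(h_i)+1}\tilde v$ is again a highest-weight vector (by the $\lie{sl}_2$ relations), lies in the submodule $V_q(\mu)$, and has weight $\ne\mu$, hence vanishes because $V_q(\mu)$ is a simple highest-weight module of highest weight $\mu$; so $\tilde v$ generates a copy of the simple module $V_q(\lambda)$, which meets $V_q(\mu)$ trivially and, by a dimension count, complements it — the sequence splits. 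If $\lambda\ne\mu$ are comparable, the quantum Casimir acts on $V_q(\nu)$ by a scalar which is a strictly monotone function of $\nu$ along the order on $P^+$ (essentially $q^{(\nu,\nu+2\rho)}$, with $q$ not a root of unity), so it takes distinct values on $V_q(\lambda)$ and $V_q(\mu)$; being central, its generalized eigenspace decomposition on $E$ provides a $U_q(\g)$-stable complement to $V_q(\mu)$, and the sequence splits. I expect the parts that genuinely use the \emph{quantum} structure — the nondegeneracy of the contravariant form (equivalently, simplicity of the universal $V_q(\lambda)$) in part (c), and the construction of the quantum Casimir with its separating property in part (b) — to be the main obstacle; the remaining steps transcribe almost verbatim the classical $\lie{sl}_2$-reduction and the Kac-style argument bounding the weights of an integrable highest-weight module.
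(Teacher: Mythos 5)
The paper offers no proof of Theorem \ref{t:ciuqg}: it is quoted as standard background on $\cal C_q$, with the reader referred to \cite{cp:book}, so there is no internal argument to compare yours against. Your sketch is essentially the standard textbook proof and is sound in outline: (a) by reduction to $U_q(\lie{sl}_2)$-strings, (c) by highest-weight theory plus integrability and $\cal W$-invariance of the weight set, and (b) by vanishing of $\mathrm{Ext}^1$ between simples, combining a highest-weight splitting in the equal/incomparable case with the quantum Casimir in the comparable case.

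Two cautions on the details. In (a), a decomposition of all of $V$ into $U_q^{(i)}$-summands need not be compatible with the full weight decomposition, since $k_j$ for $j\ne i$ does not commute with $x_i^{\pm}$ (indeed $k_jx_i^{\pm}k_j^{-1}=q^{\pm c_{ji}}x_i^{\pm}$); the clean statement is obtained by decomposing each $\alpha_i$-string $\bigoplus_{k\in\Z}V_{\mu+k\alpha_i}$, which is a $U_q^{(i)}$-submodule whose $k_i$-eigenspaces are precisely the $V_{\mu+k\alpha_i}$, and then invoking the symmetry of $\lie{sl}_2$-weight strings. In (c), the simplicity of the universal module defined by the displayed relations is exactly where the hypothesis that $q$ is not a root of unity enters, and the ``specialization at $q=1$'' route requires constructing an integral form of the module and of the contravariant form before specializing; this, together with the construction of the quantum Casimir (which lives in a completion of $U_q(\lie g)$ but acts on type-$1$ finite-dimensional modules and separates $V_q(\lambda)$ from $V_q(\mu)$ for comparable $\lambda\ne\mu$ because $q$ is not a root of unity), is precisely the nontrivial content carried by the reference, so your identification of these two points as the main obstacles is accurate.
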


We now turn to finite-dimensional representations of $U_q(\tlie g)$.
Let $V$ be a $U_q(\tlie g)$-module. We say that a nonzero vector
$v\in V$ is an $\ell$-weight vector if there exists
$\bs\omega\in\cal P$ and $k\in\mathbb Z_{>0}$ such that
$$(\eta-\bs\Psi_{\bs\omega}(\eta))^kv=0 \quad\text{for all}\quad \eta\in U_q(\tlie h).$$
 In that case, $\bs\omega$ is said to be the $\ell$-weight of
$v$. $V$ is said to be an $\ell$-weight module if every vector of
$V$ is a linear combination of $\ell$-weight vectors. In that case,
let $V_{\bs\omega}$ denote the subspace spanned by all $\ell$-weight
vectors of $\ell$-weight $\bs\omega$. An $\ell$-weight vector $v$ is
said to be a highest-$\ell$-weight vector if
$$\eta v=\bs\Psi_{\bs\omega}(\eta)v \quad\text{for every}\quad \eta\in U_q(\tlie h)
\quad\text{and}\quad x_{i,r}^+v=0 \quad\text{for all}\quad i\in I, r\in\mathbb Z.$$
$V$ is said to be a highest-$\ell$-weight module if it is generated by a
highest-$\ell$-weight vector. Denote by $\wcal C_q$ the category of
all finite-dimensional $\ell$-weight modules of $U_q(\tlie g)$.
 $\wcal C_q$ is an abelian category stable under tensor product \cite{freres:qchar}.

Observe that if $V\in\wcal C_q$, then $V\in\cal C_q$ and
\begin{equation}\label{e:lws}
V_\lambda = \bigoplus_{\bs\omega:\wt(\bs\omega)=\lambda}^{}
V_{\bs\omega}.
\end{equation}
Moreover, if $V$ is a highest-$\ell$-weight module of highest
$\ell$-weight $\bs\omega$, then
\begin{equation}\label{e:hw}
\dim(V_{\wt(\bs\omega)}) = 1\qquad\text{and}\qquad V_\mu\ne 0
\Rightarrow \mu\le\wt(\bs\omega).
\end{equation}

The next proposition is easily established using \eqref{e:hw}.

\begin{prop}
If $V$ is a highest-$\ell$-weight module, then it has a unique maximal proper submodule and, hence, a unique irreducible quotient.\endd
\end{prop}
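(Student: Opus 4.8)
The plan is to mimic the classical proof that a highest-weight module for $U_q(\lie g)$ has a unique maximal proper submodule, adapting it to the $\ell$-weight setting via \eqref{e:hw}. Let $V$ be a highest-$\ell$-weight module generated by a highest-$\ell$-weight vector $v$ of $\ell$-weight $\bs\omega$, and set $\lambda=\wt(\bs\omega)$. The key observation is that every proper submodule of $V$ must be contained in $\bigoplus_{\mu<\lambda}V_\mu$; indeed, $V$ is an object of $\wcal C_q$, hence of $\cal C_q$, so it is a weight module, and any submodule $W\subseteq V$ is again a weight module with $W=\bigoplus_{\mu}W_\mu$ where $W_\mu=W\cap V_\mu$. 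If $W$ is not contained in $\bigoplus_{\mu<\lambda}V_\mu$, then $W_\lambda\ne 0$; but by \eqref{e:hw} we have $\dim V_\lambda=1$, so $W_\lambda=V_\lambda\ni v$, and since $v$ generates $V$ this forces $W=V$.

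From this the proposition is immediate. Let $N$ be the sum of all proper submodules of $V$. Each summand lies in $\bigoplus_{\mu<\lambda}V_\mu$, hence so does $N$, and in particular $N_\lambda=0$, so $N\ne V$; thus $N$ is itself a proper submodule, and by construction it contains every proper submodule, so it is the unique maximal proper submodule. The quotient $V/N$ is then the unique irreducible quotient of $V$: it is irreducible because $N$ is maximal, and any irreducible quotient $V/W$ arises from a proper (necessarily maximal) submodule $W\subseteq N$, whence $W=N$ by maximality of $N$ within the proper submodules combined with $N$ being proper.

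I do not anticipate a genuine obstacle here; the only point requiring a little care is the claim that submodules of a finite-dimensional $\ell$-weight module are again weight modules, so that the weight-space decomposition restricts to submodules — this is what makes the argument ``$W_\lambda\ne 0\Rightarrow W=V$'' work. This follows from the fact that $V\in\cal C_q$ together with the $k_i$ acting semisimply on $V$ (weight module structure), so a submodule is stable under the $k_i$ and decomposes into its intersections with the $V_\mu$; one then just invokes $\dim V_\lambda=1$ from \eqref{e:hw}. One should also note that the whole argument does not actually use the $\ell$-weight refinement beyond \eqref{e:hw}: it is really the statement ``$\dim V_\lambda=1$ and $V_\mu\ne 0\Rightarrow\mu\le\lambda$'' that does the work, exactly as in the non-affine case.
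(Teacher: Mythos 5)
Your proof is correct and is exactly the argument the paper has in mind: the paper omits the proof, remarking only that the proposition ``is easily established using \eqref{e:hw}'', and your use of $\dim V_{\wt(\bs\omega)}=1$ together with $V_\mu\ne 0\Rightarrow\mu\le\wt(\bs\omega)$ to show that the sum of all proper submodules misses the top weight space is precisely that standard argument. No issues.
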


Given $\bs\omega\in \cal P^+$,  the Weyl module $W_q(\bs\omega)$ is
the $U_q(\tlie g)$-module defined by the quotient of $U_q(\tlie g)$
by the left ideal generated by the elements
$$x_{i,r}^+,\quad (x_{i}^-)^{\wt(\bs\omega)(h_i)+1},\quad \eta-\bs\Psi_{\bs\omega}(\eta),\quad i\in I,\ r\in\mathbb Z,\ \eta\in U_q(\tlie h).$$
In particular, it is a
highest-$\ell$-weight module. Denote by $V_q(\bs\omega)$ the
irreducible quotient of $W_q(\bs\omega)$. The next theorem was
proved in \cite{cp:weyl} and recovers the classification of the
simple objects of $\wcal C_q$ obtained previously in \cite{cp:qaa}.

\begin{thm}\label{t:weyl}
\begin{enumerate}[(a)]
\item For every $\bs\omega\in\cal P^+$, the module $W_q(\bs\omega)$ is
nonzero and, moreover, it is the universal finite-dimensional
$U_q(\tlie g)$-module with highest $\ell$-weight $\bs\omega$. 
\item If $V$ is a simple object of $\wcal C_q$, there exists unique $\bs\omega\in\cal P^+$ such that  $V\cong V_q(\bs\omega)$.\endd
\end{enumerate}	
\end{thm}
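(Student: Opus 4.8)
The plan is to follow the classical line of argument of \cite{cp:qaa,cp:weyl}, using the rank-one case as the engine and, since $\lie g$ is of type $A$, Jimbo evaluation modules to guarantee non-vanishing. I begin with the universality in (a). Let $V$ be any nonzero finite-dimensional $\ell$-weight module generated by a highest-$\ell$-weight vector $w$ of $\ell$-weight $\bs\omega$. Cyclicity makes $u\mapsto uw$ a surjection $U_q(\tlie g)\to V$, and I claim its kernel contains the defining left ideal of $W_q(\bs\omega)$. The generators $x_{i,r}^+$ and $\eta-\bs\Psi_{\bs\omega}(\eta)$ kill $w$ by the definition of a highest-$\ell$-weight vector. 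For $(x_i^-)^{\wt(\bs\omega)(h_i)+1}$, note that $\wt(\bs\omega)\in P^+$ since $\bs\omega\in\cal P^+$, so $\wt(\bs\omega)(h_i)\ge 0$; the $U_q(\lie{sl}_2)$-subalgebra generated by $x_i^\pm,k_i^{\pm1}$ sends $w$ to a finite-dimensional highest-weight module of highest weight $\wt(\bs\omega)(h_i)$, and Theorem \ref{t:ciuqg}(c) applied to this subalgebra forces $(x_i^-)^{\wt(\bs\omega)(h_i)+1}w=0$. Hence the surjection factors through $W_q(\bs\omega)$; in particular every finite-dimensional highest-$\ell$-weight module with highest $\ell$-weight $\bs\omega$ is a quotient of $W_q(\bs\omega)$.

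Next, the non-vanishing in (a). Since $\lie g$ is of type $A$, for each $a\in\C^\times$ there is an evaluation homomorphism $U_q(\tlie g)\to U_q(\lie g)$; pulling back the simple module $V_q(\omega_i)$ yields a finite-dimensional $U_q(\tlie g)$-module which is still irreducible and whose highest weight vector is a highest-$\ell$-weight vector of a fundamental $\ell$-weight $\bs\omega_{i,b(a)}$, with $a\mapsto b(a)$ a bijection of $\C^\times$; thus $W_q(\bs\omega_{i,b})\ne 0$ for all $i\in I$, $b\in\C^\times$. For a general $\bs\omega=\prod_{k=1}^m\bs\omega_{i_k,b_k}\in\cal P^+$, form $V:=V_q(\bs\omega_{i_1,b_1})\otimes\cdots\otimes V_q(\bs\omega_{i_m,b_m})\in\wcal C_q$ and let $v$ be the tensor product of the highest-$\ell$-weight vectors. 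Using the partially known triangular behaviour of $\Delta$ on the $x_{i,r}^+$ recorded in \cite{cp:qaa,freres:qchar}, one checks that $v$ is a highest-$\ell$-weight vector of $\ell$-weight $\prod_k\bs\omega_{i_k,b_k}=\bs\omega$; then $U_q(\tlie g)\cdot v$ is a nonzero finite-dimensional highest-$\ell$-weight module with highest $\ell$-weight $\bs\omega$, so $W_q(\bs\omega)\ne 0$ by the first paragraph.

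For (b), let $V$ be a simple object of $\wcal C_q$. As $U_q(\tlie h)$ is commutative and $V$ is a finite-dimensional weight module, $V$ has $\ell$-weight vectors; among the finitely many $\ell$-weights occurring in $V$, choose $\bs\mu$ with $\wt(\bs\mu)$ maximal in the order on $P$ and pick $0\ne v\in V_{\bs\mu}$. Since $x_{j,r}^+$ shifts weight by $\alpha_j$ and $\wt(\bs\mu)+\alpha_j$ does not occur in $V$, we get $x_{j,r}^+v=0$ for all $j,r$, so $v$ is a highest-$\ell$-weight vector and, by simplicity, $V=U_q(\tlie g)\cdot v$ is highest-$\ell$-weight of highest $\ell$-weight $\bs\mu$. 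It remains to show $\bs\mu\in\cal P^+$. For each $i$, the Drinfeld-type subalgebra generated by $x_{i,r}^\pm,k_i^{\pm1},h_{i,r}$ is a homomorphic image of $U_q(\widetilde{\lie{sl}}_2)$, and the submodule it generates from $v$ is a nonzero finite-dimensional highest-$\ell$-weight $U_q(\widetilde{\lie{sl}}_2)$-module whose highest $\ell$-weight is governed by the $i$-th component $\bs\mu_i(u)$; the rank-one classification of \cite{cp:qaa} (see also \cite{cp:book}) forces $\bs\mu_i$ to be a polynomial with $\bs\mu_i(0)=1$. Hence $\bs\mu\in\cal P^+$; writing $\bs\omega:=\bs\mu$, part (a) gives a surjection $W_q(\bs\omega)\twoheadrightarrow V$, and since $V$ is simple while $V_q(\bs\omega)$ is the unique irreducible quotient of $W_q(\bs\omega)$ (by the proposition stated just before the theorem), we conclude $V\cong V_q(\bs\omega)$. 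Uniqueness follows from \eqref{e:lws} and \eqref{e:hw}: $\wt(\bs\omega)$ is the unique maximal weight of $V$, $\dim V_{\wt(\bs\omega)}=1$, and the corresponding line equals $V_{\bs\omega}$, so $\bs\omega$ is an invariant of the isomorphism class of $V$. The genuinely substantial inputs, and hence the main obstacles, are the non-vanishing of $W_q(\bs\omega)$ — where the delicate point is verifying that the tensor of highest-$\ell$-weight vectors is again highest-$\ell$-weight, relying on the known triangular form of $\Delta$ on $U_q(\tlie n^+)$ — and, in (b), the reduction of dominance to the $U_q(\widetilde{\lie{sl}}_2)$-classification.
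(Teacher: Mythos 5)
The paper offers no proof of Theorem \ref{t:weyl} to compare against: it is quoted verbatim from \cite{cp:weyl}, which in turn recovers the classification of \cite{cp:qaa}, and the statement ends with no argument. Your sketch is essentially the standard Chari--Pressley proof from those references (universality of $W_q(\bs\omega)$ by cyclicity plus the rank-one relation $(x_i^-)^{\wt(\bs\omega)(h_i)+1}w=0$; non-vanishing via fundamental modules, realized in type $A$ through Jimbo evaluation, and tensor products; classification of simples by extracting a highest-$\ell$-weight vector and reducing dominance to the $U_q(\widetilde{\lie{sl}}_2)$ case), and as a sketch it is correct. Two points are worth tightening. First, in (b) a nonzero vector of $V_{\bs\mu}$ is a priori only a \emph{generalized} $U_q(\tlie h)$-eigenvector; to produce a genuine highest-$\ell$-weight vector you should take a common eigenvector of the commuting family $U_q(\tlie h)$ acting on the finite-dimensional, $U_q(\tlie h)$-stable top weight space $V_{\wt(\bs\mu)}$, which exists precisely because the operators commute. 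Second, in the non-vanishing step of (a) the annihilation of $v=v_1\otimes\cdots\otimes v_m$ by all $x_{i,r}^+$ is automatic for weight reasons, since $\wt(\bs\omega)+\alpha_i$ is not a weight of the tensor product; the only place the triangularity of $\Delta$ is genuinely needed is to identify the $U_q(\tlie h)$-eigenvalue on the one-dimensional top weight space with $\bs\Psi_{\bs\omega}$, i.e.\ that the $\ell$-weight of $v$ is the product of the highest $\ell$-weights (the same structural fact underlying Proposition \ref{p:qchtp}). Also, your appeal to Theorem \ref{t:ciuqg}(c) for the relation $(x_i^-)^{\wt(\bs\omega)(h_i)+1}w=0$ is slightly misplaced: what is actually used is the rank-one fact that, for generic $q$, a finite-dimensional highest-weight $U_q(\lie{sl}_2)$-module of highest weight $q^m$, $m\ge 0$, is the $(m+1)$-dimensional simple module. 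None of these affect the validity of the overall argument.
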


\subsection{Minimal Affinizations}\label{ss:min}

We now review the notion of minimal affinizations of an irreducible
$U_q(\lie g)$-module introduced in \cite{cha:minr2}.

Given $\lambda\in P^+$, a $U_q(\tlie g)$-module $V$ is said to be an
affinization of $V_q(\lambda)$ if there exists an isomorphism of
$U_q(\lie g)$-module,
\begin{equation}
V\cong V_q(\lambda)\oplus \opl_{\mu< \lambda}^{} V_q(\mu)^{\oplus
m_\mu(V)}
\end{equation}
for some $m_\mu(V)\in\mathbb Z_{\ge 0}$. Two affinizations of
$V_q(\lambda)$ are said to be equivalent if they are isomorphic as
$U_q(\lie g)$-modules. Notice that a highest-$\ell$-weight module of
highest $\ell$-weight $\bs\omega\in\cal P^+$ is an affinization of
$V_q(\lambda)$ if and only if $\wt(\bs\omega)=\lambda$.

The partial order on $P^+$ induces a natural partial order on the
set of  (equivalence classes of) affinizations of $V_q(\lambda)$.
Namely, if $V$ and $W$ are affinizations of $V_q(\lambda)$, say that
$V\le W$ if one of the following conditions hold:
\begin{enumerate}
\item $m_\mu(V)\le m_\mu(W)$ for all $\mu\in P^+$;
\item for all $\mu\in P^+$ such that $m_\mu(V)>m_\mu(W)$
there exists $\nu>\mu$ such that $m_\nu(V)<m_\nu(W)$.
\end{enumerate}
A minimal element of this partial order is said to be a minimal
affinization. Clearly, a minimal affinization of $V_q(\lambda)$ must
be irreducible as a $U_q(\tlie g)$-module and, hence, it is of the form
$V_q(\bs\omega)$ for some $\bs\omega\in\cal P^+$ such that
$\wt(\bs\omega)=\lambda$.

Given $i,j\in I, i\le j$, and $\lambda\in P$, set
\begin{equation*}
_i|\lambda|_j = \sum_{k=i}^j \lambda(h_k).
\end{equation*}
If $i=1$, we simplify notation and write $|\lambda|_j$ instead of
$_1|\lambda|_j$ and similarly if $j=n$. For $i>j$ we set
$_i|\lambda|_j=0$. Set also
\begin{equation*}
p_{i,j}(\lambda) =\ _{i+1}|\lambda|_j +\ _i|\lambda|_{j-1} + j-i
\quad\text{if}\quad i\le j
\end{equation*}
and $p_{i,j}(\lambda) = p_{j,i}(\lambda)$ if $j<i$. Notice that, if
$i=j$, then $p_{i,j}(\lambda)=0$ and
\begin{equation*}
p_{i,j}(\lambda) = \lambda(h_i)+\lambda(h_j)+ 2\
_{i+1}|\lambda|_{j-1}+ j-i \qquad\text{if}\quad i<j.
\end{equation*}
The next theorem proved in \cite[Theorem 2.9]{cp:small} (see also \cite{cp:minsl}) gives the classification of the minimal affinizations in type $A$.

\begin{thm}\label{teo tipo A}
For every $\lambda \in P^+$, there exists a unique class of minimal affinizations of $V_q(\lambda)$.
Moreover, $V_q(\bs\omega)$ is a minimal affinization of $V_q(\lambda)$ if and
only if there exist $a_i\in\mathbb C^\times, i\in I$, and
$\epsilon=\pm 1$ such that
\begin{equation}\label{eq cond min A}
\bs\omega=\prod_{i\in I} \bs\omega_{i,a_i,\lambda(h_i)}
\qquad\text{with}\qquad \frac{a_i}{a_j}= q^{\epsilon
p_{i,j}(\lambda)} \qquad\text{for all}\quad i<j.
\end{equation}
In that case, $V_q(\bs\omega)\cong V_q(\lambda)$ as representations
of $U_q(\g)$.\endd
\end{thm}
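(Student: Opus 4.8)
The plan is to reduce the statement to the two known ingredients cited just above: the Chari--Pressley classification of Drinfeld polynomials of minimal affinizations in type $A$ (essentially \cite{cp:minsl}) and the comparison of $U_q(\lie g)$-characters of these affinizations (the ``smallness'' result of \cite{cp:small}). Concretely, the theorem asserts three things: (i) existence of at least one minimal affinization of $V_q(\lambda)$; (ii) the precise form \eqref{eq cond min A} of the Drinfeld polynomials characterizing minimality; and (iii) that any such $V_q(\bs\omega)$ is isomorphic to $V_q(\lambda)$ as a $U_q(\lie g)$-module, so in particular the class is unique.

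First I would establish (ii) and (iii) simultaneously for the candidate $\ell$-weights. Given $\lambda\in P^+$, pick any $a_1\in\C^\times$ and define $a_i$ for $i\ge 2$ by the recursion forced by $a_i/a_j=q^{\epsilon p_{i,j}(\lambda)}$ (consistency follows from the cocycle-type identity $p_{i,k}(\lambda)=\ _{j}|\lambda|\text{-type sums}$, i.e.\ $p_{i,j}(\lambda)+p_{j,k}(\lambda)-p_{i,k}(\lambda)$ is twice an integer coming from the middle entries, so the ratios are compatible); set $\bs\omega=\prod_{i}\bs\omega_{i,a_i,\lambda(h_i)}$. By \cite{cp:small} the module $V_q(\bs\omega)$ for such a ``$q$-string'' choice of parameters has the same $U_q(\lie g)$-character as $V_q(\lambda)$, hence $m_\mu(V_q(\bs\omega))=0$ for all $\mu<\lambda$; this forces $V_q(\bs\omega)$ to be minimal in the partial order on affinizations (condition (1) in the definition is satisfied against every competitor), giving existence (i) and the ``if'' direction of (ii), together with (iii). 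For the ``only if'' direction, one invokes the Chari--Pressley analysis: if $V_q(\bs\omega)$ is a minimal affinization then $\wt(\bs\omega)=\lambda$, so $\bs\omega=\prod_i\bs\omega_{i,a_i,\lambda(h_i)}$ for \emph{some} parameters $a_i$ together with possibly finer data, and one must show the spectral parameters are rigid --- forced into the single $q$-string pattern up to the global scalar $a_1$ and the sign $\epsilon$ (the latter reflecting the diagram automorphism $i\mapsto n+1-i$ of $A_n$). This is exactly \cite[Theorem 2.9]{cp:small} (and \cite{cp:minsl}), so here I would simply cite it, perhaps after recalling the shape of the argument: any deviation from the $q$-string produces an extra $U_q(\lie g)$-constituent $V_q(\mu)$ with $\mu<\lambda$ that is absent in the $q$-string module, violating minimality via condition (2).

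Uniqueness of the equivalence class then follows formally: all the $\bs\omega$ of the form \eqref{eq cond min A} differ only by the overall scalar $a_1$ and the choice of $\epsilon$; changing $a_1$ is an automorphism of $U_q(\tlie g)$ (the spectral-parameter shift $\tau_a$) so yields isomorphic modules, while the two choices of $\epsilon$ are interchanged by the Hopf algebra automorphism induced by the Dynkin diagram automorphism, hence also give equivalent (indeed isomorphic as $U_q(\lie g)$-modules) affinizations. Combined with (iii) this shows every minimal affinization of $V_q(\lambda)$ lies in a single equivalence class.

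The main obstacle --- and the reason this is stated as a citation rather than reproved --- is the ``only if'' rigidity of the spectral parameters in (ii): ruling out \emph{all} other $\bs\omega$ with $\wt(\bs\omega)=\lambda$ requires either a delicate induction on the rank and on $\lambda$ (the Chari--Pressley method, reducing to $\lie{sl}_3$ and analyzing the relevant $R$-matrix/intertwiner), or the full qcharacter machinery; neither is short. Everything else --- consistency of the recursion for $a_i$, the character comparison, and the automorphism arguments for uniqueness --- is routine. Since the paper only uses the statement, I would present the proof essentially as: ``This is \cite[Theorem 2.9]{cp:small}; see also \cite{cp:minsl},'' optionally expanding the consistency check for \eqref{eq cond min A} and the $\tau_a$/diagram-automorphism remarks for the reader's convenience.
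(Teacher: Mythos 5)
Your proposal is correct and matches the paper exactly: the paper gives no proof of this theorem, stating it as an imported result with the citation to \cite[Theorem 2.9]{cp:small} (see also \cite{cp:minsl}), which is precisely what you propose to do. (Your supporting sketch is sound; the only minor slip is in the parenthetical consistency check, where in fact $p_{i,j}(\lambda)+p_{j,k}(\lambda)=p_{i,k}(\lambda)$ holds exactly for $i<j<k$, so the recursion for the $a_i$ is consistent on the nose.)
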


Notice that \eqref{eq cond min A} is equivalent to saying that there
exist $a\in\mathbb C^\times$ and $\epsilon=\pm 1$ such that
\begin{equation}\label{eq cond min A'}
\bs\omega=\prod_{i\in I} \bs\omega_{i,a_i,\lambda(h_i)}
\qquad\text{with}\qquad a_i=aq^{\epsilon p_{i,n}(\lambda)}
\qquad\text{for all}\quad i\in I.
\end{equation}
The support
of $\mu\in P$ is defined by
$${\rm supp}(\mu)=\{i\in I:\mu(h_i)\ne 0\}.$$
Note that if $\#\ \supp(\lambda)>1$, the pair $(a,\epsilon)$ in
\eqref{eq cond min A'} is unique. In that case, if $\bs\omega$
satisfies \eqref{eq cond min A} with $\epsilon=1$, we say that
$V_q(\bs\omega)$ is a decreasing minimal affinization (because the
powers of $q$ in \eqref{eq cond min A'} decrease as $i$ increases).
Otherwise, we say $V_q(\bs\omega)$ is an increasing minimal
affinization. However, if $\#\ \supp(\lambda)= 1$, $\bs\omega$ can
be represented in the form \eqref{eq cond min A'} by two choices of
pairs $(a,\epsilon)$, one for each value of $\epsilon$. We do not
fix a preferred presentation in that case. The minimal affinizations
satisfying $\#\ \supp(\lambda)\le 1$ are
called Kirillov-Reshetikhin modules.

\subsection{The Main Theorem}\label{ss:maint}

Fix $\bs\omega$ as in \eqref{eq cond min A} with $\epsilon=-1$ and set
$$i_0=\max(\supp(\lambda)), \qquad a=a_{i_0}.$$
Thus,
\begin{equation}\label{eq r_l r_i_0}
\bs\omega=\prod_{i\in\supp(\lambda)}
\bs\omega_{i,aq^{s_i},\lambda(h_i)} \qquad\text{with}\qquad s_i=-p_{i,i_0}(\lambda).
\end{equation}
For notational convenience, we define $s_i$ by \eqref{eq r_l r_i_0} for all $1\le i\le i_0$. Fix also
\begin{equation*}
\bs\varpi = \bs\omega_{n,b,k} \qquad\text{for some}\qquad b\in\mathbb C^\times, k\in\mathbb Z_{>0},
\end{equation*}
and set
\begin{equation*}
\bs\lambda = \bs\omega\bs\varpi \qquad\text{and}\qquad V = V_q(\bs\omega)\otimes V_q(\bs\varpi).
\end{equation*}

\begin{thm}\label{t:main}
V is reducible if and only if there exist $s\in\mathbb Z, p\in\supp(\lambda)$, and $k'>0$ such that $b=aq^s$ and either one of the following options hold:
\begin{enumerate}[(i)]
\item $k'\le\min\{\lambda(h_p),k\}$ and $s+k+n-p+2 = s_p-\lambda(h_p)+2k'$;
\item $k'\le\min\{|\lambda|,k\}$, $\lambda(h_{i_0})+n-i_0+2 = s-k+2k'$, and $p=\max\{i\in I:\ _i|\lambda|\ge k'\}$.
\end{enumerate}
In both cases, $V$ has length 2 and the highest $\ell$-weight $\bs\lambda'$ of the simple factor not isomorphic to $V_q(\bs\lambda)$ is
\begin{equation*}
\bs\lambda' =
\begin{cases}
\bs\lambda\left( \prod\limits_{l=1}^{k'} \bs\alpha_{n,p,aq^{s+k-1-2(l-1)}}^{-1}\right),& \text{if (i) holds},\vspace{10pt}\\
\bs\lambda\
\left(\prod\limits_{i=p+1}^{i_0}\prod\limits_{m=1}^{\lambda(h_i)}\bs\alpha^{-1}_{i,n,aq^{s_i+\lambda(h_i)-1-2(m-1)}}\right)
\left(\prod\limits_{m=1}^{d}\bs\alpha^{-1}_{p,n,aq^{s_p+\lambda(h_p)-1-2(m-1)}}\right),& \text{if (ii) holds},
\end{cases}\vspace{5pt}
\end{equation*}
where $d=k'-\ _{p+1}|\lambda|$.\endd
\end{thm}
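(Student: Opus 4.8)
The overall strategy I would follow is the one outlined in the introduction: control the tensor product $V = V_q(\bs\omega)\otimes V_q(\bs\varpi)$ through its set of dominant $\ell$-weights and the theory of qcharacters. First I would record the qcharacter of the increasing minimal affinization $V_q(\bs\omega)$ in terms of semistandard tableaux (as reviewed in the planned Section~\ref{ss:qchminA}) and the qcharacter of the KR-module $V_q(\bs\varpi) = V_q(\bs\omega_{n,b,k})$, which is the column/row case and is completely explicit. Taking the product of these qcharacters gives $\qch(V)$, and the dominant monomials appearing in it are in bijection with pairs of compatible tableaux. The key preliminary input is Proposition~\ref{p:multiplicity1A}: the dominant $\ell$-weights of $V$ form a \emph{totally ordered} chain and each has $\ell$-weight space of dimension $1$. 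Granting this, the composition series is controlled purely by a bookkeeping argument: an irreducible factor $V_q(\bs\mu)$ can occur only for $\bs\mu$ dominant, and since every dominant $\ell$-weight space is one-dimensional, the multiplicity of each dominant $\ell$-weight in $\qch(V)$ is distributed among the factors, so the number of factors is controlled by comparing $\qch(V)$ with $\qch(V_q(\bs\lambda))$.

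Next I would carry out the decisive step (the content of Section~\ref{ss:sftpA}): determine exactly which of the chain of dominant $\ell$-weights of $V$ are $\ell$-weights of the ``obvious'' factor $V_q(\bs\lambda)$, where $\bs\lambda=\bs\omega\bs\varpi$. Using Theorem~\ref{teo tipo A} and the tableaux description of $V_q(\bs\lambda)$ (which is itself a minimal affinization in type $A$ when the Drinfeld polynomial has the right $q$-string structure — note $\bs\lambda$ is \emph{not} in general a minimal affinization, so here one must instead use the qcharacter of the actual irreducible module $V_q(\bs\lambda)$, accessed via Frenkel–Mukhin/Nakajima-type algorithms or via explicit tableaux products), I would show: if $b\ne aq^s$ for all admissible $s$, or if neither numerical condition (i) nor (ii) holds, then \emph{every} dominant $\ell$-weight of $V$ already occurs in $V_q(\bs\lambda)$, forcing $V=V_q(\bs\lambda)$ to be irreducible. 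Conversely, when (i) or (ii) holds, I would exhibit a specific dominant $\ell$-weight of $V$ — the one obtained from $\bs\lambda$ by the explicit product of inverse simple $\ell$-roots $\bs\alpha_{n,p,*}^{-1}$ (resp.\ the two-block product in case (ii)) — and verify that it does \emph{not} appear in $\qch(V_q(\bs\lambda))$. Since the set of dominant $\ell$-weights is totally ordered, this ``extra'' monomial, being the highest one not covered by $V_q(\bs\lambda)$, must be the highest $\ell$-weight $\bs\lambda'$ of a second irreducible factor. I would then check that $\bs\lambda'$ is indeed dominant (the exponents $k'\le\min\{\dots\}$ conditions and the choice $p=\max\{i:\,_i|\lambda|\ge k'\}$ in (ii) are exactly what guarantees this) and that $\qch(V) = \qch(V_q(\bs\lambda)) + \qch(V_q(\bs\lambda'))$, i.e.\ that all remaining dominant $\ell$-weights of $V$ lie in $V_q(\bs\lambda')$. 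This last equality, together with $\dim V = \dim V_q(\bs\lambda)+\dim V_q(\bs\lambda')$ checked on characters of $U_q(\lie g)$, pins the length at exactly $2$; indecomposability follows since $V$ is highest-$\ell$-weight (hence has a unique irreducible quotient) and, by applying the same analysis to the dual / opposite-order tensor product, also has simple socle, so it cannot split.

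The combinatorial heart — and the step I expect to be the main obstacle — is the explicit identification of the dominant $\ell$-weights of $V$ and the verification of which ones survive in $V_q(\bs\lambda)$. Computing $\qch(V_q(\bs\lambda))$ is delicate precisely because $\bs\lambda=\bs\omega\bs\varpi$ need not be a minimal affinization, so the clean tableaux formula for minimal affinizations does not apply directly; one has to track the interaction of the $n$ ``strings'' of $\bs\omega$ with the length-$k$ string of $\bs\varpi$ sitting at node $n$, and the two cases (i)/(ii) correspond to the two geometrically distinct ways these strings can interlock at the last node. I would organize this by first reducing, via the duality arguments of the planned Sections~\ref{ss:dual}–\ref{ss:Gen}, to the stated normalization ($\epsilon=-1$, $\bs\varpi$ supported at node $n$), then doing an induction on $k$ or on $\#\supp(\lambda)$, peeling off one fundamental factor at a time and using the known $r=1$ (fundamental KR-module) case as the base. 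The formula $d = k' - {}_{p+1}|\lambda|$ in case (ii) should emerge naturally as the ``overflow'' once the blocks $i=p+1,\dots,i_0$ have been saturated. Throughout, Proposition~\ref{p:multiplicity1A} is what prevents multiplicities $\ge 2$ from sabotaging the counting, so if that proposition holds the remaining work, though lengthy, is bookkeeping rather than a conceptual gap.
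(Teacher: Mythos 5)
Your plan reproduces the paper's overall scheme faithfully: describe the set $D=\wt_\ell(V)\cap\mathcal P^+$, establish that it is a totally ordered chain of one-dimensional $\ell$-weight spaces (Proposition \ref{p:multiplicity1A}), determine which elements of $D$ lie in $\wt_\ell(V_q(\bs\lambda))$, and conclude that the highest remaining one is $\bs\lambda'$ and absorbs the rest. However, as written the proposal contains no proof of the two ingredients that carry essentially all of the content. First, Proposition \ref{p:multiplicity1A} is taken as granted (``Granting this\dots'', ``if that proposition holds\dots''), but it is not an external input: it is proved in the paper as a byproduct of the explicit determination of $D$ (Sections \ref{ss:Jdom}--\ref{ss:domtpA}), and it is precisely in that determination that the numerical conditions (i)/(ii), the pair $(p,k')$, and the formula for $\bs\lambda'$ emerge. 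Without it you cannot even state which $\ell$-weight is ``the extra one''.

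Second, and more seriously, the step you call ``bookkeeping'' --- deciding whether a given $\bs\nu\in D$ lies in $\wt_\ell(V_q(\bs\lambda))$ or in $\wt_\ell(V_q(\bs\lambda'))$ --- is not bookkeeping, because (as you yourself note) neither $\bs\lambda$ nor $\bs\lambda'$ is a minimal affinization, so no closed tableau formula for their qcharacters is available. The paper needs two genuinely different mechanisms here: for \emph{membership}, iterated use of Hernandez's restriction result (Proposition \ref{prop appears}) along chains of subdiagrams $J$, combined with the $\lie{sl}_2$ $q$-factorization \eqref{teo sl2 irred} to compute $\chi_J$ of the intermediate weights; for \emph{non-membership} of $\bs\lambda'$, a realization of $V_q(\bs\lambda)$ as the simple quotient of the top of a tensor product of two or three minimal affinizations whose qcharacters \emph{are} known, followed by a combinatorial argument with the multiplicative independence \eqref{e:lrootsind} of the $\bs\alpha_{i,a}$ showing that the required partition of the set $\Xi$ of $\ell$-roots cannot exist. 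Your proposed substitute --- induction on $k$, peeling off one fundamental KR factor at a time --- is not obviously workable: the composition series of $V_q(\bs\omega)\otimes V_q(\bs\omega_{n,b,k})$ is not controlled by that of $V_q(\bs\omega)\otimes V_q(\bs\omega_{n,b',1})^{\otimes k}$ without a cyclicity argument, and the multiplicity-one property you rely on can fail for the intermediate products. A minor further slip: $V$ is \emph{not} always highest-$\ell$-weight (in case (ii) the socle is $V_q(\bs\lambda)$, cf.\ Corollary \ref{c:main}), so the indecomposability remark at the end of your second paragraph needs the case distinction, though this does not affect the length-two statement of the theorem itself.
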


This is the main result of this paper. It describes the unique non-obvious irreducible factor of the tensor product of an increasing minimal affinization with a Kirillov-Reshetikhin module associated to $\omega_n$, when it exists, as well as the precise condition for its existence. Duality arguments can be used to obtain similar description for the tensor product for all combinations between an increasing or decreasing minimal affinization and a Kirillov-Reshetikhin module associated to $\omega_1$ or $\omega_n$. We give the precise statements in Section \ref{s:othermain} and explain how to obtain the other cases from Theorem \ref{t:main}. Moreover, combining these duality arguments with the main result of \cite{cha:braid}, we will also prove the following in Section \ref{s:othermain}.
Let $$V' = V_q(\bs\varpi)\otimes V_q(\bs\omega).$$
It is well-know that the Grothendieck ring of $\wcal C_q$ is commutative and, hence , Theorem \ref{t:main} applies as is to $V'$ in place of $V$.

\begin{cor}\label{c:main}
	$V$ and $V'$ are indecomposable. Moreover, if condition (i) of Theorem \ref{t:main} holds, we have short exact sequences
	 \begin{equation*}
	 0\to V_q(\bs{\lambda}')\to V\to V_q(\bs{\lambda})\to 0 \qquad\text{and}\qquad 0\to V_q(\bs{\lambda})\to V'\to V_q(\bs{\lambda}')\to 0
	 \end{equation*}
	while we have
	\begin{equation*}
	0\to V_q(\bs{\lambda})\to V\to V_q(\bs{\lambda}')\to 0 \qquad\text{and}\qquad 0\to V_q(\bs{\lambda}')\to V'\to V_q(\bs{\lambda})\to 0
	\end{equation*}
	 if condition (ii) holds. \endd
\end{cor}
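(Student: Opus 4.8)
The plan is to leverage Theorem \ref{t:main} together with a general principle about tensor products of highest-$\ell$-weight modules. First I would recall the standard fact that if $M$ and $N$ are highest-$\ell$-weight modules with highest $\ell$-weight vectors $v_M, v_N$, then $v_M \otimes v_N$ generates a highest-$\ell$-weight submodule of $M \otimes N$ with highest $\ell$-weight equal to the product of the two; in particular $V_q(\bs\lambda)$ is a quotient of the submodule of $V$ generated by the tensor product of highest-$\ell$-weight vectors, so $V_q(\bs\lambda)$ embeds in the socle is \emph{not} immediate, but it \emph{is} a subquotient, and the unique irreducible quotient of that cyclic submodule. Since by Theorem \ref{t:main} $V$ has length $2$ with composition factors $V_q(\bs\lambda)$ and $V_q(\bs\lambda')$, the submodule generated by $v_{\bs\omega}\otimes v_{\bs\varpi}$ is either all of $V$ or a proper nonzero submodule isomorphic to $V_q(\bs\lambda)$; in the latter case $V_q(\bs\lambda)=\soc(V)$ and the head is $V_q(\bs\lambda')$, while in the former $V$ is highest-$\ell$-weight, hence indecomposable with head $V_q(\bs\lambda)$ and socle $V_q(\bs\lambda')$. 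Either way $V$ is indecomposable (a length-$2$ module is decomposable only if it is a direct sum of its two factors, which would force the cyclic submodule generated by the product of highest weight vectors to be one of the summands, but that summand must contain the full weight space of weight $\wt(\bs\lambda)$, which is $1$-dimensional and generates everything — contradiction). The same argument applied to $V'$, using that the Grothendieck ring is commutative so $V'$ also has length $2$ with the same factors, gives indecomposability of $V'$.

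To pin down \emph{which} factor is the socle in each of cases (i) and (ii), the idea is to determine whether $\bs\lambda'$ is an $\ell$-weight of the submodule generated by the product of highest-$\ell$-weight vectors, equivalently whether the cyclic module $U_q(\tlie g)(v_{\bs\omega}\otimes v_{\bs\varpi})$ is all of $V$. Here is where I would invoke the duality argument and the main result of \cite{cha:braid}: there is an anti-automorphism (or the combination of the bar-type involution with the graph automorphism) intertwining $V = V_q(\bs\omega)\otimes V_q(\bs\varpi)$ and $V' = V_q(\bs\varpi)\otimes V_q(\bs\omega)$ up to taking duals, which swaps the roles of socle and head. Concretely, $V'^* \cong V$ with the two composition factors' positions interchanged (since $V_q(\bs\mu)^* \cong V_q(\bs\mu^{*})$ for an appropriate dual $\ell$-weight, and one checks the duals of $\bs\lambda, \bs\lambda'$ are again of the form covered by Theorem \ref{t:main}, now in a possibly different case). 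Thus establishing the socle of $V$ in one of the two cases automatically yields the socle of $V'$ in that case, and the asymmetry between (i) and (ii) in the statement must come from the explicit check in one representative situation.

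The concrete check I would carry out is the following: in case (i), analyze the highest-$\ell$-weight submodule $V_q(\bs\lambda)\hookrightarrow V$ by showing $\bs\lambda'\not\le$ any $\ell$-weight of $V_q(\bs\lambda)$ is false — rather, the cleanest route is to use the explicit description (from Section \ref{ss:sftpA}, referenced in the introduction) of which dominant $\ell$-weights of $V$ are $\ell$-weights of $V_q(\bs\lambda)$. In case (i) the extra dominant $\ell$-weight $\bs\lambda'$ is \emph{not} an $\ell$-weight of $V_q(\bs\lambda)$, hence $V_q(\bs\lambda)$ cannot be a quotient of $V$ containing the top, so the cyclic submodule generated by $v_{\bs\omega}\otimes v_{\bs\varpi}$ is proper, forcing $0\to V_q(\bs\lambda')\to V\to V_q(\bs\lambda)\to 0$; dually $0\to V_q(\bs\lambda)\to V'\to V_q(\bs\lambda')\to 0$. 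In case (ii) the opposite holds — $\bs\lambda'$ \emph{is} an $\ell$-weight of $V_q(\bs\lambda)$ — so the cyclic submodule is all of $V$ and $V$ is highest-$\ell$-weight, giving $0\to V_q(\bs\lambda)\to V\to V_q(\bs\lambda')\to 0$ and dually for $V'$.

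The main obstacle I anticipate is getting the duality bookkeeping exactly right: one must verify that the duality functor sends the tensor product in one order to the tensor product in the other order (this is where \cite{cha:braid} on the braid group action / $R$-matrices enters, to identify $V_q(\bs\omega)\otimes V_q(\bs\varpi)$ with $(V_q(\bs\varpi^*)\otimes V_q(\bs\omega^*))^*$ up to the relevant automorphism), and that under this identification the pair $(\bs\omega,\bs\varpi)$ in case (i) maps to a pair again satisfying the hypotheses of Theorem \ref{t:main} in case (i) — and likewise (ii) to (ii) — with $\bs\lambda'$ mapping to the corresponding primed $\ell$-weight. Once that is in place, the short exact sequences follow formally from the length-$2$ structure. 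The indecomposability, by contrast, is cheap and follows purely from the cyclicity of the highest-$\ell$-weight submodule as sketched above, so I would present that part first and independently.
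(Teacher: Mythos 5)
There are two genuine gaps here, and they are the two places where the actual proof needs module-theoretic input beyond qcharacters. First, your indecomposability argument is circular: after reducing to the hypothetical splitting $V\cong V_q(\bs\lambda)\oplus V_q(\bs\lambda')$ you derive a contradiction from the claim that the one-dimensional top weight space ``generates everything.'' That claim is precisely the assertion that $V$ is a highest-$\ell$-weight module, which is what has to be proved — and which is in fact \emph{false} for one of the two orderings in each case (e.g.\ in case (i) the top vector of $V'$ generates only the proper submodule $V_q(\bs\lambda)$, yet $V'$ is still indecomposable). The paper gets cyclicity from Proposition \ref{p:cyclic} (the consequence of \cite{cha:braid}): it factors $V_q(\bs\omega)$ through an ordered tensor product of KR modules $V_{i}=V_q(\bs\omega_{i,a_i,\lambda(h_i)})$, inserts $V_q(\bs\varpi)$ at the position dictated by comparing $s$ with the $s_i$, and concludes that one order is highest-$\ell$-weight; the indecomposability of the other order is then obtained by dualizing (via Corollary \ref{c:hwgen}), not by a direct weight-space count.

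Second, your proposed criterion for deciding which factor is the socle — whether $\bs\lambda'$ lies in $\wtl(V_q(\bs\lambda))$ — cannot work and is factually wrong. One has $\bs\lambda'\notin\wtl(V_q(\bs\lambda))$ in \emph{both} cases (i) and (ii) (Lemma \ref{l:tpfactorsA<}(b) and Lemma \ref{lem lweight in A}(c)); indeed, together with $\dim V_{\bs\lambda'}=1$ this is exactly what forces $V_q(\bs\lambda')$ to be a composition factor at all, so your claimed asymmetry between (i) and (ii) would contradict Theorem \ref{t:main} itself. More fundamentally, $\qch(V)$ is the sum of the qcharacters of the composition factors regardless of the extension structure, so no qcharacter computation can distinguish $0\to V_q(\bs\lambda')\to V\to V_q(\bs\lambda)\to 0$ from the reversed extension or from the direct sum. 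The asymmetry between (i) and (ii) comes from the sign of $s_p-s$: in case (i) one has $s_p>s$, which puts the spectral parameters in the order required by Proposition \ref{p:cyclic}(a) for $V$ (and by part (b) for $V'$), while in case (ii) the inequality reverses. Your duality bookkeeping in the middle paragraph is essentially sound (and is how the paper transfers the result between $V$ and $V'$ and proves indecomposability of the non-cyclic order), but it cannot substitute for this base-case cyclicity argument, and \cite{cha:braid} enters there rather than as an identification of $V\otimes W$ with $(W^*\otimes V^*)^*$.
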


\begin{rem}\label{r:tmain}
    We chose to write Theorem \ref{t:main} for a Kirillov-Reshetikhin module associated to $\omega_n$ since this makes the notation of this paper closer to that needed for its application to the classification of minimal affinizations of type $D$. It will be clear from the proof of the theorem that the pair $(p,k')$ is unique if it exists. In fact, this is already obvious in the case that condition (ii) holds. The uniqueness in the case of (i) is proved after \eqref{eq p,k'} below. Moreover, it follows from \eqref{e:inoii} that conditions (i) and (ii)  cannot be simultaneously satisfied. One easily checks that the equation in condition (i) of Theorem \ref{t:main} is equivalent to saying that 
    $$\bs\omega_{p,aq^{s_p},\lambda(h_p)}\ \bs\omega_{n,aq^{s-(k'-1)},k-(k'-1)}$$ 
    corresponds to a (necessarily decreasing) minimal affinization. Similarly, the one in condition (ii) is equivalent to saying that 
    $$\bs\omega_{p,aq^{s_p+ _{p+1}|\lambda|-(k'-1)}, _p|\lambda| - (k'-1)}\ \bs\omega_{n,aq^s,k}$$
    corresponds to a (necessarily increasing) minimal affinization. Moreover, in this case,
    $$\left(\prod_{i<p}\bs\omega_{i,aq^{s_i},\lambda(h_i)}\right) \bs\omega_{p,aq^{s_p+ _{p+1}|\lambda|-(k'-1)}, _p|\lambda| - (k'-1)}\ \bs\omega_{n,aq^s,k}$$
    corresponds to a minimal affinization as well. \endd
\end{rem}

\section{Character Theory}\label{s:char}

In this section, we present the main tool we will use to prove Theorem \ref{t:main}: the notion of qcharacter of a representation of $U_q(\tlie g)$ introduced by Frenkel and Reshetikhin in \cite{freres:qchar} (see also \cite{cm:qblock}). More precisely, we shall use Nakajima's tableaux description of the qcharacters \cite{nak:tableau}.

\subsection{Characters and qCharacters}
Let $\mathbb Z[P]$ be the integral group ring over $P$ and denote by
$e:P\to\mathbb Z[P], \lambda\mapsto e^\lambda$, the inclusion of $P$
in $\mathbb Z[P]$ so that $e^\lambda e^\mu=e^{\lambda+\mu}$. The
character of an object $V$ from $\cal C_q$ is defined by
\begin{equation*}\label{e:chd}
\ch(V) = \sum_{\mu\in P} \dim(V_\mu) e^\mu.
\end{equation*}
For $\lambda\in P^+$, let $m_\lambda(V)$ be the multiplicity of
$V_q(\lambda)$ as a simple factor of $V$. It is well-known that the
numbers $m_\mu(V)$ can be computed from $\ch(V)$ and vice-versa.

Similarly, for an object $V$ from $\wcal C_q$ and $\bs\omega\in\cal
P^+$, let $m_{\bs\omega}(V)$ be the multiplicity of $V_q(\bs\omega)$
as a simple factor of $V$. We now turn to the concept which plays a
role analogous to character for the category $\wcal C_q$. It was
introduced in \cite{freres:qchar} under the name of qcharacter. In
particular, one can compute the multiplicities $m_{\bs\omega}(V)$
from the qcharacter of $V$.

Let $\Z[\cal P]$ be the integral group ring over $\cal P$. Given
$\chi\in \Z[\cal P]$, say $$\chi = \sum_{\bs\mu\in\cal P}
\chi(\bs\mu)\ \bs\mu,$$ we identify it with the function $\cal
P\to\mathbb Z, \bs\mu\to \chi(\bs\mu)$. Conversely, any function
$\cal P\to\mathbb Z$ with finite support can be identified with an
element of $\Z[\cal P]$. The qcharacter of $V\in \wcal C_q$ is the
element $\qch(V)$ corresponding to the function
\begin{equation*}
\bs\mu\mapsto \dim(V_{\bs\mu}).
\end{equation*}
We shall denote by $\wt_\ell(\chi)$ the support
of $\chi\in\Z[\cal P]$. In particular, we set
\begin{equation*}
\wt_\ell(V) = \wt_\ell(\qch(V)) = \{\bs\mu\in\cal P:V_{\bs\mu}\ne
0\}.
\end{equation*}

Given an $\ell$-weight module $V$ and a vector subspace $W$ of $V$,
let $W_{\bs\mu}=W\cap V_{\bs\mu}$. We shall say that $W$ is an
$\ell$-weight subspace of $V$ if
\begin{equation*}
W = \opl_{\bs\mu\in\cal P}^{} W\cap V_{\bs\mu}.
\end{equation*}
In that case, we set
\begin{equation*}
\qch(W) = \sum_{\bs\mu\in\cal P} \dim W_{\bs\mu}\ \bs{\mu} \qquad\text{and}\qquad \wtl(W) = \wtl(\qch(W)).
\end{equation*}

Although the tensor product of $\ell$-weight vectors is not an $\ell$-weight vector in general, we still have the following result \cite[Lemma 2]{freres:qchar}:

\begin{prop}\label{p:qchtp}
For every $V,W\in\wcal C_q$, $\qch(V\otimes
W)=\qch(V)\qch(W)$.\endd
\end{prop}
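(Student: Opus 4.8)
The final statement to prove is Proposition~\ref{p:qchtp}: for every $V,W\in\wcal C_q$, $\qch(V\otimes W)=\qch(V)\qch(W)$.

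\medskip

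The plan is to reduce the statement to the commutative subalgebra $U_q(\tlie h)$ and then to a linear-algebra fact about simultaneous generalized eigenspaces of a commuting family of operators acting on a tensor product. First I would recall that $\qch$ only records the dimensions of the $\ell$-weight spaces $V_{\bs\omega}$, which are by definition the simultaneous generalized eigenspaces for the (commutative) action of $U_q(\tlie h)$, with eigencharacter $\bs\Psi_{\bs\omega}$. So the assertion $\qch(V\otimes W)=\qch(V)\qch(W)$ is equivalent to the equality, for every $\bs\mu\in\cal P$,
\begin{equation*}
\dim (V\otimes W)_{\bs\mu} = \sum_{\bs\omega\bs\varpi=\bs\mu} \dim V_{\bs\omega}\,\dim W_{\bs\varpi},
\end{equation*}
the sum being over pairs $(\bs\omega,\bs\varpi)\in\cal P\times\cal P$ with product $\bs\mu$, which is exactly the coefficient of $\bs\mu$ in the product $\qch(V)\qch(W)$ since $\Z[\cal P]$ is the group ring of the abelian group $\cal P$.

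\medskip

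Next I would set up the relevant linear algebra. Since $V$ and $W$ are finite-dimensional $\ell$-weight modules, we have direct-sum decompositions $V=\bigoplus_{\bs\omega} V_{\bs\omega}$ and $W=\bigoplus_{\bs\varpi} W_{\bs\varpi}$, hence $V\otimes W=\bigoplus_{\bs\omega,\bs\varpi} V_{\bs\omega}\otimes W_{\bs\varpi}$ as vector spaces. The coproduct on $U_q(\tlie g)$ restricts to an algebra homomorphism $\Delta\colon U_q(\tlie h)\to U_q(\tlie h)\otimes U_q(\tlie h)$ (this is part of the Hopf structure recalled earlier, using that $\Delta(k_i^{\pm1})=k_i^{\pm1}\otimes k_i^{\pm1}$ and that the $h_{i,r}$ are primitive-type in the sense that the $\tlie h$-part is preserved), so each summand $V_{\bs\omega}\otimes W_{\bs\varpi}$ is $U_q(\tlie h)$-stable under the tensor action. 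The key point is then that on $V_{\bs\omega}\otimes W_{\bs\varpi}$ every $\eta\in U_q(\tlie h)$ acts with the single generalized eigenvalue $\bs\Psi_{\bs\omega\bs\varpi}(\eta)=\bs\Psi_{\bs\omega}(\eta)\bs\Psi_{\bs\varpi}(\eta)$: indeed $\eta-\bs\Psi_{\bs\omega}(\eta)$ is nilpotent on $V_{\bs\omega}$ and $\eta-\bs\Psi_{\bs\varpi}(\eta)$ is nilpotent on $W_{\bs\varpi}$, so writing $\eta$ on the tensor factor via $\Delta$ and using that the two operators $(\eta_{(1)}-\bs\Psi_{\bs\omega}(\eta_{(1)}))\otimes 1$ and $1\otimes(\eta_{(2)}-\bs\Psi_{\bs\varpi}(\eta_{(2)}))$ commute and are individually nilpotent, one gets that $\eta-\bs\Psi_{\bs\omega}(\eta)\bs\Psi_{\bs\varpi}(\eta)$ is nilpotent on $V_{\bs\omega}\otimes W_{\bs\varpi}$. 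Since $\bs\Psi$ is injective on $\cal P$, the $\ell$-weights $\bs\omega\bs\varpi$ with the correct weight give distinct characters, so $(V\otimes W)_{\bs\mu}=\bigoplus_{\bs\omega\bs\varpi=\bs\mu} V_{\bs\omega}\otimes W_{\bs\varpi}$; taking dimensions gives the displayed identity. (One should also note in passing that $V\otimes W$ is indeed an $\ell$-weight module, i.e.\ lies in $\wcal C_q$ — this is the stability of $\wcal C_q$ under tensor products already quoted from \cite{freres:qchar}, or follows from the decomposition just obtained.)

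\medskip

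The main obstacle is the verification that $\Delta$ maps $U_q(\tlie h)$ into $U_q(\tlie h)\otimes U_q(\tlie h)$ and that the action of a generator $\eta$ on $V_{\bs\omega}\otimes W_{\bs\varpi}$ is upper-triangular with the claimed diagonal character: since the excerpt notes that no closed formula for $\Delta(h_{i,r})$ in terms of the Drinfeld generators is known, one cannot compute directly and must instead argue structurally — either using the realization of $U_q(\tlie h)$ inside the subalgebra generated by the Chevalley--Kac generators (whose coproduct is explicit) together with triangular-decomposition arguments, or, as is standard, invoking the known fact that $\Delta(\Lambda_i^\pm(u)) \equiv \Lambda_i^\pm(u)\otimes\Lambda_i^\pm(u)$ modulo terms that raise the weight on the first factor, which is precisely what makes the action triangular. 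I would present this via the grading/filtration by weight on the first tensor factor and cite \cite[Lemma 2]{freres:qchar} for the precise triangularity statement rather than reproving it, since the proposition as stated is exactly that lemma.
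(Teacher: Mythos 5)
The paper does not actually prove this proposition: it is stated with the citation \cite[Lemma 2]{freres:qchar} and the diamond, and your final paragraph ultimately defers to that same lemma, so you end where the paper ends. The problem is that your middle paragraph asserts something false on the way there. It is not true that $\Delta$ maps $U_q(\tlie h)$ into $U_q(\tlie h)\otimes U_q(\tlie h)$, nor that each block $V_{\bs\omega}\otimes W_{\bs\varpi}$ is stable under the tensor action of $U_q(\tlie h)$: the coproducts of the $h_{i,r}$ (equivalently of the $\Lambda_{i,r}$) involve cross terms lying in $U_q(\tlie g)U_q(\tlie n^-)\otimes U_q(\tlie g)U_q(\tlie n^+)$, and this is precisely why the paper prefaces the proposition with the remark that the tensor product of $\ell$-weight vectors is not in general an $\ell$-weight vector. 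Consequently the subspace identity $(V\otimes W)_{\bs\mu}=\bigoplus_{\bs\omega\bs\varpi=\bs\mu}V_{\bs\omega}\otimes W_{\bs\varpi}$ you derive from it is wrong as stated; only the corresponding identity of dimensions holds.

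What rescues the argument --- and what you do correctly identify in your closing paragraph --- is that the cross terms strictly lower the weight on one tensor factor and raise it on the other, so the action of $U_q(\tlie h)$ on $V\otimes W$ is block upper triangular with respect to an ordering of the blocks $V_{\bs\omega}\otimes W_{\bs\varpi}$ by the weight of the first factor, with the diagonal block acting through the character $\bs\Psi_{\bs\omega\bs\varpi}$. Triangularity gives the equality of dimensions of generalized eigenspaces, hence of qcharacters, without any claim that the eigenspaces themselves decompose as tensor products of eigenspaces. Your write-up would be sound if the middle paragraph were restated so that only the dimension count is asserted (or simply deleted in favour of the filtration argument); as it stands, that paragraph contradicts both the sentence the paper places immediately before the proposition and your own final paragraph.
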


\subsection{Tableaux and $\ell$-Weights}\label{ss:tabqch}
In this subsection we review Nakajima's description of $\ell$-weights in terms of tableaux  \cite{nak:tableau}.
Fix $a\in\mathbb C^\times$ and set
\begin{equation}\label{eq Y A}
Y_{i,r}:=\bs\omega_{i, aq^r} \qquad\text{and}\qquad
A_{i,r}:=\bs\alpha_{i,aq^{r-1}}, \quad i\in I,r\in \Z.
\end{equation}
In particular,
\begin{equation}\label{e:AfromYs}
A_{i,r} = Y_{i,r-1} Y_{i,r+1} Y_{i-1,r}^{-1} Y_{i+1,r}^{-1},
\end{equation}
where we set $Y_{0,r} = Y_{n+1,r} = 1$ for convenience. We also introduce the following notation. Given $i,j\in I, i\le j,
r\in\Z, m\in \Z_{\ge 0}$, define
\begin{equation*}
Y_{i,r,m} = \prod_{k=0}^{m-1} Y_{i,r+2k}=\bs\omega_{i,aq^{m+r-1},m},
\end{equation*}
\begin{equation}\label{e:A3index}
A_{i,j,r}=\prod_{k=i}^j A_{k,r+k-i+1} \quad \text{and} \quad
A_{j,i,r}=\prod_{k=i}^j A_{j+i-k,r+k-i+1}.
\end{equation}

\begin{rem}
The reason for defining $A_{i,r}:=\bs\alpha_{i,aq^{r-1}}$ instead of
simply $A_{i,r}:=\bs\alpha_{i,aq^r}$ is to match with the notation of
\cite{freres:qchar}. The notation $Y_{i,s}$ and $A_{i,s}$, originally used in  \cite{freres:qchar}, is more commonly used in the literature about qcharaters and this is the reason we switch to this notation. Note that, if $\bs\omega$ is as in \eqref{eq r_l r_i_0}, then
\begin{equation}\label{eq l-weight min aff}
\bs\omega=\prod_{i\le i_0} Y_{i,r_i,\lambda(h_i)} \quad\text{where}\quad  r_i = s_i-\lambda(h_i)+1.
\end{equation}
In particular, for all $i\le i_0$ we have
\begin{equation}\label{e:rifromri0}
r_i=r_{i_0}-2\: _{i}|\lambda|_{i_0-1}+i-i_0.
\end{equation}
\endd
\end{rem}

Consider the fundamental representation $V_q(Y_{1,s})=V_q(\bs\omega_{1,aq^s})$. Its
qcharacter is known to be given by
\begin{equation}\label{e:qchstrep}
\qch(V_q(Y_{1,s})) = Y_{1,s}\left(1+\sum_{j=1}^n
A_{1,j,s}^{-1}\right) = \sum_{i=0}^n Y_{i,s+i+1}^{-1}Y_{i+1,s+i}.
\end{equation}
Represent the element $Y_{i-1,s+i}^{-1}Y_{i,s+i-1}, i=1,\dots,n+1$,
by the picture $\ffbox{i}_{\:s}$. Given such a box
$\ffbox{i}_{\:s}$, we shall refer to $i$ as the content of the box
and to $s$ as its support. Thus, $\wt_\ell(V_q(Y_{1,s}))$ can be
described by the following graph
\begin{equation*}
\ffbox{1}_{\:s} \xrightarrow{1,s+1} \ffbox{2}_{\:s}
\xrightarrow{2,s+2} \cdots \xrightarrow{n,s+n} \ffbox{n\!+\!1}_{\:s}
\end{equation*}
where the label $(i,s+i)$ on the $i$-th arrow indicates that
$\ffbox{i+1}_{\:s}$ is obtained from $\ffbox{i}_{\:s}$ by
multiplication by $A_{i,s+i}^{-1}$. Then, \eqref{e:qchstrep} can be re-written as
\begin{equation*}
\qch(V_q(Y_{1,s})) = \sum_{i=1}^{n+1}\ \ffbox{i}_{\:s}.
\end{equation*}

Let $\mathbf B = \{ 1,\dots,n+1\}$ equipped with the usual ordering
$<$ coming from $\mathbb Z$. Given $k,s\in\mathbb Z, k> 0$, a
column tableau $T$ of length $k$ with support starting in $s$ is a map
\begin{equation*}
   T: \{1,\dots,k\} \to \mathbf B\times \mathbb Z
\end{equation*}
such that, if we denote by $T(j)_2$ the $\mathbb Z$-component of $T(j)$, then
\begin{equation}\label{eq T(j)_2}
T(j)_2 = s+2(k-j) \qquad\text{for all}\qquad j=1,\dots,k.
\end{equation}
We represent $T$ by the picture:
\begin{equation}\label{e:dcoltab}
\renewcommand{\arraycolsep}{0pt}
\begin{array}{|c|}
   \hline \hbox to 0.55cm{\hfill$\scriptstyle i_1$\hfill} \\
   \hline \hbox to 0.55cm{\hfill$\vdots$\hfill} \\
   \hline \hbox to 0.55cm{\hfill$\scriptstyle{i_k}$\hfill} \\
   \hline
\end{array}_{\:s}  \qquad\text{where}\qquad i_j = T(j)_1
\end{equation}
and $T(j)_1$ denotes the $\mathbf B$-component of $T(j)$. For
notational convenience, we set $T(0)_1=0.$ Notice that we can think
of this picture as a vertical juxtaposition of the boxes
$\ffbox{i_j}_{\:s+2(k-j)}$ with explicit mention of the support of
the $k$-th box only since the others are recovered from it. Given
such a tableau, we associate to it an element $\bs\omega^T\in\cal P$
given by
\begin{equation*}
\bs\omega^T = \prod_{j=1}^{k}\ \ffbox{i_j}_{\:s+2(k-j)}.
\end{equation*}

\begin{rem}
Nakajima's original definition regards $T$ as a map $\mathbb
Z\to\mathbf B\cup\{0\}$ such that $T(a)=0$ if and only if
$a\notin\{s,s+2,\dots,s+2(k-1)\}$. Thus, in our notation, $T(j)_2$
corresponds to the $j$-th element of the support of $T$ in
Nakajima's notation while $T(j)_1$ is the value it assumes at that
element.\endd
\end{rem}

A tableau $T$ is a finite sequence of column tableaux
$T=(T_1,T_2,\ldots,T_m)$. If $T_j$ has length $k_j$ and support starting at $s_j$, the shape of $T$ is defined as
the sequence $((k_1,s_1),(k_2,s_2),\ldots,(k_m,s_m))$. We represent $T$
graphically by ordered horizontal juxtaposition of the associated pictures
\eqref{e:dcoltab} in such a way that the boxes with equal support form a horizontal row:
\begin{equation*}
\renewcommand{\tabcolsep}{0pt}
\begin{tabular}{|c|c|c|c|c|c}
\cline{5-5} \multicolumn{3}{c}{}&&&
\\
\cline{1-1} &\multicolumn{2}{c}{}&&&
\\
\cline{2-2} &&\multicolumn{1}{c}{\dots}&&$T_m$&
\\
&$T_{2}$&\multicolumn{1}{c}{}&&&${}_{\:s_m}$
\\
\cline{5-5} $T_1$&&\multicolumn{3}{c}{}
\\
\cline{2-2} &\multicolumn{4}{c}{}&
\\
&\multicolumn{4}{c}{}&
\\
\cline{1-1}
\end{tabular}
\end{equation*}
In particular, if the picture is connected, the supports of all
boxes have the same parity and can be recovered from the support
$s_m$ of the last box of $T_m$.  We associate to  a tableau $T$ the
element $\bs\omega^T\in\cal P$ given by
\begin{equation*}
\bs\omega^T = \prod_{j=1}^m \bs\omega^{T_j}.
\end{equation*}
Henceforth, we shall only consider tableaux whose associated picture is connected.

A tableau $T$ is said to be column-increasing (or simply increasing) if the contents in each
column strictly increase from top to bottom. Note that a column
tableau is increasing of length equal to the content of its last box if
and only if $T(j)_1 = j$ for all $j$. In pictures, $T$ is of the form
\begin{equation}\label{e:coldomtab}
\renewcommand{\arraycolsep}{0pt}
\begin{array}{|c|}
   \hline \hbox to 0.55cm{\hfill$\scriptstyle 1$\hfill} \\
   \hline \hbox to 0.55cm{\hfill$\scriptstyle 2$\hfill} \\
   \hline \hbox to 0.55cm{\hfill$\vdots$\hfill} \\
   \hline \hbox to 0.55cm{\hfill$\scriptstyle{i}$\hfill} \\
   \hline
\end{array}_{\:s}
\end{equation}
for some $i \in \{ 1, 2,\ldots, n+1\}, s\in\Z$. If $T$
is such a column tableau, then
\begin{equation}\label{e:fundfromtab}
\bs\omega^T = Y_{i,s+i-1}.
\end{equation}
In particular, if $T$ is an increasing column tableau of length
$n+1$, i.e., if $T$ has the form
\begin{equation*}
\renewcommand{\arraycolsep}{0pt}
\begin{array}{|c|}
   \hline\hbox to 0.55cm{\hfill$\scriptstyle 1$\hfill}\\
   \hline\hbox to 0.55cm{\hfill$\scriptstyle 2$\hfill}\\
   \hline\hbox to 0.55cm{\hfill$\vdots$\hfill}\\
   \hline\hbox to 0.55cm{\hfill$\scriptstyle{n\!+\!1}$\hfill}\\
   \hline
\end{array}_{\:s}
\end{equation*}
for some $s\in\mathbb Z$, then $\bs\omega^T= 1$. Hence, adding
increasing columns of length $n+1$ to a tableau $T$ does not change
$\bs\omega^T$.

Two tableaux $T$ and $T'$ are said to be equivalent if, for all
$(i,a)\in\mathbf B\times\mathbb Z$, we have
$$\#\{j: (i,a)\in\mathcal Im(T_j)\}\;=\;\#\{j: (i,a)\in\mathcal Im(T_j')\}.$$
In terms of pictures, $T'$ is obtained from $T$ by permuting the contents of the boxes in the
same row. It is easy to see that $\bs\omega^T=\bs\omega^{T'}$  if
$T$ and $T'$ are equivalent. The converse is not true, but ``almost'':

\begin{lem}\cite[Lemma 4.4]{nak:tableau}\label{lem lambdaT=lambdaT'}
Let $T$ and $T'$ be tableaux. The elements $\bs\omega^T$ and
$\bs\omega^{T'}$ are equal if and only if $T$ and $T'$ become
equivalent after adding several increasing column tableaux of length
$n+1$ to $T$ and $T'$.\endd
\end{lem}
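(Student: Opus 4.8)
The ``if'' direction is immediate from the discussion preceding the statement: equivalent tableaux have the same associated element of $\cal P$, and an increasing column of length $n+1$ has associated element $1$, so appending such columns affects neither $\bs\omega^T$ nor $\bs\omega^{T'}$. For the converse I plan to identify the kernel of the map sending a multiset of boxes to its associated $\ell$-weight and to show it is freely generated by the full columns of length $n+1$. Let $\cal B$ be the free abelian group on the symbols $\ffbox{i}_{\:a}$, $(i,a)\in\mathbf B\times\Z$, and let $\phi\colon\cal B\to\cal P$ be the homomorphism determined by $\ffbox{i}_{\:a}\mapsto Y_{i-1,a+i}^{-1}Y_{i,a+i-1}$ (with the conventions $Y_{0,r}=Y_{n+1,r}=1$). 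Each tableau $T$ determines an element $\mathbf T\in\cal B$ with non-negative coefficients, namely its multiset of boxes, and by definition $\phi(\mathbf T)=\bs\omega^T$; moreover $T$ and $T'$ are equivalent exactly when $\mathbf T=\mathbf T'$. Since $\cal P$ is free abelian on the $\bs\omega_{i,b}$ and $q$ is not a root of unity, the $Y_{i,r}$ ($i\in I$, $r\in\Z$) are distinct members of that basis.

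\emph{Charge decomposition.} Assign to $\ffbox{i}_{\:a}$ the charge $\gamma=a+2i-1$; substituting $a=\gamma-2i+1$ and writing $\cal P$ additively, one finds $\ffbox{i}_{\:a}\mapsto Y_{i,\gamma-i}-Y_{i-1,\gamma-(i-1)}$. As $(j,r)\mapsto j+r$ is a bijection from $I\times\Z$ onto the set of charges, $\phi$ is the direct sum over $\gamma\in\Z$ of the maps $\phi_\gamma\colon\Z^{n+1}\to\Z^{n}$, $b_i\mapsto y_i-y_{i-1}$ (with $y_0=y_{n+1}=0$), where $b_1,\dots,b_{n+1}$ are the charge-$\gamma$ boxes listed by content and $y_j=Y_{j,\gamma-j}$ for $j\in I$; the boundary contents $i=1,n+1$ are absorbed by the conventions $Y_{0,r}=Y_{n+1,r}=1$. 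The matrix of $\phi_\gamma$ is $n\times(n+1)$ with $1$'s on the diagonal and $-1$'s on the superdiagonal, so $\phi_\gamma$ is surjective and $\ker\phi_\gamma=\Z(b_1+\dots+b_{n+1})$. A direct check using \eqref{eq T(j)_2} shows that $\mathbf C_\gamma:=b_1+\dots+b_{n+1}$ is precisely the increasing column tableau of length $n+1$ of charge $\gamma$, whence $\phi(\mathbf C_\gamma)=1$. Therefore $\ker\phi=\bigoplus_{\gamma\in\Z}\Z\,\mathbf C_\gamma$.

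\emph{Conclusion.} Suppose $\bs\omega^T=\bs\omega^{T'}$. Then $\mathbf T-\mathbf T'\in\ker\phi$, say $\mathbf T-\mathbf T'=\sum_\gamma m_\gamma\mathbf C_\gamma$, and hence
\[
\mathbf T+\sum_{\gamma\,:\,m_\gamma<0}(-m_\gamma)\,\mathbf C_\gamma\;=\;\mathbf T'+\sum_{\gamma\,:\,m_\gamma>0}m_\gamma\,\mathbf C_\gamma
\]
in $\cal B$, both sides being non-negative combinations of boxes. The left-hand side is the multiset of boxes of $T$ after appending finitely many increasing columns of length $n+1$, and the right-hand side is the corresponding thing for $T'$; since the two multisets coincide, the enlarged tableaux are equivalent, which is the assertion. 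The one genuinely substantive point here is the charge decomposition together with the computation of $\ker\phi_\gamma$; everything else is bookkeeping at the boundary of the Dynkin diagram, and I do not anticipate a serious obstacle.
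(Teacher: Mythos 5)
Your proof is correct, and it is worth pointing out that the paper does not prove this lemma at all: it is quoted from Nakajima's Lemma 4.4 with no argument supplied, so there is nothing in the paper to compare against and your write-up actually fills in the omitted proof. The substantive points all check out: equivalence of tableaux is by definition equality of the multisets $\mathbf T$ of boxes (within one column all supports are distinct, so the paper's count $\#\{j:(i,a)\in\mathcal Im(T_j)\}$ is exactly the multiplicity of the box $\ffbox{i}_{\:a}$ in $T$); the charge $\gamma=a+2i-1$ is the common value of $j+r$ for both factors $Y_{j,r}^{\pm1}$ of a box, so $\phi$ does split as $\bigoplus_\gamma\phi_\gamma$; the $n\times(n+1)$ incidence matrix has an invertible upper-triangular $n\times n$ minor, so $\phi_\gamma$ is surjective and $\ker\phi_\gamma$ is the rank-one lattice generated by the primitive vector $b_1+\cdots+b_{n+1}$, which is indeed the box multiset of the increasing column of length $n+1$ with support starting at $\gamma-2n-1$; and separating the signs of the $m_\gamma$ converts the kernel relation into an equality of genuine (non-negative) box multisets, i.e.\ an equivalence of the enlarged tableaux. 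The only blemish is the sentence asserting that $(j,r)\mapsto j+r$ is a bijection from $I\times\Z$ onto the set of charges: that map is $n$-to-one, not bijective. What you need, and what the rest of the argument actually uses, is that for each fixed $\gamma$ the basis elements $Y_{j,r}$ with $j+r=\gamma$ are precisely the $n$ elements $Y_{j,\gamma-j}$, $j\in I$, so that the charge-$\gamma$ component of the target is $\Z^n$; with that phrase repaired the argument is complete.
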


\begin{lem}\cite[Lemma 4.5]{nak:tableau}\label{lem ldom}
Let $T$ be a tableau. Then, $\bs\omega^T\in\cal P^+$ if and only if
$T$ is equivalent to a tableau $T'$ whose columns are of the form \eqref{e:coldomtab}.\endd
\end{lem}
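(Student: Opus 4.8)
The plan is to work entirely in the basis of fundamental $\ell$-weights $Y_{i,r}=\bs\omega_{i,aq^r}$, to translate dominance of $\bs\omega^T$ into a system of numerical inequalities on the box counts of $T$, and then to recognize that system as precisely the condition allowing the boxes to be reassembled into columns of the form \eqref{e:coldomtab}. The easy implication is immediate: if $T'$ has all columns of the form \eqref{e:coldomtab}, then by \eqref{e:fundfromtab} each column contributes the dominant fundamental $\ell$-weight $Y_{i,s+i-1}=\bs\omega_{i,aq^{s+i-1}}$, and since $\cal P^+$ is a submonoid the product $\bs\omega^{T'}$ lies in $\cal P^+$; as equivalent tableaux have equal associated $\ell$-weights, $\bs\omega^T=\bs\omega^{T'}\in\cal P^+$.

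For the converse I would first record the numerical meaning of dominance. Using $\ffbox{i}_s=Y_{i-1,s+i}^{-1}Y_{i,s+i-1}$ with the convention $Y_{0,r}=Y_{n+1,r}=1$, a box of content $i$ and support $s$ contributes $+1$ to the exponent of $Y_{i,s+i-1}$ and $-1$ to that of $Y_{i-1,s+i}$. Let $c_{m,s}$ be the number of boxes of $T$ with content $m$ and support $s$; this depends only on the equivalence class of $T$, since within a single column all supports are distinct, so each column contributes at most one box to a fixed content-support pair. A direct count then shows that the exponent of $Y_{m,t}$ in $\bs\omega^T$ equals $c_{m,\,t-m+1}-c_{m+1,\,t-m-1}$. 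Writing $s=t-m+1$, this gives that $\bs\omega^T\in\cal P^+$ if and only if
\[
c_{m,s}\ge c_{m+1,\,s-2}\qquad\text{for all } m\in\{1,\dots,n\} \text{ and all } s\in\mathbb Z.
\]

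The final step turns this inequality into a combinatorial reassembly. For each $m\in\{1,\dots,n\}$ and each $s$, the inequality $c_{m,s}\ge c_{m+1,s-2}$ lets me fix an injection from the content-$(m+1)$ boxes at support $s-2$ into the content-$m$ boxes at support $s$, and I declare the image box to be the \emph{predecessor} of its source. Because a content-$m$ box plays the role of a potential predecessor in exactly one of these injections and the role of a potential successor in exactly one other, every box of content $m\ge 2$ acquires exactly one predecessor (of content $m-1$, support two higher) and every box acquires at most one successor; content strictly increases along successor edges, so the resulting graph is a disjoint union of finite paths, each of which must start at a content-$1$ box. Thus each chain reads $1,2,\dots,i$ with supports decreasing by $2$, which is exactly a column of the form \eqref{e:coldomtab}. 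Regrouping the boxes of $T$ according to these chains yields a tableau $T'$ with the same box multiset, hence equivalent to $T$, all of whose columns have the required form.

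I expect the main obstacle to be careful bookkeeping rather than a conceptual difficulty: one must verify the exponent count at the boundary contents $i=1$ and $i=n+1$, where one of the two $Y$-factors of a box degenerates, and one must check that the independently chosen per-level injections genuinely assemble into a single consistent successor relation with no branching or cycles, so that the chain decomposition is well defined and every chain indeed begins at content $1$. Realizability of each chain as a legitimate column tableau is then automatic, since successive contents sit exactly two units apart in support.
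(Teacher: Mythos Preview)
The paper does not prove this lemma: it is quoted from \cite[Lemma 4.5]{nak:tableau} and closed with the diamond marker, so there is no in-paper argument to compare against. That said, your proof is correct. The exponent computation is right (including the boundary cases $m=1$ and $m=n$, where the conventions $Y_{0,r}=Y_{n+1,r}=1$ simply drop one side), and the inequality $c_{m,s}\ge c_{m+1,s-2}$ is exactly the content of dominance. Your chain decomposition is also sound: each box of content $\ge 2$ acquires a unique predecessor, predecessors strictly decrease content, so the functional graph has no cycles and every component terminates at a content-$1$ box; the resulting columns are precisely of the form \eqref{e:coldomtab}. Note finally that the paper's notion of equivalence only concerns the multiset of $(\text{content},\text{support})$ pairs, not the shape, so your reassembled tableau $T'$ is automatically equivalent to $T$ without any further shape-matching argument.
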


We end this subsection presenting the elementary modifications in column tableaux associated to $\ell$-roots.
Let $T$ be a tableaux of shape $(k,s)$ and suppose
$j\in\{1,\dots,k\}$ is such that $i:=T(j)_1\le n$. Then, given $i\le i'\le n$, one easily checks that $\bs\omega^T
A_{i,i',s+2(k-j)+i-1}^{-1} = \bs\omega^{T'}$ where $T'$ is
obtained from $T$ by replacing the content of the $j$-th box by
$i'+1$. In pictures:
\begin{equation}\label{e:rem action tableau A}
\renewcommand{\arraycolsep}{0pt}
\begin{array}{|c|l}
   \cline{1-1} \hbox to 0.55cm{\hfill$\vdots$\hfill} & \\
   \cline{1-1} \hbox to 0.55cm{\hfill$\scriptstyle i$\hfill} & {} \\
   \cline{1-1} \hbox to 0.55cm{\hfill$\vdots$\hfill} & {}_{{}_{\:s}}\\
  \cline{1-1}
\end{array}\ A_{i,i',s+2(k-j)+i-1}^{-1} = \
\renewcommand{\arraycolsep}{0pt}
\begin{array}{|c|l}
   \cline{1-1} \hbox to 0.55cm{\hfill$\vdots$\hfill} & \\
   \cline{1-1} \hbox to 0.55cm{\hfill$\scriptstyle i'\!+\!1$\hfill} & {} \\
   \cline{1-1} \hbox to 0.55cm{\hfill$\vdots$\hfill} & {}_{{}_{\: s}}\\
  \cline{1-1}
\end{array}.
\end{equation}

\subsection{The qCharacters of Minimal Affinizations}\label{ss:qchminA}

We now study the qcharacters of minimal affinziations in terms of tableaux.

A tableau $T$ with shape $((k_1,s_1),(k_2,s_2),\ldots,(k_m,s_m))$ is
said to be semi-standard if it is column-increasing and satisfies:
\begin{enumerate}
\item $s_1\ge s_2\ge\cdots\ge s_m$;
\item $(i,s)\in\mathcal Im(T_j)$ and $(i',s-2)\in\mathcal Im(T_{j+1}) \Rightarrow i\ge i'$.
\end{enumerate}
In terms of pictures, the sequences of diagonal contents from left
to right and top to bottom  are decreasing (not necessarily
strictly).  Notice that this implies that the sequences of row contents are strictly decreasing.
Given a tableau $T$, we will denote by $\stab(T)$ the set
of semi-standard tableau with the same shape as $T$.

Recall the definition of increasing and decreasing minimal
affinizations given after Theorem \ref{teo tipo A}. Untill the end of this subsection we fix $\lambda\in
P^+$ and $\bs\omega\in \cal P^+$ such that $V_q(\bs\omega)$ is a minimal affinization of $V_q(\lambda)$. Suppose first that $V_q(\bs\omega)$ is an
increasing minimal affinization  and, hence, $\bs\omega$ is give by \eqref{eq l-weight min aff}.
Using \eqref{e:fundfromtab}, one easily sees that $\bs\omega=\bs\omega^T$, where $T=(T^n,T^{n-1},\dots,T^1)$
with $T^i$ omitted if $\lambda(h_i)=0$
and, otherwise, $T^i =(T_1^i,\dots,T_{\lambda(h_i)}^i)$ with
$T_{j}^i$ column-increasing with length equal the content of its
last box and support starting at $r_{i}+2(\lambda(h_i)-j)-i+1$:
\begin{equation}\label{eq tableau min aff}
T_{j}^{i}=\renewcommand{\arraycolsep}{0pt}
\begin{array}{|c|}
   \hline\hbox to 0.55cm{\hfill$\scriptstyle 1$\hfill}\\
   \hline\hbox to 0.55cm{\hfill$\vdots$\hfill}\\
   \hline\hbox to 0.55cm{\hfill$\scriptstyle{i}$\hfill}\\
   \hline
\end{array}_{\:r_{i}+2(\lambda(h_i)-j)-i+1}
\end{equation}
Notice that $T$ has $|\lambda|$ columns and
\begin{equation}\label{e:colofmina}
\bs\omega^{T_{j}^{i}}=Y_{i,r_{i}+2(\lambda(h_{i})-j)}.
\end{equation}
Observe also that, if the support of the $j$-th column of $T$ starts at
$s$, then that of the $(j+1)$-th column starts at $s-2$. Indeed,
if they are both columns of $T^i$, this is obvious. Otherwise,
consider the last column of $T^i$ and suppose the next column is the
first one of $T^k$. Then,
\begin{eqnarray*}
\left(r_{i}-i+1\right)-\left(r_{k}+2(\lambda(h_{k})-1)-k+1\right)&=&
\left(r_{i}-(r_k+2(\lambda(h_{k})-1))\right)-i+k \\
&\stackrel{\eqref{e:rifromri0}}{=}& (i-k+2)-i+k \\
&=&2.
\end{eqnarray*}
Moreover, the top of each column is in a row below  the top of the previous column because the length of the rows decrease. In pictures, $T$ has the form:

\begin{equation}\label{rem tableau stairs}
\renewcommand{\tabcolsep}{0pt}
\begin{tabular}{c|c|c|c|c|c|c|c|c|}
& \vdots & \multicolumn{7}{c}{} \\
\cline{2-2} & \phantom{\dots}   & \vdots &  \multicolumn{6}{c}{} \\
\cline{2-3} \multicolumn{1}{c}{} &  & \phantom{\dots} & \vdots & \multicolumn{5}{c}{} \\
\cline{3-4} \multicolumn{2}{c}{} &\phantom{\vdots} & \phantom{\dots} &
\multicolumn{1}{c}{\phantom{\dots}} & \multicolumn{1}{c}{$\ddots$} & & \vdots & \multicolumn{1}{c}{}  \\
\cline{4-4} \cline{8-8} \multicolumn{5}{c}{} & \multicolumn{1}{c}{\phantom{\dots}} &  &  & \vdots \\
\cline{8-9} \multicolumn{6}{c}{\phantom{\vdots}} & \multicolumn{1}{c}{\phantom{\dots}} & \phantom{\dots} &   \\
\cline{9-9} \multicolumn{8}{c}{} &
\multicolumn{1}{c}{\phantom{\dots}}
\end{tabular}
\end{equation}

Similarly, if $V_q(\bs\omega)$ is a decreasing minimal affinization, then $\bs\omega=\bs\omega^T$, where $T=(T^1,T^{2},\dots,T^n)$ with $T^i$ omitted if $\lambda(h_i)=0$
and, otherwise, $T^i =(T_1^i,\dots,T_{\lambda(h_i)}^i)$ with
$T_{j}^i$ as in \eqref{eq tableau min aff}. Again, $T$ has $|\lambda|$ columns and, if the support of the first box of the $j$-th column of $T$ is $s$, then
the support of the first box of the $(j+1)$-th column is $s-2$. This time, the bottom of each column is in a row below the bottom of the previous column and we
 have a picture of  the form:
\begin{equation}\label{rem tableau stairs dec}
\renewcommand{\tabcolsep}{0pt}
\begin{tabular}{c|c|c|c|c|c|c|c|c|}
\cline{2-2}& \phantom{\vdots} & \multicolumn{7}{c}{} \\
\cline{2-3} &  \vdots   & \phantom{\dots} &  \multicolumn{6}{c}{} \\
\cline{3-4} \multicolumn{1}{c}{} & \phantom{\dots} & \vdots & \phantom{\dots} & \multicolumn{5}{c}{} \\
\cline{4-4}  \cline{8-8} \cline{8-8} \multicolumn{2}{c}{}
&\phantom{\vdots} & \vdots &
\multicolumn{1}{c}{\phantom{\dots}} & \multicolumn{1}{c}{$\ddots$} & & \phantom{\dots} & \multicolumn{1}{c}{}  \\
\cline{8-9} \multicolumn{5}{c}{} & \multicolumn{1}{c}{\phantom{\dots}} &  & \vdots & \phantom{\vdots} \\
\cline{9-9} \multicolumn{6}{c}{\phantom{\vdots}} &
\multicolumn{1}{c}{\phantom{\dots}} & \phantom{\dots} & \vdots  \\
 \multicolumn{8}{c}{} & \multicolumn{1}{c}{\phantom{\dots}}
\end{tabular}
\end{equation}

Henceforth, when we say that $T$ is the semi-standard tableau such that $\bs\omega^T=\bs\omega$, we mean the tableau we have described above. For the remainder of this subsection $T$ denotes this tableau.

\begin{lem}\label{l:lmin}
If $S\in\stab(T)\setminus\{T\}$, then $\bs\omega^S\notin\mathcal P^+$.
\end{lem}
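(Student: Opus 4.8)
The plan is to prove this by induction on the number $m$ of columns of $S$, say $S=(S_1,\dots,S_m)$, showing that $\bs\omega^S\in\mathcal P^+$ forces $S=T$. I treat the case where $V_q(\bs\omega)$ is increasing; the decreasing case is entirely symmetric (run the same argument on the leftmost column -- now the shortest one -- and again the largest support). Recall that $T=(T_1,\dots,T_m)$ has columns of weakly decreasing lengths $\ell_1\ge\cdots\ge\ell_m$ with support-starts $s_1>\cdots>s_m$, $s_{j+1}=s_j-2$, and each $T_j$ equal to the dominant column $1,2,\dots,\ell_j$. Two preliminary remarks. First, a combinatorial criterion for dominance: expanding $\bs\omega^S$ by means of $\ffbox{i}_\sigma=Y_{i-1,\sigma+i}^{-1}Y_{i,\sigma+i-1}$ (with $Y_{0,r}=Y_{n+1,r}=1$), the exponent of $Y_{i,\tau}$ in $\bs\omega^S$ equals $N_i(S;\tau-i+1)-N_{i+1}(S;\tau-i-1)$, where $N_c(S;\sigma)$ is the number of boxes of $S$ with content $c$ and support $\sigma$; hence $\bs\omega^S\in\mathcal P^+$ if and only if $N_c(S;\sigma)\ge N_{c+1}(S;\sigma-2)$ for all $c\in I$ and all $\sigma\in\Z$. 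Second, two facts about a semi-standard $S$: in each column the box in position $l$ from the top has content $\ge l$; and if consecutive columns $S_j,S_{j+1}$ both meet a support $\sigma$, then the contents strictly decrease along the row at $\sigma$ (apply condition (2) at $\sigma+2$ together with column-increasingness -- the columns meeting $\sigma$ form a consecutive block and the only one of them that can fail to meet $\sigma+2$ is the last).

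\emph{Step 1: $S_1$ is dominant.} I prove $S_1(l)_1=l$ for $l=1,\dots,\ell_1$ by induction on $l$. Suppose $S_1(l')_1=l'$ for $l'<l$ but $c:=S_1(l)_1\ge l+1$. The box in position $l$ of $S_1$ has support $\sigma_l:=s_1+2(\ell_1-l)$ and contributes $Y_{c-1,\sigma_l+c}^{-1}$, so the exponent of $Y_{c-1,\sigma_l+c}$ in $\bs\omega^S$ equals $N_{c-1}(S;\sigma_l+2)-N_c(S;\sigma_l)\le N_{c-1}(S;\sigma_l+2)-1$. If $l=1$, then $\sigma_l+2$ exceeds every support of $S$, so $N_{c-1}(S;\sigma_l+2)=0$. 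If $l\ge 2$, then $\sigma_l+2$ is the support of the position-$(l-1)$ box of $S_1$; a length/support count shows every box of $S$ at support $\sigma_l+2$ lies in one of $S_1,\dots,S_{l-1}$, and by the second remark the contents along that row strictly decrease starting from $S_1$'s value $l-1$, hence are all $\le l-1<c-1$, so again $N_{c-1}(S;\sigma_l+2)=0$. In either case this exponent is negative, contradicting $\bs\omega^S\in\mathcal P^+$. Thus $S_1=T_1$.

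\emph{Step 2: the induction.} Since $S_1$ is dominant, $\bs\omega^{S_1}=Y_{\ell_1,s_1+\ell_1-1}$, hence $\bs\omega^{S''}=\bs\omega^S\,Y_{\ell_1,s_1+\ell_1-1}^{-1}$ for $S'':=(S_2,\dots,S_m)$. One checks that $S''$ is semi-standard and has the shape of the semi-standard tableau $T''$ attached to the increasing minimal affinization with highest $\ell$-weight $\bs\omega\,Y_{\ell_1,s_1+\ell_1-1}^{-1}$ (so $\lambda$ is replaced by $\lambda-\omega_{i_0}$, $i_0=\ell_1$), and that $T''=(T_2,\dots,T_m)$. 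By the criterion, $\bs\omega^{S''}\in\mathcal P^+$ iff the exponent of $Y_{\ell_1,s_1+\ell_1-1}$ in $\bs\omega^S$ is $\ge 1$, i.e. $N_{\ell_1}(S;s_1)\ge N_{\ell_1+1}(S;s_1-2)+1$. The bottom box of $S_1$ has content $\ell_1$ and support $s_1$, so $N_{\ell_1}(S;s_1)\ge 1$; any other box at support $s_1$ lies in some $S_j$ with $j\ge 2$ and, by the row-decrease starting from $S_1$ with content $\ell_1$, has content $<\ell_1$, so $N_{\ell_1}(S;s_1)=1$; and any box at support $s_1-2$ lies in some $S_j$ with $j\ge 2$ and has content $\le S_1(\ell_1)_1=\ell_1<\ell_1+1$ (apply condition (2) between $S_1$ and $S_2$ at $\sigma=s_1$, and the row-decrease at $s_1-2$), so $N_{\ell_1+1}(S;s_1-2)=0$. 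Therefore $\bs\omega^{S''}\in\mathcal P^+$, so by the inductive hypothesis $S''=T''=(T_2,\dots,T_m)$; together with Step 1 this gives $S=T$. The base case $m\le 1$ is Step 1 alone.

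The crux -- and the only place where the full force of semi-standardness, not merely column-increasingness, is used -- is the identity $N_{\ell_1+1}(S;s_1-2)=0$ in Step 2: it guarantees that the fundamental monomial $Y_{\ell_1,s_1+\ell_1-1}$ actually occurs in $\bs\omega^S$, so that deleting it keeps $\bs\omega^{S''}$ dominant and the induction proceeds (for a column-increasing but non-semi-standard filling this can fail). The only other delicate points are the length/support bookkeeping in Step 1 -- that no box outside $S_1,\dots,S_{l-1}$ sits at support $\sigma_l+2$ and that the row-decrease applies there despite top-of-column boundary cases -- and the verification that $S''$ is again the tableau of a minimal affinization, of shape $(T_2,\dots,T_m)$.
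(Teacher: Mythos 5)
Your proof is correct, but it takes a genuinely different route from the paper's. The paper disposes of the lemma in two lines: if $\bs\omega^S\in\mathcal P^+$, then by Lemma \ref{lem ldom} (Nakajima's dominance criterion) $S$ is equivalent to a tableau all of whose columns are of the form \eqref{e:coldomtab}, and $T$ is clearly the unique element of $\stab(T)$ with this property. You avoid Lemma \ref{lem ldom} altogether and give a self-contained induction on the number of columns, built on the explicit exponent formula (the exponent of $Y_{i,\tau}$ being $N_i(S;\tau-i+1)-N_{i+1}(S;\tau-i-1)$), the staircase shape \eqref{rem tableau stairs}, and the strict decrease of row contents, which the paper records right after the definition of semi-standard. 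I checked the two pivotal computations: in Step 1, that only columns $1,\dots,l-1$ can meet the support $\sigma_l+2$ (this follows from $\ell_j\le\ell_1$ and $s_j=s_1-2(j-1)$), and in Step 2, that $N_{\ell_1+1}(S;s_1-2)=0$; both are right, and the compressed verifications (that $S''$ is semi-standard of the shape of the canonical tableau $T''=(T_2,\dots,T_m)$ for $\bs\omega\, Y_{\ell_1,s_1+\ell_1-1}^{-1}$, which one checks via \eqref{e:rifromri0}, and the symmetric decreasing case) are routine. The paper's route buys brevity by reusing machinery already quoted in Section \ref{ss:tabqch}; yours buys independence from Lemma \ref{lem ldom} and explicit cancellation information (in particular that $Y_{\ell_1,s_1+\ell_1-1}$ genuinely occurs in $\bs\omega^S$), at the cost of length.
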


\begin{proof}
Suppose $\bs\omega^S\in\mathcal P^+$. Then, by Lemma \ref{lem ldom},  $S$ is equivalent to a tableau having all of its columns  of the form \eqref{e:coldomtab}. But $T$ is clearly the unique element of \stab(T) with this property. 
\end{proof}

The next theorem describes the qcharacters of minimal affinizations in terms of semi-standard tableaux. For the proof see \cite[Theorems 3.8 and 3.10]{her:min}, \cite[Corollary 7.6 and Remark 7.4 (i)]{muyou:path},  and references therein. We recall that $V\in\wcal C_q$ is said to be $\ell$-minuscule (or special) if
$\#\wt_\ell(V)\cap\mathcal P^+=1$ and it is said to be thin (or quasi $\ell$-minuscule) if  $\dim (V_q(\bs\omega)_{\bs\mu})\le 1$ for all
$\bs\mu\in\mathcal P$.

\begin{thm}\label{t:qcharmin}
$V_q(\bs\omega)$ is thin and
$$\qch(V_q(\bs\omega))=\sum_{S\in\stab(T)} \bs\omega^{S}.$$
In particular, $V_q(\bs\omega)$ is also $\ell$-minuscule.\endd
\end{thm}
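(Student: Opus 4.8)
The plan is to split the statement into three parts: (1) show that $V_q(\bs\omega)$ is $\ell$-minuscule; (2) apply the Frenkel--Mukhin algorithm to compute $\qch(V_q(\bs\omega))$, obtaining thin-ness along the way; and (3) identify the output of the algorithm with $\sum_{S\in\stab(T)}\bs\omega^S$. Note that, once the formula in (3) is established, the final assertion that $V_q(\bs\omega)$ is $\ell$-minuscule becomes a corollary: by Lemma \ref{l:lmin} the only $S\in\stab(T)$ with $\bs\omega^S\in\mathcal P^+$ is $T$, so $\wt_\ell(V_q(\bs\omega))\cap\mathcal P^+=\{\bs\omega\}$. I expect part (1) to be the genuine obstacle; parts (2) and (3) are comparatively formal once $\ell$-minusculity is available.

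For part (1), I would induct on the number $|\lambda|$ of columns of $T$. If $|\lambda|=1$, then $V_q(\bs\omega)$ is a fundamental representation $V_q(Y_{i,s})$, which is thin and $\ell$-minuscule, its qcharacter being the sum over all column-increasing tableaux of length $i$ (the case $i=1$ is \eqref{e:qchstrep}). For the inductive step, remove an extreme column of $T$; a direct computation with the minimal affinization condition \eqref{eq cond min A} and the staircase structure recorded in \eqref{rem tableau stairs} and \eqref{rem tableau stairs dec} shows that this produces the semi-standard tableau of a minimal affinization $V_q(\bs\omega')$ with $|\lambda|-1$ columns, and that $\bs\omega=\bs\omega'\,Y_{i,r}$ for the appropriate $i,r$. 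Since $\bs\omega'Y_{i,r}$ is a product of dominant $\ell$-weights, $V_q(\bs\omega)$ is the irreducible quotient of $V_q(\bs\omega')\otimes V_q(Y_{i,r})$, so Proposition \ref{p:qchtp} forces each monomial of $\qch(V_q(\bs\omega))$ to be a product of a monomial of $\qch(V_q(\bs\omega'))$ with one of $\qch(V_q(Y_{i,r}))$. Feeding the inductive hypothesis for the two factors into the $\lie{sl}_2$-reduction procedure of Frenkel--Mukhin applied node by node --- which constrains the dominant monomials of $\wt_\ell(V_q(\bs\omega))$ through its rank-one restrictions --- one checks that $\bs\omega$ is the only dominant monomial that survives. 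This last verification is the hard part: it amounts to a delicate analysis of the diagonal positions of the boxes in the staircase of $T$, and it is precisely the content of \cite[Theorems 3.8 and 3.10]{her:min} and \cite[Corollary 7.6 and Remark 7.4(i)]{muyou:path}, which is why we import the statement from those sources.

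Granting $\ell$-minusculity, part (2) is routine: the Frenkel--Mukhin algorithm computes $\qch(V_q(\bs\omega))$ unambiguously starting from $\bs\omega$, and the rank-one restrictions being qcharacters of thin $U_q(\hlie{sl}_2)$-modules force every multiplicity to be $1$, so $V_q(\bs\omega)$ is thin. For part (3), one observes via \eqref{e:rem action tableau A} that an elementary move of the algorithm --- multiplication by an inverse $\ell$-root $A_{j,j',\cdot}^{-1}$ --- is exactly an admissible change of box contents inside the fixed shape of $T$; hence $\{\bs\omega^S:S\in\stab(T)\}$ is stable under the algorithm and contains $\bs\omega^T=\bs\omega$, while conversely every $S\in\stab(T)$ is reached by removing boxes from the ends of the columns one at a time following the staircase. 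So the two expressions have the same support, and, since each monomial of $\qch(V_q(\bs\omega))$ has coefficient $1$ by thin-ness and $S\mapsto\bs\omega^S$ is injective on $\stab(T)$ --- a consequence of Lemma \ref{lem lambdaT=lambdaT'} together with the fact that all columns of $T$ have length at most $n$, so no coincidence $\bs\omega^S=\bs\omega^{S'}$ can be absorbed by adjoining increasing columns of length $n+1$ within the shape --- they coincide in $\Z[\mathcal P]$. As a consistency check, applying the weight map $\wt$ termwise to $\sum_{S\in\stab(T)}\bs\omega^S$ gives the sum over semi-standard Young tableaux of the relevant shape and thus recovers $\ch V_q(\lambda)$, in agreement with the isomorphism $V_q(\bs\omega)\cong V_q(\lambda)$ of $U_q(\g)$-modules from Theorem \ref{teo tipo A}.
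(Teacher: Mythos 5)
The paper offers no proof of this theorem: it is stated with a pointer to \cite[Theorems 3.8 and 3.10]{her:min} and \cite[Corollary 7.6 and Remark 7.4(i)]{muyou:path}, and your proposal, after some correct but inessential scaffolding (the reduction to the Frenkel--Mukhin algorithm, the identification of its elementary moves with box modifications via \eqref{e:rem action tableau A}, and the injectivity of $S\mapsto\bs\omega^S$ on $\stab(T)$), explicitly imports the one genuinely hard ingredient --- that minimal affinizations in type $A$ are special and thin --- from those very same sources. So in substance your argument coincides with the paper's treatment: both rest entirely on the cited literature for the content of the statement.
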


Notice that it follows from this theorem that
\begin{equation}
S,S'\in\stab(T) \quad\Rightarrow\quad \bs\omega^S=\bs\omega^{S'} \quad\text{iff}\quad S=S'.
\end{equation}

\begin{ex}\label{ex l-weights KR n}
We now make explicit the qcharacter of the Kirillov-Reshetikhin modules associated to $k\omega_n$. Thus, suppose $\bs\varpi=Y_{n,r,k}$ for some $r$ and set $V=V_q(\bs\varpi)$. Note that $T=(T_1,\ldots,T_k)$ is the
semi-standard tableau where each column $T_j$ has length $n$, the
content of the last box is $n$, and the shape of $T$ is
$$((n,r+1+2(k-1)-n),\dots, (n, r+3-n),(n,r+1-n)).$$
By Theorem \ref{t:qcharmin}, the $\ell$-weights of
$V$ are given by elements of $\stab(T)$.  Thus, we have the highest $\ell$-weight
$\bs\varpi=\bs\omega^T$ and all the $\ell$-weights in
$\wt_\ell(V)\setminus\{\bs\varpi\}$ are obtained from
$T$ by changing the contents of the boxes of $T$ without breaking
condition of being semi-standard.
Consider the case $k=1$ first. Then, the corresponding semi-standard tableaux are $T_{1,j}, 1\le j\le n+1$, given as follows:
$$\renewcommand{\arraycolsep}{0pt}
\begin{array}{|c|}
   \hline\hbox to 0.55cm{\hfill$\scriptstyle 1$\hfill}\\
   \hline\hbox to 0.55cm{\hfill$\vdots$\hfill}\\
   \hline\hbox to 0.55cm{\hfill$\scriptstyle{n\!-\!2}$\hfill}\\
   \hline\hbox to 0.55cm{\hfill$\scriptstyle{n\!-\!1}$\hfill}\\
   \hline\hbox to 0.55cm{\hfill$\scriptstyle{n}$\hfill}\\
   \hline
\end{array}_{\:r+1-n}\!=\bs\omega^{T_{1,n+1}}\quad \xrightarrow{n,r+1} \quad \renewcommand{\arraycolsep}{0pt}
\begin{array}{|c|}
   \hline\hbox to 0.55cm{\hfill$\scriptstyle 1$\hfill}\\
   \hline\hbox to 0.55cm{\hfill$\vdots$\hfill}\\
   \hline\hbox to 0.55cm{\hfill$\scriptstyle{n\!-\!2}$\hfill}\\
   \hline\hbox to 0.55cm{\hfill$\scriptstyle{n\!-\!1}$\hfill}\\
   \hline\hbox to 0.55cm{\hfill$\scriptstyle{n\!+\!1}$\hfill}\\
   \hline
\end{array}_{\:r+1-n}\!=\bs\omega^{T_{1,n}} \quad \xrightarrow{n-1,r+2}
\quad
\renewcommand{\arraycolsep}{0pt}
\begin{array}{|c|}
   \hline\hbox to 0.55cm{\hfill$\scriptstyle 1$\hfill}\\
   \hline\hbox to 0.55cm{\hfill$\vdots$\hfill}\\
   \hline\hbox to 0.55cm{\hfill$\scriptstyle{n\!-\!2}$\hfill}\\
   \hline\hbox to 0.55cm{\hfill$\scriptstyle{n}$\hfill}\\
   \hline\hbox to 0.55cm{\hfill$\scriptstyle{n\!+\!1}$\hfill}\\
   \hline
\end{array}_{\:r+1-n}\!=\bs\omega^{T_{1,n-1}}$$
$$ \xrightarrow{n-2,r+3}\quad \cdots \quad \xrightarrow{2,r+n-1} \quad
\renewcommand{\arraycolsep}{0pt}
\begin{array}{|c|}
   \hline\hbox to 0.55cm{\hfill$\scriptstyle 1$\hfill}\\
   \hline\hbox to 0.55cm{\hfill$\scriptstyle{3}$\hfill}\\
   \hline\hbox to 0.55cm{\hfill$\scriptstyle{4}$\hfill}\\
   \hline\hbox to 0.55cm{\hfill$\vdots$\hfill}\\
   \hline\hbox to 0.55cm{\hfill$\scriptstyle{n\!+\!1}$\hfill}\\
   \hline
\end{array}_{\:r+1-n}\!=\bs\omega^{T_{1,2}} \quad \xrightarrow{1,r+n} \quad \renewcommand{\arraycolsep}{0pt}
\begin{array}{|c|}
   \hline\hbox to 0.55cm{\hfill$\scriptstyle 2$\hfill}\\
   \hline\hbox to 0.55cm{\hfill$\scriptstyle{3}$\hfill}\\
   \hline\hbox to 0.55cm{\hfill$\scriptstyle{4}$\hfill}\\
   \hline\hbox to 0.55cm{\hfill$\vdots$\hfill}\\
   \hline\hbox to 0.55cm{\hfill$\scriptstyle{n\!+\!1}$\hfill}\\
   \hline
\end{array}_{\:r+1-n}\!=\bs\omega^{T_{1,1}}.$$
Using \eqref{e:rem action tableau A}, one checks that the pair
$(j,r+j)$ written over the arrows correspond to the multiplication
by $A_{j,r+j}^{-1}$. Therefore,
\begin{equation*}
\qch(V) = \sum_{p=1}^{n+1}\bs\omega^{T_{1,p}}= \bs\varpi\left(1+\sum_{p=1}^{n} A^{-1}_{n,p,r}\right).
\end{equation*}

For $k>1$, we first notice that we can do the
same sequence of changes on the first column. Suppose we have done $j$ changes on the first column. Then
we can do the same type of changes on the second column up to the
$j$-th change and so on. In other words, the $\ell$-weights of
$V$ are parameterized by the set of partitions $J(n,k)
= \{\bs j = (j_1,j_2,\dots,j_k): 0\le j_k\le \cdots\le j_2\le j_1\le
n\}$ and the $\ell$-weight associated  to $\bs j\in J$ is
\begin{equation}\label{e:l-weights KRnroot}
\bs\varpi_{\bs j} = \bs\varpi \prod_{l=1}^k A_{n,n+1-j_l,r+2(k-l)}^{-1},
\end{equation}
where we use the convention that $A_{n,n+1,s}=1$ for all $s$\footnote{Note that, for $j_l\ne 0, A_{n,n+1-j_l,r+2(k-l)}$ is given by the second definition in \eqref{e:A3index}. Thus, the convention here, used when $j_l=0$, comes from the usual convention for products applied to the second definition in \eqref{e:A3index}.}.
In terms of fundamental $\ell$-weights, we have
\begin{equation}\label{e:ex l-weights KR n}
\bs\varpi_{\bs j} = Y_{n,r,k} \prod_{l=1}^k
\left(Y^{-1}_{n,r+2(k-l)}\
Y^{-1}_{n-j_l+1,r+2(k-l)+j_l+1}\ Y_{n-j_l,r+2(k-l)+j_l}\right)^{1-\delta_{0,j_l}}.
\end{equation}
\endd
\end{ex}

\subsection{On Irreducible Tensor Products of Minimal Affinizations}

We take a short pause in the study of $\ell$-weights via tableaux to prove a proposition which, in particular, implies the existence of the number $s$ in the statement of Theorem \ref{t:main}. Let $\bs\omega,\bs\varpi\in\mathcal P^+$ correspond to minimal affinizations. Then, by Theorem \ref{teo tipo A}, there exist $a,b\in\mathbb C^\times, r_i,s_i\in\mathbb Z, \lambda,\mu\in P^+$, such that
\begin{equation*}
\bs\omega = \prod_{i\in I} \bs\omega_{i,aq^{r_i},\lambda(h_i)} \qquad\text{and}\qquad
\bs\varpi = \prod_{i\in I} \bs\omega_{i,bq^{s_i},\mu(h_i)}.
\end{equation*}

\begin{prop}\label{p:PZ}
If $V_q(\bs\omega)\otimes V_q(\bs\varpi)$ is reducible, there exists $s\in\mathbb Z$ such that $a/b = q^s$.
\end{prop}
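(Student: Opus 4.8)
The plan is to exploit the basic block-decomposition theory of the category $\wcal C_q$ together with the explicit description of $\ell$-weights of minimal affinizations via tableaux. The key observation is that if $V_q(\bs\omega)\otimes V_q(\bs\varpi)$ is reducible, then since tensor products of highest-$\ell$-weight modules are highest-$\ell$-weight, the tensor product has $V_q(\bs\omega\bs\varpi)$ as its head, and reducibility means there is at least one more composition factor $V_q(\bs\lambda')$ with $\bs\lambda' < \bs\omega\bs\varpi$, i.e.\ $\bs\lambda' = \bs\omega\bs\varpi\,\bs\beta^{-1}$ for some nontrivial $\bs\beta\in\mathcal Q^+$. In particular $\bs\lambda'\in\wt_\ell(V_q(\bs\omega)\otimes V_q(\bs\varpi))$, and by Proposition \ref{p:qchtp} this means $\bs\lambda'$ is a product $\bs\mu\bs\nu$ with $\bs\mu\in\wt_\ell(V_q(\bs\omega))$, $\bs\nu\in\wt_\ell(V_q(\bs\varpi))$.

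First I would reduce to understanding which primes $aq^k$, $bq^l$ can cancel. Every $\ell$-weight of $V_q(\bs\omega)$ (respectively $V_q(\bs\varpi)$) is obtained from $\bs\omega$ (resp.\ $\bs\varpi$) by multiplying by inverse simple $\ell$-roots $\bs\alpha_{i,c}^{-1}$, and from the tableau description (Theorem \ref{t:qcharmin}) the spectral parameters $c$ occurring are of the form $aq^k$ (resp.\ $bq^l$) for integers $k$ (resp.\ $l$) in an explicitly bounded range determined by $\lambda$ (resp.\ $\mu$). Hence $\wt_\ell(V_q(\bs\omega))$ involves only fundamental $\ell$-weights $\bs\omega_{i,aq^k}$ and $\wt_\ell(V_q(\bs\varpi))$ only $\bs\omega_{j,bq^l}$. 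The crucial point is that $V_q(\bs\lambda')$ must lie in the same block as $V_q(\bs\omega\bs\varpi)$; equivalently, by the block classification of \cite{cm:qblock} (elliptic/spectral characters), and by the multiplicative independence of simple $\ell$-roots \eqref{e:lrootsind}, the factor $\bs\beta$ must be a product of simple $\ell$-roots $\bs\alpha_{i,c}$ whose spectral parameters $c$ already appear among the data of $\bs\omega\bs\varpi$.

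Next I would argue by contradiction. Suppose $a/b$ is not an integer power of $q$, i.e.\ the multiplicative cosets $aq^{\Z}$ and $bq^{\Z}$ in $\C^\times$ are disjoint. Then the set of spectral parameters appearing in $\bs\omega$ is disjoint from that appearing in $\bs\varpi$, and likewise for all their $\ell$-weights: no fundamental $\ell$-weight $\bs\omega_{i,aq^k}$ coming from the first factor can ever cancel against a $\bs\omega_{j,bq^l}^{-1}$ coming from the second. Consequently $\wt_\ell(V_q(\bs\omega)\otimes V_q(\bs\varpi))$, being the set of products $\bs\mu\bs\nu$, is ``separated'' along the two cosets: writing any such product in the free-abelian-group basis of $\mathcal P$, the $aq^{\Z}$-part is exactly some $\bs\mu\in\wt_\ell(V_q(\bs\omega))$ and the $bq^{\Z}$-part is exactly some $\bs\nu\in\wt_\ell(V_q(\bs\varpi))$. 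In particular, the only dominant $\ell$-weights of the tensor product are products $\bs\mu\bs\nu$ with $\bs\mu,\bs\nu$ dominant; but $V_q(\bs\omega)$ and $V_q(\bs\varpi)$ are $\ell$-minuscule by Theorem \ref{t:qcharmin}, so the only such dominant $\ell$-weight is $\bs\omega\bs\varpi$ itself. Since $\#(\wt_\ell(V)\cap\mathcal P^+)$ counts (with the $\ell$-minuscule property in hand, one-to-one) the possibilities for composition factors, this forces $V_q(\bs\omega)\otimes V_q(\bs\varpi)$ to be irreducible, contradicting our hypothesis.

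The main obstacle I anticipate is making rigorous the step that ``the only dominant $\ell$-weights of the tensor product are products of dominant $\ell$-weights of the factors'' when the spectral-parameter cosets are disjoint. This requires knowing that the simple $\ell$-roots with parameters in $aq^{\Z}$ are multiplicatively independent from those with parameters in $bq^{\Z}$ — which follows from \eqref{e:lrootsind} and the disjointness of the cosets — so that dominance of a product $\bs\mu\bs\nu$ (i.e.\ all $n$ component polynomials having no poles) decouples into dominance of $\bs\mu$ and of $\bs\nu$ separately. One must be slightly careful because a priori an $\ell$-weight of the tensor product need not be exactly a product $\bs\mu\bs\nu$ with each factor an $\ell$-weight of the respective module in the naive tensor-of-vectors sense; but Proposition \ref{p:qchtp} guarantees this at the level of $\qch$, which is all we need. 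Once this decoupling is in place, invoking the $\ell$-minuscule property of minimal affinizations closes the argument.
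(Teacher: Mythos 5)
Your proposal is correct and follows essentially the same route as the paper's proof: pass to the contrapositive, use $\qch(V\otimes W)=\qch(V)\qch(W)$ and the disjointness of the spectral cosets $aq^{\Z}$ and $bq^{\Z}$ to decouple dominance of a product $\bs\mu\bs\nu$ into dominance of each factor, and then invoke the $\ell$-minuscule property of minimal affinizations to conclude that $\bs\omega\bs\varpi$ is the only dominant $\ell$-weight, forcing irreducibility. (Your aside that a tensor product of highest-$\ell$-weight modules is highest-$\ell$-weight is false in general --- cf.\ Corollary \ref{c:main} --- but it is not needed: all that matters is that every composition factor contributes a dominant $\ell$-weight, and the block-theoretic digression is likewise superfluous.)
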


\begin{proof}
Let $V=V_q(\bs\omega)\otimes V_q(\bs\varpi), \bs\lambda=\bs\omega\bs\varpi$, and suppose $a/b\ne q^s$ for all $s\in\mathbb Z$. We claim that $D:=\wt_\ell(V)\cap\mathcal P^+=\{\bs\lambda\}$, which clearly implies the proposition. Indeed, any element of $D$ is of the form $\bs\mu\bs\nu$ with $\bs\mu\in\wt_\ell(V_q(\bs\omega))$ and $\bs\nu\in\wt_\ell(V_q(\bs\varpi))$ by Proposition \ref{p:qchtp}. If $\bs\mu\ne\bs\omega$, then $\bs\mu\notin\mathcal P^+$ by Lemma \ref{l:lmin}. Then, it follows from Theorem \ref{t:qcharmin} that there exists $i\in I, r\in\mathbb Z$, such that $\bs\omega_{i,aq^r}^{-1}$ appears in $\bs\mu$. As $\bs\mu\bs\nu\in\mathcal P^+$, it follows that $\bs\omega_{i,aq^r}$ must appear in $\bs\nu$. However, Theorem \ref{t:qcharmin} also implies that, if $\bs\omega_{i,c}$ appears in $\bs\nu$, then $c=bq^t$ for some $t$, yielding the desired contradiction.
\end{proof}

\begin{rem}\label{r:PZ}
It follows that we can assume from now on that the parameter $b$ in Theorem \ref{t:main} is of the form $aq^s$ for some $s$. In fact, it is well-known (see \eqref{e:tau_a} below) that we can assume without loss of generality that $a=1$ and we shall do so. We also use $a=1$ in \eqref{eq Y A}. \endd
\end{rem}

Let us also recall the description of the simple modules as tensor products of Kirillov-Reshetikhin modules in the case $\lie g=\lie{sl}_2$. Thus, let $i$ be the unique element of $I$. Given $\bs\omega\in\cal P^+$, it is not difficult to see that there
exist unique $m>0$, $a_{j}\in\C^\times$, and $r_{j}\in\Z_{\ge 1}$ such that
$$\bs\omega = \prod_{j=1}^m \bs\omega_{i,a_{j},r_{j}} \quad\text{with}\quad
\frac{a_{j}}{a_{l}}\ne q^{\pm(r_{j}+r_{l}-2p)} \quad\text{for all}\quad j\ne l \quad\text{and}\quad 0\le p<\min\{r_{j},r_{l}\}.$$
This decomposition is called the $q$-factorization of $\bs\omega$. It was proved in \cite[Theorem 4.11]{cp:qaa} that
\begin{equation}\label{teo sl2 irred}
V_q(\bs\omega)\cong V_q(\bs\omega_{i,a_1,r_1})\otimes \cdots \otimes V_q(\bs\omega_{i,a_m,r_m}).
\end{equation}

\subsection{Diagram Subalgebras and Sublattices}\label{ss:diagsub} Some of the next properties of qcharacters and tableaux that we will describe are related to the technique of restricting to diagram subalgebras. In this subsection, we fix the necessary notation.

By abuse of language, we will refer to any subset $J$ of $I$ as a
subdiagram of the Dynkin diagram of $\lie g$. Let $\lie g_J$ be the
Lie subalgebra of $\lie g$ generated by $x_{\alpha_j}^\pm, j\in J$,
and define $\lie n^\pm_J, \lie h_J$ in the obvious way. Let also
$Q_J$ be the subgroup of $Q$ generated by $\alpha_j, j\in J$, and
$R^+_J=R^+\cap Q_J$. Given $\lambda\in P$, $\lambda_J$ is the
restriction of $\lambda$ to $\lie h_J^*$ and let $\lambda^J\in P$ be
such that $\lambda^J(h_j)=\lambda(h_j)$ if $j\in J$ and
$\lambda^J(h_j)=0$ otherwise.  Diagram subalgebras $\tlie g_J$ are
defined in the obvious way.

Consider also the subalgebra $U_q(\tlie g_J)$ generated by $k_j^{\pm
1}, h_{j,r}, x^\pm_{j,s}$ for all $j\in J, r,s\in \Z, r\ne 0$. If
$J=\{j\}$, the algebra $U_q(\tlie g_j):=U_q(\tlie g_J)$ is
isomorphic to $U_{q}(\tlie{sl}_2)$. Similarly we define the
subalgebra $U_q(\lie g_J)$, etc.

For $\bs\omega\in\cal P$, let $\bs\omega_J$ be the associated
$J$-tuple of rational functions and let $\cal
P_J=\{\bs\omega_J:\bs\omega\in \cal P\}$. Similarly define $\cal
P_J^+$. Notice that $\bs\omega_J$ can be regarded as an element of
the $\ell$-weight lattice of $U_q(\tlie g_J)$. Let $\pi_J:\cal P\to
\cal P_J$ denote the map $\bs\omega\mapsto\bs\omega_J$. If $J=\{j\}$
is a singleton, we write $\pi_j$ instead of $\pi_J$. An
$\ell$-weight $\bs\omega\in \cal P$ is said to be $J$-dominant if
$\bs\omega_J\in \cal P_J^+$. Let also $\cal Q_J\subset\cal P_J$ (respectively, $\cal Q_J^+$) be
the subgroup (submonoid) generated by $\pi_J(\bs\alpha_{j,a}), j\in
J,a\in\mathbb C^\times$. When no confusion arises, we shall simply
write $\bs\alpha_{j,a}$ for its image in $\cal P_J$ under $\pi_J$.
Let
$$\iota_J:\Z[\cal Q_J]\rightarrow \Z[\cal Q],$$
be the ring homomorphism such that
$\iota_J(\bs\alpha_{j,a})=\bs\alpha_{j,a}$ for all $j\in J,
a\in\mathbb C^\times$. We shall often abuse of notation and identify
$\cal Q_J$ with its image under $\iota_J$. In particular, given
$\bs\mu\in\cal P$, we set
\begin{equation*}
\bs\mu\cal Q_J =\{ \bs\mu\bs\alpha: \bs\alpha\in\iota_J(\cal Q_J)\}.
\end{equation*}
It will also be useful to introduce the element $\bs\omega^J\in\cal
P$ defined by
$$(\bs\omega^J)_j(u)=\bs\omega_j(u) \quad\text{if}\quad j\in J \quad\text{and}\quad (\bs\omega^J)_j(u)=1 \quad\text{otherwise.}$$

If $\bs\omega\in \cal P$ is $J$-dominant for some subdiagram $J$, set
$$\chi_J(\bs\omega)=\bs\omega\cdot \iota_J(\bs\omega_J^{-1}\qch(V_q(\bs\omega_J))).$$

\begin{prop}\label{prop appears}\cite[Corollary 3.15]{her:small}
Let $J\subset I$, $\bs\omega\in \cal P^+$ and suppose $\bs\mu\in
\cal P$ satisfies:
\begin{enumerate}
\item $\bs\mu\in \wt_\ell(V_q(\bs\omega))$,
\item $\bs\mu\in \cal P_J^+$,
\item there is no $J$-dominant $\bs\varpi > \bs\mu$ satisfying $\bs\varpi\in\wt_\ell(V_q(\bs\omega))$ and
$\bs\mu \in \wt_\ell(\chi_J (\bs\varpi))$.
\end{enumerate}
Then $\wt_\ell(\chi_J (\bs\mu))\subseteq \wt_\ell(V_q(\bs\omega))$.\endd
\end{prop}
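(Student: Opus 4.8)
The statement is \cite[Corollary 3.15]{her:small}; what follows is the line of argument I would use to prove it. Everything in the statement concerns only supports of $q$-characters, so I would work throughout in $\Z[\cal P]$ and in the Grothendieck ring of finite-dimensional $\ell$-weight modules over the diagram subalgebra $U_q(\tlie g_J)$.

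\emph{Restriction to $U_q(\tlie g_J)$.} Put $V=V_q(\bs\omega)$. Each generalized $U_q(\tlie h)$-eigenspace $V_{\bs\rho}$ lies inside the generalized $U_q(\tlie h_J)$-eigenspace of $\res_J V$ attached to $\pi_J(\bs\rho)$, and, since $V$ is an $\ell$-weight module, this inclusion becomes an equality after summing over the fibres of $\pi_J$. Hence the ring homomorphism $\pi_J\colon\Z[\cal P]\to\Z[\cal P_J]$ sends $\qch(V)$ to the $q$-character of $\res_J V$ as a $U_q(\tlie g_J)$-module. Decomposing $\res_J V$ in the Grothendieck ring, write this as $\sum_t\qch(V_q(\bs\sigma^{(t)}))$ with $\bs\sigma^{(t)}\in\cal P_J^+$, so $\pi_J(\qch(V))=\sum_t\qch(V_q(\bs\sigma^{(t)}))$.

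\emph{The greedy $\chi_J$-decomposition.} List the (distinct) $J$-dominant elements of $\wt_\ell(V)$ as $\bs\varpi_1,\bs\varpi_2,\dots$ in some linear order refining $\le$, so that $\bs\varpi_a>\bs\varpi_b$ forces $a<b$. Set $\chi^{(0)}=\qch(V)$ and, inductively, $\chi^{(k)}=\chi^{(k-1)}-c_k\,\chi_J(\bs\varpi_k)$, where $c_k$ is the coefficient of $\bs\varpi_k$ in $\chi^{(k-1)}$. \textbf{The central claim}, which I expect to be the main obstacle and which is exactly what is carried out in \cite{her:small} in the spirit of the Frenkel--Mukhin algorithm for $q$-characters, is that every $c_k$ is a nonnegative integer and every $\chi^{(k)}$ has nonnegative coefficients. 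Its heart is that the largest $J$-dominant monomial still present in $\chi^{(k-1)}$ occurs there with positive coefficient and drags along, with the correct multiplicity, the full $\ell$-weight diagram of the simple $U_q(\tlie g_J)$-module it generates; one proves this by inducting on $|J|$ and reducing to the rank-one case $J=\{j\}$, where the representation theory of $U_{q}(\tlie{sl}_2)$ together with the relations $[h_{i,r},x^{\pm}_{j,s}]=\pm\frac1r[rc_{ij}]x^{\pm}_{j,r+s}$ for the neighbours $i\notin J$ of $j$ pins down the full $\ell$-weight of a vector lying inside a $U_q(\tlie g_j)$-submodule. Note also that $\chi_J(\bs\varpi_k)$ is a monomial times a $q$-character, hence has nonnegative integer coefficients, and its support lies in $\{\bs\nu\in\cal P:\bs\nu\le\bs\varpi_k\}$.

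\emph{Conclusion from (iii).} Let $\bs\mu$ satisfy (i)--(iii). By (ii), $\bs\mu=\bs\varpi_k$ for some $k$. For each $j<k$ the monomial $\bs\varpi_j$ is not $<\bs\mu$; if $\bs\varpi_j>\bs\mu$ then hypothesis (iii) gives $\bs\mu\notin\wt_\ell(\chi_J(\bs\varpi_j))$, while if $\bs\varpi_j$ is incomparable with $\bs\mu$ the same holds because $\wt_\ell(\chi_J(\bs\varpi_j))\subseteq\{\bs\nu:\bs\nu\le\bs\varpi_j\}$. Thus the first $k-1$ subtractions leave the coefficient of $\bs\mu$ unchanged, so $c_k=\chi^{(k-1)}(\bs\mu)=\qch(V)(\bs\mu)=\dim V_{\bs\mu}\ge 1$ by (i). Now fix $\bs\rho\in\wt_\ell(\chi_J(\bs\mu))$. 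Since $\chi_J(\bs\mu)(\bs\rho)\ge 1$ and $\chi^{(k)}=\chi^{(k-1)}-c_k\chi_J(\bs\mu)$ has nonnegative coefficients, $\chi^{(k-1)}(\bs\rho)\ge c_k\ge 1$; and since $\chi^{(k-1)}=\qch(V)-\sum_{j<k}c_j\chi_J(\bs\varpi_j)$ is obtained from $\qch(V)$ by subtracting terms with nonnegative coefficients, $\qch(V)(\bs\rho)\ge\chi^{(k-1)}(\bs\rho)\ge 1$. Hence $\bs\rho\in\wt_\ell(V)$, which gives $\wt_\ell(\chi_J(\bs\mu))\subseteq\wt_\ell(V_q(\bs\omega))$, as required.
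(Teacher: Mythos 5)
The paper does not actually prove this proposition: it is quoted verbatim from \cite[Corollary 3.15]{her:small}, so there is no in-paper argument to compare yours against. Your deduction of the conclusion from your ``central claim'' is correct and matches how the result is obtained in the reference: since $\wtl(\chi_J(\bs\varpi))\subseteq\{\bs\nu:\bs\nu\le\bs\varpi\}$, hypothesis (iii) guarantees that none of the earlier subtractions in the greedy process touches the coefficient of $\bs\mu$, hence $c_k=\dim V_{\bs\mu}\ge 1$, and nonnegativity of $\chi^{(k)}$ then forces every monomial of $\chi_J(\bs\mu)$ to survive in $\qch(V_q(\bs\omega))$ with positive coefficient.

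The gap is that the central claim --- that every $\chi^{(k)}$ has nonnegative coefficients, i.e.\ that $\qch(V_q(\bs\omega))$ is a nonnegative integer combination of the packets $\chi_J(\bs\varpi)$ --- is the entire substance of the cited result, and your one-sentence sketch does not establish it. Two points are glossed over. First, for $|J|\ge 2$ the qcharacters of the simple $U_q(\tlie g_J)$-modules $V_q(\bs\varpi_J)$ are not known in closed form, so the assertion that the top $J$-dominant monomial ``drags along the full $\ell$-weight diagram with the correct multiplicity'' cannot be verified directly; in \cite{her:small} the reduction to rank one is organized through the single-node decompositions of Frenkel--Mukhin and a careful bookkeeping of $J$-dominant monomials, not by a naive induction on $|J|$. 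Second, even granting the rank-one case, lifting the Grothendieck-ring decomposition of $\res_J V$ from $\Z[\cal P_J]$ to $\Z[\cal P]$ requires showing that the components of the $\ell$-weights at nodes $i\notin J$ are locked to the $J$-components along each composition factor; you correctly name the commutation relation responsible, but the elements $h_{i,r}$ with $i\notin J$ do not preserve $U_q(\tlie g_J)$-submodules, so the compatibility of a composition series of $\res_J V$ with the $\ell$-weight decomposition of $V$ needs an argument that is omitted. As written, the proposal is an accurate roadmap of the cited proof rather than a self-contained one.
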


\begin{rem}\label{rem appears}
Notice that taking $\bs\mu = \bs\omega$ in Proposition \ref{prop appears},
it follows that $\wt_\ell(\chi_J (\bs\omega))\subseteq
\wt_\ell(V_q(\bs\omega))$.\endd
\end{rem}

\subsection{Further Combinatorial Properties of Tableaux}\label{ss:combtab}
We now collect several technical lemmas on the combinatorics of tableaux.

Suppose $T$ is an increasing column tableau. We say that $T$ has a gap at the $j$-th box if
$$T(j)_1-T(j-1)_1>1.$$
The number $T(j)_1-T(j-1)_1-1$ will be referred to as the size of
the gap. In particular, for $j=1$, $T$ has a gap of size $i_1-1$ at
the first row iff $T(1)_1= i_1>1$. Notice also that, if the length of $T$ is $n$, then it has at most of one gap, necessarily of size 1.

\begin{lem}\label{lem gap}
Let $T$ be a column increasing tableau of shape $(k,s)$ with a gap.
More precisely, suppose $T(j)_1=l_1$ and $T(j+1)_1=l_2$ with $1\le
l_1<l_2-1\le n$, for some $j\in \{1,\ldots,k-1\}$. Then
$Y_{l_1,s+2(k-j)+l_1-1}$ and $Y^{-1}_{l_2-1,s+2(k-(j+1))+l_2}$
appear in $\bs\omega^T$.  Moreover, if $j=k-1$, then
$Y_{l_2,s+l_2-1}$ also appears in $\bs\omega^T$.
\end{lem}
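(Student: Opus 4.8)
The plan is to unwind the definitions of $\bs\omega^T$ for a column tableau and track which fundamental $\ell$-weights survive the telescoping cancellations coming from the box-to-box arrows. Recall from \eqref{e:dcoltab} that a column tableau $T$ of shape $(k,s)$ is a vertical juxtaposition of the boxes $\ffbox{i_m}_{\,s+2(k-m)}$ for $m=1,\dots,k$, where $i_m=T(m)_1$, and that each box $\ffbox{i}_{\,t}$ represents the $\ell$-weight $Y_{i-1,t+i}^{-1}Y_{i,t+i-1}$ (with $Y_{0,r}=Y_{n+1,r}=1$). Therefore
\begin{equation*}
\bs\omega^T=\prod_{m=1}^{k}\ffbox{i_m}_{\,s+2(k-m)}=\prod_{m=1}^{k}Y_{i_m-1,\,s+2(k-m)+i_m+1}^{-1}\,Y_{i_m,\,s+2(k-m)+i_m-1}.
\end{equation*}
First I would isolate the factors at positions $j$ and $j+1$: the box at position $j$ (support $s+2(k-j)$, content $l_1$) contributes the numerator factor $Y_{l_1,\,s+2(k-j)+l_1-1}$, which is exactly the first claimed factor, and the box at position $j+1$ (support $s+2(k-(j+1))$, content $l_2$) contributes the denominator factor $Y_{l_2-1,\,s+2(k-(j+1))+l_2}^{-1}$, which is exactly the second claimed factor. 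So the content of the lemma is entirely that \emph{these particular factors are not cancelled} by any other factor appearing in the product.

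The key step is the cancellation bookkeeping. I would argue that in a column-increasing tableau the positive and negative $Y$'s telescope along the column: the negative factor $Y_{i_m-1,\,s+2(k-m)+i_m+1}^{-1}$ from box $m$ can only be cancelled by a positive factor $Y_{i_{m'},\,\cdot}$ from some other box $m'$ with $i_{m'}=i_m-1$, and a short computation with the support indices shows this forces $m'=m-1$ together with $i_{m-1}=i_m-1$, i.e.\ there is \emph{no gap} at box $m$. Dually, the positive factor $Y_{i_m,\,s+2(k-m)+i_m-1}$ from box $m$ is cancelled only if $i_{m+1}=i_m+1$, i.e.\ again no gap at box $m+1$. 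Applying this at $m=j+1$: since $i_{j+1}-i_j=l_2-l_1>1$, box $j+1$ is a gap, so the negative factor $Y_{l_2-1,\,\cdot}^{-1}$ of box $j+1$ survives, and the positive factor $Y_{l_1,\,\cdot}$ of box $j$ survives. (One should also check these two surviving factors do not cancel \emph{each other}; their $Y$-subscripts are $l_1$ versus $l_2-1$, equal only if $l_2=l_1+1$, excluded by hypothesis, and in any case one is positive and the other negative with different first index.) For the final assertion, when $j=k-1$ the box $j+1=k$ is the last box, there is no box $k+1$, so the positive factor $Y_{l_2,\,s+l_2-1}$ of box $k$ has no candidate canceller below it and therefore appears in $\bs\omega^T$.

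I expect the main obstacle to be purely organizational rather than deep: making the support-index arithmetic in the cancellation argument clean and airtight, i.e.\ verifying that when two factors $Y_{i,\,t}^{\pm1}$ and $Y_{i',\,t'}^{\mp1}$ from boxes at positions $m$ and $m'$ cancel, the equalities $i=i'$ and $t=t'$ together with $i_{m}=\,$content, $i_{m'}=\,$content pin down $|m-m'|=1$ and the contents differ by exactly $1$. This is the one place a sign or parity slip would break the argument, so I would write out the two relevant index identities explicitly once and then invoke them. Everything else is a direct reading of the definitions of $\ffbox{\cdot}$ and $\bs\omega^T$, plus the observation that ``$T$ increasing'' prevents any box other than the immediate neighbour from having the matching content.
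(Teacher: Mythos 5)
Your proposal is correct and takes essentially the same route as the paper: expand $\bs\omega^T$ into the box factors $Y_{i_m-1,\cdot}^{-1}Y_{i_m,\cdot}$, observe that the two claimed factors are precisely the contributions of boxes $j$ and $j+1$ (and, for the last claim, of box $k$), and rule out cancellation using the strictly increasing contents together with the support arithmetic. One small slip: in your displayed general product the negative factor of box $m$ should be $Y_{i_m-1,\,s+2(k-m)+i_m}^{-1}$ rather than $Y_{i_m-1,\,s+2(k-m)+i_m+1}^{-1}$ (the specializations you actually use are stated with the correct index, so the argument is unaffected).
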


\begin{proof}
By hypothesis, $T$ contains the boxes
$$\ffbox{l_1}_{\:s+2(k-j)}=Y^{-1}_{l_1-1,s+2(k-j)+l_1}Y_{l_1,s+2(k-j)+l_1-1}$$
and
$$\ffbox{l_2}_{\:s+2(k-j)-2}=Y^{-1}_{l_2-1,s+2(k-j)+l_2-2}Y_{l_2,s+2(k-j)+l_1-3}.$$
Since $l_1<l_2-1$, the negative power produced by
$\ffbox{l_2}_{\:s+2(k-j)-2}$ cannot be canceled with the positive
power produced by $\ffbox{l_1}_{\:s+2(k-j)}$ (the box immediately
above it). Also, since $T$ is increasing, $T(j')_1<l_1$ for all
$j'<j$ and $T(j'')_1>l_2$ for all $j''>j+1$. Thus, there is no other
possibility for canceling $Y^{-1}_{l_2-1,s+2(k-j)+l_2-2}$ implying
that $Y^{-1}_{l_2-1,s+2(k-j)+l_2-2}$ appears in $\bs\omega^T$. The
proof that $Y_{l_1,s+2(k-j)+l_1-1}$ appears in $\bs\omega^T$ is
similar. The last statement is also proved in the same manner.
\end{proof}

\begin{lem}\label{lem i-1 above i}
Let $T$ be a semi-standard tableau with shape as in \eqref{rem
tableau stairs} and $(i,s)\in\bf B\times\mathbb Z$. Suppose the box
$\ffbox{i}_{\:s}$ is part of the the $j$-th column of $T$. Then:
\begin{enumerate}[(a)]
\item The box $\ffbox{i}_{\:s}$ is not in any other column of $T$.
\item If $\ffbox{i-1}_{\:s+2}$ is a box in $T$, it must be in the $j$-th column.
\item if $\ffbox{i+1}_{\:s-2}$ is a box in $T$, it must be in the $j$-th column.
\end{enumerate}
\end{lem}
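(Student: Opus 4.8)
The statement concerns a semi-standard tableau $T$ of the ``staircase'' shape \eqref{rem tableau stairs} (the increasing case; the decreasing case is symmetric) and asserts that a box $\ffbox{i}_{\:s}$ appearing in the $j$-th column forces the listed constraints. The plan is to argue directly from the two defining properties of semi-standardness — column-increasing together with conditions (1) and (2) after the definition — using crucially that the shape is a staircase, so that consecutive columns overlap in exactly one row and the rows strictly decrease in length.

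First I would set up notation: write the $j$-th column as $T_j$ with length $k_j$ and support starting at $s_j$, so that the box $\ffbox{i}_{\:s}$ in $T_j$ has $s = s_j + 2(k_j - p)$ for the appropriate row index $p$, i.e. $i = T_j(p)_1$. The staircase property records that $s_{j+1} = s_j - 2$ for every $j$, that $k_1 \ge k_2 \ge \cdots$, and that the top of column $j+1$ lies strictly below the top of column $j$ (equivalently, the rows occupied by column $j+1$ are a proper ``downward shift'' of a truncation of those of column $j$). I would also note, for part (a), that within a fixed column the contents are \emph{strictly} increasing and the supports within a row are strictly decreasing (remarked right after the definition of semi-standard), so a given pair $(i,s)$ occurs at most once per row and, combined with strict increase down a column, at most once in any single column.

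For part (a): the row containing $\ffbox{i}_{\:s}$ is determined by $s$; since the supports along a row strictly decrease from left to right and the contents along that row also strictly decrease, if $\ffbox{i}_{\:s}$ occurred in two distinct columns $j < j'$ we would have the same content $i$ at two boxes of that row, contradicting strict decrease of row contents. (One must also check the box can occur in only one row of a given column — immediate from strict column-increase — but the cross-column statement is the content of (a).) For parts (b) and (c): suppose $\ffbox{i-1}_{\:s+2}$ is a box of $T$; its support $s+2 = s_j + 2(k_j - p) + 2$ places it in the row immediately above that of $\ffbox{i}_{\:s}$, hence it lies in some column $j'$. If $j' < j$, apply condition (2) across the step from column $j'$ to column $j'+1$ (iterating if $j' < j - 1$, using the staircase overlap): having content $i-1$ at support $s+2$ in column $j'$ forces, in each successive column to the right at support $s$ or $s+2$, a content $\ge i-1$; combined with column-increase (the box directly below $\ffbox{i-1}$ in column $j'$ has content $\ge i$) and the staircase alignment, one shows the content $i$ at support $s$ must already sit in column $j'$, not to its right — so $j' \ge j$, and by (a) in fact $j' = j$. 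If instead $j' > j$, a symmetric application of condition (2) reading leftward (or of column-increase in column $j'$ relative to the overlap with column $j$) yields the same contradiction. Part (c) is entirely dual, swapping the roles of the two rows and of ``above/left'' with ``below/right.''

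The main obstacle I anticipate is bookkeeping the staircase alignment precisely: condition (2) only compares column $T_{j'}$ with its \emph{immediate} neighbor $T_{j'+1}$ at supports differing by $2$, so to propagate a constraint from a far-away column $j'$ to column $j$ one must chain these comparisons while tracking exactly which rows of each intermediate column overlap — and the rows shift by one step per column in the staircase, so the box $\ffbox{i}_{\:s}$ sits in a row that may be at a different position within each column. I would handle this by proving a small auxiliary observation first: in a staircase semi-standard tableau, if a box with support $s_0$ and content $c$ lies in column $j'$, then every column $j'' > j'$ that contains a box at support $s_0$ has content $\ge c$ there, and every column $j'' < j'$ containing a box at support $s_0$ has content $\le c$ there. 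This monotonicity-along-rows statement (which is essentially the ``row contents strictly decrease'' remark, but I want it in the form comparing fixed support across all columns) reduces (b) and (c) to one-line arguments, since $\ffbox{i}_{\:s}$ and its vertical neighbor in column $j'$ would then violate strict column-increase unless $j' = j$.
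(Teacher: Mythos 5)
Your part (a) is fine: two occurrences of $\ffbox{i}_{\:s}$ would lie in the same row (boxes in a row have \emph{equal} support -- they do not ``strictly decrease along a row'' as you write, though that slip is harmless), and the strict decrease of row contents gives the contradiction.

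For (b) and (c) there is a genuine gap. First, your auxiliary ``monotonicity along rows'' is stated with the inequalities reversed: row contents strictly \emph{decrease} from left to right, so a column $j''>j'$ carrying a box at support $s_0$ has content $<c$ there, not $\ge c$. More seriously, even after correcting the direction, the row comparison is strictly weaker than the diagonal condition (2) and cannot close the sub-case of (b) in which $\ffbox{i-1}_{\:s+2}$ sits in a column $j'$ strictly to the \emph{left} of $j$ (dually, the sub-case $j'>j$ in (c)). There the vertical neighbour your one-line punchline needs may simply not exist: $\ffbox{i-1}_{\:s+2}$ can be the bottom box of column $j'$ and $\ffbox{i}_{\:s}$ the top box of column $j$, so the two columns share no common support near these boxes. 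Moreover the configuration in which column $j'=j-1$ reads $\dots,\,i-1,\,i+1,\dots$ at supports $s+2,\,s$ while column $j$ has content $i$ at support $s$ passes every row comparison; it is ruled out only by condition (2) applied along the \emph{diagonal} joining $(j',s+2)$ to $(j,s)$. This is exactly what the paper's proof does: using the staircase shape it locates a box of column $j'=j-m$ at support $s+2m$, bounds its content above by $i-m<i$ via column-increase, and contradicts the iterated diagonal condition, which forces that content to be $\ge i$; the mirror argument (a box of column $j+m$ at support $s-2m$ with content $\ge i+m>i$, against the diagonal bound $\le i$) handles the other side. Your other sub-case ($j'>j$ in (b), $j'<j$ in (c)) does go through with the row comparison, since there the staircase guarantees column $j$ contains a box at support $s+2$ (resp.\ column $j'$ at support $s$); but the problematic sub-case needs the diagonal argument, and your proposal as written does not supply it.
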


\begin{proof}
 We write down the proof of (b) only since the other items
are similar. Suppose
 $\ffbox{i-1}_{\:s+2}$ appears in the $(j+m)$-th
column, $m\ge 1$. Since $T'$ is as in \eqref{rem tableau stairs},
this column has a box supported at $s-2m$. Since $T'$ is columns
increasing, the content $c$ of the box supported at $s-2m$ is at
least $i+m>i$. This contradicts the assumption that $T'$ is
semi-standard because the box $\ffbox{i}_{\:s}$ in column $j$ and
the box $\ffbox{c}_{\:s-2m}$ in column $j+m$ are in the same
diagonal from left to right and top to bottom. Suppose now that
$\ffbox{i-1}_{\:s+2}$ is in the $(j-m)$-th column, $m\ge 1$. This
time \eqref{rem tableau stairs} implies that this column has a box
supported at $s+2m$. Since all columns are increasing, the content
$c$ of the box supported at $s+2m$ is at most $i-m<i$. This
contradicts the assumption that $T'$ is semi-standard because the
box $\ffbox{c}_{\:s+2m}$ in column $j-m$ and the box
$\ffbox{i}_{\:s}$ in column $j$ are in the same diagonal from left
to right and top to bottom.
\end{proof}

\begin{rem}\label{r:gapsdontcancel}
Note that Lemma \ref{lem i-1 above i} implies that the contributions to $\bs\omega^T$ coming from each gap of a given column of $T$ as described in Lemma \ref{lem gap} are not canceled by terms in other columns.\endd
\end{rem}

The next lemma can be easily proved combinatorially and it is also a
consequence of Theorem \ref{t:qcharmin} together with the fact that
the Frenkel-Mukhin algorithm applies for computing the qcharacters
of minimal affinizations (see \cite{her:min} and references
therein).

\begin{lem}\label{l:connectedquiver}
Let $T$ be a semi-standard tableau such that $V_q(\bs\omega^T)$ is a minimal affinization. Then, for any $T'\in\stab(T)$, there exists $m\ge 0, i_j\in I, s_j\in\mathbb Z$, and elements $T_j\in\stab(T), 0\le j\le m$, such $T_0=T, T_m=T'$ and $\bs\omega^{T_{j+1}} = \bs\omega^{T_{j}}A_{i_j,s_j}^{-1}$.\endd
\end{lem}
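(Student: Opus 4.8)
The plan is to argue combinatorially by induction on the quantity
$$\delta(S)\ =\ \sum_{b}\bigl(c_S(b)-c_T(b)\bigr),$$
where $b$ ranges over the boxes of the common shape of $S$ and $T$ and $c_S(b),c_T(b)$ denote the contents of $b$ in $S$ and in $T$. Every column of $T$ is a staircase $1,2,\dots,\ell$ by \eqref{eq tableau min aff}, and any column-increasing tableau has its $j$-th entry (counting from the top) at least $j$; hence $\delta(S)\ge 0$ for every $S\in\stab(T)$, with equality precisely when $S=T$. This settles the base case $m=0$.

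For the inductive step, given $S\in\stab(T)\setminus\{T\}$, I would select the \emph{rightmost} column $c$ in which $S$ differs from $T$ and, inside it, the topmost box, say at position $j$, whose content $v:=c_S(j)$ is not equal to $j$; then $v\ge j+1\ge 2$. Let $S'$ be obtained from $S$ by replacing this content $v$ with $v-1$. Then $\delta(S')=\delta(S)-1$, and \eqref{e:rem action tableau A} (with $i=i'=v-1\in I$) gives $\bs\omega^{S'}A_{v-1,s_0}^{-1}=\bs\omega^{S}$ for an explicit $s_0\in\Z$. Once we know $S'\in\stab(T)$, the inductive hypothesis produces a chain $T=T_0,T_1,\dots,T_{m'}=S'$ in $\stab(T)$ of the required form, and appending the arrow $S'=T_{m'}\to S$ completes the proof.

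The crux is checking that $S'$ is semi-standard. Column-increasingness is immediate from the choice of $(c,j)$: the entry above (if any) equals $j-1$, which is less than $v-1$, and the entry below (if any) exceeds $v$, hence exceeds $v-1$. For the diagonal conditions, only the boxes at position $j$ in columns $c-1,c,c+1$ appear in constraints that were affected. The constraint coming from column $c-1$ (the ``up-left'' diagonal) still holds, since lowering the content of box $(c,j)$ can only relax it and it held for $S$. The constraint toward column $c+1$ (the ``down-right'' diagonal) is exactly where the choice of $c$ as the \emph{rightmost} differing column enters: column $c+1$ is then a staircase, the box of column $c+1$ diagonally below-right of $(c,j)$ sits at position $j$ with content exactly $j$, so the inequality to be verified reads $v-1\ge j$, which holds by the choice of $j$. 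A short bookkeeping with supports, using that consecutive columns of $T$ are shifted by $2$, identifies which boxes are diagonally adjacent and disposes of the degenerate cases where the relevant box of column $c\pm1$ is absent (the condition is then vacuous). The decreasing minimal affinization case runs identically, since there too every column of $T$ is a staircase and the inter-column support shift is the same.

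Alternatively, one may invoke Theorem \ref{t:qcharmin}: $V_q(\bs\omega^T)$ is special, so the Frenkel--Mukhin algorithm computes its qcharacter (see \cite{her:min} and references therein), and that algorithm produces every monomial of $\qch(V_q(\bs\omega^T))$ starting from the highest $\ell$-weight by successive multiplications by simple $\ell$-roots $A_{i,s}^{-1}$, with all intermediate monomials genuine $\ell$-weights — which is precisely the assertion. I expect the verification of the diagonal conditions in the combinatorial route to be the only delicate point; the rest is routine.
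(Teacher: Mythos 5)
Your proposal is correct, and it is worth noting that the paper does not actually write out a proof of this lemma: it merely asserts that the statement ``can be easily proved combinatorially'' and that it follows from Theorem \ref{t:qcharmin} together with the applicability of the Frenkel--Mukhin algorithm to minimal affinizations (citing \cite{her:min}). Your ``alternative'' paragraph is therefore exactly the paper's argument (specialness plus thinness guarantees each monomial produced by the algorithm is reached from the highest $\ell$-weight by single $A_{i,s}^{-1}$ steps through genuine $\ell$-weights, and thinness identifies each intermediate monomial with a unique element of $\stab(T)$), while your induction on $\delta(S)$ supplies the combinatorial proof the paper leaves unwritten. The induction is sound: lowering the topmost non-staircase entry $v$ of the rightmost column $c$ differing from $T$ preserves column-increasingness, relaxes the diagonal constraint against column $c-1$, and the constraint against column $c+1$ holds because that column is the untouched staircase. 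One small imprecision: in the increasing case the box of column $c+1$ diagonally below-right of position $j$ sits at position $j-(k_c-k_{c+1})$ (counted from the top), where $k_c\ge k_{c+1}$ are the column lengths, not at position $j$ in general; its content is then $j-(k_c-k_{c+1})\le j\le v-1$, so the required inequality is only easier, and in the decreasing case (where consecutive top supports differ by exactly $2$) the box is indeed at position $j$ with content $j$. This is precisely the ``bookkeeping with supports'' you defer, and it closes without incident, so both of your routes give a complete proof; the combinatorial one has the merit of being self-contained, whereas the paper's route leans on the external input that the Frenkel--Mukhin algorithm computes qcharacters of type $A$ minimal affinizations.
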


\subsection{Right Negativity}\label{ss:qcharprop}
Let $\cal P_\mathbb Z$ denote the subgroup of $\cal P$ generated by $Y_{i,s}, i\in I,s\in\mathbb Z$, and we similarly
define the subgroup $\cal Q_\mathbb Z$ of $\cal Q$ and the monoids
$\cal P_\mathbb Z^+$ and $\cal Q_\mathbb Z^+$.
The following concept defined in \cite{fremuk:qchar} will be useful in the proof of Theorem \ref{t:main}. Given $\bs\omega\in\cal P_\mathbb Z\setminus\{\bs 1\}$, set
\begin{equation}\label{eq def r(omega)}
r(\bs\omega):=\max\{s\in \Z : Y_{i,s}^{\pm 1} \text{ appears in }
\bs\omega \text{ for some } i\in I\}.
\end{equation}
Then, $\bs\omega$ is said to be right negative if $Y_{i,r(\bs\omega)}$ does not appear in
$\bs\omega$ for all $i\in I$.
Observe that the product of right negative $\ell$-weights is a right
negative $\ell$-weight and a dominant $\ell$-weight is not right
negative. Observe also that
\begin{equation}
r(Y_{i,r,k}) = r+2(k-1) \qquad\text{for all}\qquad i\in I,r\in \Z,k\in \Z_{\ge 0}.
\end{equation}

\begin{ex}
Return to Example \ref{ex l-weights KR n} and recall \eqref{e:ex l-weights KR n}.
Notice that, if $j_l>0$ for some $l$, there exists a gap of size $1$ at the
$(n-j_l+1)$-th row of the $l$-th column of the associated semi-standard tableau. Moreover, if
$\bs\varpi_{\bs j} \ne \bs\varpi$, then $j_1>0$ which implies
$Y^{-1}_{n+1-j_1,r+2(k-1)+j_1+1} Y_{n-j_1,r+2(k-1)+j_1}$ appears in
$\bs\varpi_{\bs j}$ and
\begin{equation}\label{eq r(varpi_j)}
r(\bs\varpi_{\bs j})=r+2(k-1)+j_1+1.
\end{equation}
Indeed, since $l_1<l_2$ implies $j_{l_1}\ge j_{l_2}$, we have
$r+2(k-l_1)+j_{l_1}>r+2(k-l_2)+j_{l_2}$.\endd
\end{ex}

The following was proved in \cite[Theorem 3.2]{nak:t-analogs}.

\begin{prop}\label{prop right neg}
Let $\bs\omega=Y_{i,r,k}$ for some $i\in I$, $r\in \Z$, and $k\in \Z_{\ge 0}$. Then, all the elements
of $\wt_\ell(V_q(\bs\omega))\setminus\{\bs\omega\}$ are right negative. Moreover, if
$\bs\mu\in\wt_\ell(V_q(\bs\omega))\setminus\{\bs\omega\}$ is such
that $r(\bs\mu)\le r+2k$, then
$$\bs\mu=Y_{i,r,s} Y_{i,r+2(s+1),k-s}^{-1}\prod_{j\::\:c_{ij}=-1}Y_{j,r+2s+1,k-s}
\qquad \text{for some} \quad s=0,\ldots,k-1.$$ In particular,
$r(\bs\mu)=r+2k>r(\bs\omega)$.\endd
\end{prop}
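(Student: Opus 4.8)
The plan is to argue entirely inside the tableau model of Sections \ref{ss:tabqch}--\ref{ss:combtab}. Since $\bs\omega=Y_{i,r,k}$ is the highest $\ell$-weight of a Kirillov--Reshetikhin module, it is a minimal affinization, and the tableau $T$ with $\bs\omega^T=\bs\omega$ coming from \eqref{eq tableau min aff} is the rectangle with $k$ columns, each of length $i$ and content $1,2,\dots,i$ from top to bottom; by \eqref{e:rifromri0} (with the normalization $a=1$) the $m$-th column from the left has its bottom box supported at $r+2(k-m)+1-i$, equivalently $\bs\omega^{T_m}=Y_{i,r+2(k-m)}$. By Theorem \ref{t:qcharmin}, $\wt_\ell(V_q(\bs\omega))=\{\bs\omega^S:S\in\stab(T)\}$ with each element occurring once, so it suffices to analyse the monomials $\bs\omega^S$. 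Recalling that a box of content $c$ and support $s$ contributes $Y_{c-1,s+c}^{-1}Y_{c,s+c-1}$, let $M=M(S)$ be the largest value of $s+c$ over all boxes of $S$; this is the largest support at which a factor of the form $Y_{\ast,\ast}^{-1}$ can possibly appear in $\bs\omega^S$ before cancellations.

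For the first assertion, fix $S\in\stab(T)\setminus\{T\}$. First, column $1$ of $S$ must contain a gap: if it were gapless it would be $1,2,\dots,i$, and then chaining the second semi-standard condition from column $1$ rightwards forces the content of every column in row $a$ to be $\le a$, hence (being increasing with top entry $\ge1$) exactly $a$, so $S=T$. That same chaining shows that the content in each row weakly decreases from left to right, so $M$ is attained on column $1$, and consequently any surviving positive factor $Y_{c,s+c-1}$ of $S$ sits at support $s+c-1\le M-1$. On the other hand, by Lemma \ref{lem gap} together with Lemma \ref{lem i-1 above i} and Remark \ref{r:gapsdontcancel}, the factor $Y^{-1}$ attached to the lowest gap box of column $1$ (reading the first box as a gap of size $T(1)_1-1$ when $T(1)_1>1$, as in Section \ref{ss:combtab}) survives cancellation, and tracking the evolution of $s+c$ down column $1$ (it decreases by $1$ at a gapless step and increases by $g-1$ at a gap of size $g$) shows that the support of this surviving $Y^{-1}$ equals $M$. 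Hence $r(\bs\omega^S)=M$ and every $Y$ of support $M$ in $\bs\omega^S$ is an inverse power; that is, $\bs\omega^S$ is right negative.

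For the \emph{moreover} part, assume in addition $r(\bs\mu)\le r+2k$ for $\bs\mu=\bs\omega^S$, i.e.\ $M(S)\le r+2k$. If $a^\ast$ is the lowest gap box of column $1$ and $c=T(a^\ast)_1$, then $M(S)=\bigl(r+2(k-1)+1-i+2(i-a^\ast)\bigr)+c$, so $M(S)\le r+2k$ reads $c\le 2a^\ast+1-i$; combined with $c\ge a^\ast+1$ (gap box) this forces $a^\ast=i$ and $c=i+1$, so column $1$ is $1,2,\dots,i-1,i+1$. Feeding this back into the semi-standard conditions forces every row except the bottom one to be standard and the bottom row to consist of a (possibly empty) left block of $i+1$'s followed by $i$'s; thus $S$ is obtained from $T$ by replacing the bottom entry $i$ by $i+1$ in the first $k-s$ columns, for some $0\le s\le k-1$. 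For such $S$, iterating \eqref{e:rem action tableau A} gives $\bs\omega^S=\bs\omega\prod_{m=1}^{k-s}A_{i,r+2(k-m)+1}^{-1}$, and expanding via \eqref{e:AfromYs} and collecting terms yields
\[
\bs\mu\;=\;Y_{i,r,s}\,Y_{i,r+2(s+1),k-s}^{-1}\prod_{j:\,c_{ij}=-1}Y_{j,r+2s+1,k-s},
\]
whose unique factor of maximal support is $Y_{i,r+2k}^{-1}$; in particular $r(\bs\mu)=r+2k>r(\bs\omega)$.

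The one delicate point is the middle step of the second paragraph: confirming that the $Y^{-1}$ coming from the lowest gap of column $1$ genuinely survives and that no positive $Y$ ever reaches support $M$. Lemma \ref{lem i-1 above i} and Remark \ref{r:gapsdontcancel} are exactly what make this non-cancellation transparent, so I expect the argument to go through without extra input. A conceivable alternative for the first assertion is an induction on $k$: $V_q(Y_{i,r,k})$ is a quotient of $V_q(Y_{i,r,k-1})\otimes V_q(Y_{i,r+2(k-1)})$ for a suitable order of the factors, hence its $\ell$-weights are products $\bs\mu_1\bs\mu_2$ of $\ell$-weights of the two factors, and since a product of right-negative $\ell$-weights is right negative (the observation following \eqref{eq def r(omega)}) one would reduce to smaller $k$; however this variant needs the \emph{moreover} statement to be carried along in the induction hypothesis, so the direct tableau argument seems cleaner.
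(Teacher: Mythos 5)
The paper does not actually prove this proposition --- it is quoted from \cite[Theorem 3.2]{nak:t-analogs} --- so your direct tableau argument is necessarily a different route from the paper's; unfortunately it has a genuine gap in the first assertion. The problem is the claim that the lowest gap box of column $1$ realizes $M(S)=\max(s+c)$, and hence that $r(\bs\omega^S)=M(S)$. Both are false. Take $n\ge 3$, $i=3$, $k=1$, so $T$ is the single column $1,2,3$ with bottom support $r-2$, and let $S$ be the column $1,2,4$. Then $M(S)=r+3$ is attained only at the top box (content $1$, support $r+2$), whose ``negative factor'' $Y_{0,r+3}^{-1}$ is trivial; the lowest (indeed only) gap box is the bottom one, with $s+c=r+2\ne M(S)$; and $\bs\omega^S=Y_{2,r+1}\,Y_{3,r+2}^{-1}\,Y_{4,r+1}$, so $r(\bs\omega^S)=r+2\ne M(S)$. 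In general $s+c$ drops by $1$ at each gapless step and rises only by $g-1$ at a gap of size $g$, so a low size-one gap can lose to the top of the column. Consequently your inference ``positives sit at support $\le M-1$, a negative survives at support $M$, hence right negative'' breaks in exactly the case you flagged as delicate: when $M$ is attained only at content-$1$ boxes there is no nontrivial negative at support $M$, and you must instead show that the positive factors at support $M-1$ (here the $Y_{1,r+2}$ from the top box) get cancelled. Lemmas \ref{lem gap} and \ref{lem i-1 above i} do not deliver this ``without extra input.''

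The statement is true and your framework is repairable. For the first assertion, argue by contradiction that a positive factor $Y_{c,s+c-1}$ survives at the maximal support $r(\bs\omega^S)$: by Lemma \ref{lem i-1 above i}, survival forces the box below $\ffbox{c}_{\:s}$ (if any) to open a gap, whose negative factor survives at support $\ge s+c>r(\bs\omega^S)$, a contradiction; and if $\ffbox{c}_{\:s}$ is the bottom box of its column, compare with the surviving negative from the lowest gap of column $1$, whose support strictly exceeds $r+2(k-1)+c-i$ for every possible bottom-box content $c$, again a contradiction. Your ``moreover'' part is essentially sound once the false identity $r(\bs\mu)=M(S)$ is replaced by the correct inequality $f(a^*)\le r(\bs\mu)$ (the negative from the lowest gap survives, so its support bounds $r(\bs\mu)$ from below); with that substitution, the arithmetic forcing $a^*=i$ and content $i+1$, the propagation through the semi-standard conditions to determine $S$, and the computation of $\bs\mu$ via \eqref{e:rem action tableau A} and \eqref{e:AfromYs} are all correct.
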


\section{Proof of the Main Theorem}\label{s:proof}

\subsection{The Scheme of the Proof}\label{ss:scheme}

Fix the notation of Theorem \ref{t:main}. The scheme of the proof is as follows. First, using the combinatorics of semi-standard tableaux studied in Section \ref{ss:combtab}, we describe the set
$$D:=\wt_\ell(V)\cap\mathcal P^+$$
and, as a byproduct, we obtain the following proposition.
\begin{prop}\label{p:multiplicity1A}
The partial order on $\mathcal P$ induces a total order on $D$ and
\begin{equation*}
\dim(V_{\bs\nu})=1 \qquad\text{for all}\qquad \bs\nu\in D.
\end{equation*}
\endd
\end{prop}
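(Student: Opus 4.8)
The plan is to work entirely with qcharacters via the tableau description. Since $V = V_q(\bs\omega)\otimes V_q(\bs\varpi)$ with $\bs\omega$ an increasing minimal affinization and $\bs\varpi = Y_{n,r,k}$ a KR-module at node $n$, Proposition \ref{p:qchtp} gives $\qch(V) = \qch(V_q(\bs\omega))\qch(V_q(\bs\varpi))$, and by Theorem \ref{t:qcharmin} and Example \ref{ex l-weights KR n} every $\ell$-weight of $V$ is a product $\bs\omega^S\bs\varpi_{\bs j}$ with $S\in\stab(T)$ (for the tableau $T$ with $\bs\omega^T = \bs\omega$) and $\bs j\in J(n,k)$. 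Because both tensor factors are thin, $\dim V_{\bs\nu}$ equals the number of such pairs $(S,\bs j)$ with $\bs\omega^S\bs\varpi_{\bs j} = \bs\nu$. So both assertions reduce to a combinatorial statement about which products land in $\mathcal P^+$ and with what multiplicity.

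First I would analyze when $\bs\omega^S\bs\varpi_{\bs j}\in\mathcal P^+$. The key tool is right negativity (Section \ref{ss:qcharprop}) together with the gap analysis of Lemmas \ref{lem gap}, \ref{lem i-1 above i} and Remark \ref{r:gapsdontcancel}. If $S\ne T$, then $\bs\omega^S$ has at least one gap whose negative contribution $Y^{-1}_{l,t}$ (with $l\le n-1$) cannot be cancelled within $\bs\omega^S$; since $\bs\varpi_{\bs j}$ only involves $Y^{\pm}$ with second index $\ge n-1$ appearing through node-$n$ modifications, one checks that these gap terms survive in the product unless $\bs j$ is constrained to produce exactly the matching positive factor — and the shape of $\bs\varpi_{\bs j}$ in \eqref{e:ex l-weights KR n} only ever produces positive $Y_{m,\cdot}$ with $m = n-j_l$. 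This pins down which $S$ are admissible: essentially $S$ must differ from $T$ only by changes that can be "absorbed" by the column of the KR-tableau, forcing a rigid correspondence between the nontrivial part of $S$ and the partition $\bs j$. I would then show that for each dominant product the pair $(S,\bs j)$ is \emph{uniquely determined} by the resulting $\ell$-weight $\bs\nu$ (using the $q$-factorization / multiplicative independence \eqref{e:lrootsind} of the $\bs\alpha_{i,a}$, plus the fact noted after Theorem \ref{t:qcharmin} that distinct $S$ give distinct $\bs\omega^S$, and similarly for $\bs j$), which yields $\dim V_{\bs\nu} = 1$.

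For the total ordering of $D$, I would exhibit $D$ as a chain explicitly: list the admissible dominant $\ell$-weights, indexed by the "depth" to which the node-$n$ strip has been pushed into the staircase of $\bs\omega$, and show each one is obtained from the previous by multiplying by a single explicit product of inverse simple $\ell$-roots $\bs\alpha^{-1}_{i,n,\cdot}$ (in the spirit of the formulas for $\bs\lambda'$ in Theorem \ref{t:main}), so that consecutive elements are comparable in the $\le$ order and hence all of $D$ is. Concretely, I would parametrize $D$ by an integer $k'$ running from $0$ up to $\min\{|\lambda|,k\}$ (with the break at $p = \max\{i : {}_i|\lambda|\ge k'\}$ governing when the staircase step changes), verify $\wt(\bs\nu)$ is strictly decreasing along this chain, and conclude totality. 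Here I would lean on Lemma \ref{l:connectedquiver} to guarantee each putative dominant weight is actually reached inside the qcharacter.

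The main obstacle will be the combinatorial bookkeeping in the first step: correctly identifying, among all products $\bs\omega^S\bs\varpi_{\bs j}$, exactly which are dominant, and proving there are no "accidental" cancellations between gaps of $\bs\omega^S$ living in different columns and the $Y$-factors coming from $\bs\varpi_{\bs j}$. This requires carefully tracking supports (the parity and the precise shift $r_i = s_i-\lambda(h_i)+1$, and $r$ for the KR-factor) so that one can decide term-by-term which $Y_{m,t}^{-1}$ can be matched; the right-negativity bound $r(\bs\varpi_{\bs j}) = r+2(k-1)+j_1+1$ from \eqref{eq r(varpi_j)} will be essential to control the "rightmost" behaviour. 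Everything else — uniqueness of $(S,\bs j)$ and the chain structure — should follow fairly formally once the admissible set is pinned down.
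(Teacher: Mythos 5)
Your overall strategy coincides with the paper's: reduce both claims to the combinatorics of products $\bs\omega^{S'}\bs\omega^{T'}$ of tableau monomials using thinness of both factors, determine which products are dominant, and exhibit $D$ as a chain of elements each obtained from the previous by multiplying by explicit inverse $\ell$-roots. Those parts of your plan are sound and match Sections \ref{ss:Jdom} and \ref{ss:domtpA}.

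The genuine gap is in the multiplicity-one step. You assert that uniqueness of the pair $(S,\bs j)$ realizing a fixed dominant $\bs\nu$ ``should follow fairly formally'' from the multiplicative independence \eqref{e:lrootsind} together with the injectivity of $S\mapsto\bs\omega^S$ on semi-standard tableaux. It does not. Writing $\bs\nu=\bs\lambda\prod_{\xi\in\Xi}A_\xi^{-1}$, independence of the simple $\ell$-roots only determines the multiset $\Xi$; it says nothing about how $\Xi$ is distributed between the two tensor factors, and a priori a different partition $\Xi=\Xi_1'\cup\Xi_2'$ could produce the same $\bs\nu$ from a different pair of $\ell$-weights of $V_q(\bs\omega)$ and $V_q(\bs\varpi)$. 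Injectivity of $S\mapsto\bs\omega^S$ is irrelevant to this redistribution problem. Ruling it out is exactly the content of the last part of Section \ref{ss:domtpA}: one needs the node separation $(i,t)\in\Xi_1\Rightarrow i<p$ versus $(i,t)\in\Xi_2\Rightarrow i\ge p$, coming from \eqref{e:xi_0} and \eqref{e:Xj=Xj2}, together with a semi-standardness argument on the $b$-th and $(b+1)$-th columns, to conclude $\Xi_j\subseteq\Xi_j'$. Relatedly, your description of the admissible products leans on the picture in which only one factor is perturbed; in the regime $r+2k\le r_{i_0}+2\lambda(h_{i_0})$ the dominant weights are $\bs\omega^{S_{c,f,p}}\bs\omega^{T_{m,p}}$ with \emph{both} factors nontrivially modified, and your claim that $\bs\varpi_{\bs j}$ only involves $Y^{\pm}$ at nodes near $n$ is not correct --- the positive factors $Y_{n-j_l,\cdot}$ in \eqref{e:ex l-weights KR n} occur at arbitrary nodes --- so the two-sided cancellation analysis of Lemmas \ref{lem dom t,p} and \ref{lem dom c,f,p}, and the case split between \eqref{eq ri<rn} and \eqref{eq rn<ri}, cannot be bypassed.
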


This will be done in Sections \ref{ss:Jdom} and \ref{ss:domtpA}. Moving on, we shall see that, if neither of the conditions (i) and (ii) of  Theorem \ref{t:main} are satisfied, then  $\sum_{\bs\nu\in D}\bs\nu$ is part of $\qch(V_q(\bs\lambda))$ and, hence, $V$ is irreducible. Otherwise, we will see that there exists a decomposition of $D$ as a disjoint union
\begin{equation*}
D = D_1\cup D_2
\end{equation*}
such that
\begin{enumerate}[(a)]
\item $\sum_{\bs\nu\in D_1}\bs\nu$ is part of $\qch(V_q(\bs\lambda))$;
\item $\bs\lambda'\in D_2$ and $\bs\lambda'\notin \wt_\ell(V_q(\bs\lambda))$;
\item $\sum_{\bs\nu\in D_2}\bs\nu$ is part of $\qch(V_q(\bs\lambda'))$.
\end{enumerate}
This clearly implies Theorem \ref{t:main}. For proving these
properties of $D_1$ and $D_2$ we will use the results on qcharacters
that were reviewed in Section \ref{ss:qcharprop}.  The
aforementioned properties of $D$ will follow from Lemmas
\ref{l:tpfactorsA<} and \ref{lem lweight in A} below.

Throughout this section we let $S$ be the semi-standard tableau such
that $\bs\omega^S=\bs\omega$ and $T$ be the one such that
$\bs\omega^T=\bs\varpi$. Recall also from Remark \ref{r:PZ} that we
have set  $a=a_{i_0}=1$. We shall work with the expression \eqref{eq
l-weight min aff} for $\bs\omega$ and, similarly, we let
$r\in\mathbb Z$ be such that $\bs\varpi=Y_{n,r,k}$, i.e., $r=s-k+1$
where $s$ is the number in Theorem \ref{t:main} (cf. Proposition
\ref{p:PZ}).  We now rephrase Theorem \ref{t:main} in the context of
tableaux and in terms of the numbers $r_i$ and $r$ since this is the
way we will work in the proof.

\begin{thm}\label{t:mainr}
$V$ has length at most 2 and is reducible if and only if there exist $p\in\supp(\lambda)$ and
$k'>0$ such that either one of the following options hold:
\begin{enumerate}[(i)]
\item $k'\le\min\{\lambda(h_p),k\}$ and $r+2k+n-p+2 = r_p+2k'$;
\item $k'\le\min\{|\lambda|,k\}$ and  $r_{i_0}+2\lambda(h_{i_0})+n-i_0+2 = r+2k'$, and $p=\max\{i\in I:\ _i|\lambda|\ge k'\}$.
\end{enumerate}
If (i) holds,  $\bs\lambda'=\bs\omega\bs\omega^{T'}$ where $T'$ is the only element of $\stab(T)$ whose gaps are all located at the $p$-th boxes of the first $k'$ columns. If (ii) holds, $\bs\lambda'=\bs\omega^{S'}\bs\varpi$ where $S'$ is obtained from $S$ by replacing the contents of last boxes of the first $k'$ columns by $n+1$.\endd
\end{thm}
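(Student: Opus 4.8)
The plan is to prove Theorem \ref{t:mainr} by following the scheme outlined in Section \ref{ss:scheme}, reducing everything to a careful combinatorial analysis of the set $D=\wt_\ell(V)\cap\mathcal P^+$. Since $\qch(V)=\qch(V_q(\bs\omega))\qch(V_q(\bs\varpi))$ by Proposition \ref{p:qchtp}, and both factors are thin and $\ell$-minuscule by Theorem \ref{t:qcharmin}, every element of $D$ is of the form $\bs\omega^{S'}\bs\omega^{T'}$ with $S'\in\stab(S)$, $T'\in\stab(T)$, and $\bs\omega^{S'}\bs\omega^{T'}\in\mathcal P^+$. The first main step is therefore to classify such products: using Lemmas \ref{lem gap}, \ref{lem i-1 above i}, and Remark \ref{r:gapsdontcancel}, a gap in a column of $S'$ or $T'$ contributes an uncancelled factor $Y_{l,\bullet}^{-1}$ to $\bs\omega^{S'}$ (resp. $\bs\omega^{T'}$) unless it is cancelled by a matching positive $Y_{l,\bullet}$ coming from the \emph{other} tableau. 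Matching the supports of these $Y$'s — which are pinned down by \eqref{eq T(j)_2}, \eqref{e:colofmina}, and \eqref{e:rifromri0} — forces precisely the numerical relations appearing in conditions (i) and (ii), together with the constraint that the relevant gaps be stacked at the $p$-th boxes of an initial segment of $k'$ columns. This analysis is what establishes Proposition \ref{p:multiplicity1A}: each admissible cancellation pattern is rigid, so $D$ is parametrized by the pair $(p,k')$ (or is a singleton when no such pair exists), the induced order is total, and each $\ell$-weight space is one-dimensional.

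The second step is to produce, in the reducible case, the decomposition $D=D_1\cup D_2$ with properties (a)--(c). I would take $D_2$ to be the set of dominant $\ell$-weights $\le\bs\lambda'$ (equivalently, those whose cancellation pattern involves at least one gap as above), $D_1$ its complement, and identify $\bs\lambda'$ explicitly as $\bs\omega\bs\omega^{T'}$ (case (i)) or $\bs\omega^{S'}\bs\varpi$ (case (ii)) with $T',S'$ as described in the statement. To show $\sum_{\bs\nu\in D_1}\bs\nu$ is part of $\qch(V_q(\bs\lambda))$ and $\sum_{\bs\nu\in D_2}\bs\nu$ is part of $\qch(V_q(\bs\lambda'))$, I would apply Proposition \ref{prop appears} (via $\chi_J$ for suitable subdiagrams $J$, together with the $\lie{sl}_2$-description \eqref{teo sl2 irred} for rank-one pieces), combined with the right-negativity machinery of Section \ref{ss:qcharprop}: Proposition \ref{prop right neg} controls which dominant $\ell$-weights can appear in $V_q(\bs\lambda)$ by bounding $r(\bs\mu)$, and the key point $\bs\lambda'\notin\wt_\ell(V_q(\bs\lambda))$ follows because the gap-induced factor $Y_{l,\bullet}^{-1}$ in $\bs\lambda'$ pushes $r(\bs\lambda')$ strictly above the value compatible with membership in $\wt_\ell(V_q(\bs\lambda))$. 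Conversely, in the irreducible case one checks that \emph{no} admissible cancellation pattern with a gap survives, so $D=D_1$ and $\sum_{\bs\nu\in D}\bs\nu$ lies entirely in $\qch(V_q(\bs\lambda))$, forcing $V=V_q(\bs\lambda)$.

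Finally, the length bound and the recovery of the original formulation of Theorem \ref{t:main}: once $D$ is understood, $V$ has at most $\#\{$irreducible constituents$\}$ bounded by the number of ``generators'' of $D$ not already in $\wt_\ell(V_q(\bs\lambda))$, which the analysis shows is at most one; hence $\ell(V)\le 2$. The translation between the $(r_i,r)$-form here and the $(s_i,s,b)$-form of Theorem \ref{t:main} is routine using $r=s-k+1$, $r_i=s_i-\lambda(h_i)+1$, and \eqref{e:rifromri0}; and the alternative characterizations in Remark \ref{r:tmain} (that the relevant equations say certain two-column $\ell$-weights are themselves minimal affinizations) give a useful sanity check and also the claim that (i) and (ii) are mutually exclusive.

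I expect the main obstacle to be the first step: proving that the only way a product $\bs\omega^{S'}\bs\omega^{T'}$ of two semi-standard tableaux can be dominant is through the very constrained ``stacked gaps in an initial segment'' patterns, and that each such pattern occurs with multiplicity one. The difficulty is combinatorial bookkeeping — tracking, across two interleaved staircase-shaped tableaux with different support parities a priori reconciled only when $b=aq^s$, exactly which negative $Y$-factors can be cancelled and ruling out all spurious cancellations. Lemma \ref{lem i-1 above i} and Remark \ref{r:gapsdontcancel} do most of the heavy lifting for gaps \emph{within} one tableau, but the cross-tableau cancellations require a genuinely new case analysis, organized by the position $p$ of the gap and the number $k'$ of affected columns, and this is where the bulk of the work in Sections \ref{ss:Jdom}--\ref{ss:sftpA} will go.
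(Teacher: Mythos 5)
Your overall architecture matches the paper's: describe $D=\wt_\ell(V)\cap\mathcal P^+$ via products of semi-standard tableaux, establish Proposition \ref{p:multiplicity1A}, split $D=D_1\cup D_2$, and distribute the two pieces between $\qch(V_q(\bs\lambda))$ and $\qch(V_q(\bs\lambda'))$ using Proposition \ref{prop appears} and the $\lie{sl}_2$ factorization \eqref{teo sl2 irred}. However, there is a genuine gap at the single most delicate step, namely property (b): showing $\bs\lambda'\notin\wt_\ell(V_q(\bs\lambda))$. You propose to get this from right-negativity, arguing that the gap-induced factor $Y_{l,\bullet}^{-1}$ in $\bs\lambda'$ pushes $r(\bs\lambda')$ above the value compatible with membership in $\wt_\ell(V_q(\bs\lambda))$. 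This cannot work: $\bs\lambda'$ is a \emph{dominant} $\ell$-weight, hence not right negative at all, and Proposition \ref{prop right neg} applies only to Kirillov-Reshetikhin modules, not to $V_q(\bs\lambda)$. More to the point, there is no a priori upper bound on $r(\bs\mu)$ for $\bs\mu\in\wt_\ell(V_q(\bs\lambda))$ that a dominant weight could violate; deciding whether the lower dominant weight $\bs\lambda'$ occurs in $V_q(\bs\lambda)$ is precisely the question of whether $V_q(\bs\lambda)$ is $\ell$-minuscule, which is part of what is being proved, so the argument as stated is circular. The paper's actual mechanism is quite different: it factors $\bs\lambda$ as a product of Drinfeld polynomials of two (case (i)) or three (case (ii)) minimal affinizations, observes that $V_q(\bs\lambda)$ is a constituent of the corresponding tensor product, writes $\bs\lambda'=\bs\lambda\prod_{\xi\in\Xi}A_\xi^{-1}$, and shows via the multiplicative independence \eqref{e:lrootsind} and a tableau analysis that no partition of $\Xi$ among the tensor factors is realizable (Lemma \ref{l:tpfactorsA<}(b) and Lemma \ref{lem lweight in A}(c)). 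Some such auxiliary construction is indispensable and is absent from your plan.

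A secondary inaccuracy: your definition of $D_2$ as the set of dominant $\ell$-weights $\le\bs\lambda'$ fails in case (i). There, below $\bs\lambda'$, the chain $D$ interlaces two families, $\bs\omega^{S_{c,c+m-k',p}}\bs\omega^{T_{m,p}}$ and $\bs\omega^{S_{c,c+m-k'-1,p}}\bs\omega^{T_{m,p}}$ for $m\ge k'$, and only the second lies in $\wt_\ell(V_q(\bs\lambda'))$; the first, although strictly below $\bs\lambda'$, belongs to $\wt_\ell(V_q(\bs\lambda))$ (Lemma \ref{lem lweight in A}, parts (d) and (e)). The correct $D_2$ is $D\cap\wt_\ell(V_q(\bs\lambda'))$, which must be identified through the $\chi_J$-chain arguments rather than read off from the partial order.
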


\begin{rem}
The formulas for $\bs\lambda'$ as given in Theorem \ref{t:main}, or
rather in terms of the elements defined in \eqref{eq Y A} and
\eqref{e:A3index}, will be obtained as we develop the proof of
Theorem \ref{t:mainr}. Since we will work with this rephrasing of
Theorem \ref{t:main}, we no longer maintain $s$ and $s_i$ fixed as
in Theorem \ref{t:main} and allow ourselves to use these symbols for
additional local parameters in the several steps of the
proof.\endd
\end{rem}

\subsection{Fail of Dominance at $n$ for Increasing Minimal Affinizations}\label{ss:Jdom}
As the first step towards describing the set $D$, we describe the elements of $\wt_\ell(V_q(\bs\omega))$ which are $J$-dominant where
\begin{equation*}
J = I\setminus\{n\}.
\end{equation*}
Recall the definition of the map $\pi_J$ in Section \ref{ss:diagsub} and the definition of right negative elements in Section \ref{ss:qcharprop}.

\begin{lem}\label{l:nrnA}
If $\bs\nu\in\wt_\ell(V_q(\bs\omega))$ is not right negative, then
$r(\bs\nu)=r_{i_0}+2(\lambda(h_{i_0})-1)$ and
$\pi_{i_0}(\bs\nu)=Y_{i_0,r_{i_0},\lambda(h_{i_0})}$.
\end{lem}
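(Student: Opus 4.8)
The plan is to exploit the two combinatorial facts we already have in hand: the tableau description of $\qch(V_q(\bs\omega))$ from Theorem \ref{t:qcharmin} (here $\bs\omega$ is an increasing minimal affinization, so $S=(T^n,\dots,T^1)$ is the staircase tableau of \eqref{rem tableau stairs}), and the right-negativity machinery of Section \ref{ss:qcharprop}, in particular the notion of $r(\bs\nu)$. First I would take $\bs\nu\in\wt_\ell(V_q(\bs\omega))$, write $\bs\nu=\bs\omega^{S'}$ for some $S'\in\stab(S)$, and ask what it means for $\bs\nu$ not to be right negative: some $Y_{i,r(\bs\nu)}$ with positive exponent survives in $\bs\omega^{\bs\nu}$. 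Using \eqref{e:AfromYs}, the boxes of $S'$ contribute $Y$'s of largest support precisely from the boxes with the largest support value, namely the boxes in the top row of the staircase; since $S$ is increasing and semi-standard, the relevant support is the one attached to the first (leftmost, longest) column $T_1^{i_0}$, whose last box sits at support $r_{i_0}+2(\lambda(h_{i_0})-1)-i_0+1$ and whose $Y$-contribution (from $\ffbox{i}_{\:s}=Y_{i-1,s+i}^{-1}Y_{i,s+i-1}$, see the paragraph after \eqref{e:qchstrep}) has top support $r_{i_0}+2(\lambda(h_{i_0})-1)$.

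Next I would argue that for this maximal support to contribute a \emph{positive} power of some $Y_{j,s}$ that is not cancelled, the box configuration in $S'$ at the extreme diagonal must be exactly as in $S$ itself — i.e. $S'$ cannot have altered the contents of the boxes feeding into support $r_{i_0}+2(\lambda(h_{i_0})-1)$. Concretely, the only $Y$ of that support comes from the last box of the first column of $T^{i_0}$ having content $i_0$; any semi-standard modification that changes this content would replace $Y_{i_0,\,r_{i_0}+2(\lambda(h_{i_0})-1)}$ by a negative power $Y_{i_0-1,\cdot}^{-1}$ of the same or larger support (this is the elementary move \eqref{e:rem action tableau A} with $i=i_0$), forcing $\bs\nu$ to be right negative — contradiction. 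Hence $S'$ agrees with $S$ on all boxes whose support is $\ge r_{i_0}+2(\lambda(h_{i_0})-1)$, which pins down $r(\bs\nu)=r_{i_0}+2(\lambda(h_{i_0})-1)$ and also forces the $i_0$-component: all boxes of $S$ with content $i_0$ lie in the columns $T_j^{i_0}$, and Lemma \ref{lem i-1 above i} guarantees these columns (hence the contents $i_0$ and the $Y_{i_0,\cdot}$'s they produce) are untouched and uncancelled, giving $\pi_{i_0}(\bs\nu)=\prod_j Y_{i_0,\,r_{i_0}+2(\lambda(h_{i_0})-j)}=Y_{i_0,r_{i_0},\lambda(h_{i_0})}$ by \eqref{e:colofmina}.

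The main obstacle I anticipate is the bookkeeping showing that \emph{no} cancellation across columns can conspire to let a box of support strictly larger than $r_{i_0}+2(\lambda(h_{i_0})-1)$ survive while still leaving a surviving positive $Y$ — i.e., ruling out right-negativity ``from above'' as well. For this I would lean on Lemma \ref{lem i-1 above i}(b),(c) (and Remark \ref{r:gapsdontcancel}): changing a content in $S'$ away from the staircase pattern introduces a gap, and by Lemma \ref{lem gap} the associated $Y^{-1}$ at the higher support is not cancellable by other columns, so any such change only raises $r(\bs\nu)$ while contributing a negative power at the new top support — i.e. it makes $\bs\nu$ right negative. So the ``not right negative'' hypothesis forces $S'$ to coincide with $S$ precisely in the region that controls $r(\bs\nu)$ and $\pi_{i_0}(\bs\nu)$, which is exactly the assertion. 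Finally, I would double-check the degenerate case $i_0=n$ (where $J=I\setminus\{n\}$ and $\lambda(h_n)=\lambda(h_{i_0})$) separately, though it is subsumed by the same argument.
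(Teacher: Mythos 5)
Your overall strategy coincides with the paper's (represent $\bs\nu=\bs\omega^{S'}$ with $S'\in\stab(S)$, use Lemma \ref{lem gap} and Remark \ref{r:gapsdontcancel} to locate surviving negative powers coming from gaps, and conclude that the first column must be untouched), but the step you flag as "the main obstacle" is exactly where your argument breaks. Your key claim --- that changing any content of $S$ "only raises $r(\bs\nu)$ while contributing a negative power at the new top support, i.e.\ it makes $\bs\nu$ right negative" --- is false. A gap in a column far to the right produces an uncancellable $Y^{-1}_{i,t}$, but its support $t$ is computed from that column's (much lower) support and can lie strictly below $r_{i_0}+2(\lambda(h_{i_0})-1)$; in that case $r(\bs\nu)$ is unchanged, $Y_{i_0,r(\bs\nu)}$ still appears, and $\bs\nu$ is not right negative even though $S'\ne S$. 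Indeed, if your claim were true it would show that $\bs\omega$ is the \emph{only} non-right-negative $\ell$-weight of $V_q(\bs\omega)$, which is not what the lemma asserts and is not true for a general increasing minimal affinization (only for KR modules, cf.\ Proposition \ref{prop right neg}). So you cannot argue box-by-box that every modification is fatal; you must isolate why a gap in the \emph{first} column specifically is fatal while gaps in later columns need not be.

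The missing comparison is the left-to-right monotonicity of column tops, which is how the paper closes the argument: let $l$ be the smallest index of a column of $S'$ with a gap; its top support is realized by an uncancellable negative power, so $r(\bs\nu)\ge r(\bs\omega^{S'_l})$, and since every gapped column has its own top realized by a negative power, the positive $Y_{j,r(\bs\nu)}$ required by non-right-negativity can only come from a \emph{gap-free} column $l'$ with $r(\bs\omega^{S'_{l'}})>r(\bs\omega^{S'_l})$. Because gap-free columns have strictly decreasing tops from left to right (and a gap only raises a column's top above its gap-free value), any such $l'$ must satisfy $l'<l$; hence $l\ge 2$, the first column is gap-free, and $r(\bs\nu)=r(\bs\omega^{S'_1})=r_{i_0}+2(\lambda(h_{i_0})-1)$. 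Note also that "not cancellable" from Remark \ref{r:gapsdontcancel} is weaker than what right-negativity requires: you must exclude a positive $Y_{j,t}$ at the top support $t$ for \emph{every} index $j$, not just the index carried by the surviving $Y^{-1}_{i,t}$, and that exclusion is precisely what the gapped-versus-gap-free comparison provides. Your final step is fine: once the first column is gap-free, the diagonal condition for semi-standardness (rather than Lemma \ref{lem i-1 above i}, which you cite) forces all columns of length $i_0$ to be gap-free, giving $\pi_{i_0}(\bs\nu)=Y_{i_0,r_{i_0},\lambda(h_{i_0})}$.
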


\begin{proof}
The statement is clear if $\bs\nu = \bs\omega$. Suppose
$\bs\nu\ne\bs\omega$ and write $\bs\nu=\bs\omega^{S'}$ with
$S'\in\stab(S)$. Since $S'\ne S$, there exists $1\le l\le|\lambda|$
such that the $l$-th column $S'_l$ of $S'$ has a gap. Assume $l$ is
the smallest such index. It follows from Lemma \ref{lem gap} that
$Y_{i,r(\bs\omega^{S_l'})}^{-1}$ appears in $\bs\omega^{S'_l}$ for
some $i\in I$. By Lemma \ref{lem i-1 above i},
$Y_{i,r(\bs\omega^{S_l'})}$ does not appear in
$\bs\omega^{S'\setminus S'_l}$, where $S'\setminus S'_l$ is the
tableaux obtained from $S'$ by removing its $l$-th column. Hence,
$Y_{i,r(\bs\omega^{S_l'})}^{-1}$ appears in $\bs\nu$. Since $\bs\nu$
is not right negative, there must exist $1\le l'\le|\lambda|$ such
that $S'_{l'}$ is gap-free and
$r(\bs\omega^{S_{l'}'})>r(\bs\omega^{S_l'})$. One easily checks
that, if $l'>l$ and $S'_{l'}$ has no gaps, then
$r(\bs\omega^{S_{l'}'})<r(\bs\omega^{S_l'})$. Therefore, we must
have $1\le l'<l$ and, since both  $S'_1$ and $S'_{l'}$ are gap-free,
it follows that $r(\bs\omega^{S_{1}'})>r(\bs\omega^{S_{l'}'})$ which
proves the first statement of the lemma. Since $S'$ is semi-standard
and the first column is gap-free, all columns of length $i_0$ must
also be gap-fee which implies the second statement.
\end{proof}

Henceforth, we denote by $l_j$  the length of the $j$-th column of
$S.$ Given $1\le c\le f\le |\lambda|$ and  $0< p\le n+1$, consider
the tableau $S_{c,f,p}$ which is the unique tableau in $\stab(S)$ satisfying:
\begin{enumerate}[(1)]
\item each column has at most one gap, necessarily at its last box;
\item if $j<c$ or $j>f$, the $j$-th column of $S_{c,f,p}$ does not have a gap;
\item  if $c\le j\le f$, the content of the last box of the $j$-th column is $\max\{l_j,p\}$.
\end{enumerate}
For notational convenience, we set
$S_{c,f,p}=S_{c,|\lambda|,p}$ for $f>|\lambda|$ and $S_{c,f,p}=S$
for $f<c$ as well as for $c>|\lambda|$ and for $p=0$.  Note that the
support of the last box of the $c$-th column of $S$ is
\begin{equation*}
s_c = r_{l_c} + 2(\: _{l_c}|\lambda|-c\:)-l_c+1
\end{equation*}
Then,  Lemma \ref{lem gap} implies that
\begin{align}
\notag
\bs\omega^{S_{c,f,p}} = &
\left( \prod_{j=0}^{f-c} Y_{l_{c+j}-1,s_c-2j+l_{c+j}}\ Y_{p-1,s_c+p-2j}^{-1}\  Y_{p,s_c+p-2j-1} \right)
\left(\prod_{i= l_c+1}^{n} Y_{i,r_i,\lambda(h_i)}\right)
\\ \label{e:omegaScfpgen} \hfill\\\notag &\times
\left(\prod_{i=1}^{l_f-1} Y_{i,r_i,\lambda(h_i)}\right)
\left( \prod_{\substack{j<c:\\ l_j=l_c}} Y_{l_c,r_{l_c}+2(\: _{l_c}|\lambda|-j\:)} \right)
\left( \prod_{\substack{j>f:\\ l_j=l_f}} Y_{l_f,r_{l_f}+2(\: _{l_f}|\lambda|-j\:)} \right).
\end{align}
The terms in the first parenthesis of \eqref{e:omegaScfpgen} are the
ones corresponding to the columns of $S_{c,f,p}$ which are not equal
to those of $S$, while the remaining terms come from the columns
which were not modified. By Remark \ref{r:gapsdontcancel}, there are
no cancelations in \eqref{e:omegaScfpgen}.
Here is the main result of this subsection.

\begin{prop}\label{prop 1,..,n-1-dom}
The $J$-dominant elements of
$\wt_\ell(V_q(\bs\omega))$ are $\bs\omega^{S_{1,f,n+1}}$, $0\le f\le |\lambda|$.
\end{prop}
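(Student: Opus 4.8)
The plan is to combine Lemma \ref{l:nrnA} with a descent argument using the $J$-dominance criterion from Proposition \ref{prop appears}. First I would establish that each $\bs\omega^{S_{1,f,n+1}}$, $0\le f\le|\lambda|$, is indeed $J$-dominant: by the explicit formula \eqref{e:omegaScfpgen} with $c=1$, $p=n+1$, all the ``gap'' terms $Y_{p-1,\cdot}^{-1}=Y_{n,\cdot}^{-1}$ contribute only to the $n$-th component, and the entries $Y_{p,\cdot}=Y_{n+1,\cdot}=1$ vanish by the convention $Y_{n+1,r}=1$; so the negative powers in $\pi_J(\bs\omega^{S_{1,f,n+1}})$ all come from gaps of columns whose last box has content $l_{c+j}$ with $l_{c+j}\le n$, and these are of the form $Y_{l_{c+j}-1,\cdot}$ — but by semi-standardness and Lemma \ref{lem gap} (last statement) the paired positive power $Y_{l_{c+j},\cdot}$ also appears, and a direct check that the supports match shows $\pi_J$ of the product is dominant. (I'd write this out carefully via \eqref{e:omegaScfpgen}; this is bookkeeping, not conceptual.)

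Next I would prove the converse: if $\bs\nu\in\wt_\ell(V_q(\bs\omega))$ is $J$-dominant, then $\bs\nu=\bs\omega^{S_{1,f,n+1}}$ for some $f$. Write $\bs\nu=\bs\omega^{S'}$ with $S'\in\stab(S)$. The key structural input is Lemma \ref{lem i-1 above i}, which says the contribution of a gap at the $j$-th box of a column is not cancelled by any other column. A gap at the $j$-th box of a column, with $T(j-1)_1=l_1<l_2-1=T(j)_1-1$, produces (by Lemma \ref{lem gap}) a factor $Y^{-1}_{l_2-1,\cdot}$; for $\pi_J(\bs\nu)$ to be dominant this negative power must be cancelled, hence (since it is not cancelled by other columns) we need $l_2-1=n$, i.e. $l_2=n+1$, i.e. the gap is at the last box of the column and jumps all the way to content $n+1$. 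This forces every column of $S'$ to be of the type allowed in the definition of $S_{c,f,p}$ with $p\in\{$its own length$\}\cup\{n+1\}$. Then semi-standardness (condition (2) in the definition of semi-standard, controlling diagonals) forces the columns whose last box has content $n+1$ to be an initial segment $1,\dots,f$ of the columns: if the $j$-th column has its last box raised to $n+1$ and $j'>j$, one checks via the diagonal condition that the $j'$-th column must also have been raised. This is exactly the defining property of $S_{1,f,n+1}$.

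I also need to rule out the remaining way $\pi_J(\bs\nu)$ could fail to be the predicted shape: a column could in principle have its last box raised only partway (to some $p$ with $l_j<p<n+1$), but then a factor $Y_{p-1,\cdot}^{-1}$ with $p-1\in J$ survives in $\pi_J(\bs\nu)$ uncancelled (again by Lemma \ref{lem i-1 above i}), contradicting $J$-dominance — unless $p-1$ equals the content $l_j-1$ of the original last-but-one box, i.e. $p=l_j$ and there is no gap at all. Combining: every column either has no gap or has a single gap at its last box to content $n+1$, and the latter columns form an initial segment. The last thing to verify is that there are no \emph{other} negative powers in $\pi_J(\bs\nu)$ beyond those from gaps — i.e. that the non-modified columns contribute only dominant terms to $\pi_J$ — which is immediate since a gap-free increasing column of the form \eqref{e:coldomtab} has $\bs\omega^T=Y_{i,\cdot}$, a single fundamental $\ell$-weight, and the terms in the second line of \eqref{e:omegaScfpgen} are manifestly of this form. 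The main obstacle I anticipate is the combinatorial argument forcing the raised columns to be an initial segment; I expect this to follow cleanly from condition (2) of semi-standardness together with the ``staircase'' picture \eqref{rem tableau stairs}, much as in the proof of Lemma \ref{lem i-1 above i}, but it requires care to handle the interaction of the diagonal constraint with columns of varying length.
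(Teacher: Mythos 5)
Your proposal is correct and follows essentially the same route as the paper: the forward direction is read off from \eqref{e:omegaScfpgen}, and the converse uses Lemma \ref{lem gap} together with the non-cancellation of gap contributions (Remark \ref{r:gapsdontcancel}) to force every gap to sit at the last box of its column with content $n+1$, after which semi-standardness makes the raised columns an initial segment. The only difference is that you spell out the final initial-segment step, which the paper dismisses as immediate.
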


\begin{proof}
It is clear from  \eqref{e:omegaScfpgen} that
$\bs\omega^{S_{1,f,n+1}}$ is $J$-dominant for all $0\le f\le
|\lambda|$. For the converse, by Theorem \ref{t:qcharmin}, any element of
$\wt_\ell(V_q(\bs\omega))$ can be represented by an element of
$\stab(S)$. Let $S'\in\stab(S)$ and suppose its $l$-th column has a gap in a box whose content is $i$. By Lemma \ref{lem gap} and Remark \ref{r:gapsdontcancel}, it follows that $Y^{-1}_{i-1,r}$ appears in $\bs{\omega}^{S'}$ and, hence, $i=n+1$ and the gap must be in the last box of the column. 
This immediately implies that $S'= S_{1,f,n+1}$  where $f$ is the number of the last column having $n+1$ as the content of its last box.
\end{proof}

We shall need some extra information about the elements
$\bs\omega^{S_{c,f,p}}$. Given $1\le j\le |\lambda|$, set
\begin{equation}\label{e:jisdjoflj}
d_j= j-\ _{l_j+1}|\lambda|.
\end{equation}
Thus, the $j$-th column is the $d_j$-th one of length $l_j$.
Notice that, by definition of $S_{c,f,p}$ and \eqref{e:rem action tableau A}, we have
\begin{equation}\label{e:Scfpgodown}
\bs\omega^{S_{c,f,p}} = \bs\omega^{S_{c,f-1,p}}\
A^{-1}_{l_f,p-1,r_{l_f}+2(\lambda(h_{l_f})-d_f)}
\end{equation}
for all $1\le c\le f\le |\lambda|, l_c< p\le n+1$. Iterating this,
we get
\begin{equation}\label{e:ScfpfromSbyAs}
\bs\omega^{S_{c,f,p}}=\bs\omega \left(\prod_{j=c}^{f}A^{-1}_{l_j,p-1,r_{l_j}+2(\lambda(h_{l_j})-d_j)}\right).
\end{equation}

\begin{rem}
Note that the element $S'$ in Theorem \ref{t:mainr} is $S_{1,k',n+1}$. The formula for $\bs\lambda'$ in case (ii) of Theorem \ref{t:main} is then easily deduced from \eqref{e:ScfpfromSbyAs}.\endd
\end{rem}

The following formulas are easily obtained from \eqref{e:omegaScfpgen} (note that, if $l_f=n$, then $i_0=n$).
\begin{equation}\label{eq S1,j,n+1}
\pi_n(\bs\omega^{S_{1,f,n+1}}) =
\begin{cases}
Y^{-1}_{n,r_{i_0}+2(\lambda(h_{i_0})-f+1)+n-i_0,f}, & \text{ if } l_f<n,\\
Y_{n, r_n,\lambda(h_n)-f}\ Y^{-1}_{n,r_n+2(\lambda(h_n)-f+1),f}, &
\text{ if } l_f=n.
\end{cases}
\end{equation}
Let $\bs\eta$ be so that \eqref{e:ScfpfromSbyAs} reads $\bs\omega^{S_{c,f,p}}=\bs\omega\bs\eta$. A careful inspection of \eqref{e:ScfpfromSbyAs} and \eqref{e:A3index} shows that $\bs\eta$ can be written in the form
\begin{equation}\label{e:Scfpeta}
\bs\eta= \prod_{\xi\in\Xi}A_\xi^{-1} \quad\text{for some} \quad \Xi=\Xi_{c,f,p}\subseteq I\times \Z.
\end{equation}
Moreover, \eqref{e:lrootsind} implies that such $\Xi$ is unique. Notice also that, if
$$s=\min\{t:(i,t)\in\Xi\text{ for some } i\}$$
(such minimum is afforded by the pair $(l_f,r_{l_f}+2(\lambda(h_{l_f})-d_f)+1)$),
then $s-l_f$ is the support of the last box of the $f$-th column of $S$. Moreover, if $c=\min\{j:l_j<p\}$, then
\begin{equation}\label{e:xi_0}
\max\{i:(i,t)\in \Xi \text{ for some } t\}=p-1, 
\end{equation}
\begin{equation*}
(p-1,s+p-l_f-1)\in\Xi, \qquad\text{and}\qquad (i,s+p-l_f-1)\notin\Xi \quad\text{for}\quad i\ne p-1.
\end{equation*}

\subsection{Dominant $\ell$-Weights in $V$}\label{ss:domtpA}
We now give the description of $D$ and prove Proposition \ref{p:multiplicity1A}. As in Section \ref{ss:Jdom}, we set $J=I\setminus\{n\}$. We will consider separately the following two subcases
\begin{equation}\label{eq ri<rn}
r_{i_0}+2\lambda(h_{i_0})\le r+2k
\end{equation}
and
\begin{equation}\label{eq rn<ri}
r+2k \le r_{i_0}+2\lambda(h_{i_0}).
\end{equation}
Assume first that \eqref{eq ri<rn} holds.

\begin{lem}\label{lema dominant product}
The elements of $D$ are of the form $\bs\nu\bs\varpi$ with
$\bs\nu\in\wt_\ell(V_q(\bs\omega))$.
\end{lem}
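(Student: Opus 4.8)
The plan is to combine the multiplicativity of qcharacters under tensor products (Proposition \ref{p:qchtp}) with right negativity, the point being that the standing hypothesis \eqref{eq ri<rn} translates into a comparison of the largest supports occurring in the two tensor factors. First I would record two numerical facts. Since $\bs\omega$ is dominant it is not right negative, so Lemma \ref{l:nrnA} applied to $\bs\omega$ itself gives $r(\bs\omega)=r_{i_0}+2(\lambda(h_{i_0})-1)$; together with \eqref{eq ri<rn} this yields $r(\bs\omega)=r_{i_0}+2\lambda(h_{i_0})-2\le r+2k-2$. On the other hand, by Proposition \ref{p:qchtp} and the nonnegativity of the coefficients of a qcharacter, every $\bs\pi\in\wt_\ell(V)$ can be written as $\bs\pi=\bs\mu\,\bs\sigma$ with $\bs\mu\in\wt_\ell(V_q(\bs\omega))$ and $\bs\sigma\in\wt_\ell(V_q(\bs\varpi))$, and by Example \ref{ex l-weights KR n} we have $\bs\sigma=\bs\varpi_{\bs j}$ for a unique $\bs j=(j_1\ge\cdots\ge j_k)\in J(n,k)$, with $\bs\varpi_{\bs 0}=\bs\varpi$. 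Thus the lemma reduces to the claim that if $\bs\pi=\bs\mu\,\bs\varpi_{\bs j}\in\mathcal P^+$ then $\bs j=\bs 0$.

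To prove this claim I would argue by contradiction, assuming $j_1>0$, so that $\bs\varpi_{\bs j}\ne\bs\varpi$. Then $\bs\varpi_{\bs j}$ is right negative (Proposition \ref{prop right neg}) and, by \eqref{eq r(varpi_j)}, $r(\bs\varpi_{\bs j})=r+2(k-1)+j_1+1\ge r+2k>r(\bs\omega)$. The contradiction is then extracted at the top support. Since $\bs\varpi_{\bs j}$ is right negative, only factors $Y_{i,\cdot}^{-1}$ appear in it at level $r(\bs\varpi_{\bs j})$, and at least one does; for $\bs\pi$ to lie in $\mathcal P^+$ every such factor must be cancelled by a factor $Y_{i,r(\bs\varpi_{\bs j})}$ coming from $\bs\mu$, whence $r(\bs\mu)\ge r(\bs\varpi_{\bs j})>r(\bs\omega)$. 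By the contrapositive of Lemma \ref{l:nrnA}, $\bs\mu$ is therefore itself right negative. But then only factors $Y_{i,\cdot}^{-1}$ occur in $\bs\mu$ at level $r(\bs\mu)$, at least one of them, while $\bs\varpi_{\bs j}$ contributes at that level either nothing (if $r(\bs\mu)>r(\bs\varpi_{\bs j})$) or again only factors $Y_{i,\cdot}^{-1}$ (if $r(\bs\mu)=r(\bs\varpi_{\bs j})$); in either case $\bs\pi=\bs\mu\,\bs\varpi_{\bs j}$ retains a non-cancelled factor $Y_{i,r(\bs\mu)}^{-1}$, contradicting $\bs\pi\in\mathcal P^+$. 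Hence $\bs j=\bs 0$ and $\bs\pi=\bs\mu\,\bs\varpi$ with $\bs\mu\in\wt_\ell(V_q(\bs\omega))$, which is what the lemma asserts.

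I do not expect a genuine obstacle here: once the inequality $r(\bs\omega)<r(\bs\varpi_{\bs j})$ is available --- and this is exactly where \eqref{eq ri<rn} is used --- the rest is elementary bookkeeping with right negativity. The only point requiring a little care is to check that neither $\bs\varpi_{\bs j}$ nor $\bs\mu$ can carry a positive factor at the relevant top support, and this is precisely what right negativity, together with the dichotomy provided by Lemma \ref{l:nrnA}, guarantees.
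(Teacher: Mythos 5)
Your proof is correct and follows essentially the same route as the paper's: decompose a dominant $\ell$-weight of $V$ via Proposition \ref{p:qchtp}, use Proposition \ref{prop right neg} and \eqref{eq r(varpi_j)} to locate the top support of a non-highest $\ell$-weight of the KR factor, and then use Lemma \ref{l:nrnA} together with \eqref{eq ri<rn} to force an uncancelled negative factor at that support. The paper phrases the final contradiction as ``the product of two right negative $\ell$-weights is right negative,'' whereas you track the surviving $Y_{i,r(\cdot)}^{-1}$ directly, but the ingredients and logic are the same.
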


\begin{proof}
Let $\bs\nu\in\wt_\ell(V_q(\bs\omega))$ and
$\bs\mu\in\wt_\ell(V_q(\bs\varpi))$ be such that
$\bs\nu\bs\mu\in\cal P^+$. In particular, $\bs\nu\bs\mu$ is not
right negative. Suppose by contradiction that $\bs\mu\ne \bs\varpi$.
Then, by Proposition \ref{prop right neg}, $\bs\mu$ is right
negative and it follows from \eqref{eq r(varpi_j)} that
$$r(\bs\mu)=r+2(k-1)+j+1$$
for some $1\le j\le n$. Since the product of right negative elements
is again right negative, $\bs\nu$ is not right negative and Lemma
\ref{l:nrnA} implies that
\begin{equation*}
r(\bs\nu)=r_{i_0}+2(\lambda(h_{i_0})-1).
\end{equation*}
Together with \eqref{eq ri<rn}, this implies that
$r(\bs\nu)<r(\bs\mu)$. It then follows that $\bs\nu\bs\mu$ is right
negative, yielding the desired contradiction.
\end{proof}

Note that, since $V_q(\bs\omega)$ is thin by Theorem \ref{t:qcharmin}, Lemma
\ref{lema dominant product} and Proposition \ref{p:qchtp} imply the second statement of
Proposition \ref{p:multiplicity1A} in the present case. We shall now
prove that
\begin{equation}\label{eq D}
D=\{\bs\omega^{S_{1,f,n+1}}\bs\varpi:f=0,1,\ldots,k'\}
\end{equation}
where $0\le k'\le k$ is either zero or given by the following relation (cf. condition (ii) in Theorem \ref{t:mainr}):
\begin{equation}\label{e:k'i0}
r_{i_0}+2\lambda(h_{i_0})+n-i_0+2 = r+2k'.
\end{equation}
Indeed, since the elements in $D$ are of the form $\bs\nu\bs\varpi$
with $\bs\nu\in \wt_{\ell}(V_q(\bs\omega))$, it follows that
$\bs\nu$ must be $J$-dominant and, hence, by
Proposition \ref{prop 1,..,n-1-dom}, we must have
$\bs\nu=\bs\omega^{S_{1,f,n+1}}$ for some $0\le f\le |\lambda|$. It
now easily follows from \eqref{eq S1,j,n+1} that $\bs\omega^{S_{1,f,n+1}}\bs\varpi \in\mathcal
P^+$ if and only if $0\le f\le k'\le k$.

Notice that the first statement of Proposition \ref{p:multiplicity1A} follows easily from
\eqref{eq D} and \eqref{e:Scfpgodown}, which completes the proof of Proposition \ref{p:multiplicity1A} when \eqref{eq ri<rn} holds.  Before moving on, notice that equality in \eqref{eq ri<rn} implies that there is no $k'>0$ satisfying \eqref{e:k'i0}. Indeed, if such $k'$ existed, we would have
\begin{equation}\label{e:inoii}
2k' = r_{i_0}+2\lambda(h_{i_0})+n-i_0+2 - r  \stackrel{\eqref{eq ri<rn}}{=} n-i_0+2+2k > 2k
\end{equation}
which contradicts $k'\le k$. This shows that conditions (i) and (ii) of Theorem \ref{t:main} cannot be simultaneously satisfied.

From now on till the end of this subsection, assume \eqref{eq rn<ri} holds.

\begin{lem}\label{lem no dom of form}
If $\bs\mu\in \wt_\ell(V_q(\bs\omega))$ is such that
$\bs\mu\bs\varpi\in D$, then $\bs\mu=\bs\omega$.
\end{lem}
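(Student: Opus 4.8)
The plan is to argue by contradiction, in the same spirit as Lemma \ref{lema dominant product}, but now using that \eqref{eq rn<ri} forces the roles of $\bs\omega$ and $\bs\varpi$ to be reversed. Suppose $\bs\mu\in\wt_\ell(V_q(\bs\omega))$ with $\bs\mu\bs\varpi\in D$, and assume $\bs\mu\neq\bs\omega$. First I would observe that, since $\bs\mu$ is obtained from $\bs\omega=\bs\omega^S$ by a nontrivial element of $\stab(S)$, Lemma \ref{lem gap}, Lemma \ref{lem i-1 above i} and Remark \ref{r:gapsdontcancel} show that $\bs\mu$ is right negative with
$$r(\bs\mu)\le r_{i_0}+2(\lambda(h_{i_0})-1)<r_{i_0}+2\lambda(h_{i_0}).$$
Moreover, the box realizing this maximal support produces a factor $Y_{i,r(\bs\mu)}^{-1}$ for some $i\in I$ which, by Lemma \ref{lem i-1 above i}, is not cancelled within $\bs\omega^{S'}$, and which is the strictly largest support appearing in $\bs\mu$ among those contributing negatively and not matched by a positive $Y_{\cdot,r(\bs\mu)}$.

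Next, since $\bs\mu\bs\varpi\in\cal P^+$, this factor $Y_{i,r(\bs\mu)}^{-1}$ must be cancelled by a factor $Y_{i,r(\bs\mu)}$ coming from $\bs\varpi$. But $\bs\varpi=Y_{n,r,k}$, so the only $Y_{j,s}$ appearing in $\bs\varpi$ (with positive exponent) have $j=n$ and $s\in\{r,r+2,\dots,r+2(k-1)\}$, and in particular $s\le r+2(k-1)<r+2k$. Hence we would need $i=n$ and $r(\bs\mu)=s\le r+2(k-1)$. On the other hand, I would use \eqref{eq rn<ri}, namely $r+2k\le r_{i_0}+2\lambda(h_{i_0})$, to compare supports: every other factor $Y_{j,t}^{\pm1}$ appearing in $\bs\varpi$ has $t\le r+2(k-1)$, so $r(\bs\varpi_{\bs j})$-type considerations together with the structure of $\bs\mu$ pin down that the only chance for $\bs\mu\bs\varpi$ to be dominant is for all negative factors of $\bs\mu$ at support $>r+2(k-1)$ to be absent — which, since $\bs\mu\ne\bs\omega$, I expect to be incompatible with $\bs\mu$ being a nontrivial $\ell$-weight of $V_q(\bs\omega)$: the largest gap in $S'$ sits in a column of $S$ whose last box has support determined by \eqref{e:rifromri0}, and one computes that this support is $\ge$ some explicit bound exceeding $r+2(k-1)$ under \eqref{eq rn<ri}. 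This yields the contradiction and forces $\bs\mu=\bs\omega$.

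The main obstacle I anticipate is the bookkeeping in the last step: one must show that \emph{any} nontrivial $S'\in\stab(S)$ produces an uncancelled negative factor $Y_{i,t}^{-1}$ with $t$ too large to be absorbed by $Y_{n,r,k}$ under hypothesis \eqref{eq rn<ri}. This requires using the semi-standardness of $S$ (the "staircase" shape \eqref{rem tableau stairs}) to locate the leftmost gapped column and to control the support of its last box via \eqref{e:rifromri0}, and then checking the inequality against $r+2(k-1)$. I would handle this by reducing to the leftmost column with a gap, as in the proof of Lemma \ref{l:nrnA}, where the relevant support was already identified as $r_{i_0}+2(\lambda(h_{i_0})-1)$; combined with \eqref{eq rn<ri} this is $\ge r+2(k-1) - (\text{something nonnegative})$, and a short case analysis on whether the gap lies in a column of length $i_0$ or a shorter one closes the argument. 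Everything else is a direct application of right negativity (Section \ref{ss:qcharprop}) and the tableau lemmas of Section \ref{ss:combtab}.
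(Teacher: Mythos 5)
Your overall strategy (produce an uncancellable negative factor of $\bs\mu$ whose support exceeds everything $\bs\varpi=Y_{n,r,k}$ can supply) is the right idea, but as written the proposal has a genuine gap in two places. First, the opening claim that any $\bs\mu=\bs\omega^{S'}$ with $S'\in\stab(S)\setminus\{S\}$ is right negative with $r(\bs\mu)\le r_{i_0}+2(\lambda(h_{i_0})-1)$ is unjustified and, for the inequality, false: non-highest $\ell$-weights of a minimal affinization need not be right negative (this is exactly the situation Lemma \ref{l:nrnA} exists to handle), and a gap in a column produces a factor $Y_{i,t}^{-1}$ with $t$ typically \emph{larger} than $r_{i_0}+2(\lambda(h_{i_0})-1)$ --- e.g.\ for $\lie g$ of type $A_2$ and $\bs\omega=Y_{1,0}$, the $\ell$-weight $Y_{1,2}^{-1}Y_{2,1}$ has $r(\bs\mu)=2>0=r_{i_0}+2(\lambda(h_{i_0})-1)$. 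Second, the decisive step --- that \emph{every} nontrivial $S'$ whose product with $\bs\varpi$ could be dominant produces an uncancelled $Y_{n,t}^{-1}$ with $t>r+2(k-1)$ --- is left as an expectation, and the patch you propose (reading the relevant support off Lemma \ref{l:nrnA} as $r_{i_0}+2(\lambda(h_{i_0})-1)$) misreads that lemma: there this number is the support of the largest \emph{positive} factor, contributed by the gap-free first column, not of a negative factor created by a gap; moreover \eqref{eq rn<ri} only gives $r+2(k-1)\le r_{i_0}+2(\lambda(h_{i_0})-1)$, so a negative factor at that support could in principle still be cancelled by $\bs\varpi$ and no contradiction would follow.

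The paper closes both gaps at once with a reduction you did not use. Since $\bs\varpi$ involves only the node $n$, dominance of $\bs\mu\bs\varpi$ forces $\bs\mu$ to be $J$-dominant for $J=I\setminus\{n\}$, and Proposition \ref{prop 1,..,n-1-dom} identifies the $J$-dominant elements of $\wt_\ell(V_q(\bs\omega))$ as the explicit family $\bs\omega^{S_{1,f,n+1}}$. By \eqref{eq S1,j,n+1}, each of these with $f\ge 1$ contains $Y_{n,s}^{-1}$ with $s=r_{i_0}+2\lambda(h_{i_0})+n-i_0$, while $\bs\varpi$ only contributes $Y_{n,r'}$ with $r'\le r+2(k-1)\le r_{i_0}+2(\lambda(h_{i_0})-1)<s$ by \eqref{eq rn<ri}; hence $Y_{n,s}^{-1}$ survives in $\bs\mu\bs\varpi$ and dominance fails unless $\bs\mu=\bs\omega$. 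If you insist on avoiding Proposition \ref{prop 1,..,n-1-dom}, you would have to carry out in full the case analysis over the position of the leftmost gap that you flagged as the ``main obstacle'', which amounts to re-proving that proposition.
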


\begin{proof}
Obviously, if $\bs\mu\bs\varpi\in D$, $\bs\mu$ must be $J$-dominant.
Let $s=r_{i_0}+2\lambda(h_{i_0})+n-i_0$. By \eqref{eq S1,j,n+1}, $Y^{-1}_{n,s}$ appears in all $J$-dominant
$\ell$-weights of $\wt_\ell(V_q(\bs\omega))$ except $\bs\omega$. We
claim that, if $\bs\mu\ne\bs\omega$, then $Y^{-1}_{n,s}$ appears in
$\bs\mu\bs\varpi$, which proves the lemma. By definition of $\bs\varpi$, $Y_{n,r'}$
appears in $\bs\varpi$ iff $r' = r+2(j-1)$ for some $j=1,\dots,k$.
Since
$$r+2(k-1)\stackrel{\eqref{eq rn<ri}}{\le} r_{i_0}+2(\lambda(h_{i_0})-1)<s,$$
the claim follows.
\end{proof}

Suppose there exists $p\in\supp(\lambda)$ and $k'\in\{1,\ldots,\lambda(h_p)\}$ satisfying (cf. condition (i) in Theorem \ref{t:mainr}):
\begin{equation}\label{eq p,k'}
r+2k+n-p+2= r_p+2k'.
\end{equation}
Observe that the pair $(p,k')$ is unique, if it exists. Indeed,
assume $(p,k')$ and $(p',k'')$ satisfy \eqref{eq p,k'}. If $p=p'$ we
must obviously have $k'=k''$. Otherwise, without loss of generality,
assume that $$p-p'<0.$$ To obtain a contradiction, observe that \eqref{e:rifromri0} implies
$$r_{p'}-(r_p+2(\lambda(h_p)-1))\ge p'-p+2$$
(equality holds if $p'=\min\{i\in\supp(\lambda): i>p\}$). It follows that,
\begin{eqnarray*}
p-p'&=&r+2(k-1)+n-p'+2-(r+2(k-1)+n-p+2)\\
&\stackrel{\eqref{eq p,k'}}{=}&r_{p'}+2(k''-1)-(r_p+2(k'-1))\ \ge\ r_{p'}+2(1-1)-(r_p+2(\lambda(h_p)-1))\\
&\ge& p'-p +2\ >\ 0.
\end{eqnarray*}
If a pair $(p,k')$ satisfying \eqref{eq p,k'} does not exist, we set
$p=k'=0$. Henceforth, we assume that either $(p,k')$ satisfies \eqref{eq p,k'} or $p=k'=0$.

If $p\ne 0$, given $0\le m\le k$, consider the tableau $T_{m,p}$ which is the unique tableau in $\stab(T)$ satisfying:
\begin{enumerate}[(1)]
\item if $j>m$, the $j$-th column of $T_{m,p}$ does not have gaps;
\item each of the first $m$ columns of $T_{m,p}$ has a gap;
\item all gaps occur at the $p$-th box of the corresponding column.
\end{enumerate}
In particular, $T_{0,p}=T$ and, for convenience, we set
$T_{m,n+1}=T$ and $T_{m,p}=T_{k,p}$ for $m>k$. In the spirit
of \eqref{e:l-weights KRnroot},
\begin{equation}\label{e:Tmpaspart}
T_{m,p} \quad\text{corresponds to the partition \mbox{$\bs j$} given by}\quad j_i
= \begin{cases} n+1-p, & \text{if } i\le m,\\ 0, & \text{if } i> m.\end{cases}
\end{equation}
Note that, if $p\ne 0$, \eqref{e:rem action tableau A} implies
\begin{equation}\label{e:Ttpgodown}
\bs\omega^{T_{m+1,p}} = \bs\omega^{T_{m,p}}\ A^{-1}_{n,p,r+2(k-m-1)}
\qquad\text{for all}\qquad 0\le m<k.
\end{equation}
Iterating we get
\begin{equation}\label{eq T D in A ii}
\bs\omega^{T_{m,p}}=\bs\varpi\; \prod_{l=1}^{m} A^{-1}_{n,p,r+2(k-l)}
\qquad\text{for all}\qquad 1\le m\le k.
\end{equation}

\begin{rem}\label{r:T'}
Note that the element $T'$ in Theorem \ref{t:mainr} is $T_{k',p}$. The formula for $\bs\lambda'$ in case (i) of Theorem \ref{t:main} is then easily deduced from \eqref{eq T D in A ii}.\endd
\end{rem}

Set
\begin{equation}\label{e:defc}
c = \min\{j: l_j<p\} =1\ +\ _p|\lambda|
\end{equation}
and consider the subset $D'$ of $\wtl(V)$ defined as follows.
\begin{equation*}
\bs\mu\in D'\ \Leftrightarrow\ \bs\mu =
\begin{cases}
\bs\omega\; \bs\omega^{T_{m,p}},& \text{ for } 0\le m\le k'\\
\bs\omega^{S_{c,f,p}}\bs\omega^{T_{m,p}}, & \text{ for } k'\le m\le k, f\le |\lambda|, f=c+m-k'- \epsilon \text{ with } \epsilon\in\{0,1\}.
\end{cases}
\end{equation*}
We will show that
\begin{equation}\label{e:tensorA}
D=D'.
\end{equation}
Together with \eqref{e:Ttpgodown} and \eqref{e:Scfpgodown},
\eqref{e:tensorA} easily implies the first statement of Proposition
\ref{p:multiplicity1A}. Notice that \eqref{e:tensorA} implies that, if $p=k'=0$, then $D=\{\bs\lambda\}$. However, we will
prove that this is true independently  as part of the proof of
\eqref{e:tensorA} (see Proposition \ref{prop without dom}).

We begin the proof of \eqref{e:tensorA} by investigating the
elements in $D$ of the form $\bs\omega\bs\nu$ with
$\bs\nu\in\wtl(V_q(\bs\varpi))$. Recall from Example \ref{ex
l-weights KR n} that the elements of $\wtl(V_q(\bs\varpi))$ are in
bijection with the set $J(n,k) = \{\bs j = (j_1,j_2,\dots,j_k): 0\le
j_k\le \cdots\le j_2\le j_1\le n\}$. Let $\bs j$ and $T'$ be the
tuple and tableaux associated to $\bs\nu$, respectively. In
particular, if $\bs\nu\ne\bs\varpi$, we have $j_1>0$ and Lemma
\ref{lem gap} implies that the first column of $T'$ contributes with
the appearance of the factor $Y_{i_1,r+2k+n-i_1}^{-1}$ in
$\bs\nu$, where
\begin{equation}\label{e:i_1}
i_1:=n-j_1+1.
\end{equation}
This implies that $Y_{i_1,r+2(k-1)+n-i_1+2}$ must appear in $\bs\omega$.
Recalling that
\begin{equation*}
\bs\omega = \prod_{i\in I} Y_{i,r_i,\lambda(h_i)} = \prod_{i\in I}\prod_{l=1}^{\lambda(h_i)} Y_{i,r_i+2(l-1)},
\end{equation*}
it follows that there exists $1\le l\le\lambda(h_{i_1})$ such that
\begin{equation*}
r+2(k-1)+n-i_1+2 = r_{i_1}+2(l-1).
\end{equation*}
In other words, the pair $(i_1,l)$ satisfies \eqref{eq p,k'} and,
hence, $i_1=p$ and $l=k'$. In particular, we have shown the following lemma.

\begin{lem}\label{lem no dom of form 2}
If $p=k'=0$ and $\bs\nu\in \wt_\ell(V_q(\bs\varpi))$ is such that $\bs\omega\bs\nu\in D$, then
$\bs\nu=\bs\varpi$.\endd
\end{lem}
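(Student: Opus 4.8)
The plan is to argue by contradiction, producing from an exceptional $\bs\nu$ a pair satisfying \eqref{eq p,k'} and thereby contradicting the hypothesis $p=k'=0$ --- which, by the convention fixed just before the definition of $T_{m,p}$, means precisely that \eqref{eq p,k'} has no solution. So suppose $\bs\nu\ne\bs\varpi$.

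First I would invoke the description of $\wtl(V_q(\bs\varpi))$ from Example \ref{ex l-weights KR n}: write $\bs\nu=\bs\varpi_{\bs j}$ for a partition $\bs j\in J(n,k)$ with associated semi-standard tableau $T'$. Since $\bs\nu\ne\bs\varpi$ we have $j_1>0$, so the first column of $T'$ has a size-$1$ gap at its $i_1$-th box, where $i_1=n-j_1+1$ as in \eqref{e:i_1}. By Lemma \ref{lem gap} this column produces the factor $Y_{i_1,r+2k+n-i_1}^{-1}$, which by Lemma \ref{lem i-1 above i} and Remark \ref{r:gapsdontcancel} is not cancelled by the remaining columns of $T'$; alternatively, \eqref{eq r(varpi_j)} gives $r(\bs\nu)=r+2k+n-i_1$ and \eqref{e:ex l-weights KR n} exhibits this factor directly. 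Either way, $Y_{i_1,r+2k+n-i_1}^{-1}$ genuinely appears in $\bs\nu$.

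Next I would use that $\bs\omega\bs\nu\in D\subseteq\mathcal P^+$. Dominance forces the negative power $Y_{i_1,r+2k+n-i_1}^{-1}$ of $\bs\nu$ to be cancelled, and by right negativity of $\bs\nu$ (Proposition \ref{prop right neg}) no factor $Y_{\cdot,r+2k+n-i_1}$ occurs in $\bs\nu$ itself, so the cancelling factor $Y_{i_1,r+2k+n-i_1}$ must appear in $\bs\omega$. Writing $\bs\omega=\prod_{i\in I}\prod_{l=1}^{\lambda(h_i)}Y_{i,r_i+2(l-1)}$ and matching the content $i_1$ and the support, one obtains $i_1\in\supp(\lambda)$ and some $l\in\{1,\dots,\lambda(h_{i_1})\}$ with $r+2(k-1)+n-i_1+2=r_{i_1}+2(l-1)$; that is, $(i_1,l)$ solves \eqref{eq p,k'}, the desired contradiction. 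Hence $\bs\nu=\bs\varpi$.

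The one point requiring care --- and it is exactly what the paragraph immediately preceding the lemma establishes --- is checking that the distinguished negative power of the first column of $T'$ survives in $\bs\nu$ (rather than being cancelled by another column) and therefore must be absorbed by a term of $\bs\omega$. Once the combinatorial lemmas of Section \ref{ss:combtab}, or alternatively the explicit formula \eqref{e:ex l-weights KR n}, are in hand this is routine, so the lemma is in effect a corollary of that discussion and needs no separate argument.
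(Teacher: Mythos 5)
Your proposal is correct and is essentially the paper's own argument: the lemma is stated with no separate proof precisely because the paragraph preceding it derives, from $j_1>0$, the surviving factor $Y_{i_1,r+2k+n-i_1}^{-1}$ in $\bs\nu$ via Lemma \ref{lem gap}, forces its cancellation by a factor of $\bs\omega$, and concludes that $(i_1,l)$ solves \eqref{eq p,k'}, which is impossible when $p=k'=0$. Your added care about non-cancellation within $\bs\nu$ (via Remark \ref{r:gapsdontcancel} or right negativity) is a harmless elaboration of the same route.
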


\begin{prop}\label{prop without dom}
If $p=k'=0$, then $D=\{\bs\lambda\}$.
\end{prop}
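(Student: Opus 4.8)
The plan is to show $D\subseteq\{\bs\lambda\}$ where $\bs\lambda=\bs\omega\bs\varpi$ (the reverse inclusion being clear). By Proposition \ref{p:qchtp}, every $\bs\rho\in D$ is of the form $\bs\rho=\bs\mu\bs\nu$ with $\bs\mu\in\wt_\ell(V_q(\bs\omega))$ and $\bs\nu\in\wt_\ell(V_q(\bs\varpi))$. If $\bs\nu=\bs\varpi$, Lemma \ref{lem no dom of form} forces $\bs\mu=\bs\omega$ and we are done; if $\bs\mu=\bs\omega$, Lemma \ref{lem no dom of form 2} (this is exactly where the hypothesis $p=k'=0$ is used) forces $\bs\nu=\bs\varpi$ and we are done again. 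So the whole content is to rule out the remaining possibility that $\bs\mu\ne\bs\omega$ and $\bs\nu\ne\bs\varpi$ simultaneously.

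Assume this were the case and write $\bs\mu=\bs\omega^{S'}$ with $S'\in\stab(S)\setminus\{S\}$ (Theorem \ref{t:qcharmin}). By Proposition \ref{prop right neg} the $\ell$-weight $\bs\nu$ is right negative, and exactly as in the discussion preceding Lemma \ref{lem no dom of form 2} (using Lemma \ref{lem gap} and \eqref{eq r(varpi_j)}), the unique factor of $\bs\nu$ of support $r(\bs\nu)=r+2k+n-i_1$ is $Y_{i_1,r(\bs\nu)}^{-1}$, where $i_1\in\{1,\dots,n\}$ is as in \eqref{e:i_1} (so $i_1=n-j_1+1$ with $j_1$ the largest part of the partition parametrizing $\bs\nu$). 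Since $\bs\rho=\bs\mu\bs\nu\in\mathcal P^+$ has nonnegative exponent at the slot $(i_1,r(\bs\nu))$, and $\bs\nu$ contributes $-1$ there, the net exponent of $Y_{i_1,r(\bs\nu)}$ in $\bs\mu$ is at least $1$. As the negative factors of $\bs\mu=\prod_l\bs\omega^{S'_l}$ arise only from the gaps of the columns of $S'$, this forces some column $S'_l$ of $S'$ to produce the positive factor $Y_{i_1,r(\bs\nu)}$, either as the factor attached to its bottom box or as a factor arising from one of its gaps in the sense of Lemma \ref{lem gap}. (Incidentally, since $\bs\rho$ is dominant hence not right negative, $\bs\mu$ is not right negative either, so Lemma \ref{l:nrnA} further constrains $\bs\mu$; this is not strictly needed for the argument below.)

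Now the final split. If $S'_l$ has a gap, one checks using Lemma \ref{lem gap} (for a gap producing $Y_{i_1,r(\bs\nu)}$ directly, use that gap; if $Y_{i_1,r(\bs\nu)}$ is the bottom-box factor, use the gap of $S'_l$ closest to the bottom) that $\bs\omega^{S'_l}$ — and hence, by Remark \ref{r:gapsdontcancel}, $\bs\mu$ itself — contains a factor $Y_{j,\sigma}^{-1}$ with $\sigma>r(\bs\nu)$; since $\bs\nu$ has no factor of support exceeding $r(\bs\nu)$, the exponent of $Y_{j,\sigma}$ in $\bs\rho$ would be negative, contradicting $\bs\rho\in\mathcal P^+$. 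Hence $S'_l$ must be gap-free, so its contents are $1,\dots,\ell_l$ where $\ell_l$ is its length; its bottom-box factor is $Y_{\ell_l,\cdot}$, forcing $\ell_l=i_1$, and $\bs\omega^{S'_l}$ then coincides with the contribution of the corresponding column of $S$ (all columns of $S$ have the form \eqref{e:coldomtab}), which is $Y_{i_1,r_{i_1}+2(l-1)}$ for some $1\le l\le\lambda(h_{i_1})$ (recall $\bs\omega=\prod_i Y_{i,r_i,\lambda(h_i)}$ and the shape of $S$). Equating $r(\bs\nu)=r+2k+n-i_1$ with $r_{i_1}+2(l-1)$ gives $r+2k+n-i_1+2=r_{i_1}+2l$, i.e. the pair $(i_1,l)$ satisfies \eqref{eq p,k'} with $i_1\in\supp(\lambda)$ and $1\le l\le\lambda(h_{i_1})$, contradicting the hypothesis $p=k'=0$ (which means precisely that \eqref{eq p,k'} has no solution). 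This exhausts all cases, so $D=\{\bs\lambda\}$. The main obstacle is the column-by-column bookkeeping of which factors of $\bs\mu=\bs\omega^{S'}$ survive — which is what Lemmas \ref{lem gap} and \ref{lem i-1 above i} and Remark \ref{r:gapsdontcancel} are tailored for — together with the observation that $\bs\mu$ can supply the positive factor $Y_{i_1,r(\bs\nu)}$ without creating an uncancellable negative factor of higher support only through a gap-free column, which in turn forces \eqref{eq p,k'}.
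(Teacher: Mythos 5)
Your proof is correct and follows essentially the same route as the paper's: decompose the dominant $\ell$-weight as $\bs\mu\bs\nu$, isolate the uncancellable factor $Y_{i_1,r(\bs\nu)}^{-1}$ contributed by the first column of the tableau for $\bs\nu$, and show via the gap analysis of Lemmas \ref{lem gap} and \ref{lem i-1 above i} and Remark \ref{r:gapsdontcancel} that only a gap-free column of $S'$ of length $i_1$ can supply the compensating positive factor, which forces a solution of \eqref{eq p,k'}. Your closing step is marginally cleaner than the paper's: where the paper invokes the two-sided condition \eqref{e:prop without dom} on the positive factors of $\bs\nu$, you only need that every factor of $\bs\nu$ has support at most $r(\bs\nu)$ while the negative factor produced by any gap of the relevant column of $S'$ has support at least $r(\bs\nu)+1$.
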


\begin{proof}
By Lemma \ref{lem no dom of form}, there is no element in $D$ of the
form $\bs\mu\bs\varpi$ with $\bs\mu\in
\wt_\ell(V_q(\bs\omega))\setminus\{\bs\omega\}$. Suppose
$\bs\mu\bs\nu\in D$  with $\bs\mu\in
\wt_\ell(V_q(\bs\omega))$ and $\bs\nu\in
\wt_\ell(V_q(\bs\varpi))\setminus\{\bs\varpi\}$. As before, let $\bs j$ and $T'$ be the
tuple and tableaux associated to $\bs\nu$, respectively, and set $i=i_1$ as defined in \eqref{e:i_1}.
Then, applying Lemma \ref{lem gap} to the first column of $T'$ as before, we see that the terms
\begin{equation}\label{e:l-weight fund n}
Y_{i,s+i-1}^{-1}\quad\text{and}\quad Y_{i-1,s+i-2} \quad\text{appear in}\quad \bs\nu,
\end{equation}
where $$s=r+2(k-i)+n+1.$$
It then suffices to show that the first listed factor in \eqref{e:l-weight fund n} cannot
be canceled by a factor of $\bs\mu$. Let $S'\in\stab(S)$ be such that $\bs\omega^{S'}=\bs\mu$ and observe that, for
canceling that term, $S'$ should have the box  $\ffbox{i}_s$ and, if $\ffbox{i'}_{s-2}$ is also a box in $S'$, then $i'\ne i+1$. Suppose the box $\ffbox{i}_s$  occurred at the $l$-th column of $S'$. Assume first that it were the last box of the column. We have two cases:
\begin{enumerate}
\item the $l$-th column of $S'$ has length $i$;
\item the $l$-th column of $S'$ has length strictly smaller than
$i$.
\end{enumerate}
We will get a contradiction in both cases. By working with the other columns of $T'$ similarly to how we deduced \eqref{e:l-weight fund n}, we see that
\begin{equation}\label{e:prop without dom}
\text{if} \quad Y_{j,s'}\quad \text{appears in}\quad \bs\nu,
\quad\text{then}\quad j\ge i-1\ \text{ and }\ s'\le s+i-2.
\end{equation}

\noindent Case (i). Since $S'$ has the same shape of $S$, the columns of
length $i$ have their last box supported at $r_i+2j-i-1$, for
$j=1,\ldots,\lambda(h_i)$. Thus, there must exist $j$ such that
$$s=r_i+2j-i-1,\quad\text{which implies}\quad r+2k+n-i+2=r_i+2j$$
and, hence, $(i,j)$ satisfies \eqref{eq p,k'}, contradicting the hypothesis of the proposition.

\noindent Case (ii). In this case, the $l$-th column of $S'$ must have a gap.
Suppose that $\ffbox{j}_{\:s'}$  is
a box of the $l$-th column of $S'$ such that $\ffbox{j-1}_{\:s'+2}$
is not a box of this column. Evidently, $j\le i$ and $s'\ge s$. By
Lemma \ref{lem gap} and Remark \ref{r:gapsdontcancel}, $Y_{j-1,s'+j}^{-1}$
appears in $\bs\omega^{S'}$. Since $\bs\mu\bs\nu\in D$,
$Y_{j-1,s'+j}$ must appear in $\bs\nu$ and, therefore, $j=i$ by the first assertion in
\eqref{e:prop without dom}. This implies $s'=s$ and, hence, $s'+j=s+i$. However, the second assertion in \eqref{e:prop without dom} implies that
$s'+j<s+i$, yielding the desired contradiction.

Finally, assume that $\ffbox{i}_{\:s}$ is not the last
box of the $l$-th column of $S'$. This implies that there exists
$j>i+1$ such that $\ffbox{j}_{s-2}$ is a box in this
column. By Lemma \ref{lem gap} and Remark \ref{r:gapsdontcancel},
$Y^{-1}_{j-1,s-2+j}$ appears in $\bs\omega^{S'}$ and,
hence, $Y_{j-1,s-2+j}$ must appear in $\bs\nu$. Since
$j>i+1$, we get $s-2+j> s+i-1$ contradicting the second assertion in \eqref{e:prop without dom} again.
\end{proof}

Now, suppose $(p,k')$ satisfies \eqref{eq p,k'} and set
\begin{equation}\label{e:s}
s=r+2(k-p)+n-1,
\end{equation}
i.e.,
\begin{equation}\label{e:suppk'}
s \text{ is the support of the $p$-th box of the first column of }T.
\end{equation}
Then, \eqref{eq p,k'} implies that
\begin{equation}\label{e:suppofb}
s+2\ \text{ is the support of the the last box of the $b$-th column of } S
\end{equation}
where
\begin{equation}\label{e:b}
b=c-k'.
\end{equation}
Note that, by \eqref{e:defc}, the $b$-th column of $S$ is the $k'$-th one of length $p$ counted from right to left.
Since $S$ has the
form \eqref{rem tableau stairs}, it follows that, for all $1\le m\le
k$, if $s'$ is the support of a box in the $l$-th column $S$, then
\begin{equation}\label{e:columnpos}
s'\le s+2-2(m-1) \quad\Rightarrow\quad  l\ge b+m-1.
\end{equation}
Observe also that, by \eqref{e:suppk'}, $s-2(m-1)$ is the support of the $p$-th
box of the $m$-th column of $T$.

\begin{lem}\label{lem dom t,p}
Every element of $D$ is of the form $\bs\omega^{S'}\bs\omega^{T_{m,p}}$ for some $0\le m\le k$ and $S'\in\stab(S)$.
\end{lem}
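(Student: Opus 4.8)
The plan is to turn the statement into a purely combinatorial claim about tableaux via Proposition~\ref{p:qchtp} and Theorem~\ref{t:qcharmin}, and then to run the argument of the proof of Proposition~\ref{prop without dom} column by column, this time keeping track of which relation the relevant configuration forces. By Proposition~\ref{p:qchtp} every element of $D$ factors as $\bs\mu\bs\nu$ with $\bs\mu\in\wt_\ell(V_q(\bs\omega))$ and $\bs\nu\in\wt_\ell(V_q(\bs\varpi))$, and by Theorem~\ref{t:qcharmin} we may write $\bs\mu=\bs\omega^{S'}$ with $S'\in\stab(S)$; so it is enough to show that $\bs\omega^{S'}\bs\nu\in\mathcal P^+$ forces $\bs\nu=\bs\omega^{T_{m,p}}$ for some $0\le m\le k$. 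If $\bs\nu=\bs\varpi$ we are done with $m=0$. Otherwise parameterize $\bs\nu$ by a partition $\bs j=(j_1,\dots,j_k)\in J(n,k)$ with $j_1>0$ as in Example~\ref{ex l-weights KR n}; since $\bs j$ is weakly decreasing and by \eqref{e:Tmpaspart}, the claim reduces to proving that $j_l\in\{0,n+1-p\}$ for every $l$, equivalently that $j_l>0$ implies $i_l:=n+1-j_l=p$.

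Fix $l$ with $j_l>0$. By Lemma~\ref{lem gap}, the size-one gap at the $i_l$-th box of the $l$-th column of the tableau $T'$ representing $\bs\nu$ contributes a factor $Y^{-1}_{i_l,\sigma_l}$ to $\bs\omega^{T'}$; the supports $\sigma_l$ over the indices with $j_\bullet>0$ are strictly decreasing (compare \eqref{eq r(varpi_j)} and the explicit expression \eqref{e:ex l-weights KR n}), and together with the list of positive factors of $\bs\nu$ read off from \eqref{e:ex l-weights KR n} this shows that $Y^{-1}_{i_l,\sigma_l}$ survives in $\bs\nu$. As $\bs\omega^{S'}\bs\nu\in\mathcal P^+$, the factor $Y_{i_l,\sigma_l}$ must appear in $\bs\omega^{S'}$, which by Lemma~\ref{lem i-1 above i} is equivalent to saying that the box $\ffbox{i_l}_{s_l}$, with $s_l$ determined by $s_l+i_l-1=\sigma_l$, occurs in $S'$ while $\ffbox{i_l+1}_{s_l-2}$ does not.

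I would then apply the trichotomy of the proof of Proposition~\ref{prop without dom} to the position of $\ffbox{i_l}_{s_l}$ in $S'$: the cases ``$\ffbox{i_l}_{s_l}$ is not the last box of its column'' and ``it is the last box of a column of length $<i_l$'' are excluded verbatim as there (those contradictions do not use the hypothesis $p=k'=0$ of Proposition~\ref{prop without dom}), so $\ffbox{i_l}_{s_l}$ must be the last box of a column of $S'$ of length exactly $i_l$. Since $S'$ has the shape of $S$, matching $s_l$ with the support of the last box of the $t$-th column of $S$ of length $i_l$ and using \eqref{e:rifromri0} shows that $(i_l,k'_l)$ solves the equation in condition~(i) of Theorem~\ref{t:mainr}, with $k'_l=\lambda(h_{i_l})-t+l\ge 1$; applied first to $l=1$ one has $k'_1\le\lambda(h_{i_1})$, so $(i_1,k'_1)=(p,k')$ by the uniqueness established after \eqref{eq p,k'}, giving $i_1=p$ and hence $j_1=n+1-p$. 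Since $\bs j$ is a partition, $j_l\le j_1=n+1-p$ for all $l$, and I would propagate $i_l=p$ to all $l$ with $j_l>0$ by induction on $l$: knowing that columns $1,\dots,l-1$ of $T'$ have their gap at box $p$ pins down which columns of $S'$ are used to cancel the corresponding factors (via the support bookkeeping in \eqref{e:suppk'}, \eqref{e:suppofb}, \eqref{e:columnpos} and the staircase shape \eqref{rem tableau stairs}), and this rigidity, together with the same uniqueness, forces the gap of the $l$-th column also to sit at box $p$. This produces $\bs\nu=\bs\omega^{T_{m,p}}$ with $m=\#\{l:j_l>0\}$ (the argument in fact also yields $m\le k'$). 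The step I expect to be the main obstacle is precisely this inductive propagation: one must verify that the boxes of $S'$ whose values are already constrained by the earlier columns of $T'$ leave no room for a gap of the $l$-th column other than at box $p$, and this is where the staircase combinatorics of Section~\ref{ss:combtab} and the bounds \eqref{e:columnpos} are used in earnest.
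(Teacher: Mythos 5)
Your reduction via Proposition \ref{p:qchtp} and Theorem \ref{t:qcharmin}, and your treatment of the first column of $T'$, are sound: identifying the cancelling box $\ffbox{i_1}_{\:s_1}$ as the last box of a column of $S'$ of length $i_1$ and then invoking the uniqueness of the pair $(p,k')$ established after \eqref{eq p,k'} is a legitimate (and arguably cleaner) way to get $i_1=p$ than the paper's direct case split. However, the step you yourself flag as the main obstacle — the propagation to columns $l>1$ — is a genuine gap, and the claim that the two bad cases of the trichotomy are excluded ``verbatim as there'' is false for $l>1$, for two reasons.

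First, the exclusions in Proposition \ref{prop without dom} rest on the bound \eqref{e:prop without dom}, which is a statement about the positive factors of $\bs\nu$ calibrated to the \emph{first} column of $T'$ (it uses $j\ge i_1-1$ and $s'\le s_1+i_1-2$). Once you know $j_1=n+1-p$, the first column contributes the positive factor $Y_{p-1,\sigma_1-1}$ to $\bs\nu$, and for a later column with $i_l>p$ this factor violates both assertions of the analogue of \eqref{e:prop without dom} with $(i_l,s_l)$ in place of $(i_1,s_1)$; so neither the ``not the last box'' case nor the ``shorter column'' case is excluded by that argument. Second, and more decisively, the conclusion you want from the trichotomy is simply not true for $l>k'$: by \eqref{e:tensorA} and \eqref{e:tensorA'}, when $p>\min\supp(\lambda)$ and $k>k'$ the set $D$ contains elements $\bs\omega^{S_{c,f,p}}\bs\omega^{T_{m,p}}$ with $m>k'$, and there the negative factors $Y^{-1}_{p,\sigma_l}$ coming from the columns $l=k'+1,\dots,m$ of $T_{m,p}$ are cancelled by the positive factors $Y_{p,\cdot}$ produced by gaps at the last boxes of the columns $c,\dots,f$ of $S_{c,f,p}$, which have length $l_j<p$ — exactly your ``excluded'' case (ii). This is also why your parenthetical remark that the argument ``in fact also yields $m\le k'$'' should have been a warning sign: it contradicts the actual description of $D$. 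What remains to be proved for $l>1$ is precisely the content of the paper's case $j>p$ — that a cancelling box with content $i_l>p$ and support $\le s$ forces, via \eqref{e:columnpos} and semi-standardness, a gap in the $b$-th column of $S'$ whose own negative contribution cannot be cancelled by $T'$ (using the rigidity statement \eqref{e:c=p}) — and this constitutes the bulk of the paper's proof; gesturing at ``support bookkeeping'' and ``rigidity'' does not supply it.
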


\begin{proof}
We will prove that, if $T'\in\stab(T)$, then
\begin{equation}
T'\ne T_{m,p} \ \Rightarrow\ \bs\omega^{T'} \bs\omega^{S'}\notin\cal P^+ \ \text{for all}\  S'\in\stab(S).
\end{equation}
Thus, suppose $T'\ne T_{m,p}$ for all $0\le m\le k$ and, by
contradiction, suppose there exists $S'$ such that $\bs\omega^{T'}
\bs\omega^{S'}\in\cal P^+$. Since $T_{0,p}=T$, we must have $T'\ne
T$ and, hence, the first column of $T'$ must contain a gap
(necessarily of size $1$ since the length of the column is $n$). The hypothesis $T'\ne T_{m,p}$ implies
that $T'$ has a column containing a gap not located at its $p$-th box.
Suppose the $m'$-th column of $T'$ is the first such column and
assume the gap occurs at the $j$-th box. Then, Lemma \ref{lem gap}
and Remark \ref{r:gapsdontcancel} imply that $Y^{-1}_{j,r'}$ appears in $\bs\omega^{T'}$ where
\begin{equation}\label{e:defr'rands}
r'=r+2(k-m'+1)+n-j = s+2(p-m'+1)-j+1
\end{equation}
and, hence, $Y_{j,r'}$ must appear in $\bs\omega^{S'}$. This means that
$\ffbox{j}_{\:r'-j+1}$ must be a box in $S'$.  Since this
box occurs in only one column of $S'$ by Lemma \ref{lem
i-1 above i}, it follows that
\begin{equation}\label{e:notaboxS'}
\ffbox{j+1}_{\:r'-j-1} \ \text{  is not a box in }\ S'
\end{equation}
(otherwise, it would cancel the $Y_{j,r'}$ coming from
$\ffbox{j}_{\:r'-j+1}$).

Suppose $j>p$ and notice that this implies that,
\begin{equation}\label{e:c=p}
\text{if}\ \ \ffbox{i}_{s'}\ \text{ is a box in } T' \text{ with } s'>s, \text{ then it is the } i\text{-th box of its column}.
\end{equation}
Indeed, the condition $s'>s$ implies that this box is among the
first $p-1$ boxes of its column. Recall that, since all columns of
$T$ have length $n$, each column of  $T'$ has at most one gap.
Hence, the assumption on $j$ implies that all gaps in $T'$ occur
either at or after the $p$-th box of each column. Hence, the content
of the boxes of $T'$ supported at $s'$ must be equal to its position
in the column. This proves \eqref{e:c=p}.

Say $\ffbox{j}_{r'-j+1}$ is in the $l$-th column of $S'$.
Since $j>p$, by definition of $r'$ we have
$$r'-j+1<r+2(k-m')+n-2p+3=s-2(m'-1)+2,$$
which, together with \eqref{e:columnpos}, implies that $l\ge b+m'-1\ge b$. As $S'$ is
semi-standard, its $b$-th column must have a box whose content is at
least $j$. Since the length of the $b$-th column is $p$, this
implies that the $b$-th column of $S'$ has a gap. Suppose the
$j'$-th box of the $b$-th column of $S'$ has a gap and let $d$ be
the content of this box. In particular, since the columns are increasing, we have
\begin{equation}\label{e:d>j'}
d>j'
\end{equation}
Thus, $\ffbox{d}_{s+2(p-j')+2}$ is a box in the $b$-th column of $S'$
while $\ffbox{d-1}_{s+2(p-j')+4}$ is not. This implies that
$Y^{-1}_{d-1,s+2(p-j')+d+2}$ appears in $\bs\omega^{S'}$ and hence,
$\ffbox{d-1}_{s+2(p-j')+4}$ must be a box in $T'$, say, at its
$l'$-th column. Observe that, $s+2(p-j')+4=s+2(p-(j'-2))$ is the
support of the $(j'-2)$-th box of the first column of $T'$ and,
hence, it is also the support of the $(j'-l'-1)$-th box of the $l'$-th
column of $T'$. Moreover, since $s+2(p-j')+4>s$, \eqref{e:c=p}
implies that $d-1=j'-l'-1$. Therefore,
\begin{equation*}
d= j'-l'\le j',
\end{equation*}
yielding a contradiction with \eqref{e:d>j'}.

Suppose now that $j<p$. Since $T'$ is semi-standard, this implies
that $m'=1$ and \eqref{e:defr'rands} gives that
$$r'-j+1=s+2+2(p-j)$$
is the support of the $j$-th box of the $b$-th column of $S'$ whose
content is at least $j$ (because the columns of $S'$ are
increasing). Since $S'$ is semi-standard, this implies that
\begin{equation}\label{e:jinl>b}
\ffbox{j}_{\:r'-j+1} \quad\text{is a box in the } l\text{-th column of } S' \text{ with } l\ge b.
\end{equation}
We treat the cases $l=b$ and $l>b$ separately.

If $l=b$, \eqref{e:notaboxS'} implies that the $b$-th column of $S'$
has $\ffbox{d}_{\:r'-j-1}$ with $d>j+1$. Lemma \ref{lem gap} and
Remark \ref{r:gapsdontcancel} then imply that  $Y^{-1}_{d-1,
r'-j+d-1}$ appears in $\bs\omega^{S'}$, forcing
$\ffbox{\:d-1}_{\:r'-j+1}$  to be a box in $T'$. Observe that
$r'-j+1=s+2(p-(j-1))$ is the support of the $(j-1)$-th box of the
first column of $T'$ and recall that, since this column has a gap in
the $j$-th box, all boxes above it have have content equal to its
position in the column. In particular, the content of the box
supported at $r'-j+1$ is $j-1<d-1$. Since $T'$ is semi-standard, the
content of the boxes of the remaining columns supported at $r'-j+1$
must be at most $j-1$ contradicting that $\ffbox{\:d-1}_{\:r'-j+1}$
is a box in $T'$.

Finally, suppose $l>b$. Using that $S'$ is semi-standard, \eqref{e:jinl>b} implies
that the $b$-th column of $S'$ contains a box
$\ffbox{j'}_{\:r'-j+1+2(l-b)}$ with $j'\ge j$. Since, by \eqref{e:defr'rands}
\begin{equation*}
r'-j+1+2(l-b) =s+2+2(p-(j-(l-b)))
\end{equation*}
is the support of the $(j-(l-b))$-th box of the $b$-th column of
$S'$ by \eqref{e:suppofb} and $j-(l-b)< j$, it follows that there
exists a gap in the $b$-th column of $S'$ at the $j''$-th box for
some $j''\le j-(l-b) < j$. Let $d$ be the content of this box. In
particular, $d>j''$. This time, the usual application of Lemma
\ref{lem gap} and Remark \ref{r:gapsdontcancel} imply that
$\ffbox{\:d-1}_{\:r'-j+3+2(j-j'')}$ must be a box in $T'$.  Let us
show that this is impossible, yielding the desired contradiction.
Since
$$r'-j+3+2(j-j'')=s+2(p-(j''-2)),$$
\eqref{e:suppk'} implies that $r'-j+3+2(j-j'')$ is the support of the $(j''-2)$-th box of the first column of $T'$, which has a gap at the $j$-th box.
Thus, since $j''<j$ and the boxes above the $j$-th have their contents equal to their
position in the column, $T'$ has $\ffbox{\:j''\!-\!2}_{\:r'-j+3+2(j-j'')}$ in the first column. As $d>j''$, $\ffbox{\:d-1}_{\:r'-j+3+2(j-j'')}$ is not in the first column of $T'$. Since the sequence of contents of each row of $T'$ is decreasing, $\ffbox{\:d-1}_{\:r'-j+3+2(j-j'')}$ cannot be in any other column of $T'$ as well.
\end{proof}

For the next step, note that, by specializing \eqref{e:ex l-weights KR n} to the partitions described in \eqref{e:Tmpaspart}, we get
\begin{align}\label{e:omegaTtp}
\bs\omega^{T_{m,p}}  = Y_{p-1,s+p-2(m-1),m}\ Y_{p,s+p-2m+3,m}^{-1}\ Y_{n,s-2(k-p)-n+1,k-m}.
\end{align}

\begin{lem}\label{lem dom c,f,p}
If $0\le m\le k$ and $S'\in\stab(S)$ are such that $\bs\omega^{S'}\bs\omega^{T_{m,p}}\in D$, then
$S'=S_{c,f,p}$ for some $f\le |\lambda|$.
\end{lem}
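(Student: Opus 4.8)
The proof splits according to whether $m=0$ or $m\ge1$. If $m=0$, then $\bs\omega^{T_{m,p}}=\bs\varpi$ and the assertion is exactly Lemma \ref{lem no dom of form}: it gives $\bs\omega^{S'}=\bs\omega$, hence $S'=S$, which is $S_{c,f,p}$ for any $f<c$. So assume $m\ge1$. The plan is to prove two claims about $S'$: (A) every gap of $S'$ is at the last box of its column and raises the content there to exactly $p$ (so every gapped column has length $<p$, i.e.\ index $\ge c$); and (B) the set of indices of the gapped columns of $S'$ is an initial segment $\{c,c+1,\dots,f\}$ of the set of columns of $S$ of length $<p$. Granting (A) and (B), the tableau $S'$ satisfies the three conditions defining $S_{c,f,p}$, and since that tableau is unique, $S'=S_{c,f,p}$.

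For (A), I would first read off from \eqref{e:omegaTtp} that the negative part of $\bs\omega^{T_{m,p}}$ consists of exactly $m$ factors $Y_{p,t}^{-1}$, at the supports $t=s+p+1,s+p-1,\dots,s+p-2m+3$, while its positive part consists of $m$ factors $Y_{p-1,t}$, at the supports $t=s+p,s+p-2,\dots,s+p-2m+2$, together with $Y_{n,r,k-m}$ (whose supports are comparatively small, by \eqref{eq rn<ri}). Next, since $\bs\mu:=\bs\omega^{S'}\bs\omega^{T_{m,p}}$ is dominant it is not right negative, and $\bs\omega^{T_{m,p}}$ \emph{is} right negative for $m\ge1$ (by \eqref{e:omegaTtp}, its maximal support $s+p+1$ is realized only by a negative factor); since a product of right negative $\ell$-weights is right negative, $\bs\omega^{S'}$ is not right negative, so by Lemma \ref{l:nrnA} every length-$i_0$ column of $S'$ is gap-free. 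Now let $S'_j$ have a gap and let $a'$ be the content of the box at the bottom of its first gap; by Lemma \ref{lem gap}, Lemma \ref{lem i-1 above i} and Remark \ref{r:gapsdontcancel}, some $Y_{a'-1,\tau}^{-1}$ occurs uncancelled in $\bs\omega^{S'}$, so dominance of $\bs\mu$ forces $a'-1\in\{p-1,n\}$. If $a'=n+1$, that box must be the last box of $S'_j$ (being of maximal content), and the semi-standard condition propagates the bottom content $n+1$ leftwards through columns $j,j-1,\dots,1$; the first column, of length $i_0\le n$, would then be modified, contradicting the previous step. Hence $a'=p$. Finally, to see the gap is at the last box, I would use \eqref{e:suppofb} and \eqref{e:b} to place the bottom boxes of the columns of $S$ at supports $s+2-2(j-b)$ with $b=c-k'$ along the staircase \eqref{rem tableau stairs}, compute the support of the $Y_{p-1}^{-1}$ produced by a gap lying strictly above the last box, match it against the $m$ admissible supports $s+p,\dots,s+p-2m+2$ of $Y_{p-1}$ in $\bs\omega^{T_{m,p}}$, and follow the resulting diagonal up to the first column; this forces a length-$i_0$ column of $S'$ to be modified, again a contradiction. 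Thus every gap of $S'$ is at the last box with content $p$, which requires its column to have length $<p$.

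For (B), let $j>c$ be a gapped column of $S'$; by (A) the last box of $S'_j$ has content $p$, and the last box of $S'_{j-1}$ lies two units of support above it. The semi-standard condition applied to these two bottom boxes forces the content of the last box of $S'_{j-1}$ to be $\ge p$; since $j-1\ge c$ implies $l_{j-1}<p$, the column $S'_{j-1}$ must contain a gap, which by (A) is again at its last box with content $p$. Hence the set of gapped columns of $S'$, which by (A) is contained in $\{c,c+1,\dots,|\lambda|\}$, is downward closed inside this set, so it is either empty or of the form $\{c,c+1,\dots,f\}$; in either case $S'=S_{c,f,p}$. I expect the technical heart of the argument to be the support bookkeeping in (A) needed to exclude gaps lying strictly above the last box of a column.
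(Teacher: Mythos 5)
Your overall architecture is sound and most of it tracks the paper's proof: the $m=0$ case via Lemma \ref{lem no dom of form}; the observation that a gap with bottom content $a'$ leaves an uncancelled $Y^{-1}_{a'-1,\tau}$ (Lemma \ref{lem gap}, Remark \ref{r:gapsdontcancel}) which must be killed by a positive factor of $\bs\omega^{T_{m,p}}$, forcing $a'\in\{p,n+1\}$; the exclusion of $a'=n+1$ by propagating the bottom content leftward to a gap-free length-$i_0$ column (a legitimate variant of the paper's support computation); and your claim (B). The genuine gap is in your justification of (A2), that every gap sits at the \emph{last} box of its column. You propose to match the support of the resulting $Y^{-1}_{p-1,\tau}$ against the admissible supports $s+p,\dots,s+p-2m+2$ and then ``follow the resulting diagonal up to the first column,'' forcing a length-$i_0$ column to be modified. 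This yields a contradiction only when $p=i_0$. If $p<i_0$, a gap strictly above the last box of column $j$ forces (via the diagonal condition) the box of the first column on that diagonal to have content $\ge p$, and the last boxes of all columns $j'\le j$ to have content $\ge p+1$; but the gap-free first column has length $i_0\ge p+1$ and its boxes legitimately carry contents up to $i_0$, so nothing is violated there. Nor does the support matching alone exclude such gaps: for instance, when $k'\ge 3$, a gap at the $(p-1)$-th box of the $(b+2)$-th column (which has length $p$) produces $Y^{-1}_{p-1,s+p}$, which is precisely one of the admissible supports.

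The correct target of the diagonal propagation is the $b$-th column, $b=c-k'$, whose length is exactly $p$ and whose last box has support $s+2$ by \eqref{e:suppofb}. The paper first shows that no column with index $j\le b$ can carry a gap: such a gap would contribute $Y^{-1}_{p-1,\tau}$ with $\tau\ge s+2+p$, strictly larger than every admissible support in \eqref{e:cfpTpositive}, hence uncancellable. (Semi-standardness then also rules out gaps in columns $b<j<c$, which have length $p$ as well.) With that in hand, a gap strictly above the last box of any column forces the last box of that column to have content $\ge p+1$, which propagates leftward along the bottom diagonal to give the $b$-th column a last box of content $\ge p+1>l_b=p$, i.e.\ a gap in the $b$-th column --- the desired contradiction. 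You need to insert this step (or an equivalent one) before (A2) can be concluded; once that is done, your claim (B) and the identification $S'=S_{c,f,p}$ go through as you describe.
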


\begin{proof}
If, $S'=S$, then $S'=S_{c,f,p}$ for any $f<c$ and there is nothing to do. Thus,
assume $S'\ne S$. It follows from Lemma \ref{lem no dom of form}
that we must also  have $T_{m,p}\ne T$, i.e., $m\ge 1$.
Observe that, if $p=1$, then $S_{c,f,p}=S$ since, in this case,
$f\le |\lambda|<c$.

We need to study the structure of gaps in $S'$. Since each gap
contributes with the appearance of a term of the form $Y_{i,r'}^{-1}$
in $\gb\omega^{S'}$  for some $r'$, it follows that $Y_{i,r'}$ must appear in
$\gb\omega^{T_{m,p}}$.
By \eqref{e:omegaTtp},  $Y_{i,r'}$
must be among the following elements:
\begin{equation}\label{e:cfpTpositive}
Y_{p-1, r+2(k-m')+n-p+1},\ 1\le m'\le m, \quad\text{and}\quad
Y_{n,r+2m''}, \ 0\le m''<k-m.
\end{equation}
In particular, $i\in\{p-1,n\}$.

Let us show that we must have $i=p-1$ (in particular, it
follows that each column of $S'$ can have at most one gap).
Indeed, suppose $i=n$ and that the corresponding gap occurs at the $l$-th column of $S'$, necessarily at the last box whose content is then $n+1$.
Since $S'$ is semi-standard of the form \eqref{rem tableau stairs}, this
implies that the contents of all the last boxes of the columns to
the left of the $l$-th column are also equal to $n+1$. In
particular, if $j\le\min\{l,b\}$, the content of the last box of the
$j$-th column of $S'$ is $n+1$. Moreover, \eqref{e:suppofb} and
\eqref{rem tableau stairs} imply that the support of this box is
$s+2(b-j+1)$. Using Lemma \ref{lem gap}, this implies that
$Y_{i,r'} = Y_{n,r+2(k+b-j+n-p+1)}$,
which contradicts \eqref{e:cfpTpositive} since $k+(b-j)+(n-p)+1\ge k +1 >k-m$.

Since, as we have observed, each column of $S'$
has at most one gap, it remains to show that there exists $f\le|\lambda|$ such that the $j$-th column of $S'$ has
a gap iff $c\le j\le f$, the gap occurs at the last box and its size is $p-l_j$.

We start by showing that, if the $j$-th column of $S'$ has a gap, then $j\ge c$. Assume, by contradiction, that $j<c$.
Let $s'$ be the support of the box where the gap occurs. In particular, since the
support of the last box of this column is $s+2(b-j+1)$ by \eqref{e:suppofb}, we have
$s'\ge s+2(b-j+1)$. Assume first that $j\le b$. Lemma \ref{lem gap} and Remark \ref{r:gapsdontcancel} imply that $Y_{p-1,s'+p}^{-1}$ appears in $\bs\omega^{S'}$.
But
\begin{equation*}
s'+p\ge s+2(b-j+1)+p
\stackrel{\eqref{e:s}}{=}r+2(k+b-j)+n-p+1>r+2(k-1)+n-p+1,
\end{equation*}
which contradicts \eqref{e:cfpTpositive}. In particular, the $b$-th column of $S'$ does not have a gap. If $b<j<c$, then $l_j=l_b=p$ (see the comment after \eqref{e:b}). Since, $S'$ is semi-standard, this implies that the $b$-th column also has a gap, yielding the desired contradiction.

Next, we show that if the $j$-th column has a gap, it must occur at
its last box. Indeed, Lemma \ref{lem gap} and \eqref{e:cfpTpositive}
imply that the content of this box  must be $p$. Hence, if
this were not the last box, it would follow that the content of the
last box is at least $p+1$. Since  $S'$ is semi-standard of the form
\eqref{rem tableau stairs}, this would imply that the last box of
the $b$-th column is at least $p+1$ implying that the $b$-th column
would have a gap (because it has $p$ boxes), yielding a
contradiction. The same reasoning implies that, if $c\le j'\le j$,
the last box of the $j'$-th column has content $p$ and, hence, has a
gap (because its length is at most $p-1$). Notice also that, since
there is no other gap in the $j$-th column, all the boxes but the
last one must have content equal to their position in the column. In
particular, the content of its
$(l_j-1)$-th box is $l_j-1$ and, hence, the size of the gap is $p-l_j$, as claimed.
\end{proof}

In light of Lemmas \ref{lem dom t,p} and \ref{lem dom c,f,p}, in order to complete the proof of \eqref{e:tensorA}, it remains to check that
\begin{equation}\label{e:tensorA'}
\bs\omega^{S_{c,f,p}}\bs\omega^{T_{m,p}}\in \mathcal P^+ \quad \Longleftrightarrow \quad
\begin{aligned}
& \text{either }\ 0\le m\le k' \text{ and } f<c\ \text{ or }\  k'\le m\le k,\\ & f\le|\lambda| \text{ and, } f=c+m-k'- \epsilon \text{ with } \epsilon\in\{0,1\}.
\end{aligned}
\end{equation}
But, for $c$ as defined in \eqref{e:defc} and noting that $l_f<p$, if $s_c$ is as in \eqref{e:omegaScfpgen}, then \eqref{e:suppofb} implies that $s_c=s-2(k'-1)$ and we can re-write \eqref{e:omegaScfpgen} as
\begin{align}\notag
\bs\omega^{S_{c,f,p}} = &  \left( \prod_{j=0}^{f-c}
Y_{l_{c+j}-1,s-2(k'-1+j)+l_{c+j}}\  Y_{p-1,s+p-2(k'-1+j)}^{-1}\ Y_{p,s+p-2(k'-1+j)-1} \right)
\\ \label{e:omegaScfp} \hfill\\\notag
&\times \left( \prod_{\substack{j>f:\\ l_j=l_f}}
Y_{l_f,r_{l_f}+2(\lambda(h_{l_f})-d_j)} \right) \left(\prod_{i=
p}^{n} Y_{i,r_i,\lambda(h_i)}\right) \left(\prod_{i=1}^{l_f-1}
Y_{i,r_i,\lambda(h_i)}\right).
\end{align}
Now, a simple comparison of \eqref{e:omegaScfp} with \eqref{e:omegaTtp} proves \eqref{e:tensorA'}.

It remains to prove the second statement of Proposition
\ref{p:multiplicity1A}, which is clear in the case that
$D=\{\bs\lambda\}$. Thus, we can assume \eqref{eq p,k'}
holds. Fix $\bs\mu\in D$, say
\begin{equation*}
\bs\mu = \bs\omega^{S_{c,f,p}}\bs\omega^{T_{m,p}}
\end{equation*}
with $m$ and $f$ as in \eqref{e:tensorA'}. Suppose $S'\in\stab(S), T'\in\stab(T)$
are such that
$$\bs\omega^{S'}\bs\omega^{T'}=\bs\mu.$$ Since $V_q(\bs\omega)$ and
$V_q(\bs\varpi)$ are thin, we are left to show that $S'=S_{c,f,p}$
and $T'=T_{m,p}$. For doing this, notice that, working with
\eqref{eq T D in A ii} similarly to how we deduced \eqref{e:Scfpeta}
and \eqref{e:xi_0}, it follows that
\begin{equation*}
\bs\omega^{S_{c,f,p}} = \bs\omega \prod_{\xi\in\Xi_1} A_\xi^{-1} \qquad\text{and}\qquad \bs\omega^{T_{m,p}} = \bs\varpi \prod_{\xi\in\Xi_2} A_\xi^{-1}
\end{equation*}
with $\Xi_1,\Xi_2\subseteq I\times\mathbb Z$ satisfying
\begin{equation}\label{e:Xj=Xj1}
(i,t)\in \Xi_1\ \Rightarrow\ i<p
\end{equation}
while
\begin{equation}\label{e:Xj=Xj2}
(i,t)\in \Xi_2\ \Rightarrow\ i\ge p\quad\text{and}\quad t\le s+p.
\end{equation}
Note that \eqref{e:Xj=Xj1} is a rephrasing of \eqref{e:xi_0}. Setting $\Xi=\Xi_1\cup\Xi_2$, it
then follows from \eqref{e:lrootsind} that there must exist a
partition $\Xi=\Xi_1'\cup\Xi_2'$ such that
\begin{equation*}
\bs\omega^{S'} = \bs\omega \prod_{\xi\in\Xi_1'} A_\xi^{-1} \qquad\text{and}\qquad \bs\omega^{T'} = \bs\varpi \prod_{\xi\in\Xi_2'} A_\xi^{-1}.
\end{equation*}
We will show that
\begin{equation}\label{e:XjinXj'}
\Xi_j\subseteq \Xi_j' \quad\text{for}\quad j=1,2,
\end{equation}
which clearly completes the proof of Proposition \ref{p:multiplicity1A}.

We first show \eqref{e:XjinXj'} for $j=2$. By contradiction, suppose
there exists $(i,t)\in \Xi_2\cap \Xi_1'$. By \eqref{e:rem action
tableau A}, this implies that, for obtaining $S'$ from $S$, a
modification $\ffbox{i}_{\: t-i} \to \ffbox{i+1}_{\: t-i}$ was
performed in some column of $S$. Since $t-i\le s$ by
\eqref{e:Xj=Xj2}, \eqref{e:suppofb} implies that this modification
can only be performed in columns to the right of the $b$-th column
of $S$. In particular, the content of the last box of the $b$-th column
of $S'$ is equal to $p$. We claim that the above modification also
implies that the content of the last box of $(b+1)$-th column of
$S'$ is at least $p+1$, contradicting the fact that $S'$ is
semi-standard. Indeed, since $i\ge p$ by \eqref{e:Xj=Xj2}, the last
box of the modified column has content larger or equal to $p+1$. The condition on diagonals for semi-standard tableaux implies that the same holds for the last box of the $(b+1)$-th column of $S'$ as claimed.

To prove \eqref{e:XjinXj'} for $j=1$, it suffices to show that we actually have $\Xi_2=\Xi_2'$. Suppose $\Xi_2\subsetneqq \Xi_2'$ and, hence, $\Xi_1\cap\Xi_2'\ne\emptyset$. By Lemma \ref{lem dom t,p}, there exists $0\le m'\le k$ such that $T' = T_{m',p}$ and, hence, \eqref{e:Xj=Xj2} is valid for $\Xi_2'$ in place of $\Xi_2$ (it is valid for $T_{l,p}$ for any $l$). It follows that there exists $(i,t)\in\Xi_1$ with $i\ge p$ contradicting \eqref{e:Xj=Xj1}. Hence, $m=m'$ showing that $\Xi_2=\Xi_2'$.

\subsection{The Simple Factors of $V$}\label{ss:sftpA}

We now complete the proof of Theorem \ref{t:mainr}. If neither \eqref{e:k'i0} nor \eqref{eq p,k'}
hold, then $D$ is a singleton by \eqref{eq D} and \eqref{e:tensorA} and, hence, $V$ is irreducible.

Assume first that \eqref{e:k'i0} holds and set
\begin{equation*}
\bs\mu_j=\bs\varpi\bs\omega^{S_{1,j,n+1}} \quad\text{for}\quad  0\le j\le k'.
\end{equation*}
Recall that $S_{1,j,n+1}=S_{1,|\lambda|,n+1}$ for all $j\ge|\lambda|$. Thus, $D=\{\bs\mu_j: 0\le j\le \min\{k',|\lambda|\}\}$ by \eqref{eq D}, while \eqref{e:Scfpgodown} implies that $\bs\mu_{j+1}\le\bs\mu_j$. In particular,
\begin{equation}\label{e:mu'(ii)}
\bs\lambda'= \bs\mu_{k'} = \min D.
\end{equation}
Under the present assumption, as explained in Section \ref{ss:scheme}, Theorem \ref{t:mainr} follows from the second statement of
Proposition \ref{p:multiplicity1A} together with the following lemma.

\begin{lem}\label{l:tpfactorsA<}
We have:
\begin{enumerate}[(a)]
\item $\bs\mu_j\in\wtl(V_q(\bs\lambda))$ for all $0\le j\le\min\{|\lambda|,k'-1\}$. In particular, if $k'>|\lambda|$, $D\subseteq\wtl(V_q(\bs\lambda))$.
\item $\bs\mu_{k'}\notin \wtl(V_q(\bs\lambda))$ if $k'\le|\lambda|$.
\end{enumerate}
\end{lem}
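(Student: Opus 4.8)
The plan is to prove part (a) by exhibiting each $\bs\mu_j$, $0\le j\le\min\{|\lambda|,k'-1\}$, as an element of $\wtl(V_q(\bs\lambda))$ via the diagram-subalgebra machinery of Proposition \ref{prop appears}, and to prove part (b) by a right-negativity obstruction. For (a), first I would observe that $\bs\lambda=\bs\omega\bs\varpi$ is $J$-dominant (indeed dominant), and compute $\chi_n(\bs\lambda)$ using the explicit $\mathfrak{sl}_2$-qcharacter of $V_q(\pi_n(\bs\lambda))$. Since $\pi_n(\bs\omega)=Y_{i_0,r_{i_0},\lambda(h_{i_0})}$ when $i_0<n$ (and a product of two such factors when $i_0=n$) and $\pi_n(\bs\varpi)=Y_{n,r,k}$, the $q$-factorization of $\pi_n(\bs\lambda)$ together with \eqref{teo sl2 irred} and Example \ref{ex l-weights KR n} gives a completely explicit list of the $\ell$-weights appearing in $\chi_n(\bs\lambda)$; they are obtained from $\bs\lambda$ by multiplying by products $\prod A_{n,n+1-j_l,\ast}^{-1}$. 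Comparing with \eqref{eq T D in A ii} and \eqref{e:Ttpgodown} (which describe $\bs\omega^{T_{m,p}}$, equivalently $\bs\mu_m$ up to the fixed factor $\bs\omega$ — wait, here the relevant $\bs\mu_j$ come from the $S$-side, so I compare instead with \eqref{e:Scfpgodown}/\eqref{e:ScfpfromSbyAs} restricted to $n$), I would check that $\bs\mu_j\in\wtl(\chi_n(\bs\lambda))$ precisely when $j\le k'-1$, using \eqref{e:k'i0} to pin down the arithmetic. Then Remark \ref{rem appears} (the $\bs\mu=\bs\omega$ case of Proposition \ref{prop appears}) gives $\wtl(\chi_n(\bs\lambda))\subseteq\wtl(V_q(\bs\lambda))$, which yields (a) at once; the ``in particular'' clause follows since then $D=\{\bs\mu_j:0\le j\le|\lambda|\}$ with $|\lambda|\le k'-1$.

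For part (b), the plan is to show $\bs\mu_{k'}=\bs\lambda'$ is \emph{not} an $\ell$-weight of $V_q(\bs\lambda)$ by a right-negativity argument in the spirit of the proofs of Lemmas \ref{lema dominant product} and \ref{lem no dom of form}. Concretely, $\bs\lambda'$ is obtained from $\bs\lambda$ by modifying the last boxes of the first $k'$ columns of $S$ to content $n+1$; by \eqref{eq S1,j,n+1} (the $l_f<n$ branch, using $k'\le|\lambda|$ so that the $k'$-th column from the right still has length $<n$ whenever $i_0<n$, and handling the $i_0=n$ case separately with the other branch) one reads off that $\pi_n(\bs\lambda')$ contains the factor $Y_{n,r_{i_0}+2(\lambda(h_{i_0})-k'+1)+n-i_0}^{-1}$ with no compensating positive $Y_{n,\ast}$ at that support coming from $\bs\varpi=Y_{n,r,k}$, precisely because \eqref{e:k'i0} forces $r+2(k-1)<r_{i_0}+2(\lambda(h_{i_0})-k'+1)+n-i_0$ (this is the same inequality used in \eqref{e:inoii}). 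The key point is then: any $\bs\nu\in\wtl(V_q(\bs\lambda))$ with $\bs\nu<\bs\lambda$ has the property that the factor $Y_{n,r(\bs\nu)}$ does not appear — but one must rule out that $\bs\lambda'$ has some \emph{other}, larger-support, right-negative contribution that would allow it in $\wtl(V_q(\bs\lambda))$. I would handle this by the tableau description: $V_q(\bs\lambda)$ has length $\le 2$ (a fact we may take from the scheme, or prove here by the $D$-count), so if $\bs\lambda'\in\wtl(V_q(\bs\lambda))$ were true, then since $\bs\lambda'\in D$ and $\bs\lambda'=\min D$, every element of $D$ below it — none — need be accounted, and one shows directly that $\bs\lambda'$ cannot be reached from $\bs\lambda$ by a chain of $A^{-1}_{i,s}$-moves staying inside the Frenkel–Mukhin-closed set $\wtl(V_q(\bs\lambda))$, using Lemma \ref{l:connectedquiver}-type reasoning and the location of the $n$-th simple-root moves forced by \eqref{e:k'i0}.

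The main obstacle I expect is part (b): showing $\bs\lambda'\notin\wtl(V_q(\bs\lambda))$ is not a pure right-negativity statement about a single KR module (where Proposition \ref{prop right neg} would apply directly) but about the minimal affinization $V_q(\bs\lambda)$, for which the cleanest tool is the tableau qcharacter of Theorem \ref{t:qcharmin} \emph{if} $\bs\lambda$ itself corresponds to a minimal affinization — which it generally does not. The right framework is therefore to argue at the level of $\pi_n$: I would show that if $\bs\lambda'$ were an $\ell$-weight of $V_q(\bs\lambda)$, then applying the restriction functor to $U_q(\tilde{\mathfrak g}_n)\cong U_q(\tilde{\mathfrak{sl}}_2)$ and using that $V_q(\bs\lambda)$ restricted there is a submodule of $V_q(\pi_n(\bs\omega))\otimes V_q(\pi_n(\bs\varpi))$ — hence a sum of KR-type modules by \eqref{teo sl2 irred} — one could bound which $\pi_n(\bs\mu)$ can occur, and \eqref{e:k'i0} is exactly the numerology that excludes $\pi_n(\bs\lambda')$ while including $\pi_n(\bs\mu_j)$ for $j<k'$. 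Making the ``submodule of a tensor product of explicit $\mathfrak{sl}_2$-modules, hence its $\ell$-weights are constrained'' step precise — in particular extracting from it a necessary condition that $\bs\lambda'$ fails — is the delicate part, and I would expect to need the right-negativity bookkeeping of \eqref{eq r(varpi_j)} and Lemma \ref{l:nrnA} to control the interaction between the two tensor factors exactly as in Lemma \ref{lema dominant product}.
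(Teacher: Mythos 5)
Your plan for part (a) has a gap that the paper's proof is specifically structured to avoid. You propose to conclude via $\bs\mu_j\in\wtl(\chi_{\{n\}}(\bs\lambda))$ and Remark \ref{rem appears}, but every element of $\wtl(\chi_{\{n\}}(\bs\lambda))$ lies in $\bs\lambda\,\iota_{\{n\}}(\cal Q_{\{n\}})$, i.e.\ differs from $\bs\lambda$ only by $\ell$-roots $\bs\alpha_{n,\ast}$. By \eqref{e:ScfpfromSbyAs} with $p=n+1$, however, $\bs\mu_j=\bs\varpi\,\bs\omega^{S_{1,j,n+1}}$ differs from $\bs\lambda$ by products $A^{-1}_{l_m,n,\ast}$, which involve simple $\ell$-roots in \emph{all} directions $l_m,\dots,n$; so unless every relevant column of $S$ has length $n$, $\bs\mu_j\notin\wtl(\chi_{\{n\}}(\bs\lambda))$ and the one-step argument fails. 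The paper interposes $\bs\mu_j'=\bs\omega^{S_{1,j,n}}\bs\varpi$: one first reaches $\bs\mu_j'$ from $\bs\lambda$ using $\chi_J$ with $J=I\setminus\{n\}$ (where the restriction \emph{is} a minimal affinization, so the tableau qcharacter applies), and only then applies $\chi_{\{n\}}$ to $\bs\mu_j'$ — verifying all three hypotheses of Proposition \ref{prop appears}, not just the $\bs\mu=\bs\omega$ case — to land on $\bs\mu_j$ via the $q$-factorization of $\pi_n(\bs\mu_j')$. This two-step chain is essential, not a stylistic choice.

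Part (b) is a more serious problem. Your right-negativity argument starts from a false premise: $\bs\mu_{k'}=\bs\lambda'$ lies in $D\subseteq\cal P^+$, so it is dominant and the alleged uncompensated factor $Y^{-1}_{n,\ast}$ does not exist — indeed \eqref{e:k'i0} gives $\pi_n(\bs\mu_{k'})=Y_{n,r,k}\,Y^{-1}_{n,r,k'}=Y_{n,r+2k',k-k'}$, a polynomial. Nor can the exclusion be detected by restricting to $U_q(\tlie g_n)$: the restriction of $V_q(\bs\lambda)$ to that subalgebra is a large direct sum, not a submodule of $V_q(\pi_n(\bs\omega))\otimes V_q(\pi_n(\bs\varpi))$, and in any case many $\bs\nu\in\wtl(V_q(\bs\lambda))$ share the same image under $\pi_n$, so no single-node computation can rule out the full $\ell$-weight $\bs\lambda'$. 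The idea you are missing is the paper's factorization: let $S'$ be the first $k'-1$ columns of $S$ and set $V'=V_q(\bs\omega^{S'})$, $V''=V_q(\bs\varpi\,\bs\omega^{S\setminus S'})$; one checks that $V''$ is again an increasing minimal affinization (this is where $k'\le|\lambda|$ and \eqref{e:k'i0} enter), that $V_q(\bs\lambda)$ is a simple factor of $V'\otimes V''$, and that the set $\Xi$ of $\ell$-roots with $\bs\mu_{k'}=\bs\lambda\prod_{\xi\in\Xi}A_\xi^{-1}$ admits no partition $\Xi=\Xi'\cup\Xi''$ compatible with the tableau qcharacters of $V'$ and $V''$ — the modification $(n,s-2(k'-1)+n)$ can be placed in neither factor. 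Without this construction your sketch does not close.
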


\begin{proof}
Since $\bs\mu_0=\bs\lambda$, part (a) clearly holds for $j=0$. For $j>0$, consider
$$\bs\mu_j'= \bs\omega^{S_{1,j,n}}\bs\varpi$$
and observe that $S_{1,j,n}=S$ if $l_j=n$. We claim that
\begin{equation*}
\bs\mu_j'\in\wtl(V_q(\bs\lambda))\quad\text{for all}\quad j>0,
\end{equation*}
which is obvious if $l_j=n$. Thus, assume $l_j<n$. Using Remark
\ref{rem appears} with $\bs\lambda$ in place of $\bs\omega$ and
$J=I\setminus\{n\}$, we have
$\wt_\ell(\chi_{J}(\bs\lambda))\subseteq \wt_\ell(V_q(\bs\lambda))$
and, hence, it suffices to show that
\begin{equation}\label{e:mu'inchiJ}
\bs\mu_j'\in\wtl(\chi_J(V_q(\bs\lambda))).
\end{equation}
By definition of $S_{1,j,n}$, all of its columns of length smaller than $n$ coincide with the corresponding column of $S$ and, hence, $\bs\lambda_J=\bs\omega_J$. Therefore, $V_q(\bs\lambda_J)$ is an increasing minimal affinization whose
highest weight corresponds to the tableau $S'$ obtained from $S$ by removing the columns of length $n$. Note that the $j$-th column of $S$ becomes the $j'$-th column of $S'$ where $j'=j-\lambda(h_n)$. Theorem \ref{t:qcharmin} then
implies that $(\bs\omega^{S'_{1,j',n}})_J\in
\wtl(V_q(\bs\lambda_J))$. One easily checks that
\begin{equation*}
\bs\lambda\cdot
\iota_J\left((\bs\omega_J^{-1}\bs\omega^{S'_{1,j',n}})_J\right) =
\bs\mu'_j
\end{equation*}
completing the proof of \eqref{e:mu'inchiJ}.

In particular, it follows that, for all $j$, $\bs\mu'_j$ satisfies condition (i)  of Proposition \ref{prop appears} with $\bs\lambda$ in place of $\bs\omega$.
We show next that, setting $J=\{n\}$, conditions (ii) and (iii) of Proposition \ref{prop appears} are also satisfied by $\bs\mu'_j$ and, therefore,
\begin{equation}
\wt_\ell(\chi_{\{n\}}(\bs\mu_j'))\subseteq \wt_\ell (V_q(\bs\lambda)).
\end{equation}
Proceeding as in the proof of \eqref{eq S1,j,n+1}, one sees that
\begin{equation}\label{eq mu_j' n}
\pi_n(\bs\omega^{S_{1,j,n}}) =
\begin{cases}
Y_{n,r_{i_0}+2(\lambda(h_{i_0})-j)+n-i_0,j}, & \text{ if } l_j<n,\\
Y_{n,r_n,\lambda(h_n)},& \text{ if } l_j=n,
\end{cases}
\end{equation}
which implies condition (ii). Condition (iii) is obvious if $l_j=n$. If $l_j<n$, \eqref{e:ScfpfromSbyAs}
applied to $S_{1,j,n}$ implies that there exists $\Xi\subseteq (I\setminus\{n\})\times\mathbb Z$ such that
\begin{equation*}
\bs\mu'_j=\bs\lambda \prod_{\xi\in\Xi} A_\xi^{-1}.
\end{equation*}
Since, for every $\{n\}$-dominant $\bs\nu$, the elements of $\wt_\ell(\chi_{\{n\}}(\bs\nu))$ are of the form $\bs\nu \prod_{r\in \mathbb Z}A^{-m_r}_{n,r}$ with $m_r\in\mathbb Z_{\ge 0}$, condition (iii) follows.

Part (a) of the lemma now follows if we show that
\begin{equation}\label{e:tpfactorsA<a}
\bs\mu_j\in\wt_\ell(\chi_{\{n\}}(\bs\mu_j')) \quad\text{for
all}\quad 1\le j\le\min\{|\lambda|,k'-1\}.
\end{equation}
Observe that, since $S_{1,j,n+1}$ is obtained from $S_{1,j,n}$ by
modifying the contents of the last boxes of the first $j$ columns
from $n$ to $n+1$, \eqref{e:rem action tableau A} together with
\eqref{e:rifromri0} and \eqref{e:k'i0} implies that
\begin{equation}\label{e:tpfactorsA<a'}
\bs\mu_{j}=\bs\mu_{j}' \left( \prod_{l=1}^jA^{-1}_{n,r+2(k'-l)-1}\right).
\end{equation}
Assume first that $l_j<n$ and observe that
\begin{eqnarray*}
\pi_n(\bs\mu_j')&\stackrel{\eqref{eq mu_j' n} }{=}& Y_{n,r_{i_0}+2(\lambda(h_{i_0})-j)+n-i_0,j} \ Y_{n,r,k}
\stackrel{\eqref{e:k'i0}}{=} Y_{n,r+2(k'-1-j),j} \ Y_{n,r,k}.
\end{eqnarray*}
One easily checks that, if $j\le\min\{|\lambda|,k'-1\}$, the above
is the $q$-factorization of $\pi_n(\bs\mu_{j}')$. Thus, \eqref{teo sl2 irred} implies that
\begin{equation*}
V_q(\pi_n(\bs\mu_{j}'))\cong V_q(Y_{n,r+2(k'-1-j),j})\otimes
V_q(Y_{n,r,k}).
\end{equation*}
one now deduces \eqref{e:tpfactorsA<a} by applying
\eqref{e:l-weights KRnroot} to the first factor of this tensor
product and comparing with \eqref{e:tpfactorsA<a'}. Indeed, the
$\ell$-roots in \eqref{e:tpfactorsA<a'} coincide with those
appearing in \eqref{e:l-weights KRnroot} for the constant non zero
partition (i.e., for the lowest weight).

If $l_j=n$, which implies $\bs\mu'_j=\bs\lambda, j\le\lambda(h_n)\le|\lambda|$, and $i_0=n$, then
$$\pi_n(\bs\lambda) \stackrel{\eqref{eq mu_j' n} }{=} Y_{n,r_n,\lambda(h_n)}\ Y_{n,r,k} \stackrel{\eqref{e:k'i0}}{=} Y_{n,r-2(\lambda(h_n)-k'+1),\lambda(h_n)} Y_{n,r,k}$$
and this is the $q$-factorization of $\pi_n(\bs\lambda)$ provided
$\lambda(h_n)\le k'-1$ (in particular, $j\le k'-1$). One then
deduces \eqref{e:tpfactorsA<a} as in the previous case (but
$\bs\mu_j$ may arise from a weight higher than the lowest one in
$V_q(Y_{n,r-2(\lambda(h_n)-k'+1),\lambda(h_n)})$). Finally, if
$\lambda(h_n)>k'-1$, then
\begin{equation*}
\pi_n(\bs\lambda) = Y_{n,r_n,\lambda(h_n)+k-k'+1} \ Y_{n,r,k'-1}
\end{equation*}
is the $q$-factorization of $\pi_n(\bs\lambda)$ and, hence
\begin{equation*}
V_q(\pi_n(\bs\lambda)) \cong V_q(Y_{n,r_n,\lambda(h_n)+k-k'+1})\otimes V_q(Y_{n,r,k'-1}).
\end{equation*}
This time, we apply \eqref{e:l-weights KRnroot} to the second factor of this tensor product to obtain $\bs\mu_j$ for $j\le k'-1$, thus proving
\eqref{e:tpfactorsA<a}.

To prove part (b), it suffices to show that there exist two simple modules $V'$ and $V''$ such that
\begin{equation}\label{e:tpfactorsA<b}
\bs\mu_{k'}\notin \wtl(V'\otimes V'')  \quad\text{and}\quad V_q(\bs\lambda) \text{ is a simple factor of } V'\otimes V''.
\end{equation}
For doing this, let $V'=V_q(\bs\omega^{S'})$ where $S'$ is the tableau formed by the first
$k'-1$ columns of $S$. In particular, $V'$ is an increasing minimal affinization. Evidently, $\bs\lambda(\bs\omega^{S'})^{-1} = \bs\varpi\bs\omega^{S\setminus S'}\in\mathcal P^+$ and, therefore, letting $V''$ be the simple module having this as its highest $\ell$-weight, the second claim in \eqref{e:tpfactorsA<b} is satisfied.

To complete the proof of \eqref{e:tpfactorsA<b}, we begin by observing that $V''$ is also an increasing minimal affinization. More precisely, we show that the tableau $S''$ obtained by the juxtaposition of $T$ and $S\setminus S'$ is of the form \eqref{rem tableau stairs}. Note that the hypothesis $k'\le|\lambda|$ implies that $S\setminus S'\ne\emptyset$ and that the support of the last box of its first column (the $k'$-th column of $S$) is $r_{l_{k'}}-l_{k'}+1+2(\lambda(h_{l_k'})-d_{k'})$ (recall \eqref{e:jisdjoflj}). On the other hand, the support of the last box of the last column of $T$ is $r-n+1$ and, hence, we need to show that
\begin{equation*}
(r-n+1)-2=r_{l_{k'}}-l_{k'}+1+2(\lambda(h_{l_{k'}})-d_{k'}).
\end{equation*}
But this is easily checked using \eqref{e:rifromri0} and \eqref{e:k'i0}.

Using \eqref{e:ScfpfromSbyAs} as before, we see that there exists $\Xi\subseteq I\times\mathbb Z$ such that
\begin{equation*}
\bs\mu_{k'}=\bs\lambda \prod_{\xi\in\Xi} A_\xi^{-1}.
\end{equation*}
Thus, \eqref{e:tpfactorsA<b} follows if we show that there is no partition $\Xi=\Xi'\cup \Xi''$ such that
\begin{equation*}
\bs\omega^{S'}\prod_{\xi\in\Xi'}
A_\xi^{-1}\in\wtl(V') \qquad\text{and}\qquad
\bs\omega^{S''}\prod_{\xi\in\Xi''} A_\xi^{-1}\in\wtl(V'').
\end{equation*}
By contradiction, assume such a partition exists. It follows from \eqref{e:rem action tableau A} and Lemma \ref{l:connectedquiver} that each
element $A_\xi^{-1}, \xi\in\Xi$, corresponds to adding $1$ to the
content of a particular box of some element of either $\stab(S')$ or $\stab(S'')$ so that the new tableau remains semi standard. More precisely,
if $\xi=(i,l)$, then \eqref{e:rem action tableau A} implies that the
modification associated to $A_\xi^{-1}$ is of the form
\begin{equation*}
\ffbox{i}_{\:l-i} \longrightarrow \ffbox{i+1}_{\:l-i}.
\end{equation*}
Inspecting \eqref{e:ScfpfromSbyAs}, one checks that
\begin{equation*}
\max\{l-i: (i,l)\in\Xi\} = r_{i_0}+2\lambda(h_{i_0})-i_0-1 =:s.
\end{equation*}
Note that \eqref{e:rifromri0} and \eqref{e:k'i0} imply that
\begin{equation}\label{eq s+2}
s+2\ \text{ is the support of the last box of the $(k-k'+1)$-th
column of } T
\end{equation}
(which is the $k'$-th one counted from right to left).
Let $\xi=(i,l)$ be such that $l-i=s$. This means that the boxes of
$S'$ and $S''$ supported above $s$ are not being modified.
Together with \eqref{eq s+2}, it follows that the first $k-k'+1$ columns of
$S''$ are not modified. In particular, since the first $k$ columns of $S''$ come from $T$ and $V_q(\bs\omega^{S''})$ is a minimal
affinization, all first $k$ columns of $S''$ are not modified.

On the other hand, another inspection of \eqref{e:ScfpfromSbyAs} shows that
$$(n,s-2(k'-1)+n)\in\Xi.$$
This corresponds to a modification of the form
\begin{equation*}
\ffbox{n}_{\:s-2(k'-1)} \longrightarrow \ffbox{n\!+1}_{\:s-2(k'-1)},
\end{equation*}
in some column of some element of $\stab(S')\cup\stab(S'')$. Since the content is $n$, this must be the last box the column.
Since the modified tableau is semi standard, the last box of each column to the left must also have $n+1$
as content. Since the first columns of $S''$ are left unmodified,  it follows that $(n,s-2(k'-1)+n)\notin\Xi''$. Observing that, by construction, the box of
$S'$ having the lowest support is supported at $s-2(k'-1)+2$, it
follows that $(n,s-2(k'-1)+n)$ cannot be in $\Xi'$ as well, yielding
the desired contradiction.
\end{proof}

It remains to prove Theorem \ref{t:mainr} when \eqref{eq p,k'} holds. In that case, by \eqref{e:tensorA},
$$D=\left\{\bs\omega^{S_{c,c+m-k'-1,p}}\bs\omega^{T_{m,p}},\;\bs\omega^{S_{c,c+m-k',p}}\bs\omega^{T_{m,p}}
:  0\le m\le \min \left\{k,|\lambda|_{p-1}+k'\right\}\right\},$$ where
$c=\ _p|\lambda|+1$ (recall that $S_{c,f,p}=S$ for $f<c$ and for $c>|\lambda|$, $S_{c,f,p}=S_{c,|\lambda|,p}$ for
$f>|\lambda|$, and $T_{m,p}=T_{k,p}$ for $m>k$). Let
$$\bs\mu_m:=\bs\omega\bs\omega^{T_{m,p}} \quad\text{for}\quad 0\le m\le k$$
and observe that $\bs\lambda'=\bs\mu_{k'}$ and
$$\{\bs\nu\in D: \bs\nu\ge \bs\lambda'\} = \{\bs\mu_m: 0\le m\le \min\{k',k\}\}.$$
In particular, if either $k'>k$ or $p=\min\supp(\lambda)$, then
$\bs\lambda'=\bs\mu_k$ is the smallest element of
$D$. As explained in Section \ref{ss:scheme}, Theorem \ref{t:mainr}
follows from the second statement of Proposition
\ref{p:multiplicity1A} together with the following lemma.

\begin{lem}\label{lem lweight in A} We have
\begin{enumerate}[(a)]
\item $\{\bs\nu\in D: \bs\nu>\bs\lambda'\}\subseteq \wtl(V_q(\bs\lambda))$.
\item If $k'>k$, then $\bs\lambda'\in \wtl(V_q(\bs\lambda))$ or, equivalently, $D\subseteq \wtl(V_q(\bs\lambda))$.
\item If $k'\le k$, then $\bs\lambda'\notin \wtl(V_q(\bs\lambda))$.
\item If $p>\min\supp(\lambda)$, then $\left\{\bs\omega^{S_{c,c+m-k',p}}\bs\omega^{T_{m,p}}: k'\le m\le \min \left\{k,|\lambda|_{p-1}+k'\right\}\right\}\subseteq \wtl(V_q(\bs\lambda))$.
\item If $k'\le k$, then $\left\{\bs\omega^{S_{c,c+m-k'-1,p}}\bs\omega^{T_{m,p}} :  k'\le m\le \min \left\{k,|\lambda|_{p-1}+k' \right\}\right\}\subseteq \wtl(V_q(\bs\lambda'))$.
\end{enumerate}
\end{lem}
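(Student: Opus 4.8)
The plan is to derive Theorem~\ref{t:mainr} in case~(i) from parts~(a)--(e) by the composition-series count sketched in Section~\ref{ss:scheme}, and to prove (a)--(e) with the same machinery used for Lemma~\ref{l:tpfactorsA<} --- the diagram restriction results of Section~\ref{ss:diagsub} (Remark~\ref{rem appears}, Proposition~\ref{prop appears}), the $\lie{sl}_2$ factorization \eqref{teo sl2 irred}, and the explicit Kirillov--Reshetikhin $\ell$-weights of Example~\ref{ex l-weights KR n}) --- together with an induction on the rank that strips off the subdiagram $\{p,p+1,\dots,n\}$. First I would record the bookkeeping. Writing $c={}_p|\lambda|+1$ and $\bs\mu_m=\bs\omega\,\bs\omega^{T_{m,p}}$, \eqref{e:tensorA} and the monotonicity relations \eqref{e:Ttpgodown}, \eqref{e:Scfpgodown} exhibit the totally ordered set $D$ of Proposition~\ref{p:multiplicity1A} as a disjoint union $D=D_1\sqcup D_2$ with $D_2=\{\bs\omega^{S_{c,c+m-k'-1,p}}\bs\omega^{T_{m,p}}:k'\le m\le\min\{k,|\lambda|_{p-1}+k'\}\}$ (maximal element $\bs\lambda'=\bs\mu_{k'}$), while $D_1$ contains $\{\bs\mu_m:0\le m<\min\{k,k'\}\}=\{\bs\nu\in D:\bs\nu>\bs\lambda'\}$ and the family $\{\bs\omega^{S_{c,c+m-k',p}}\bs\omega^{T_{m,p}}:k'\le m\le\min\{k,|\lambda|_{p-1}+k'\}\}$. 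Parts (a), (b), (d) give $D_1\subseteq\wtl(V_q(\bs\lambda))$ (and, when $k'>k$, all of $D$), part (c) gives $\bs\lambda'\notin\wtl(V_q(\bs\lambda))$, and part (e) gives $D_2\subseteq\wtl(V_q(\bs\lambda'))$. Since $\dim V_{\bs\nu}=1$ for $\bs\nu\in D$ and $V_q(\bs\lambda)$ is a factor of $V$, the difference $\qch(V)-\qch(V_q(\bs\lambda))$ is effective, is positive at $\bs\lambda'$, and has all its dominant $\ell$-weights in $D\setminus\wtl(V_q(\bs\lambda))\subseteq D_2$ with largest element $\bs\lambda'$; hence $V_q(\bs\lambda')$ is the next composition factor, $\qch(V)\ge\qch(V_q(\bs\lambda))+\qch(V_q(\bs\lambda'))$, and restricting to $\mathcal P^+$ and using $D=D_1\sqcup D_2$ forces equality, so $V$ has precisely the two factors $V_q(\bs\lambda),V_q(\bs\lambda')$ (and $V=V_q(\bs\lambda)$ if $k'>k$). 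This is Theorem~\ref{t:mainr}.

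For parts (a), (b), (d) I would argue by induction on $n$. If $p>1$, put $J=\{p,\dots,n\}$: one checks, as after \eqref{e:rem action tableau A}, that $\pi_J$ carries the simple $\ell$-roots $\bs\alpha_{j,\cdot}$ ($j\in J$) to those of $\lie g_J$, so $\bs\lambda^{-1}\bs\nu\in\iota_J(\mathcal Q_J)$ for each $\bs\nu$ in $D_1$, and Remark~\ref{rem appears}/Proposition~\ref{prop appears} with this $J$ reduce the claims for $\bs\lambda$ to the corresponding claims for $\bs\lambda_J=\bs\omega_J\bs\varpi_J$ inside $\lie g_J$; since $\bs\omega_J$ is an increasing minimal affinization of $\lie g_J$ with top node $i_0$, $\bs\varpi_J=Y_{n,r,k}$ sits at the last node of $J$, and \eqref{eq rn<ri}, \eqref{eq p,k'} persist with $p$ now the first node of $J$, the induction hypothesis applies. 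This reduces everything to the case $p=1$, where I would imitate Lemma~\ref{l:tpfactorsA<}(a): realize an auxiliary $\ell$-weight $\bs\nu_m$ --- obtained from $\bs\mu_m$ by omitting its node-$n$ modifications --- inside $\wtl(\chi_{I\setminus\{n\}}(\bs\lambda))\subseteq\wtl(V_q(\bs\lambda))$ via Theorem~\ref{t:qcharmin} applied to the increasing minimal affinization $V_q(\bs\lambda_{I\setminus\{n\}})$, and then pull $\bs\mu_m$ into $\wtl(\chi_{\{n\}}(\bs\nu_m))\subseteq\wtl(V_q(\bs\lambda))$ by checking conditions (ii), (iii) of Proposition~\ref{prop appears} --- (ii) is the dominance of $\pi_n(\bs\nu_m)$, which follows from \eqref{e:omegaTtp}, \eqref{eq rn<ri}, \eqref{eq p,k'}, and (iii) follows from computing the $q$-factorization of $\pi_n(\bs\nu_m)$, after which \eqref{teo sl2 irred} and \eqref{e:l-weights KRnroot} finish. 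The bounds $m<\min\{k,k'\}$ and $p>\min\supp(\lambda)$ are exactly what makes the relevant $q$-factorizations hold, and when $k'>k$ the same computation covers $m=k$, giving (b). For part (e) I would proceed the same way --- now with base point $\bs\lambda'$, which by the computation in \eqref{e:omegaTtp} is a dominant $\ell$-weight whose restrictions to $\{1,\dots,p-1\}$ and to $\{p,\dots,n\}$ are again products of an increasing minimal affinization with Kirillov--Reshetikhin modules at the last node --- placing $D_2$ in $\wtl(V_q(\bs\lambda'))$ by the same rank induction.

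For part (c) I would follow Lemma~\ref{l:tpfactorsA<}(b): it suffices to exhibit minimal affinizations $V',V''$ with $V_q(\bs\lambda)$ a simple factor of $V'\otimes V''$ and $\bs\lambda'\notin\wtl(V'\otimes V'')$. Here $V'$ is taken to be an increasing minimal affinization obtained by deleting one of the $k'$ columns of $S$ that ``interact'' with $\bs\varpi$ (so that, by \eqref{e:rifromri0} and \eqref{eq p,k'}, the remaining staircase is still of the form \eqref{rem tableau stairs}), and $V''$ the simple module with highest $\ell$-weight $\bs\lambda\,\bigl(\text{highest }\ell\text{-weight of }V'\bigr)^{-1}\in\mathcal P^+$ --- also a minimal affinization by a staircase check. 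Writing $\bs\lambda'=\bs\lambda\prod_{\xi\in\Xi}A_\xi^{-1}$ by means of \eqref{eq T D in A ii} and \eqref{e:ScfpfromSbyAs}, one then shows, exactly as in the last part of the proof of Lemma~\ref{l:tpfactorsA<}(b) --- tracking the extremal supports occurring in $\Xi$ via \eqref{e:rem action tableau A} and Lemma~\ref{l:connectedquiver} and using semistandardness of $V'$ and $V''$ --- that no partition $\Xi=\Xi'\cup\Xi''$ realizes $\bs\lambda'$ as a product of an $\ell$-weight of $V'$ with one of $V''$, whence $\bs\lambda'\notin\wtl(V'\otimes V'')$.

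The main obstacle is part (c): showing that \emph{every} splitting $\Xi=\Xi'\cup\Xi''$ is incompatible with the semistandard shapes of $V'$ and $V''$ --- an elaboration of the case analysis in Lemma~\ref{l:tpfactorsA<}(b), now complicated by the extra modifications at the node $p$. The rest --- the rank induction, the choice of subdiagrams, and the $q$-factorization checks required by condition (iii) of Proposition~\ref{prop appears} under \eqref{eq rn<ri} and \eqref{eq p,k'} --- is routine but long.
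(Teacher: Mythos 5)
Your overall strategy for the ``positive'' parts (a), (b), (d), (e) --- subdiagram restrictions via Remark \ref{rem appears} and Proposition \ref{prop appears}, followed by $q$-factorizations, \eqref{teo sl2 irred} and \eqref{e:l-weights KRnroot} --- is the same as the paper's, and your rank induction along $J=\{p,\dots,n\}$ is a legitimate repackaging of the paper's two-step chain $\bs\lambda\to\bs\mu'_m\to\bs\mu_m$ for parts (a) and (b) (though your base case is garbled: for condition (i) the elementary modifications terminate at node $p$, so the two steps are $\{p+1,\dots,n\}$ then $\{p\}$, not $I\setminus\{n\}$ then $\{n\}$, which is the pattern for condition (ii) in Lemma \ref{l:tpfactorsA<}). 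However, your key reduction claim fails for parts (d) and (e): the elements $\bs\omega^{S_{c,f,p}}\bs\omega^{T_{m,p}}$ with $f\ge c$ differ from $\bs\lambda$ by the $\ell$-roots $A^{-1}_{l_j,p-1,\cdot}$ coming from \eqref{e:ScfpfromSbyAs}, which are supported at nodes $l_j,\dots,p-1$, all \emph{outside} $J=\{p,\dots,n\}$. Hence $\bs\lambda^{-1}\bs\nu\notin\iota_J(\mathcal Q_J)$ and these weights are simply not in $\wtl(\chi_J(\bs\lambda))$; no induction on $\{p,\dots,n\}$ can reach them. The paper handles this with longer chains ($\{p+1,\dots,n\}\to\{1,\dots,p-1\}\to\{p\}$ for (d), and a four-step chain ending at the node $p-1$ for (e)), and each new link requires a nontrivial verification of conditions (ii) and (iii) of Proposition \ref{prop appears} (e.g.\ the $\{p\}$-dominance of $\bs\nu''$ in part (e) rests on a cancellation argument), none of which is ``routine'' in the sense of following from what you set up.

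The more serious gap is part (c). Deleting a single interior column of $S$ does not leave a tableau of the form \eqref{rem tableau stairs}: consecutive column supports must drop by exactly $2$, so the ``remaining staircase'' is not a minimal affinization, and the complementary factor (one length-$p$ column together with the full $\bs\varpi=Y_{n,r,k}$) satisfies the minimal-affinization condition of Theorem \ref{teo tipo A} only for a very particular spectral parameter which is incompatible with the first requirement. This tension is exactly why the paper uses a \emph{three}-factor decomposition: $S'$ (columns of length $>p$), $S''$ (the first $\lambda(h_p)-k'+1$ columns of length $p$ juxtaposed with $T$, which \eqref{eq p,k'} makes into a \emph{decreasing} minimal affinization), and $S'''$ (the remaining columns), with $V_q(\bs\lambda)$ a factor of the triple tensor product and the obstruction argument run over partitions $\Xi=\Xi'\cup\Xi''\cup\Xi'''$. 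Your two-factor setup cannot be completed as described, so the heart of part (c) is missing.
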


\begin{proof}
The proof of part (c) is similar in spirit to that of part (b) of the previous lemma, but more complicated. Similarly, the proofs of the other parts have the same spirit of that of  part (a) of the previous lemma. In particular, the arguments will contain the proof of a chain of claims of the form
\begin{equation*}
\bs{\nu}\in \wtl(\chi_J(V_q(\bs{\mu}_J)))\subseteq \wtl(V_q(\bs{\pi}))
\end{equation*}
for some $\bs{\pi}\in\mathcal P^+, J\subseteq I$ connected, $\bs{\mu}\in\wtl(V_q(\bs{\pi}))\cap\mathcal P_J^+$, and  $\bs{\nu}\in\mathcal P$. At the beginning of each part we will give a summary of the associated chain by drawing a picture of the form
\begin{equation*}
\bs{\pi} {\longrightarrow} \bs{\mu} \stackrel{J}{\longrightarrow} \bs{\nu}
\end{equation*}
For most arrows we will have to check that the hypothesis of Proposition \ref{prop appears} applies with $\bs{\pi}$ in place of $\bs{\omega}$.
Note that, after part (b), there is nothing to prove in part (d) if $k'>k$. Hence, we will assume without further mention that $k'\le k$ once parts (a) and (b) are proved.\vspace{10pt}

\noindent {\bf Parts (a) and (b):} 
Since $\bs\mu_0=\bs\lambda$, we have $\bs\mu_0\in \wtl(V_q(\bs\lambda))$. Given $1\le m\le k$, define
$$\bs\mu_m'=\bs\omega\bs\omega^{T_{m,p+1}}.$$
The summary of the proof is given by the picture 
$$\bs{\lambda} \xrightarrow{\{p+1,\cdots,n\}} \bs\mu'_m \stackrel{\{p\}}{\longrightarrow} \bs\mu_m$$ 
We begin by proving that
\begin{equation}\label{e:claima}
\bs\mu_m'\in\wtl(V_q(\bs\lambda))\quad\text{for all}\quad 1\le m\le k
\end{equation}
which is obvious if $p=n$ since $T_{m,n+1}=T$. Thus, assume $p<n$. Using Remark \ref{rem appears} with $\bs\lambda$ in place of $\bs\omega$ and
$J=\{p+1,\ldots,n\}$, we have
$\wt_\ell(\chi_{J}(\bs\lambda))\subseteq \wt_\ell(V_q(\bs\lambda))$
and, hence, in order to prove \eqref{e:claima}, it suffices to show that
\begin{equation*}
\bs\mu_m'\in\wtl(\chi_{J}(V_q(\bs\lambda)))\quad\text{for all}\quad 1\le m\le k.
\end{equation*}
Since $p\notin J$, applying Proposition \ref{prop without dom} to the algebra $U_q(\tlie g_J)$ with $\bs\omega_J$ in place of
$\bs\omega$ and $\bs\varpi_J$ in place of $\bs\varpi$, we get
$$V_q(\bs\omega_{J})\otimes V_q(\bs\varpi_{J})\cong V_q(\bs\lambda_{J}).$$
The $\ell$-weights of $V_q(\bs\varpi_{J})$ are also determined by the elements of $\stab(T)$.
Therefore, $(\bs\omega^{T_{m,p'}})_{J}\in \wtl(V_q(\bs\varpi_{J}))$ for all $m,p'$.
One easily checks that
\begin{equation*}
\bs\lambda\cdot \iota_{J}\left((\bs\omega^{-1}\bs\varpi^{-1}\bs\omega^{T_{m,p+1}})_{J}\right)
= \bs\mu'_m
\end{equation*}
completing the proof of \eqref{e:claima}.

Next, we use Proposition \ref{prop appears} with $\bs\lambda$ in
place of $\bs\omega$ and $J=\{p\}$. The previous paragraph implies
that $\bs\mu'_m$ satisfies condition (i) of that Proposition for all
$m$. Equation \eqref{e:omegaTtp}, with $p+1$ in place of $p$,
implies that condition (ii) is also satisfied. As for condition
(iii) with $p<n$ (if $p=n$ this condition is obvious), \eqref{eq T D
in A ii} implies that
$$\bs\mu_m'=\bs\lambda \cdot \prod_{l=1}^{m}A^{-1}_{n,p+1,r+2(k-l)}.$$
On the other hand, for any $\bs\nu\in\mathcal P_{\{p\}}^+$, the
elements of $\wt_\ell(\chi_{\{p\}}(\bs\nu))$ are of the form $\bs\nu
\prod_{s\in \mathbb Z}A^{-m_s}_{p,s}$ with $m_s\in\mathbb Z_{\ge
0}$. Since $p<n$, condition (iii) must also be satisfied. It then follows from
Proposition \ref{prop appears} that
$$\wt_\ell(\chi_{\{p\}}(\bs\mu_m'))\subseteq \wt_\ell (V_q(\bs\lambda)).$$
Parts (a) and (b) of the lemma now follows if we show that
\begin{equation}\label{e: mu t factors A}
\bs\mu_m\in\wt_\ell(\chi_{\{p\}}(\bs\mu_m')) \quad\text{for all}\quad 1\le m\le\min\{k'-1,k\}.
\end{equation}
Observe that \eqref{eq T D in A ii} implies that
\begin{equation}\label{eq mu_t'}
\bs\mu_{m}=\bs\mu_{m}'\left(\prod_{l=1}^{m}A_{p,r+2(k-l)+n-p+1}^{-1} \right).
\end{equation}
Moreover, \eqref{e:omegaTtp}  implies that
$$\pi_p(\bs\mu_{m}')=Y_{p,r+2(k-m)+n-p,m}\; Y_{p,r_p,\lambda(h_p)}\stackrel{\eqref{eq p,k'}}{=} Y_{p,r_p+2(k'-1-m),m} \; Y_{p,r_p,\lambda(h_p)}.$$
One easily checks that, for $m\le\min\{k'-1,k\}$, the above is the $q$-factorization of $\pi_p(\bs\mu_{m}')$. Thus, \eqref{teo sl2 irred} implies that
\begin{equation*}
V_q(\pi_p(\bs\mu_{m}'))\cong V_q(Y_{p,r_p+2(k'-1-m),m})\otimes V_q(Y_{p,r_p,\lambda(h_p)})=V_q(Y_{p,r+2(k-m)+n-p,m})\otimes V_q(Y_{p,r_p,\lambda(h_p)}).
\end{equation*}
We now proceed as in the Proof of part (a) of Lemma \ref{l:tpfactorsA<}. Namely, applying \eqref{e:l-weights KRnroot} for the subalgebra $U_q(\tlie g_{p})$  to the first factor of the above tensor product and comparing with \eqref{eq mu_t'} one deduces \eqref{e: mu t factors A}.

\vspace{10pt}
\noindent {\bf Part (c):} Consider the tableau $S'$ formed by the first $_{p+1}|\lambda|$ columns of $S$, the tableau  $S''$  formed by juxtaposing the first $\lambda(h_p)-k'+1$ columns of length $p$ of $S$ and $T$, and the tableau $S'''$ formed by the remaining
columns of $S$ (i.e., the tableau whose columns are those to the right of the $b$-th column of $S$ where $b$ is given by \eqref{e:b}). Then,
$$\bs\omega^{S'}=\prod_{i=p+1}^{i_0} Y_{i,r_{i}\lambda(h_i)}, \qquad
\bs\omega^{S''}= Y_{p,r_{p}+2(k'-1),\lambda(h_p)-k'+1}\ Y_{n,r,k},$$
$$\bs\omega^{S'''}=\left(\prod_{i=1}^{p-1} Y_{i,r_{i},\lambda(h_i)}\right) Y_{p,r_{p},k'-1},\quad\text{and}\quad
\bs\omega^{S'}\bs\omega^{S''}\bs\omega^{S'''} = \bs\lambda.$$ In
particular, $V_q(\bs\lambda)$ is the simple quotient of the
submodule of $V_q(\bs\omega^{S'})\otimes V_q(\bs\omega^{S''})\otimes
V_q(\bs\omega^{S'''})$ generated by the top weight space and part
(c) follows if we show that
\begin{equation}\label{e:mu not in}
\bs\lambda' \notin \wtl\left(V_q(\bs\omega^{S'})\otimes
V_q(\bs\omega^{S''})\otimes V_q(\bs\omega^{S''})\right).
\end{equation}
It is clear from the construction of $S'$ and $S'''$ that
$V_q(\bs\omega^{S'})$ and $V_q(\bs\omega^{S'''})$ are increasing
minimal affinizations. On the other hand, \eqref{eq p,k'} implies
that $V_q(\bs\omega^{S''})$ is a decreasing minimal affinization. In
any case, the $\ell$-weights of all three factors are represented by
the corresponding set of semi-standard tableaux.

By \eqref{eq T D in A ii},
\begin{equation}\label{eq mu ii}
\bs\lambda'=\bs\lambda \left(\prod_{l=1}^{k'}A^{-1}_{n,p,r+2(k-l)}\right) = \bs\lambda \prod_{\xi\in\Xi} A_\xi^{-1}
\end{equation}
for some $\Xi\subseteq I\times\mathbb Z$. Equation \eqref{e:mu not in}
follows if we show that there is no partition $\Xi=\Xi'\cup
\Xi''\cup \Xi'''$ such that
\begin{equation*}
\bs\omega^{S'}\prod_{\xi\in\Xi'}
A_\xi^{-1}\in\wtl(V_q(\bs\omega^{S'})), \quad
\bs\omega^{S''}\prod_{\xi\in\Xi''}
A_\xi^{-1}\in\wtl(V_q(\bs\omega^{S''})), \quad
\bs\omega^{S'''}\prod_{\xi\in\Xi'''}
A_\xi^{-1}\in\wtl(V_q(\bs\omega^{S'''})).
\end{equation*}
By contradiction, assume such a partition exists. As before, each
element $\xi\in\Xi$, say $\xi=(i,l)$, corresponds to a modification
of the form
\begin{equation*}
\ffbox{i}_{\:l-i} \longrightarrow \ffbox{i+1}_{\:l-i}
\end{equation*}
in some tableau belonging to $\stab(S')\cup\stab(S'')\cup\stab(S''')$. Inspecting \eqref{eq mu ii}, one
checks that
\begin{equation}\label{e:minmax}
\max\{l-i: i\in I, (i,l)\in\Xi\} = s \qquad \text{and} \qquad \min\{i: (i,l)\in\Xi \text{ for some } l\in\mathbb Z\} = p,
\end{equation}
where $s$ is given by \eqref{e:suppk'}.  This means that the boxes
of $S'$, $S''$, and $S'''$ with support larger than $s$ are not
modified. Together with \eqref{e:suppofb}, it follows that $S'$ and
the columns of $S''$ coming from $S$ are left unmodified.  The
condition on diagonals for semi-standard tableaux imply that the
(possibly) modified element of $\stab(S'')$ has the first $p$ boxes
of every column coinciding with that of $S''$.

On the other hand, another inspection of \eqref{eq mu ii}, recalling
that $k'\le k$, shows that
$$\xi_0:=(p,s-2(k'-1)+p)\in\Xi.$$
This corresponds to a modification of the form
\begin{equation}\label{eq change}
\ffbox{p}_{\:s-2(k'-1)} \longrightarrow \ffbox{p+1}_{\:s-2(k'-1)}.
\end{equation}
Since all the tableaux are column increasing, if the box on which
this modification is being performed is the $j$-th box on its
column, we must have $j\le p$. Hence, it follows from the previous
paragraph that $\xi_0\in\Xi'''$. We will show that this is a
contradiction.

Indeed, by construction, the last box of the last column of length
$p$ of $S'''$ is supported at $s-2(k'-1)+2$. If we had $\min
\supp(\lambda)=p$, it would follow that $S'''$ had no box supported
at $s-2(k'-1)$, yielding a contradiction. Thus, we can assume that
$\min \supp(\lambda)<p$ and, hence, the first column of $S'''$ which
has a box supported at $s-2(k'-1)$ has length $i<p$. This implies
that, before the modification \eqref{eq change} can be performed, we
need a modification of the form
\begin{equation*}
\ffbox{i}_{\:s-2(k'-1)} \longrightarrow \ffbox{i+1}_{\:s-2(k'-1)}
\end{equation*}
which implies that $(i,s-2(k'-1)+i)\in\Xi$, contradicting the second
statement in \eqref{e:minmax}.

\vspace{10pt}\noindent {\bf Part (d):} Fix $k'\le m \le \min\{k,|\lambda|_{p-1}+k'\}$, let $f= c+m-k'$, and define
\begin{equation*}
\bs\nu=\bs\omega^{S_{c,f,p}}\bs\omega^{T_{m,p}}, \qquad \bs\nu'=\bs\omega^{S_{c,f,p}}\bs\omega^{T_{m,p+1}}.
\end{equation*}
The summary of the proof is given by the picture 
$$\bs{\lambda} \xrightarrow{\{p+1,\cdots,n\}}  \bs\mu'_m \xrightarrow{\{1,\cdots,p-1\}}  \bs\nu' \xrightarrow{\{p\}}  \bs\nu$$ 

An analogous argument to that used to prove \eqref{e:claima} shows that
\begin{equation}\label{eq nu'in}
\bs\nu'\in\wtl(V_q(\bs\lambda)).
\end{equation}
Indeed, by \eqref{e:claima}, $\bs\mu'_m\in \wtl(V_q(\bs\lambda))$. Proceeding as in the paragraph after the proof of \eqref{e:claima}, this time with $J=\{1,\ldots,p-1\}$, we see that
\begin{equation}\label{e:mu'_mJin}
\wtl(\chi_{J}(V_q(\bs\mu'_m)))\subseteq \wtl(V_q(\bs\lambda)).
\end{equation}
Consider the tableaux $\tilde S$ obtained from $S$ by restriction to $J$, i.e., by removing the columns of length larger or equal to $p$ and regarding it as a tableau for the diagram sub-algebra associated to $J$. Consider also $\tilde c=c- _p|\lambda|, \tilde f=f- _p|\lambda|$, and observe that
\begin{equation*}
(\bs\mu'_m)_J=\bs\omega_J = \bs\omega^{\tilde S} \qquad\text{and}\qquad \bs\omega_J(\bs\omega^{-1}\bs\omega^{S_{c,f,p}})_J = \bs\omega^{\tilde S_{\tilde c,\tilde f,p}}\in\qch(V_q(\bs\omega_J)).
\end{equation*}
Since $$\bs\nu'=\bs\mu'_m\cdot \iota_J((\bs\omega^{-1}\bs\omega^{S_{c,f,p}})_J)=\bs\mu'_m\cdot
\iota_J((\bs\mu'_m)_J^{-1}\bs\omega^{\tilde S_{\tilde c,\tilde f,p}})\in\wtl(\chi_{J}(V_q(\bs\mu'_m))),$$
\eqref{eq nu'in} follows from \eqref{e:mu'_mJin}.

Next, we use Proposition \ref{prop appears} with $\bs\lambda$ in
place of $\bs\omega$ and $J=\{p\}$. Note that \eqref{eq nu'in} implies that
$\bs\nu'$ satisfies condition (i) of that Proposition. Equations
\eqref{e:omegaTtp}, with $p+1$ in place of $p$, and
\eqref{e:omegaScfp} imply that condition (ii) is also satisfied. As
for condition (iii), note that the condition $m\ge k'$ implies $f\ge
c$ and, since $c=\ _p|\lambda|+1$, it follows that
$$l_j< p \quad\text{for all}\quad j\ge c.$$ In particular, \eqref{eq T D in A ii} and
\eqref{e:ScfpfromSbyAs} imply that
$$\bs\nu'= \bs\lambda\bs\eta^{-1} \quad\text{with}\quad \bs\eta\in\mathcal Q_{J'}^+,\ J'=I\setminus\{p\}.$$
On the other hand, for any $\bs\mu\in\mathcal P_{\{p\}}^+$, the elements of
$\wt_\ell(\chi_{\{p\}}(\bs\mu))$ are of the form $\bs\mu \prod_{l\in \mathbb Z}A^{-m_l}_{p,l}$, proving that condition (iii) must also be
satisfied. It then follows from Proposition \ref{prop appears} that
$$\wt_\ell(\chi_{\{p\}}(\bs\nu'))\subseteq \wt_\ell (V_q(\bs\lambda))$$
and part (d) follows if we show that
\begin{equation}\label{e: nu factors A}
\bs\nu\in\wt_\ell(\chi_{\{p\}}(\bs\nu')).
\end{equation}
Observe that \eqref{eq T D in A ii} implies that
\begin{equation}\label{eq nu'}
\bs\nu=\bs\nu' \prod_{l=1}^{m}A_{p,r+2(k-l)+n-p+1}^{-1}.
\end{equation}
Moreover, \eqref{e:omegaTtp} and \eqref{e:omegaScfp} imply that
\begin{eqnarray*}
\pi_p(\bs\nu')&=&Y_{p,r+2(k-m)+n-p,m}\; Y_{p,r_p,\lambda(h_p)}\left(\prod_{l=0}^{m-k'}Y_{p,s+p-2(k'-1+l)-1}\right)\\
&\stackrel{\eqref{e:s}}{=}& Y_{p,r+2(k-m)+n-p,m} \;
Y_{p,r_p,\lambda(h_p)}\prod_{l=0}^{m-k'}Y_{p,r_p-2(l+1)}
\\
&=&  Y_{p,r+2(k-m)+n-p,m} \;
Y_{p,r_p,\lambda(h_p)} \; Y_{p,r_p+2(k'-1-t),m-k'+1}\\
&=&  Y_{p,r+2(k-m)+n-p,m}\; Y_{p,r_p+2(k'-1-m),\lambda(h_p)+m-k'+1}.
\end{eqnarray*}
The hypothesis  $p>\min\supp(\lambda)$ was used in the first equality (for $p=\min\supp(\lambda)$ the term in parenthesis would not exist).
Using that $r+2(k-m)+n-p=r_p+2(k'-1-m)$ (by \eqref{eq p,k'}), one
easily checks that the the last line above is the $q$-factorization of
$\pi_p(\bs\nu')$ and we apply \eqref{teo sl2 irred} to get
\begin{equation*}
V_q(\pi_p(\bs\nu'))\cong V_q(Y_{p,r+2(k-m)+n-p,m})\otimes
V_q(Y_{p,r_p+2(k'-1-m),\lambda(h_p)+m-k'+1}).
\end{equation*}
Applying \eqref{e:l-weights KRnroot} as before to the first factor of the
above tensor product and comparing with \eqref{eq nu'} one easily
deduces \eqref{e: nu factors A}.

\vspace{10pt}\noindent {\bf Part (e):} Fix $k'\le m \le \min\{k,|\lambda|_{p-1}+k'\}$ and set $f=c+m-k'-1$ and
$$\bs\nu=\bs\omega^{S_{c,f,p}}\bs\omega^{T_{m,p}}.$$
We need to show that
\begin{equation}\label{e:parte}
\bs\nu\in\wtl(V_q(\bs\lambda')).
\end{equation}
If $m=k'$, then $\bs\nu=\bs\lambda'$ and this is obvious. For $m>k'$, consider
\begin{equation*}
\bs\mu=\bs\omega^{S_{c,f,p-1}}\bs\omega^{T_{m,p}}, \quad\bs\nu'=\bs\omega\bs\omega^{T_{m,p}},\quad \text{and}\quad
\bs\nu''=\bs\omega\bs\omega^{T'},
\end{equation*}
where $T'$ is the tableau whose first $k'$ columns coincide with those of $T_{k',p}$ and the remaining columns are
equal to those of $T_{m,p+1}$. 
The summary of the proof is given by the picture 
$$\bs{\lambda}' \xrightarrow{\{p+1,\cdots,n\}}  \bs{\nu}'' \xrightarrow{\{p\}}  \bs\nu' \xrightarrow{\{1,\cdots,p-2\}}  \bs\mu \xrightarrow{\{p-1\}}  \bs\nu$$ 

One easily checks that $T'\in \stab(T)$ and uses \eqref{eq T D in A ii} to get
\begin{equation}\label{eq nu'' mu}
\bs\nu''=\bs\lambda'\left(\prod_{l=k'+1}^{m}A_{n,p+1,r+2(k-l)}^{-1}\right) \quad\text{and}\quad
\bs\nu'=\bs\nu''\left(\prod_{l=k'+1}^{m}A_{p,r+2(k-l)+n-p+1}^{-1}\right).
\end{equation}
An analogous argument to that used to prove \eqref{e:claima} (with $(\bs\omega^{T_{k',p}})_J=Y_{n,r,k-k'}$ in
place of $\bs\varpi_J$), shows that
\begin{equation}\label{eq nu''in}
\bs\nu''\in\wtl(\chi_{J}(V_q(\bs\lambda')))\subseteq \wtl(V_q(\bs\lambda')) \quad\text{with}\quad J=\{p+1,\ldots,n\}.
\end{equation}

Next, we use Proposition \ref{prop appears} with $\bs\lambda'$ in place of $\bs\omega$ and $J=\{p\}$. Note that
\eqref{eq nu''in} implies that $\bs\nu''$ satisfies condition (i) of
that Proposition. Equation \eqref{e:omegaTtp} implies that the term of the form $Y^{-1}_{p,t}$ coming from $T_{k',p}$ in $\bs{\nu}''$ is the same one that $T_{k',p}$ contributes to $\bs\lambda'$. Since $\bs \lambda'$ is dominant, it means that  $Y_{p,t}$ appears in $\bs{\omega}$. Therefore, $Y^{-1}_{p,t}$ does not appear in $\bs{\nu}''$ proving that condition (ii) is also satisfied.
Condition (iii) follows since the elements of $\wt_\ell(\chi_{p}(\bs\lambda'))$ are of the form
$\bs\lambda' \prod_{l\in \mathbb Z}A^{-m_l}_{p,l}$ which is incompatible with the first part of \eqref{eq nu'' mu}.
Thus, it follows from Proposition \ref{prop appears} that
$$\wt_\ell(\chi_{p}(\bs\nu''))\subseteq \wt_\ell (V_q(\bs\lambda')).$$

Observe that
\begin{eqnarray*}
\pi_{p}(\bs\nu'')&\stackrel{\eqref{eq nu'' mu}}{=}&\pi_{p}(\bs\lambda')\;\pi_{p}\left(\prod_{l=k'+1}^{m}A_{p+1,r+2(k-l)+n-p+1}^{-1}\right)\\
&=&Y_{p,r_{p}+2k',\lambda(h_p)-k'}\;\left(\prod_{l=k'+1}^{m}Y_{p,r+2(k-l)+n-p}\right)\\
&=&Y_{p,r_{p}+2k',\lambda(h_p)-k'}\;Y_{p,r+2(k-m)+n-p,m-k'}\\
&\stackrel{\eqref{eq p,k'}}{=}&
Y_{p,r_{p}+2k',\lambda(h_p)-k'}\;Y_{p,r_{p}+2(k'-m-1),m-k'}.
\end{eqnarray*}
One easily checks that the above is the $q$-factorization of
$\pi_p(\bs\nu'')$ and, hence,
\begin{eqnarray*}
V_q(\pi_p(\bs\nu''))&\cong&
V_q(Y_{p,r_{p}+2k',\lambda(h_p)-k'})\otimes
V_q(Y_{p,r_{p}+2(k'-t-1),t-k'})\\
&=&V_q(Y_{p,r_{p}+2k',\lambda(h_p)-k'})\otimes
V_q(Y_{p,r+2(k-t)+n-p,t-k'}).
\end{eqnarray*}
Applying \eqref{e:l-weights KRnroot} to the first factor of
the above tensor product as usual and comparing with the second part of \eqref{eq nu'' mu}, we get
\begin{equation}\label{eq nu'in e}
\bs\nu'\in \wt_\ell(\chi_{p}(\bs\nu''))\subseteq \wt_\ell (V_q(\bs\lambda')).
\end{equation}

We now apply a similar argument to prove that
\begin{equation}\label{eq nu'inlambda'p-1}
\wtl(\chi_{J}(V_q(\bs\nu')))\subseteq \wtl(V_q(\bs\lambda')) \quad\text{with}\quad J=\{1,\ldots,p-1\}.
\end{equation}
Note that \eqref{eq nu'in e} implies that $\bs\nu'$ satisfies
condition (i) of Proposition \ref{prop appears} with $\bs\lambda'$
in place of $\bs\omega$ and $J$ as in \eqref{eq nu'inlambda'p-1}.
Condition (ii) is clear from \eqref{e:omegaTtp}. Since the elements
of $\wt_\ell(\chi_{J}(\bs\lambda'))$ are of the form
$\lambda'\bs\eta^{-1}$ with $\bs\eta\in\mathcal Q_J$ which is
incompatible with \eqref{eq nu'' mu}, condition (iii) is also
satisfied and \eqref{eq nu'inlambda'p-1} follows from Proposition
\ref{prop appears}.

Since, by \eqref{e:omegaTtp},
$$\pi_{\{1,\ldots,p-2\}}(\bs\nu')=\prod_{i=1}^{p-2}Y_{i,r_i,\lambda(h_i)}=\pi_{\{1,\ldots,p-2\}}(\bs\omega),$$
it follows that $\bs\nu'_{\{1,\ldots,p-2\}}=\bs\omega^{\tilde S}$,
where $\tilde S$ is obtained from $S$ by restriction to $\tilde J=\{1,\ldots,p-2\}$.
Consider also $\tilde c=c- _{p-1}|\lambda|, \tilde f=f- _{p-1}|\lambda|$, and observe that
    \begin{equation*}
    (\bs\nu')_{\tilde J}=\bs\omega_{\tilde J} = \bs\omega^{\tilde S}
    \qquad\text{and}\qquad \bs\omega_{\tilde
        J}(\bs\omega^{-1}\bs\omega^{S_{c,f,p}})_{\tilde J} =
    \bs\omega^{\tilde S_{\tilde c,\tilde
            f,p}}\in\qch(V_q(\bs\omega_{\tilde J})).
    \end{equation*}
Since
$$\bs\mu=\bs\nu'\cdot \iota_{\tilde J}((\bs\omega^{-1}\bs\omega^{S_{c,f,p}})_{\tilde J})=
\bs\nu'\cdot\iota_{\tilde J}((\bs\nu')_{\tilde J}^{-1}\bs\omega^{\tilde S_{\tilde c,\tilde f,p}})\in\wtl(\chi_{\tilde J}(V_q(\bs\nu'))),$$
we get
    \begin{equation}\label{eq nu'''in e}
    \bs\mu\in \wt_\ell(\chi_{\tilde J}(\bs\nu'))\subseteq\wt_\ell(\chi_{J}(\bs\nu'))\subseteq \wt_\ell(V_q(\bs\lambda'))
    \end{equation}
with $J$ as in \eqref{eq nu'inlambda'p-1}.

After \eqref{eq nu'''in e},  \eqref{e:parte} follows if we show that
\begin{equation}\label{e: nu''' factors A e}
\wt_\ell(\chi_{p-1}(\bs\mu))\subseteq \wt_\ell
(V_q(\bs\lambda'))\qquad \text{and} \qquad
\bs\nu\in\wt_\ell(\chi_{p-1}(\bs\mu)).
\end{equation}
To prove the first claim in \eqref{e: nu''' factors A e}, we use Proposition \ref{prop appears} once more, this time with
$\bs\lambda'$ in place of $\bs\omega$ and $J=\{p-1\}$. By \eqref{eq nu'''in e}, $\bs\mu$ satisfies condition (i) of that Proposition while condition (ii) is easily checked using \eqref{e:omegaTtp}  and \eqref{e:omegaScfp}. Note that the first conclusion in \eqref{eq nu'''in e} (the ``$\in$''), together with \eqref{eq nu'' mu}, implies that
\begin{equation*}
\bs{\mu} = \bs{\lambda}'\bs{\eta}^{-1} \quad\text{with}\quad \bs{\eta}\in \mathcal Q^+_{I\setminus \{p-1\}}.
\end{equation*}
Therefore, \eqref{e:lrootsind} implies that condition (iii) is satisfied. The first claim in \eqref{e: nu''' factors A e} now follows from  Proposition \ref{prop appears}.

To prove the second claim in  \eqref{e: nu''' factors A e}, recall the definitions of $\bs{\mu}$ and $\bs{\nu}$ in terms of tableaux and observe that \eqref{e:ScfpfromSbyAs} together with \eqref{e:rifromri0} implies
\begin{equation}\label{eq nu''' nu A e}
\bs\nu=\bs\mu\left(\prod_{l=0}^{m-k'-1}A^{-1}_{p-1,r_p+2(k'-m+l)}\right).
\end{equation}
As before, we now compute the $q$-factorization of $\pi_{p-1}(\bs\mu)$ and apply the usual argument with tensor products to complete the proof. We split the analysis in two cases according to whether $l_f = p-1$ or $l_f<p-1$. In the former case we have
\begin{eqnarray*}
    \pi_{p-1}(\bs\mu)&\stackrel{\eqref{e:omegaTtp}}{=}&Y_{p-1,r_{p-1},\lambda(h_{p-1})} \left(\prod_{l=0}^{m-1}Y_{p-1,r+2(k-l)+n-p-1}\right)\\
    &\stackrel{\eqref{eq p,k'}}{=}&Y_{p-1,r_{p-1},\lambda(h_{p-1})}\left(\prod_{l=0}^{m-1}Y_{p-1,r_p+2(k'-l-1)-1}\right)\\
    &=&Y_{p-1,r_{p-1},\lambda(h_{p-1})}\left(\prod_{l=0}^{m-1}Y_{p-1,r_p+2(k'-m+l)-1}\right)\\
    &=&Y_{p-1,r_{p-1},\lambda(h_{p-1})}\left(\prod_{l=m-k'}^{m-1}Y_{p-1,r_p+2(k'-m+l)-1}\right)\left(\prod_{l=0}^{m-k'-1}Y_{p-1,r_p+2(k'-m+l)-1}\right)\\
    &\stackrel{\eqref{e:rifromri0}}{=}&Y_{p-1,r_{p-1},\lambda(h_{p-1})+k'}\left(\prod_{l=0}^{m-k'-1}Y_{p-1,r_p+2(k'-m+l)-1}\right)\\
    &=&Y_{p-1,r_{p-1},\lambda(h_{p-1})+k'} Y_{p-1,r_p+2(k'-m)-1,m-k'}.
\end{eqnarray*}
One easily checks that the above is the $q$-factorization of  $\pi_{p-1}(\bs\nu')$. Thus, \eqref{teo sl2 irred} implies that
\begin{eqnarray*}
    V_q(\pi_{p-1}(\bs\mu))&\cong&
    V_q(Y_{p-1,r_{p-1},\lambda(h_{p-1})+k'})\otimes
    V_q(Y_{p-1,r_p+2(k'-m)-1,m-k'}).
\end{eqnarray*}
Applying \eqref{e:l-weights KRnroot}  to the second factor of this
tensor product and comparing with \eqref{eq nu''' nu A e} one easily
deduces the second claim of \eqref{e: nu''' factors A e} in this
case. Finally, if $l_f<p-1$, \eqref{e:omegaTtp}  and \eqref{e:omegaScfp} give
\begin{eqnarray*}
    \pi_{p-1}(\bs\mu) &=& Y_{p-1,r_{p-1},\lambda(h_{p-1})} \left(\prod_{i=l_f+1}^{p-2} \prod_{l=1}^{\lambda(h_i)} Y_{p-1,r_i+2(\lambda(h_i)-l)+p-i-1}\right)\\
    &&\times\left(\prod_{l=1}^{d_f}Y_{p-1,r_{l_f}+2(\lambda(h_{l_f})-l)+p-l_f-1}\right)\left(\prod_{l=0}^{m-1}Y_{p-1,r+2(k-l)+n-p-1}\right)\\
    &\stackrel{\eqref{e:rifromri0}}{=}& \left(\prod_{l=0}^{m-k'-1}Y_{p-1,r_p+2(k'-m+l)-1}\right) \left(\prod_{l=0}^{m-1}Y_{p-1,r+2(k-l)+n-p-1}\right)\\
    &\stackrel{\eqref{eq p,k'}}{=}&  \left(\prod_{l=0}^{m-k'-1}Y_{p-1,r_p+2(k'-m+l)-1}\right) \left(\prod_{l=0}^{m-1}Y_{p-1,r_p+2(k'-l-1)-1}\right)\\
    &=&  \left(\prod_{l=0}^{m-k'-1}Y_{p-1,r_p+2(k'-m+l)-1}\right) \left(\prod_{l=0}^{m-1}Y_{p-1,r_p+2(k'-m+l)-1}\right)\\
    &=& Y_{p-1,r_p+2(k'-m)-1,m-k'} Y_{p-1,r_p+2(k'-m)-1,m}.
\end{eqnarray*}
One easily checks that the above is the $q$-factorization of $\pi_{p-1}(\bs\mu)$ and, hence,
\begin{eqnarray*}
V_q(\pi_{p-1}(\bs\mu))&\cong&
V_q(Y_{p-1,r_{p-1},\lambda(h_{p-1})+k'+1})\otimes
V_q(Y_{p-1,r_p+2(k-m)-1,t-k'})
\end{eqnarray*}
by \eqref{teo sl2 irred}. We are once again done by applying \eqref{e:l-weights KRnroot}  to the second factor of this tensor product and comparing with \eqref{eq nu''' nu A e}. The proof of the second claim of \eqref{e: nu''' factors A e} is complete, as well as the proof of the lemma.
\end{proof}

\section{The Corollaries}\label{s:othermain}

In this section, we prove Corollary \ref{c:main} and state and prove the other versions of Theorem \ref{t:main} concerning tensor products of general minimal affinizations with KR modules supported either at last or the first node of the Dynkin diagram. The proofs will rely most strongly on duality type arguments which we review in the first subsection.

\subsection{Duality and Cartan Involution}\label{ss:dual}

Given a finite-dimensional $U_q(\lie g)$-module $V$, let $V^*$
denote the dual module defined using the antipode as usual, and
similarly for $U_q(\tlie g)$. Given $\lambda\in P^+$ and
$\bs\omega\in\cal P^+$, we have:
\begin{gather}\notag
\quad V_q(\lambda)^* \cong V_q(\lambda^*) \qquad\text{and}\qquad\quad V_q(\bs\omega)^*\cong V_q(\bs\omega^*)\\ \label{e:drhw} \text{where}\\ \notag
\qquad\qquad\lambda^*=-w_0\lambda \quad\qquad\text{and}\qquad \bs\omega^*_i(u) = \bs\omega_{w_0\cdot i}(q^{-h^\vee}u).
\end{gather}
Here,  $h^\vee=n+1$ is the (dual) Coxeter number of $\lie g$, $w_0$ is the longest element of $\mathcal W$ and $w_0\cdot i =j$
iff $w_0\omega_i=-\omega_j$. To simplify notation, we set
\begin{equation}\label{e:bar}
\bar i= w_0\cdot i = h^\vee-i.
\end{equation}
For a proof of the second isomorphism in \eqref{e:drhw} see \cite{fremuk:qchar}.

We will also need the automorphisms given by the following proposition \cite[Propositions 1.5 and 1.6]{cha:minr2}.

\begin{prop}\label{p:Cartaninv}\hfill\\\vspace{-10pt}
    \begin{enumerate}[(a)]
        \item Given $a\in \mathbb C^\times$, there exists a unique Hopf algebra automorphism $\tau_a$ of $U_q(\tlie g)$ such that
            $$\tau_a(x_{i,r}^{\pm})=a^rx_{i,r}^{\pm},\qquad \tau_a(h_{i,r})=a^rh_{i,r},\qquad \tau_a(k_{i}^{\pm})=k_{i}^{\pm}.$$

        \item There exists a unique algebra automorphism $\kappa$ of $U_q(\tlie g)$ such that
        $$\kappa(x_{i,r}^{\pm})=-x_{i,-r}^{\mp},\qquad \kappa(h_{i,r})=-h_{i,-r},\qquad \kappa(k_{i}^{\pm1})=k_{i}^{\mp1}.$$
        Moreover $(\kappa\otimes \kappa)\circ \Delta=\Delta^{\text{op}}\circ \kappa$,
        where $\Delta^{\text{op}}$ is the opposite comultiplication of $U_q(\tlie g)$.\footnote{The automorphism $\kappa$ is most often denoted by $\hat\omega$ in the literature and its restriction to $U_q(\lie g)$, typically denoted by $\omega$, is referred to as the Cartan automorphism of $U_q(\lie g)$. We chose to modify the notation to avoid visual confusion with our most often used symbol for a Drinfeld polynomial: $\bs\omega$.}\endd
    \end{enumerate}
\end{prop}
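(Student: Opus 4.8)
The final statement to prove is Proposition \ref{p:Cartaninv}, which asserts the existence and uniqueness of two automorphisms of $U_q(\tlie g)$: the Hopf algebra automorphism $\tau_a$ and the algebra automorphism $\kappa$. Since this result is quoted from \cite[Propositions 1.5 and 1.6]{cha:minr2}, the natural approach is to verify directly that the prescribed assignments on generators respect the defining relations of $U_q(\tlie g)$ listed in Section \ref{ss:clalg}, and then to observe that the maps are invertible because they are visibly involutive (for $\kappa$) or have an obvious inverse (for $\tau_a$, namely $\tau_{a^{-1}}$).

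The plan for part (a): First I would check that $\tau_a$ preserves each defining relation. Because $\tau_a$ rescales $x_{i,r}^\pm$ and $h_{i,r}$ by $a^r$ and fixes $k_i^{\pm1}$, every relation is \emph{homogeneous} with respect to the $\Z$-grading $\deg(x_{i,r}^\pm) = \deg(h_{i,r}) = r$, $\deg(k_i^{\pm1}) = 0$ --- one reads this off relation by relation: the Serre-type relations have total degree $r_1 + r_2 + s$ on both sides, the relation $[h_{i,r}, x_{j,s}^\pm] = \pm\frac1r[rc_{ij}] x_{j,r+s}^\pm$ has degree $r+s$ on both sides, the exchange relation $x_{i,r+1}^\pm x_{j,s}^\pm - \cdots$ has degree $r+1+s$ throughout, and $[x_{i,r}^+, x_{j,s}^-] = \delta_{ij}(\psi_{i,r+s}^+ - \psi_{i,r+s}^-)/(q-q^{-1})$ has degree $r+s$ once one notes that $\tau_a$ scales $\psi_{i,r}^\pm$ by $a^r$ (this follows from the defining power series for $\Psi_i^\pm(u)$ by substituting $u \mapsto au$). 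Hence $\tau_a$ extends to an algebra endomorphism; it is an automorphism with inverse $\tau_{a^{-1}}$ and uniqueness is automatic since it is defined on generators. Finally, compatibility with the coproduct is checked on the Chevalley--Kac generators $x_i^\pm = x_{i,0}^\pm$, $k_i^{\pm1}$, on which $\tau_a$ is the identity, so $\Delta \circ \tau_a = (\tau_a \otimes \tau_a) \circ \Delta$ trivially; similarly $\tau_a$ commutes with $S$ and $\varepsilon$.

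The plan for part (b): I would verify that $\kappa$ preserves the defining relations by swapping the roles of $x^+$ and $x^-$ and of $r$ with $-r$. The symmetry of the relation set under $(x_{i,r}^+ \leftrightarrow x_{i,-r}^-,\, h_{i,r} \mapsto -h_{i,-r},\, k_i \mapsto k_i^{-1})$ is essentially built into the presentation: the $\pm$ relations come in matched pairs, the sign $-$ on $x_{i,-r}^\mp$ is needed to make the quadratic Serre relations and the exchange relation come out right (the exchange relation $x_{i,r+1}^\pm x_{j,s}^\pm - q^{\pm c_{ij}} x_{j,s}^\pm x_{i,r+1}^\pm = q^{\pm c_{ij}} x_{i,r}^\pm x_{j,s+1}^\pm - x_{j,s+1}^\pm x_{i,r}^\pm$ transforms, under $r \mapsto -r-1$, $s \mapsto -s-1$ and $\pm \mapsto \mp$, into the same relation with signs absorbed), and the relation $[x_{i,r}^+, x_{j,s}^-] = \delta_{ij}(\psi_{i,r+s}^+ - \psi_{i,r+s}^-)/(q-q^{-1})$ maps to itself because $\kappa(\psi_{i,r+s}^\pm) = \psi_{i,-(r+s)}^\mp$ up to the sign conventions in the exponential, while $[x_{i,r}^-, x_{j,s}^+] = -[x_{j,s}^+, x_{i,r}^-]$ supplies the needed overall sign. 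I would record that $\kappa^2 = \mathrm{id}$ (one checks $\kappa^2$ fixes all generators), so $\kappa$ is an automorphism, and uniqueness is again automatic. For the coalgebra statement, I would compute $(\kappa \otimes \kappa)\Delta(x_i^+) = (\kappa \otimes \kappa)(x_i^+ \otimes 1 + k_i \otimes x_i^+) = -x_i^- \otimes 1 - k_i^{-1} \otimes x_i^-$, and compare with $\Delta^{\mathrm{op}}(\kappa(x_i^+)) = \Delta^{\mathrm{op}}(-x_i^-) = -(x_i^- \otimes k_i^{-1} + 1 \otimes x_i^-)^{\mathrm{op}} = -(k_i^{-1} \otimes x_i^- + x_i^- \otimes 1)$, which agree; the cases of $x_i^-$ and $k_i^{\pm1}$ are analogous, and since these generate $U_q(\tlie g)$ as an algebra and both $(\kappa\otimes\kappa)\circ\Delta$ and $\Delta^{\mathrm{op}}\circ\kappa$ are algebra homomorphisms, the identity extends.

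The main obstacle is bookkeeping rather than conceptual: carefully matching signs and index shifts in the quadratic Serre relation and in the $[x^+, x^-]$ relation under $\kappa$, and making sure the scaling $\psi_{i,r}^\pm \mapsto a^r \psi_{i,r}^\pm$ (resp. $\psi_{i,r}^\pm \mapsto \psi_{i,-r}^\mp$ up to sign) is correctly extracted from the defining generating function $\Psi_i^\pm(u) = k_i^{\pm1}\exp(\pm(q-q^{-1})\sum_{s\ge1} h_{i,\pm s} u^s)$. Since the statement is standard and cited, I would simply refer the reader to \cite{cha:minr2} for the full verification and only indicate the grading argument for $\tau_a$ and the symmetry argument for $\kappa$ as above.
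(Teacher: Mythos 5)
The paper offers no proof of this proposition: it is quoted directly from \cite[Propositions 1.5 and 1.6]{cha:minr2}, so there is no argument of the authors to compare yours against. Your verification of the defining relations is sound: the $\Z$-grading argument for $\tau_a$, the sign and index bookkeeping for $\kappa$ on the exchange and Serre relations, and the identity $\kappa(\Psi_i^\pm(u))=\Psi_i^\mp(u)$ (which is exactly \eqref{e:kappaLambda} in the paper) all check out, and the observation that $\kappa^2=\mathrm{id}$ and $\tau_a^{-1}=\tau_{a^{-1}}$ settles bijectivity.

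There is, however, one concrete error in the coalgebra step of both parts: the elements $x_{i,0}^{\pm},k_i^{\pm1}$ with $i\in I$ generate only the Hopf subalgebra $U_q(\lie g)$, not all of $U_q(\tlie g)$, so verifying $(\tau_a\otimes\tau_a)\circ\Delta=\Delta\circ\tau_a$ and $(\kappa\otimes\kappa)\circ\Delta=\Delta^{\mathrm{op}}\circ\kappa$ on those generators alone does not extend to the whole algebra. You must also treat the affine-node Chevalley--Kac generators $x_0^{\pm},k_0^{\pm1}$, on which $\tau_a$ is \emph{not} the identity but acts by the scalar $a^{\pm1}$ (the required identity still holds there because $\Delta(x_0^{\pm})$ is linear in $x_0^{\pm}$, and likewise for $\kappa$, which sends $x_0^{\pm}$ to $-x_0^{\mp}$ up to normalization). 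The genuine content hidden in this step, and the reason the literature simply cites the statement, is that determining the action of $\tau_a$ and $\kappa$ on $x_0^{\pm}$ requires passing through the isomorphism between the Drinfeld loop presentation (in which your maps are defined) and the Chevalley--Kac presentation (in which the coproduct is known); as the paper itself remarks in Section \ref{ss:clalg}, no closed formula for $\Delta$ on the loop generators is available. Your sketch should acknowledge this translation rather than asserting that the degree-zero generators suffice.
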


Given $\bs\omega\in\cal P^+$, define $\bs\omega^{\tau_a}\in \cal P^+$ by $$\bs\omega^{\tau_a}_i(u)=\bs\omega_i(au).$$ One easily checks that the pullback
$V_q(\bs\omega)^{\tau_a}$ of $V_q(\bs\omega)$ by $\tau_a$ satisfies
\begin{equation}\label{e:tau_a}
V_q(\bs\omega)^{\tau_a}\cong V_q(\bs\omega^{\tau_a}).
\end{equation}
In particular, we get
\begin{equation}
(V_q(\bs{\omega})^*)^*\cong V_q(\bs{\omega})^{\tau_{q^{-2h^\vee}}}.
\end{equation}

Quite clearly, $\kappa$ is an involution,
\begin{equation}\label{e:kappaLambda}
\kappa(\Psi_i^\pm(u)) = \Psi_i^\mp(u), \qquad\text{and}\qquad \kappa(\Lambda_i^\pm(u)) = (\Lambda_i^\mp(u))^{-1}.
\end{equation}
The pullback $V^\kappa$ of an irreducible $U_q(\tlie g)$-module $V$ by $\kappa$ is, evidently, an irreducible module as well and we can use \eqref{e:kappaLambda} to compute its highest $\ell$-weight. Indeed, if $V\cong V_q(\bs{\omega})$ with $\bs{\omega}\in\mathcal P^+$, then the lowest-weight vector of $V$ is the highest-weight vector of $V^\kappa$. Therefore, if $\bs{\varpi}\in\wtl(V)$ is the lowest $\ell$-weight, we must have, by \eqref{e:kappaLambda} and \eqref{e:PsiLambda}, that
\begin{equation}\label{e:kappahislow}
V^\kappa\cong V_q\left((\bs{\varpi^-})^{-1}\right).
\end{equation}
Since
\begin{equation*}
\bs{\varpi}_i(u) = (\bs{\omega}_{\bar i}(q^{h^\vee}u))^{-1}
\end{equation*}
(see for instance
\cite[Proposition 3.6]{cm:qblock}), it follows that
\begin{equation*}
\bs{\varpi}^-_i(u) = (\bs{\omega}_{\bar i}^-(q^{-h^\vee}u))^{-1}.
\end{equation*}
In other words,
\begin{equation*}
(\bs{\varpi}^-)^{-1} = (\bs{\omega}^-)^*
\end{equation*}
and, hence,
\begin{equation}\label{e:kappa}
V_q(\bs{\omega})^\kappa \cong V_q(\bs\omega^\kappa)
\qquad\text{where}\qquad \bs{\omega}^\kappa = (\bs{\omega}^-)^*.
\end{equation}
One easily checks that
\begin{equation*}
(\bs{\omega}^*)^\kappa = \bs{\omega}^- \qquad\text{and}\qquad (\bs{\omega}^\kappa)^* = ((\bs{\omega}^-)^*)^* 
\end{equation*}
which proves that
\begin{equation}\label{e:prop invert}
(V_q(\bs\omega)^*)^\kappa\cong V_q(\bs{\omega}^-) \qquad\text{and}\qquad (V_q(\bs{\omega})^\kappa)^* \cong (V_q(\bs{\omega}^-)^*)^*\cong V_q(\bs{\omega}^-)^{\tau_{q^{-2h^\vee}}} .
\end{equation}

Note that the maps $\bs{\omega}\mapsto \bs{\omega}^-$ and $\bs{\omega}\mapsto \bs{\omega}^*$ are monoid endomorphisms of $\mathcal P^+$. Hence, they induce group endomomorphisms of $\mathcal P$ which we also denote by $-$ and $*$. Inspired by \eqref{e:kappa}, we denote by $\kappa$ the composition $*\circ -$. The following lemma is obvious.

\begin{lem}\label{l:dualiso}
    The endomomorphisms $-,*$, and $\kappa$ of $\mathcal P$ are bijective.\endd
\end{lem}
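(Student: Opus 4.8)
The statement to prove is Lemma \ref{l:dualiso}: the endomorphisms $-$, $*$, and $\kappa$ of $\mathcal P$ are bijective.

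The plan is to verify bijectivity directly on generators, exploiting that $\mathcal P$ is the free abelian group on the fundamental $\ell$-weights $\bs\omega_{i,a}$ for $(i,a)\in I\times\mathbb C^\times$. First I would recall from the definitions that on a fundamental $\ell$-weight we have $(\bs\omega_{i,a})^- = \bs\omega_{i,a^{-1}}$ and $(\bs\omega_{i,a})^* = \bs\omega_{\bar i,\, q^{-h^\vee}a}$, where $\bar i = h^\vee - i$; these follow immediately from the formula $(\bs\omega^-)_i(u) = \prod_j(1-a_{i,j}^{-1}u)$ and from $\bs\omega^*_i(u) = \bs\omega_{w_0\cdot i}(q^{-h^\vee}u)$ in \eqref{e:drhw}, applied to $\bs\omega_{i,a}$ whose $i$-th component is $1-au$.

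The key observation is that each of these three maps is an endomorphism of the free abelian group $\mathcal P$ which permutes (up to the group structure) the generating set $\{\bs\omega_{i,a}\}$; hence each is given, on generators, by a bijection of the index set $I\times\mathbb C^\times$. Concretely, $-$ sends the generator indexed by $(i,a)$ to the generator indexed by $(i,a^{-1})$, and $a\mapsto a^{-1}$ is an involution of $\mathbb C^\times$; so $-$ is an involution of $\mathcal P$, in particular bijective. Similarly, $*$ sends the generator indexed by $(i,a)$ to the one indexed by $(\bar i, q^{-h^\vee}a)$; since $i\mapsto\bar i$ is an involution of $I$ and $a\mapsto q^{-h^\vee}a$ is a bijection of $\mathbb C^\times$ with inverse $a\mapsto q^{h^\vee}a$, the map $*$ is a bijection of $\mathcal P$ (its inverse sends the generator indexed by $(i,a)$ to the one indexed by $(\bar i, q^{h^\vee}a)$). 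Finally $\kappa = {*}\circ{-}$ is a composition of two bijections, hence bijective. One can also check it is itself an involution, consistent with the fact noted before \eqref{e:kappaLambda} that $\kappa$ is an involution on modules: on the generator indexed by $(i,a)$ it acts by $(i,a)\mapsto(\bar i,\, q^{-h^\vee}a^{-1})$, and applying it twice returns $(i,a)$ since $\bar{\bar i}=i$ and $q^{-h^\vee}(q^{-h^\vee}a^{-1})^{-1}=a$.

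This lemma is essentially formal once the action on generators is pinned down, so there is no real obstacle; the only point requiring a line of care is confirming the action of $*$ and $-$ on a single fundamental $\ell$-weight from the definitions, which is a one-step unwinding of \eqref{e:drhw} and the definition of $\bs\omega^-$. Since all three maps are presented as monoid endomorphisms of $\mathcal P^+$ that extend to group endomorphisms of $\mathcal P$, it suffices to exhibit two-sided inverses on $\mathcal P$, and the inverses are again of the same type (induced by the inverse bijections of $I\times\mathbb C^\times$), completing the proof.
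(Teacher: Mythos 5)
Your proof is correct: the paper simply declares this lemma obvious, and your verification — that each map permutes the free generators $\bs\omega_{i,a}$ of $\mathcal P$ via a bijection of the index set $I\times\mathbb C^\times$ (with $-$ an involution, $*$ invertible with inverse $(i,a)\mapsto(\bar i,q^{h^\vee}a)$, and $\kappa$ a composition of the two) — is exactly the routine check the authors had in mind. Your computations of the actions on generators from \eqref{e:drhw} and the definition of $\bs\omega^-$ are accurate, so nothing further is needed.
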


The proof of the next proposition is straightforward from \eqref{e:drhw} and \eqref{e:kappa} and the definition of $\bs{\omega}^-$.

\begin{prop}\label{p:incdec}
    Let $\bs{\omega}\in\mathcal P^+, \lambda = \wt(\bs{\omega}), V= V_q(\bs{\omega})$, and $V^-=V_q(\bs{\omega}^-)$. The following are equivalent.
    \begin{enumerate}[(i)]
        \item $V$ is an increasing minimal affinization of $V_q(\lambda)$.
        \item $V^-$ is a decreasing minimal affinization of $V_q(\lambda)$.
        \item $V^*$ is a decreasing minimal affinization of $V_q(-w_0\lambda)$.
        \item $V^\kappa$ is an increasing minimal affinization of $V_q(-w_0\lambda)$. \endd
    \end{enumerate}
\end{prop}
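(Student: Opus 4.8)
The plan is to prove Proposition \ref{p:incdec} by establishing the equivalence of (i) and (iii), the equivalence of (i) and (ii), and the equivalence of (i) and (iv), each by a direct computation with Drinfeld polynomials using the explicit formulas for the duality operations recorded in \eqref{e:drhw}, \eqref{e:kappa}, and the definition of $\bs\omega^-$. The only input needed beyond these formulas is the classification of minimal affinizations in type $A$ (Theorem \ref{teo tipo A}), and in particular the characterization in \eqref{eq cond min A'}: $V_q(\bs\omega)$ is a minimal affinization of $V_q(\lambda)$ with $\epsilon=-1$ (increasing) if and only if $\bs\omega=\prod_{i\in I}\bs\omega_{i,a_i,\lambda(h_i)}$ with $a_i = aq^{-p_{i,n}(\lambda)}$, and with $\epsilon=+1$ (decreasing) if and only if $a_i = aq^{p_{i,n}(\lambda)}$. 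Since the operations $-,*,\kappa$ are monoid endomorphisms of $\mathcal P^+$ (Lemma \ref{l:dualiso}), it suffices to track what each does to a single factor $\bs\omega_{i,c,m}$.

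First I would compute the action on $\bs\omega_{i,c,m} = \prod_{j=0}^{m-1}\bs\omega_{i,cq^{m-1-2j}}$. For the bar operation: $\bs\omega^-_{i,c}(u)=\bs\omega_{i,c^{-1}}(u)$, so $(\bs\omega_{i,c,m})^- = \bs\omega_{i,c^{-1},m}$ (the set of spectral parameters $\{cq^{m-1-2j}\}$ is sent to $\{c^{-1}q^{-(m-1-2j)}\} = \{c^{-1}q^{m-1-2j'}\}$). For the $*$ operation: $\bs\omega^*_i(u)=\bs\omega_{\bar i}(q^{-h^\vee}u)$ with $\bar i = h^\vee - i = n+1-i$, so $(\bs\omega_{i,c,m})^* = \bs\omega_{\bar i, cq^{-h^\vee}, m} = \bs\omega_{n+1-i,\, cq^{-(n+1)},\, m}$. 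Hence $\bs\omega^* = \prod_{i\in I}\bs\omega_{\bar i,\, a_i q^{-(n+1)},\, \lambda(h_i)}$. Relabeling $i\mapsto \bar i$, this is $\prod_{i\in I}\bs\omega_{i,\, a_{\bar i}q^{-(n+1)},\, \lambda(h_{\bar i})} = \prod_{i\in I}\bs\omega_{i, b_i, (\lambda^*)(h_i)}$ where $\lambda^* = -w_0\lambda$ has $(\lambda^*)(h_i) = \lambda(h_{\bar i})$ and $b_i = a_{\bar i}q^{-(n+1)}$. Now if $\bs\omega$ is increasing, $a_i = aq^{-p_{i,n}(\lambda)}$, so $b_i/b_j = a_{\bar i}/a_{\bar j} = q^{p_{\bar i,n}(\lambda) - p_{\bar j,n}(\lambda)}$; the key identity to verify is that $p_{\bar i, n}(\lambda) - p_{\bar j,n}(\lambda) = p_{i,j}(\lambda^*)$ for $i<j$ (equivalently $\bar i > \bar j$), which follows directly from the definition $p_{i,j}(\lambda) = {}_{i+1}|\lambda|_j + {}_i|\lambda|_{j-1}+j-i$ together with $(\lambda^*)(h_k) = \lambda(h_{\bar k})$ — this is the one genuinely non-routine identity-chasing step, and I expect it to be the main obstacle, though it is elementary. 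It shows $b_i/b_j = q^{p_{i,j}(\lambda^*)}$, i.e. $\epsilon = +1$, so $V^* = V_q(\bs\omega^*)$ is a decreasing minimal affinization of $V_q(\lambda^*) = V_q(-w_0\lambda)$. Running the argument in reverse (using bijectivity from Lemma \ref{l:dualiso} and $(\bs\omega^*)^* = \bs\omega^{\tau_{q^{-2h^\vee}}}$, which preserves minimal-affinization-ness and the $\epsilon$ sign by \eqref{e:tau_a}) gives (iii)$\Rightarrow$(i), so (i)$\Leftrightarrow$(iii).

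Next, the bar computation above gives $\bs\omega^- = \prod_i \bs\omega_{i,a_i^{-1},\lambda(h_i)}$, so $(\bs\omega^-)_i/(\bs\omega^-)_j$ has spectral ratio $(a_i/a_j)^{-1}$; thus $\epsilon$ flips sign while $\wt$ is unchanged. This immediately yields (i)$\Leftrightarrow$(ii). Finally, $\kappa = {*}\circ{-}$ (by \eqref{e:kappa}, $\bs\omega^\kappa = (\bs\omega^-)^*$), so $V^\kappa$ is obtained from $V$ by first applying $-$ (flips $\epsilon$, keeps $\lambda$) and then $*$ (flips $\epsilon$, sends $\lambda\mapsto -w_0\lambda$); the net effect is $\epsilon$ unchanged and $\lambda\mapsto -w_0\lambda$. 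Hence $V$ increasing $\Leftrightarrow$ $V^\kappa$ increasing (of $V_q(-w_0\lambda)$), giving (i)$\Leftrightarrow$(iv). I would organize the writeup so that the factor-level formulas $(\bs\omega_{i,c,m})^- = \bs\omega_{i,c^{-1},m}$ and $(\bs\omega_{i,c,m})^* = \bs\omega_{\bar i, cq^{-h^\vee},m}$ are stated once, the $p$-function identity $p_{\bar i,n}(\lambda)-p_{\bar j,n}(\lambda)=p_{i,j}(-w_0\lambda)$ is checked once, and then all four equivalences are read off; the bijectivity of $-,*,\kappa$ from Lemma \ref{l:dualiso} supplies the converse directions with no extra work.
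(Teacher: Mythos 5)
Your proposal follows exactly the route the paper intends: the paper offers no written proof, stating only that the proposition ``is straightforward from \eqref{e:drhw} and \eqref{e:kappa} and the definition of $\bs\omega^-$,'' and your factor-by-factor computation of the effect of $-$, $*$, and $\kappa$ on $\bs\omega_{i,c,m}$, combined with the criterion \eqref{eq cond min A'}, is precisely that straightforward argument made explicit. The structure is sound and the conclusions are right, but be aware that your (i)$\Leftrightarrow$(iii) step contains two sign slips that happen to cancel: with $a_i=aq^{-p_{i,n}(\lambda)}$ one has $b_i/b_j=a_{\bar i}/a_{\bar j}=q^{\,p_{\bar j,n}(\lambda)-p_{\bar i,n}(\lambda)}$ (not $q^{\,p_{\bar i,n}(\lambda)-p_{\bar j,n}(\lambda)}$), and the identity you propose to verify is false as stated --- the correct one is $p_{\bar j,n}(\lambda)-p_{\bar i,n}(\lambda)=p_{i,j}(-w_0\lambda)$ for $i<j$, equivalently $p_{\bar i,n}(\lambda)-p_{\bar j,n}(\lambda)=-p_{i,j}(\lambda^*)$, which follows from ${}_{i+1}|\lambda^*|_j={}_{\bar j}|\lambda|_{\bar i-1}$, ${}_{i}|\lambda^*|_{j-1}={}_{\bar j+1}|\lambda|_{\bar i}$, and $\bar i-\bar j=j-i$. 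Once both signs are set straight you still land on $b_i/b_j=q^{\,p_{i,j}(\lambda^*)}$, i.e.\ $\epsilon=+1$, so the conclusion that $V^*$ is decreasing stands; just fix the two displayed formulas before writing it up. The remaining equivalences ($-$ flips $\epsilon$ and fixes $\lambda$; $\kappa={*}\circ{-}$ preserves $\epsilon$ and sends $\lambda\mapsto -w_0\lambda$) and the use of bijectivity plus $\tau_a$-invariance for the converses are fine.
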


The following proposition is easily established by standard arguments.

\begin{prop}\label{p:length}
    Let $V\in \wcal C_q$ and $a\in\mathbb C^\times$. Then, the lengths of $V^{\tau_a}, V^*$, and of $V^\kappa$ are all equal to that of $V$.\endd
\end{prop}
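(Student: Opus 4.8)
The plan is to observe that each of the three operations $V\mapsto V^{\tau_a}$, $V\mapsto V^*$, $V\mapsto V^\kappa$ is an exact functor on $\wcal C_q$ which is either an equivalence or an anti-equivalence of categories, and that any such functor preserves the length of a composition series (which is well defined since every object of $\wcal C_q$ is finite-dimensional, hence of finite length). The first step is to record that all three operations do preserve $\wcal C_q$: the pullbacks $V\mapsto V^{\tau_a}$ and $V\mapsto V^\kappa$ are defined via the algebra automorphisms $\tau_a$ and $\kappa$ of Proposition \ref{p:Cartaninv}, and it is immediate from their action on the $k_i^{\pm1}$ and the $h_{i,r}$ that both restrict to automorphisms of $U_q(\tlie h)$; therefore they send finite-dimensional $\ell$-weight modules to finite-dimensional $\ell$-weight modules. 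Stability of $\wcal C_q$ under $V\mapsto V^*$ is standard (see \cite{fremuk:qchar}) and is in any case used implicitly throughout Section \ref{ss:dual}.

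For $\tau_a$ and $\kappa$ the argument is then purely formal: since $\tau_a$ and $\kappa$ are algebra automorphisms of $U_q(\tlie g)$, a subspace $W\subseteq V$ is a $U_q(\tlie g)$-submodule of $V$ if and only if it is a submodule of $V^{\tau_a}$ (respectively $V^\kappa$). Hence the pullback induces an isomorphism of the lattices of submodules and, in particular, carries simple modules to simple modules. Consequently it sends any composition series of $V$ to a composition series of $V^{\tau_a}$ (respectively $V^\kappa$) with isomorphic successive quotients, so the length is preserved.

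For the contravariant functor $V\mapsto V^*$ one uses exactness of finite-dimensional duality together with the fact that $V^*$ is simple if and only if $V$ is simple; the latter follows because $(V^*)^*\cong V^{\tau_{q^{-2h^\vee}}}$, as noted in Section \ref{ss:dual}, so applying $*$ twice returns $V$ up to a length-preserving twist. Given a composition series $0=V_0\subset V_1\subset\cdots\subset V_\ell=V$, applying $*$ to the chain of surjections $V=V/V_0\twoheadrightarrow V/V_1\twoheadrightarrow\cdots\twoheadrightarrow V/V_\ell=0$ yields $0=(V/V_\ell)^*\subset(V/V_{\ell-1})^*\subset\cdots\subset(V/V_0)^*=V^*$ with successive quotients isomorphic to $(V_i/V_{i-1})^*$, all simple; thus $V^*$ also has length $\ell$. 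There is no genuine obstacle in this argument: the only points needing (routine) verification are the stability of $\wcal C_q$ under the three operations and the fact that $*$ preserves irreducibility, both of which are already implicit in the surrounding material, so I would keep the proof short and refer to Section \ref{ss:dual} for these facts.
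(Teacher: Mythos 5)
Your proof is correct, and it is precisely the kind of "standard argument" the paper invokes when it states this proposition without proof: pullback along an algebra automorphism preserves the submodule lattice, and duality is exact with $(V^*)^*$ a length-preserving twist of $V$, so all three operations preserve length. Nothing further is needed.
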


Finally, recall that, given a Hopf algebra $H$, the antipode is an anti-automorphism of the comultiplication. Using this and the last statement of Proposition \ref{p:Cartaninv}, it follows that, for any two finite-dimensional $U_q(\tlie g)$-modules $V$ and $W$, we have
\begin{equation}\label{e:tpdual}
(V\otimes W)^*\cong W^*\otimes V^* \qquad\text{and}\qquad (V\otimes W)^\kappa\cong W^\kappa\otimes V^\kappa.
\end{equation}

\subsection{The General Version of the Main Theorem}\label{ss:Gen} Fix $\lambda\in P^+$, $\bs\omega$ as in \eqref{eq cond min A}, and set
$$i_0=\max(\supp(\lambda)), \qquad a=a_{i_0}.$$
as before. Thus, there exist $r_i\in\mathbb Z, i\in I$, such that
\begin{equation}
\bs\omega_i = \bs{\omega}_{i,a_i,\lambda(h_i)} = Y_{i,r_i,\lambda(h_i)}.
\end{equation}
By definition of $Y_{i,r,m}$, we have
\begin{equation*}
r_{i_0} = 1-\lambda(h_{i_0}).
\end{equation*}
If $V_q(\bs{\omega})$ is an increasing minimal affinization, then the $r_j$ are related as in \eqref{e:rifromri0}. If $V_q(\bs{\omega})$ is a decreasing minimal affinization, then
\begin{equation}\label{e:rifromri0dec}
r_i =r_{i_0}+2\ _{i+1}|\lambda|_{i_0} +i_0-i \quad \text{for all}
\quad i\in\supp(\lambda).
\end{equation}
Fix also
\begin{equation*}
\bs\varpi = Y_{e,r,k} \qquad\text{for some}\qquad r,k\in\mathbb Z,\ k>0,\ e\in\{1,n\}\subseteq I,
\end{equation*}
and set
\begin{equation*}
\bs\lambda = \bs\omega\bs\varpi, \qquad V=V_q(\bs{\omega}), \quad\text{and}\quad W=V_q(\bs{\varpi}).
\end{equation*}
Let also $V^-= V_q(\bs{\omega}^-)$ and  $W^-=V_q(\bs\varpi^-)$.

\begin{cor}
    The length of $V\otimes W$ is at most $2$ and the following are equivalent
    \begin{enumerate}[(i)]
        \item  $V\otimes W$ is irreducible.
        \item  $V^*\otimes W^*$ is irreducible.
        \item  $V^\kappa\otimes W^\kappa$ is irreducible.
        \item  $V^-\otimes W^-$ is irreducible.
    \end{enumerate}
\end{cor}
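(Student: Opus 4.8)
The plan is to deduce the corollary directly from Theorem~\ref{t:main} (equivalently Theorem~\ref{t:mainr}), Corollary~\ref{c:main}, and the duality results collected in Section~\ref{ss:dual}. First, recall that the Grothendieck ring of $\wcal C_q$ is commutative, so the length of $V\otimes W$ equals that of $W\otimes V$; hence we may freely switch the order of tensor factors whenever convenient. The statement that the length of $V\otimes W$ is at most $2$ is immediate once we reduce to the case covered by Theorem~\ref{t:mainr}: by Proposition~\ref{p:PZ} (or Proposition~\ref{p:PZ}'s hypothesis failing), if $a/b$ is not an integral power of $q$ the tensor product is already irreducible, and otherwise we are in the setting of Theorem~\ref{t:main}, where the length is explicitly $\le 2$. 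The only point requiring care is that Theorem~\ref{t:main} is stated for an \emph{increasing} minimal affinization $V_q(\bs\omega)$ tensored with a KR-module $Y_{n,r,k}$ supported at the last node, whereas here $V_q(\bs\omega)$ may be increasing or decreasing and $\bs\varpi = Y_{e,r,k}$ with $e\in\{1,n\}$. This is exactly what the duality arguments of Section~\ref{ss:dual} are designed to handle, and the ``other versions'' of the main theorem alluded to in the section introduction reduce all four combinations to the one proved.

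The core of the argument is the chain of equivalences $(i)\Leftrightarrow(ii)\Leftrightarrow(iii)\Leftrightarrow(iv)$, which I would prove using Proposition~\ref{p:length} together with \eqref{e:tpdual}. Concretely: by \eqref{e:tpdual} we have $(V\otimes W)^*\cong W^*\otimes V^*$, and by Proposition~\ref{p:length} the length of $(V\otimes W)^*$ equals that of $V\otimes W$; combining with commutativity of the Grothendieck ring, $V\otimes W$ is irreducible iff $W^*\otimes V^*$ is iff $V^*\otimes W^*$ is, giving $(i)\Leftrightarrow(ii)$. The same reasoning with $\kappa$ in place of $*$, using the second isomorphism in \eqref{e:tpdual} and again Proposition~\ref{p:length}, yields $(i)\Leftrightarrow(iii)$. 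For $(i)\Leftrightarrow(iv)$, note that $V^-\otimes W^- \cong (V\otimes W)^{-}$-type relation is not literally an operation on modules, so instead I would route through the composition: by \eqref{e:prop invert}, $(V_q(\bs\omega)^*)^\kappa\cong V_q(\bs\omega^-)=V^-$ and likewise $W^-\cong (W^*)^\kappa$, so $V^-\otimes W^-\cong (V^*)^\kappa\otimes (W^*)^\kappa\cong \big(W^*\otimes V^*\big)^\kappa\cong \big((V\otimes W)^*\big)^\kappa$ up to the commutativity of the Grothendieck ring; applying Proposition~\ref{p:length} twice (once for $*$, once for $\kappa$) shows this has the same length as $V\otimes W$, giving $(i)\Leftrightarrow(iv)$. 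One should be slightly careful that the $\tau_a$-twist appearing in $(V_q(\bs\omega)^\kappa)^*$ versus $(V_q(\bs\omega)^*)^\kappa$ does not matter here, since by Proposition~\ref{p:length} twisting by $\tau_a$ preserves length and by \eqref{e:tau_a} it only shifts spectral parameters — and the whole statement is about lengths, not about specific Drinfeld polynomials.

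Having set up the equivalences, it remains to observe that the ``length at most $2$'' assertion transports along the same equivalences, so it suffices to establish it for one representative. By Proposition~\ref{p:incdec}, if $V_q(\bs\omega)$ is a decreasing minimal affinization then $V^-=V_q(\bs\omega^-)$ is an increasing one, and similarly $\bs\varpi^- = Y_{e,r',k}$ for an appropriate $r'$ (with $e$ unchanged, since $Y$-parameters are simply inverted). If moreover $e=1$ rather than $e=n$, one applies the $*$ or $\kappa$ operation, which sends $\omega_1$ to $\omega_n$ via $\bar i = h^\vee - i$ as in \eqref{e:bar}, turning a KR-module at the first node into one at the last node; one must track that $*$ also swaps increasing and decreasing (Proposition~\ref{p:incdec}(iii)), so possibly a further $-$ is needed to land on the increasing case — this is a finite case analysis with at most two operations composed. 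In every case we arrive at the tensor product of an increasing minimal affinization with $Y_{n,\tilde r, k}$, to which Theorem~\ref{t:mainr} applies and gives length $\le 2$. Therefore $V\otimes W$ itself has length $\le 2$.

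The main obstacle I anticipate is purely bookkeeping: ensuring that the combination of operations $-$, $*$, $\kappa$ chosen in each of the four cases $\{$increasing, decreasing$\}\times\{e=1, e=n\}$ genuinely lands in the hypothesis of Theorem~\ref{t:mainr}, and that none of these operations changes the length (which is guaranteed by Proposition~\ref{p:length}, so this is safe) nor introduces a spurious reducibility. By Lemma~\ref{l:dualiso} these operations are bijective on $\mathcal P$, so no information is lost and the reductions are reversible; this is what makes the equivalences genuine equivalences rather than one-directional implications. I do not expect any serious mathematical difficulty beyond carefully citing \eqref{e:tpdual}, Proposition~\ref{p:length}, Proposition~\ref{p:incdec}, and the commutativity of the Grothendieck ring in the right order; the argument is essentially formal once Theorem~\ref{t:main} and Corollary~\ref{c:main} are in hand.
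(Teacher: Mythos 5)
Your proposal is correct and follows essentially the same route as the paper: the four equivalences are obtained from Proposition \ref{p:length} together with \eqref{e:tpdual}, \eqref{e:prop invert}, and the commutativity of the Grothendieck ring, and the length bound is reduced to Theorem \ref{t:main} by the same case analysis on $\{\text{increasing},\text{decreasing}\}\times\{e=1,e=n\}$ using $*$, $\kappa$, and $-$. The paper's proof is merely a terser version of the same argument.
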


\begin{proof}
    The equivalence of the four statements is an immediate consequence of Proposition \ref{p:length} together with \eqref{e:tpdual} and \eqref{e:prop invert} and the fact that the Grothendieck ring of $\wcal C_q$ is commutative. If $V$ is an increasing minimal affinization and $e=n$, the first statement is part of Theorem \ref{t:main}. If $V$ is increasing and $e=1$, then $V^\kappa$ is increasing, $\bar e = n$ and, hence, $V^\kappa\otimes W^\kappa$ has length at most $2$ by Theorem \ref{t:main}. The case that $V$ is decreasing and $e=1$ follows by a similar argument using dualization instead of $\kappa$. Finally, if $V$ is decreasing and $e=n$, we apply dualization and $\kappa$ to obtain a tensor product as that of Theorem \ref{t:main}.
\end{proof}

By Lemma \ref{l:dualiso}, there exists a unique pair $(\tilde{\bs\omega},\tilde{\bs{\varpi}})\in\mathcal P^+\times \mathcal P^+$ such that
\begin{equation*}
(\bs\omega,\bs{\varpi}) =
\begin{cases}
(\tilde{\bs\omega}^*,\tilde{\bs{\varpi}}^*),& \text{if $V$ is decreasing and } e=1;\\
(\tilde{\bs\omega}^\kappa,\tilde{\bs{\varpi}}^\kappa),& \text{if $V$ is increasing and } e=1;\\
(\tilde{\bs\omega}^-,\tilde{\bs{\varpi}}^-),& \text{if $V$ is decreasing and } e=n.
\end{cases}
\end{equation*}
Moreover, by Proposition \ref{p:incdec}, $V_q(\tilde{\bs\omega})$ is an increasing minimal affinization and $V_q(\tilde{\bs{\varpi}})$ is a
Kirillov-Reshetikhin module with $\wt(\tilde{\bs{\varpi}}) = k\omega_n$. In other words, Theorem \ref{t:main} applies to $V_q(\tilde{\bs{\varpi}})\otimes V_q(\tilde{\bs{\varpi}})$. In the case that $V\otimes W$ is reducible and, hence, so is $V_q(\tilde{\bs\omega})\otimes V_q(\tilde{\bs\varpi})$ by the previous corollary, we denote by $\tilde{\bs{\lambda}}$ the Drinfeld polynomial of the extra irreducible factor of $V_q(\tilde{\bs\omega})\otimes V_q(\tilde{\bs\varpi})$ and by $\bs{\lambda}'$ that of the extra irreducible factor of $V\otimes W$. Set
$$i_1=\min(\supp(\lambda))$$
and note that
\begin{equation*}
\bar{i_1} = \max(\supp(\lambda^*)).
\end{equation*}
We are read to prove the completion of Theorem \ref{t:main}

\begin{cor}\label{c:mainc}\hfill\\ \vspace{-12pt}
    \begin{enumerate}[(a)]
        \item If $V$ is decreasing and $e=1$, then $V\otimes W$ is reducible if and only if there exist $p\in\supp(\lambda)$ and $k'>0$ such that  either one of the following options hold:\vspace{5pt}
        \begin{enumerate}[(i)]
            \item $k'\le\min\{\lambda(h_p),k\}$ and $r+2k+p+1=r_p+2k'$;
            \item $k'\le\min\{|\lambda|,k\}$ and  $r_{i_1}+2\lambda(h_{i_1})+i_1+1=r+2k'$ and $p=\min\{i:|\lambda|_i\ge k'\}$.
        \end{enumerate}
        In both cases, $\bs{\lambda}' = \tilde{\bs{\lambda}}^*$.

        \item If $V$ is increasing and $e=1$, then $V\otimes W$ is reducible if and only if there exist $p\in\supp(\lambda)$ and $k'>0$ such that  either one of the following options hold:\vspace{5pt}
        \begin{enumerate}[(i)]
            \item $k'\le\min\{\lambda(h_p),k\}$ and $r_{p}+2\lambda(h_{p})+p+1=r+2k'$;
            \item $k'\le\min\{|\lambda|,k\}$ and  $r+2k+i_1+1=r_{i_1}+2k'$ and  $p=\min\{i:|\lambda|_i\ge k'\}$.
        \end{enumerate}
        In both cases, $\bs{\lambda}' = \tilde{\bs{\lambda}}^\kappa$.

        \item If $V$ is decreasing and $e=n$, then $V\otimes W$ is reducible if and only if there exist $p\in\supp(\lambda)$ and $k'>0$ such that  either one of the following options hold:\vspace{5pt}
        \begin{enumerate}[(i)]
            \item $k'\le\min\{\lambda(h_p),k\}$ and $r_{p}+2\lambda(h_{p})+n-p+2=r+2k'$;
            \item $k'\le\min\{|\lambda|,k\}$ and  $r+2k+n-i_0+2=r_{i_0}+2k'$ and $p=\max\{i\in I:\ _i|\lambda|\ge k'\}$.\vspace{5pt}
        \end{enumerate}
        In both cases, $\bs{\lambda}' = \tilde{\bs{\lambda}}^-$.
    \end{enumerate}
\end{cor}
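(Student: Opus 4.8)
The plan is to derive Corollary \ref{c:mainc} from Theorem \ref{t:main} (equivalently Theorem \ref{t:mainr}) purely by the duality machinery set up in Section \ref{ss:dual}, exactly in the spirit of the proof of the previous corollary. The starting point is the observation that in each of the three cases (a), (b), (c) of the statement, the pair $(\bs\omega,\bs\varpi)$ is, by construction, of the form $(\tilde{\bs\omega}^\dagger,\tilde{\bs\varpi}^\dagger)$ where $\dagger\in\{*,\kappa,-\}$ is the relevant endomorphism of $\mathcal P$ and $V_q(\tilde{\bs\omega})\otimes V_q(\tilde{\bs\varpi})$ is precisely a tensor product of the type covered by Theorem \ref{t:main}: an \emph{increasing} minimal affinization tensored with a KR-module supported at $\omega_n$. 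By Proposition \ref{p:length} together with \eqref{e:tpdual} and \eqref{e:prop invert} (and commutativity of the Grothendieck ring of $\wcal C_q$), $V\otimes W$ is reducible if and only if $V_q(\tilde{\bs\omega})\otimes V_q(\tilde{\bs\varpi})$ is, and when reducible both have length $2$. So the content to be checked is twofold: first, that the explicit arithmetic conditions (i)/(ii) in each part of Corollary \ref{c:mainc} are exactly the images, under the substitution, of conditions (i)/(ii) of Theorem \ref{t:main}; and second, that the Drinfeld polynomial $\bs\lambda'$ of the extra factor of $V\otimes W$ is $\tilde{\bs\lambda}^\dagger$, which is immediate since $\dagger$ is a bijection of $\mathcal P$ sending simple factors to simple factors (by \eqref{e:tpdual}, \eqref{e:prop invert}).

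The core of the work is therefore a bookkeeping translation of the numerical parameters $r_i, r, k, i_0$ under each of the three maps. I would first record how the maps act on fundamental $\ell$-weights and on the KR-building blocks $Y_{i,r,k}$: from \eqref{e:drhw} one gets $(Y_{i,r,k})^* = Y_{\bar i, -r-2(k-1)-h^\vee, k}$ up to the normalization, from the definition of $\bs\omega^-$ one gets $(Y_{i,r,k})^- = Y_{i,-r-2(k-1),k}^{-1}\cdot(\text{dominant rep.})$, handled via the passage $a_{i,j}\mapsto a_{i,j}^{-1}$, and $\kappa = *\circ -$ composes the two. In particular, under $*$ an increasing minimal affinization with $i_0=\max\supp(\lambda)$ becomes a decreasing one whose relevant extreme node is $\bar i_1=\max\supp(\lambda^*)$, which is why part (a) is phrased in terms of $i_1$ and part (c) in terms of $i_0$ but with the ``decreasing'' relation \eqref{e:rifromri0dec} in force; under $\kappa$ increasing stays increasing but the diagram is flipped, sending node $n$ to node $1$. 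Plugging these substitutions into the two equations $r+2k+n-p+2 = r_p+2k'$ and $r_{i_0}+2\lambda(h_{i_0})+n-i_0+2 = r+2k'$ of Theorem \ref{t:mainr}, and using the node-flip $p\mapsto \bar p$ together with \eqref{e:rifromri0} versus \eqref{e:rifromri0dec} to rewrite $r_{\bar p}$ in terms of $r_p$, should yield precisely the displayed conditions; the description of $p$ as $\min\{i:|\lambda|_i\ge k'\}$ (resp.\ $\max\{i:\,_i|\lambda|\ge k'\}$) follows since $_i|\lambda|$ for $\lambda$ turns into $|\lambda^*|_{\bar i}$ and the ``$\max$'' of Theorem \ref{t:main} becomes a ``$\min$'' after reversal. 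I would organize this as three short paragraphs, one per case, each consisting of: identify $\tilde{\bs\omega},\tilde{\bs\varpi}$; apply the relevant bijection to the equations; read off $\bs\lambda'=\tilde{\bs\lambda}^\dagger$.

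The main obstacle I anticipate is purely notational rather than conceptual: keeping the shifts by the Coxeter number $h^\vee=n+1$, the length-dependent shifts $2(k-1)$ coming from $Y_{i,r,k}$, and the $+1$ versus $+2$ discrepancies in the $p_{i,j}$-type formulas all consistent, so that e.g.\ the asymmetry between ``$+p+1$'' in part (a)(i) and ``$+n-p+2$'' in part (c)(i) comes out correctly and not off by one. A secondary point requiring a little care is confirming that the map $\dagger$ indeed sends the \emph{preferred} presentation \eqref{eq cond min A} (with $\epsilon=-1$, i.e.\ increasing, and the chosen base point $a=a_{i_0}$) to the presentation with which the corollary's hypotheses are stated — for $*$ this uses $-w_0$ and \eqref{e:bar}, for $\kappa$ it uses that $\kappa$ preserves ``increasing'' by Proposition \ref{p:incdec}(iv). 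Since Theorem \ref{t:main} and Proposition \ref{p:length} are already in hand, once the substitution dictionary is written down the three cases are mechanical; no new representation-theoretic input is needed, only careful transport of indices.
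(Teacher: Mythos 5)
Your proposal follows exactly the paper's own route: identify $(\tilde{\bs\omega},\tilde{\bs\varpi})$ with $(\bs\omega,\bs\varpi)=(\tilde{\bs\omega}^\dagger,\tilde{\bs\varpi}^\dagger)$ for $\dagger\in\{*,\kappa,-\}$, transport reducibility, length, and the extra simple factor through Proposition \ref{p:length}, \eqref{e:tpdual} and \eqref{e:prop invert} (using commutativity of the Grothendieck ring to absorb the reversal of tensor factors), and then translate the numerical conditions of Theorem \ref{t:mainr} via the substitution dictionary for $r_i$, $r$, and $\bar p=n+1-p$. One minor caveat: your dictionary entry $(Y_{i,r,k})^*=Y_{\bar i,-r-2(k-1)-h^\vee,k}$ is really the formula for $\kappa$ (the correct one for $*$ is $(Y_{i,r,k})^*=Y_{\bar i,r-h^\vee,k}$), but since the displayed conditions only involve differences of the supports this slip is exactly the kind of bookkeeping error you flagged and does not affect the validity of the approach.
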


\begin{proof}
    We know that $V\otimes W$ is reducible if and only if  $V_q(\tilde{\bs\omega})\otimes V_q(\tilde{\bs\varpi})$ is. Therefore, the conditions listed above are obtained from those of Theorem \ref{t:mainr} by applying $\kappa$ and/or by considering dual modules. Thus, assume $V_q(\tilde{\bs\omega})\otimes V_q(\tilde{\bs\varpi})$ is reducible and consider the exact sequence
    \begin{equation*}
    0\to M\to V_q(\tilde{\bs\omega})\otimes V_q(\tilde{\bs\varpi})\to N\to 0.
    \end{equation*}
    We have either $M\cong V_q(\tilde{\bs{\lambda}})$ or $N\cong V_q(\tilde{\bs{\lambda}})$.

    Assume first that we are in case (a). Thus, we also have  the exact sequence
    \begin{equation*}
    0\to N^*\to W\otimes V\to M^*\to 0,
    \end{equation*}
    which immediately proves the statement about $\bs{\lambda}'$. To obtain conditions (i) and (ii) above from those of Theorem \ref{t:mainr}, observe that, since
    $\tilde{\bs\omega}^* =\bs{\omega}$, \eqref{e:drhw} implies that
    \begin{equation*}
    \tilde{\bs{\omega}}_i = Y_{\bar i,r^*_i,\lambda(h_i)} \qquad\text{where}\qquad  r^*_i = r_i+h^\vee.
    \end{equation*}
    Notice also that $\tilde{\bs{\varpi}} = Y_{n,r^*,k}$ with $r^*=r+h^\vee$ and that $|\lambda|=|\lambda^*|$.
    By Theorem \ref{t:mainr}, there exist a pair $(p,k')$ such that $\bar p\in\supp(\lambda^*)$, $k'>0$, and either one of the following options hold:
    \begin{enumerate}[(i)]
        \item $k'\le\min\{\lambda^*(h_{\bar p}),k\}$ and $r^*+2k+n-\bar p+2 = r^*_{p}+2k'$;
        \item $k'\le\min\{|\lambda|,k\}$ and  $r^*_{\bar{i_1}}+2\lambda^*(h_{\bar{i_1}})+n-\bar {i_1}+2 = r^*+2k'$, and $\bar p=\max\{i\in I:\ _i|\lambda^*|\ge k'\}$.
    \end{enumerate}
    Recalling that $\bar i = n+1-i$, $\lambda^*(h_{\bar i})=\lambda(h_i)$,  and using the expressions for $r^*_i$ and $r^*$ above, we get the conditions in the statement of part (a).

    If we are in case (b), we have  the exact sequence
    \begin{equation*}
    0\to M^\kappa\to W\otimes V\to N^\kappa\to 0,
    \end{equation*}
    which proves the statement about $\bs{\lambda}'$ as before. This time, it follows from \eqref{e:drhw} and \eqref{e:kappa} that
    \begin{equation*}
    \tilde{\bs{\omega}}_i = Y_{\bar i,r'_i,\lambda(h_i)} \qquad\text{where}\qquad  r'_i = -r_i-2(\lambda(h_i)-1)+h^\vee
    \end{equation*}
    and
    \begin{equation*}
    \tilde{\bs{\varpi}} = Y_{n,r',k} \qquad\text{where}\qquad  r' = -r-2(k-1)+h^\vee.
    \end{equation*}
    By Theorem \ref{t:mainr}, there exist a pair $(p,k')$ such that $\bar p\in\supp(\lambda^*)$, $k'>0$, and either one of the following options hold:
    \begin{enumerate}[(i)]
        \item $k'\le\min\{\lambda^*(h_{\bar p}),k\}$ and $r'+2k+n-\bar p+2 = r'_{p}+2k'$;
        \item $k'\le\min\{|\lambda|,k\}$ and  $r'_{\bar{i_1}}+2\lambda^*(h_{\bar{i_1}})+n-\bar {i_1}+2 = r'+2k'$, and $\bar p=\max\{i\in I:\ _i|\lambda^*|\ge k'\}$.
    \end{enumerate}
    Using the expressions for $r'_i$ and $r'$ above, we get the conditions in the statement of part (b).

    Finally, in case (c), we have  the exact sequence
    \begin{equation*}
    0\to (N^*)^\kappa\to V\otimes W\to (M^*)^\kappa\to 0,
    \end{equation*}
    which, together with \eqref{e:prop invert}, proves the statement about $\bs{\lambda}'$. This time we have
    \begin{equation*}
    \tilde{\bs{\omega}}_i = Y_{i,r^-_i,\lambda(h_i)} \qquad\text{where}\qquad  r^-_i = -r_i-2(\lambda(h_i)-1)
    \end{equation*}
    and
    \begin{equation*}
    \tilde{\bs{\varpi}} = Y_{n,r^-,k} \qquad\text{where}\qquad  r^- = -r-2(k-1).
    \end{equation*}
    By Theorem \ref{t:mainr}, there exist a pair $p\in\supp(\lambda)$ and $k'>0$ such that and either one of the following options hold:
    \begin{enumerate}[(i)]
        \item $k'\le\min\{\lambda(h_{p}),k\}$ and $r^-+2k+n-p+2 = r^-_{p}+2k'$;
        \item $k'\le\min\{|\lambda|,k\}$ and  $r^-_{i_0}+2\lambda(h_{i_0})+n-i_0+2 = r^-+2k'$, and $p=\max\{i\in I:\ _i|\lambda|\ge k'\}$.
    \end{enumerate}
    Using the expressions for $r^-_i$ and $r^-$ above, we get the conditions in the statement of part (c).
\end{proof}

\subsection{Socle and Head}\label{ss:sh}

It remains to prove Corollary \ref{c:main} and its analogues for the other cases given in Corollary \ref{c:mainc}. We begin by observing that, if $V,W\in\wcal C_q$ are such that $V\otimes W$ is irreducible, then
\begin{equation}\label{e:stporder}
V\otimes W\cong W\otimes V.
\end{equation}
As we have seen in the previous subsections, \eqref{e:stporder} can be false if  $V\otimes W$ is reducible.

It will be convenient to introduce the following notation. Given  $V\in\wcal C_q$ we will say that $V$ has thin top if there exists $\lambda\in P^+$ satisfying:
\begin{enumerate}[(1)]
	\item $V_\lambda\ne 0$ and $\dim(V_\lambda)=1$;
	\item $V_\mu\ne 0$ only if $\mu\le \lambda$.
\end{enumerate}
Evidently, such $\lambda$ is unique if it exists. Observe that every tensor product of modules with thin top has thin top as well. Given a module with thin top, let ${\rm Top}(V)$ be the submodule generated by its top weight space.
Our main extra tool for proving Corollary \ref{c:main} is the following consequence of the main result of \cite{cha:braid} (see also \cite[Corollary 4.4]{mou}).

\begin{prop}\label{p:cyclic}
	Let $l\in\mathbb Z_{\ge 1}, i_j\in I, m_j\in\mathbb Z_{\ge 1}, a_j\in\mathbb C^\times$ for $j=1,\dots, l$.
	\begin{enumerate}[(a)]
		\item If $\frac{a_{j}}{a_{k}}\notin q^{\mathbb Z_{> 0}}$ for $j>k$, then $V_q(\gb\omega_{i_1,a_1,m_1})\otimes \cdots\otimes V_q(\gb\omega_{i_l,a_l,m_l})$ is a highest-$\ell$-weight module.
		\item If $\frac{a_{j}}{a_{k}}\notin q^{\mathbb Z_{< 0}}$ for $j>k$, then  ${\rm Top}\left(V_q(\gb\omega_{i_1,a_1,m_1})\otimes \cdots\otimes V_q(\gb\omega_{i_l,a_l,m_l})\right)$ is irreducible. \endd
	\end{enumerate}
\end{prop}

Henceforth, fix the notation of Theorem \ref{t:main}. In particular, 
\begin{equation}
s_{i_0}> \cdots > s_2> s_1,
\end{equation}
where $s_i$ defined in \eqref{eq r_l r_i_0} and we have:

\begin{cor}\label{e:deccilic}
	$V_q(\bs\omega_{i_0,a_{i_0},\lambda(h_{i_0})})\otimes\cdots\otimes V_q(\bs\omega_{2,a_2,\lambda(h_2)})\otimes V_q(\bs\omega_{1,a_1,\lambda(h_1)})$ is highest-$\ell$-weight and
	\begin{equation*}
	V_q(\bs{\omega}) \cong {\rm Top}\left(V_q(\bs\omega_{1,a_1,\lambda(h_1)})\otimes V_q(\bs\omega_{2,a_2,\lambda(h_2)})\otimes\cdots\otimes V_q(\bs\omega_{i_0,a_{i_0},\lambda(h_{i_0})})\right).
	\end{equation*}
	\endd
\end{cor}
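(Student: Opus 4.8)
The plan is to deduce both assertions directly from Proposition \ref{p:cyclic}, applied to the two opposite orderings of the tensor factors, using only the elementary fact that, since $V_q(\bs\omega)$ is an \emph{increasing} minimal affinization (the case $\epsilon=-1$), the exponents $s_i$ of \eqref{eq r_l r_i_0} satisfy $s_1<s_2<\cdots<s_{i_0}$, so that $a_i/a_{i'}=q^{s_i-s_{i'}}\in q^{\mathbb Z_{<0}}$ whenever $i<i'$. Since $q$ is not a root of unity, $q^{\mathbb Z_{<0}}\cap q^{\mathbb Z_{>0}}=\emptyset$. One may also discard throughout the tensor factors with $\lambda(h_i)=0$, each of which is the trivial module $V_q(\bs 1)$ and contributes nothing to a tensor product regardless of its position; after this reduction only the indices $i\in\supp(\lambda)$ intervene, for which $\lambda(h_i)\ge 1$, so that Proposition \ref{p:cyclic} applies literally (the case $\supp(\lambda)$ a singleton being trivial).

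First I would prove the highest-$\ell$-weight statement by applying Proposition \ref{p:cyclic}(a) to $V_q(\bs\omega_{i_0,a_{i_0},\lambda(h_{i_0})})\otimes\cdots\otimes V_q(\bs\omega_{1,a_1,\lambda(h_1)})$. In this ordering a factor with larger node sits further to the left, so given any two factors, with nodes $i<i'$ say, the one with node $i$ lies to the right of the one with node $i'$, and the relevant ratio of spectral parameters is $a_i/a_{i'}=q^{s_i-s_{i'}}\in q^{\mathbb Z_{<0}}\not\subseteq q^{\mathbb Z_{>0}}$. Thus the hypothesis of Proposition \ref{p:cyclic}(a) is met and this tensor product is a highest-$\ell$-weight module.

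Next I would treat the ${\rm Top}$ statement by the mirror computation, applying Proposition \ref{p:cyclic}(b) to $V_q(\bs\omega_{1,a_1,\lambda(h_1)})\otimes\cdots\otimes V_q(\bs\omega_{i_0,a_{i_0},\lambda(h_{i_0})})$. Now a factor with larger node sits further to the right, so for nodes $i<i'$ the one with node $i'$ lies to the right, and the relevant ratio is $a_{i'}/a_i=q^{s_{i'}-s_i}\in q^{\mathbb Z_{>0}}\not\subseteq q^{\mathbb Z_{<0}}$; hence Proposition \ref{p:cyclic}(b) shows that ${\rm Top}$ of this tensor product is irreducible. It remains to identify it. Each Kirillov-Reshetikhin factor $V_q(\bs\omega_{i,a_i,\lambda(h_i)})$ has highest weight $\lambda(h_i)\omega_i$, and the only decomposition of $\lambda=\sum_i\lambda(h_i)\omega_i$ as a sum of weights, one from each factor and each bounded above by that factor's highest weight, is the obvious one; since each space $V_q(\bs\omega_{i,a_i,\lambda(h_i)})_{\lambda(h_i)\omega_i}$ is one-dimensional, the top weight space of the tensor product is one-dimensional, spanned by the tensor product $v$ of the highest-$\ell$-weight vectors. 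As $U_q(\tlie n^+)$ strictly raises weight, $v$ is annihilated by $U_q(\tlie n^+)$, and as $U_q(\tlie h)$ preserves this one-dimensional weight space, $v$ is an $\ell$-weight vector, of $\ell$-weight $\prod_i\bs\omega_{i,a_i,\lambda(h_i)}=\bs\omega$ (the highest $\ell$-weight of a tensor product of highest-$\ell$-weight modules being the product of their highest $\ell$-weights). Hence ${\rm Top}$, being the submodule generated by $v$, is a highest-$\ell$-weight module of highest $\ell$-weight $\bs\omega$ which is irreducible, so ${\rm Top}\cong V_q(\bs\omega)$ by Theorem \ref{t:weyl}.

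The argument is essentially routine once Proposition \ref{p:cyclic} is available; I do not anticipate a genuine obstacle. The only points requiring some care are the bookkeeping of the inequalities $s_1<\cdots<s_{i_0}$ against the two opposite orderings of the nodes — so that part (a) is the statement relevant to the decreasing order and part (b) to the increasing one — and, for the second assertion, the standard but slightly delicate verification that the tensor of the highest-$\ell$-weight vectors really is a highest-$\ell$-weight vector of $\ell$-weight $\bs\omega$, which I would handle exactly as sketched above via the one-dimensionality of the top weight space together with the multiplicativity of highest $\ell$-weights under tensor products.
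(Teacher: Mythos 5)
Your proof is correct and is exactly the argument the paper intends: the corollary is stated immediately after Proposition \ref{p:cyclic} and the display $s_{i_0}>\cdots>s_2>s_1$ precisely so that parts (a) and (b) apply to the decreasing and increasing orderings, respectively, with the trivial factors $\lambda(h_i)=0$ discarded. Your additional verification that the top weight space is one-dimensional and spanned by a highest-$\ell$-weight vector of $\ell$-weight $\bs\omega$ is the standard step the paper leaves implicit, and you carry it out correctly.
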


Suppose $V=V_q(\bs\omega)\otimes V_q(\bs\varpi)$ is reducible, and let  $(p,k')$ be the pair satisfying either condition (i) or (ii) of the theorem.

\begin{cor}\label{c:irnotp}
	$V_q(\bs\omega_{i,a_i,\lambda(h_i)})\otimes V_q(\bs\varpi)$ is irreducible for every $i\in \supp(\lambda)\setminus\{p\}$. In particular, 	$V_q(\bs\omega_{i,a_i,\lambda(h_i)})\otimes V_q(\bs\varpi)\cong V_q(\bs\varpi) \otimes V_q(\bs\omega_{i,a_i,\lambda(h_i)})$.
\end{cor}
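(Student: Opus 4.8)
My plan is to deduce the statement directly from Theorem \ref{t:mainr} (equivalently Theorem \ref{t:main}) applied to the tensor product $V_q(\bs\omega_{i,a_i,\lambda(h_i)})\otimes V_q(\bs\varpi)$, where the first factor is regarded as the minimal affinization (trivially increasing) associated to the single node $i\in\supp(\lambda)$. Since the support of $\lambda(h_i)\omega_i$ is $\{i\}$, the ``$p$'' produced by Theorem \ref{t:mainr} for this tensor product is forced to equal $i$, and the tensor product is reducible if and only if there is an admissible $k''$ (i.e. $1\le k''\le\min\{\lambda(h_i),k\}$) satisfying the specialization to this tensor product of condition (i) or of condition (ii) of the theorem. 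So I must show that neither of these two equations has an admissible solution when $i\ne p$; the ``In particular'' statement is then immediate from \eqref{e:stporder} and the commutativity of the Grothendieck ring of $\wcal C_q$.

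I would organize the argument by which of conditions (i)/(ii) of Theorem \ref{t:mainr} makes $V$ reducible, and then split on $i<p$ versus $i>p$. The specialization of condition (i) to $V_q(\bs\omega_{i,a_i,\lambda(h_i)})\otimes V_q(\bs\varpi)$ is literally equation \eqref{eq p,k'} with $p$ replaced by $i$; hence an admissible solution $(i,k'')$ would contradict the uniqueness of the pair satisfying \eqref{eq p,k'} proved right after that equation (using $i,p\in\supp(\lambda)$ together with the bounds $k''\le\lambda(h_i)$ and $k'\le\lambda(h_p)$), ruling out this possibility in both scenarios at once. For the specialization of condition (ii), I would substitute \eqref{e:rifromri0} to rewrite $r_i$ in terms of $r_{i_0}$, substitute the reducibility equation for $V$ (i.e. \eqref{eq p,k'} or \eqref{e:k'i0}), solve for $k''$ in closed form, and then check that the resulting value exceeds $\min\{\lambda(h_i),k\}$. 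In doing so I expect to need: the strict form of the relevant regime inequality (\eqref{eq rn<ri} when $V$ is reducible via (i), \eqref{eq ri<rn} when via (ii)\,---\,strictness being exactly what the paragraph around \eqref{e:inoii} records), the monotonicity of $j\mapsto{}_j|\lambda|$, and, in case (ii), the defining equality $p=\max\{j:{}_j|\lambda|\ge k'\}$.

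The step I expect to be the main obstacle is this closed-form bound, and in particular the subcase where $V$ is reducible via condition (i) and $i<p$: there one must combine the reducibility equation, the regime inequality and the several admissibility bounds in precisely the right way, since no single one of them alone produces the contradiction. If the direct computation becomes unwieldy, I would instead argue through Remark \ref{r:tmain}: condition (i) (resp.\ (ii)) for a tensor product is equivalent to a specific two-node product of fundamental KR-type $\ell$-weights $\bs\omega_{j,\cdot,\cdot}$ being a decreasing (resp.\ increasing) minimal affinization, so ``$V$ reducible'' and ``$V_q(\bs\omega_{i,a_i,\lambda(h_i)})\otimes V_q(\bs\varpi)$ reducible'' translate into two instances of the classification Theorem \ref{teo tipo A}, whose exponent constraints one compares using \eqref{e:rifromri0} and the formula $s_i=-p_{i,i_0}(\lambda)$ of \eqref{eq r_l r_i_0}; I expect these two instances to be mutually exclusive precisely when $i\ne p$. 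Once both equations are excluded, Theorem \ref{t:mainr} gives that $V_q(\bs\omega_{i,a_i,\lambda(h_i)})\otimes V_q(\bs\varpi)$ is irreducible, and then \eqref{e:stporder} yields the asserted isomorphism with $V_q(\bs\varpi)\otimes V_q(\bs\omega_{i,a_i,\lambda(h_i)})$.
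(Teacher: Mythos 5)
Your overall strategy coincides with the paper's: apply Theorem \ref{t:mainr} to $V_q(\bs\omega_{i,a_i,\lambda(h_i)})\otimes V_q(\bs\varpi)$ (whose ``$p$'' is forced to be $i$) and show that neither specialized condition admits a solution, then conclude with the commutativity of the Grothendieck ring. The condition (i) branch is exactly the paper's argument and closes: the specialized equation is literally \eqref{eq p,k'}, whose admissible solutions are unique, and under \eqref{eq ri<rn} it has no admissible solution at all.

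The gap is in the condition (ii) branch, precisely in the subcase you flag as the main obstacle ($V$ reducible via (i) and $i<p$), and it cannot be closed by a cleverer combination of the inequalities. Carrying out your computation (substituting \eqref{e:rifromri0} and \eqref{eq p,k'}), the specialized condition (ii) forces $k''=(k-k')+(n-p)+2-{}_{i+1}|\lambda|_{p-1}$, and this lands in the admissible range $[1,\min\{\lambda(h_i),k\}]$ whenever ${}_{i+1}|\lambda|_{p-1}<(k-k')+(n-p)+2\le{}_{i}|\lambda|_{p-1}$ (together with $k'\ge (n-p)+2-{}_{i+1}|\lambda|_{p-1}$), which is a satisfiable system. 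Concretely, for $\lie g=\lie{sl}_3$ take $\bs\omega=\bs\omega_{1,q^{-5},2}\,\bs\omega_{2,1,2}$ (an increasing minimal affinization of $V_q(2\omega_1+2\omega_2)$) and $\bs\varpi=\bs\omega_{2,q^{-2},2}$: condition (i) holds with $(p,k')=(2,2)$, yet the specialized condition (ii) for $i=1$ holds with $k''=2\le\min\{2,2\}$, so Theorem \ref{t:main} declares $V_q(\bs\omega_{1,q^{-5},2})\otimes V_q(\bs\omega_{2,q^{-2},2})$ reducible. This is not an artifact of the theorem: by \eqref{e:drhw} one has $V_q(\bs\omega_{2,q^{-2},2})\cong{}^*V_q(\bs\omega_{1,q^{-5},2})$, so the evaluation map exhibits the trivial module as a quotient of this $36$-dimensional tensor product. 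The same difficulty is hidden in the paper's one-line proof: for condition (ii) the pair that actually transfers to the full weight is $(i,k''+{}_{i+1}|\lambda|)$, not $(i,k'')$, and it may violate the bound $k'\le k$, so no contradiction with the uniqueness of $(p,k')$ is obtained. In short, the step you propose to verify (``the resulting value exceeds $\min\{\lambda(h_i),k\}$'') genuinely fails in this subcase, and any correct treatment must either restrict the range of $i$ for which the claim is made or handle such $i$ by a different argument.
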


\begin{proof}
	If it were reducible, using Theorem \ref{t:main} with $\bs\omega_{i,a_i,\lambda(h_i)}$ in place of $\bs\omega$, it would follow that
	there would exist $0<k''\le\min\{\lambda(h_i),k\}$ such that the pair $(i,k'')$ satisfies either condition (i) or (ii) of Theorem \ref{t:main}. But then $(i,k'')$ satisfies the same condition for $\bs{\omega}$ as well contradicting the uniqueness of the pair $(p,k')$. The second statement is immediate from the first and Proposition \ref{p:qchtp}.
\end{proof}

We are ready to prove Corollary \ref{c:main}. To shorten notation, write
\begin{equation*}
V_i = V_q(\bs\omega_{i,a_i,\lambda(h_i)}), \quad i\le i_0.
\end{equation*}
If neither conditions (i) nor (ii) are satisfied, then $V$ and $V'$ are irreducible and, hence, highest-$\ell$-weight.
Otherwise, suppose first that the pair $(p,k')$ satisfies condition (i) and let us show that $V$ is highest-$\ell$-weight.
Corollary \ref{e:deccilic} implies that we have surjective map
\begin{equation*}
V_{i_0}\otimes\cdots \otimes V_2\otimes V_1\otimes V_q(\bs\varpi) \twoheadrightarrow V.
\end{equation*}
Condition (i) implies that $s_p>s$ and, hence, there exists $i_1\le p, i_1\in\supp(\lambda)$, such that $s_{i_1}> s$ and $s\ge s_i$ for $i\le i_1$. 
Corollary \ref{c:irnotp} then implies that we have a surjective map
\begin{equation}\label{e:hwproof}
V_{i_0}\otimes\cdots\otimes V_{i_1} \otimes V_q(\bs\varpi)\otimes V_{i_1-1}\otimes \cdots \otimes V_2\otimes V_1 \twoheadrightarrow V.
\end{equation}
Since, by choice of $i_1$, we have 
\begin{equation}
s_{i_0}> \cdots > s_{i_1}> s\ge s_{i_1-1}>\cdots  s_2> s_1,
\end{equation}
Proposition \ref{p:cyclic} then implies that the left-hand side of \eqref{e:hwproof} is highest-$\ell$-weight. Hence, so is $V$. This proves the indecomposability of $V$ as well as the first exact sequence of Corollary \ref{c:main}.

By inverting the order of all tensor products in the above argument and  working  with injective maps instead of surjective ones, one can similarly show that
\begin{equation*}
V_q(\bs\lambda)\cong {\rm Top}(V') \subsetneqq V'
\end{equation*}
which proves the second exact sequence. 

In the case that condition (ii) holds, then $s>s_p$ and the argument is similar. We omit the details. In particular, it follows that $V'$ is indecomposable. The following is now immediate from the proof of Corollary \ref{c:mainc}.

\begin{cor}\label{c:hwgen} Let $V$ and $W$ be as in Corollary \ref{c:mainc}.
	\begin{enumerate}[(a)]
		\item If $V$ is decreasing and $e=1$, then $V\otimes W$ is not highest-$\ell$-weigh if and only if (ii) holds while $W\otimes V$ is not highest-$\ell$-weigh if and only if (i) holds.
		\item If $V$ is increasing and $e=1$, then $V\otimes W$ is not highest-$\ell$-weigh if and only if (i) holds while $W\otimes V$ is not highest-$\ell$-weigh if and only if (ii) holds.
		\item If $V$ is decreasing and $e=n$, then $V\otimes W$ is not highest-$\ell$-weigh if and only if (ii) holds while $W\otimes V$ is not highest-$\ell$-weigh if and only if (i) holds. \endd
	\end{enumerate}
\end{cor}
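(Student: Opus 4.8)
The plan is to read off Corollary \ref{c:hwgen} from the short exact sequences produced in the proofs of Corollaries \ref{c:main} and \ref{c:mainc}, using the following elementary remark: if $M\in\wcal C_q$ has thin top and length at most $2$, then $M$ is highest-$\ell$-weight if and only if its head is the simple module whose highest $\ell$-weight equals the top $\ell$-weight of $M$. Indeed, by thin top the top weight space of such an $M$ is one-dimensional, spanned by some vector $v$, and $M$ is highest-$\ell$-weight iff $U_q(\tlie g)v=M$; since $M$ has length at most $2$, $U_q(\tlie g)v$ is either $M$ or the unique maximal proper submodule, and it equals $M$ precisely when the head of $M$ carries the top weight of $M$. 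Both $V\otimes W$ and $W\otimes V$ have thin top (being tensor products of modules with thin top), their common top $\ell$-weight is $\bs\lambda=\bs\omega\bs\varpi$, and their length is at most $2$; hence $V\otimes W$ fails to be highest-$\ell$-weight exactly when $\head(V\otimes W)\cong V_q(\bs\lambda')$, equivalently $\soc(V\otimes W)\cong V_q(\bs\lambda)$, and likewise for $W\otimes V$. Thus it suffices to locate, in each of the cases (a), (b), (c), which of $V\otimes W$, $W\otimes V$ has the obvious factor $V_q(\bs\lambda)$ as its head.

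In the base case treated in Corollary \ref{c:main}, where $V_q(\bs\omega)$ is increasing and $\bs\varpi=Y_{n,r,k}$, the proof shows that $V_q(\bs\omega)\otimes V_q(\bs\varpi)$ is highest-$\ell$-weight exactly when condition (i) holds while $V_q(\bs\varpi)\otimes V_q(\bs\omega)$ is highest-$\ell$-weight exactly when condition (ii) holds; and if neither holds both products are irreducible, hence highest-$\ell$-weight. For each of the three cases of Corollary \ref{c:mainc}, its proof exhibits $V\otimes W$ and $W\otimes V$ as images of the two base-case products $\tilde V\otimes\tilde W=V_q(\tilde{\bs\omega})\otimes V_q(\tilde{\bs\varpi})$ and $\tilde W\otimes\tilde V$ under one of the functors $*$, $\kappa$, or $V\mapsto(V^*)^\kappa$. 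Since $*$ and $V\mapsto(V^*)^\kappa$ are contravariant and exact while $\kappa$ is covariant and exact, each of these functors carries a short exact sequence to a short exact sequence (reversing arrows in the contravariant cases), so it sends head to head or to socle accordingly; and since each of them is bijective on $\cal P$ (Lemma \ref{l:dualiso}) with $V_q(\bs\mu)^*=V_q(\bs\mu^*)$, $(V_q(\bs\mu)^*)^\kappa=V_q(\bs\mu^-)$, and is multiplicative on $\cal P$, it carries the obvious and extra factors of the base-case products to the corresponding factors of $V\otimes W$ and $W\otimes V$ (this is exactly what the identifications $\bs\lambda'=\tilde{\bs\lambda}^*$, $\tilde{\bs\lambda}^\kappa$, $\tilde{\bs\lambda}^-$ in Corollary \ref{c:mainc} record, together with \eqref{e:drhw} and \eqref{e:prop invert}). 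Pushing the base-case short exact sequences through these functors and identifying the head then determines, in each case, which order is highest-$\ell$-weight.

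The delicate point — which I expect to be the main, if routine, obstacle — is keeping track of two parities at once: whether the functor reverses the order of the tensor factors ($*$ and $\kappa$ do, by \eqref{e:tpdual}, while their composite does not), which fixes whether the relevant base-case product is $\tilde V\otimes\tilde W$ or $\tilde W\otimes\tilde V$; and whether the functor is covariant or contravariant, which fixes whether head is carried to head or to socle. One must also keep the matching between conditions (i), (ii) in the statement of Corollary \ref{c:mainc} and conditions (i), (ii) of Theorem \ref{t:mainr} for the pair $(\tilde{\bs\omega},\tilde{\bs\varpi})$ exactly as it was set up in the proof of Corollary \ref{c:mainc} (where the defining equations get transformed under $*$, $\kappa$, and $V\mapsto(V^*)^\kappa$), and use that, since the product is irreducible when neither condition holds, ``not highest-$\ell$-weight'' selects precisely the remaining one of the two conditions. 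Carrying this out in cases (a), (b), (c) yields the stated dichotomies.
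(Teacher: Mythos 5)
Your strategy is exactly the paper's: reduce the question to identifying the head via the thin-top/length-at-most-$2$ observation, and transport the two short exact sequences of Corollary \ref{c:main} through $*$, $\kappa$ and $(\cdot^*)^\kappa$, keeping track of variance and of the order reversal in \eqref{e:tpdual}. The reduction in your first paragraph is sound (the "unique maximal proper submodule" step needs indecomposability, but that is supplied by Corollary \ref{c:main}), and the functorial properties you list are all correct.

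The gap is that the step you defer --- ``carrying this out in cases (a), (b), (c) yields the stated dichotomies'' --- is the entire content of the corollary, and in case (c) a correct execution of your own recipe yields the \emph{opposite} of the stated dichotomy. There $V\otimes W\cong((\tilde V\otimes \tilde W)^*)^\kappa$ with $\tilde V=V_q(\tilde{\bs\omega})$, $\tilde W=V_q(\tilde{\bs\varpi})$: this composite functor preserves the order of the factors and is contravariant, so $\head(V\otimes W)$ is the image under $\bs\mu\mapsto\bs\mu^-$ of $\soc(\tilde V\otimes\tilde W)$. Substituting $r^-=-r-2(k-1)$ and $r^-_i=-r_i-2(\lambda(h_i)-1)$ shows that condition (i) of Theorem \ref{t:mainr} for $(\tilde{\bs\omega},\tilde{\bs\varpi})$ translates into condition (i) of Corollary \ref{c:mainc}(c), not into (ii); under it, Corollary \ref{c:main} gives $\soc(\tilde V\otimes\tilde W)\cong V_q(\tilde{\bs\lambda}')$ (the extra factor), whence $\head(V\otimes W)$ is the extra factor of $V\otimes W$ and $V\otimes W$ is \emph{not} highest-$\ell$-weight, while part (c) attributes this to (ii). The same conclusion follows independently from Proposition \ref{p:cyclic}: writing $\bs\varpi=\bs\omega_{n,b,k}$ and $\bs\omega=\prod_i\bs\omega_{i,a_i,\lambda(h_i)}$, condition (i) of part (c) forces $b/a_p=q^{\lambda(h_p)+k-2k'+n-p+2}\in q^{\mathbb Z_{>0}}$, so it is $W\otimes V$ that is highest-$\ell$-weight in that case. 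Cases (a) and (b) do come out as stated. So either part (c) of the statement needs (i) and (ii) interchanged, or your closing assertion is false; in either event the parity bookkeeping cannot be left as a claim --- it must be written out, and when it is, it does not confirm the statement as given.
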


To complete the proof of Corollary \ref{c:main}, it remains to show that $V'$ is indecomposable if (i) holds and, similarly, that $V$ is indecomposable if (ii) holds. We shall write down the proof of the latter. Let $\hat{\bs\omega}$ and $\hat{\bs\varpi}$ be such that
\begin{equation*}
\hat{\bs\omega}^* = \bs\omega \qquad\text{and}\qquad \hat{\bs\varpi}^* = \bs\varpi.
\end{equation*}
By part (a) of Corollary \ref{c:hwgen}, $\hat V = V_q(\hat{\bs\omega})\otimes V_q(\hat{\bs\omega})$ is highest-$\ell$-weight and, hence, indecomposable. But then, $V\cong \hat V^*$ must also be indecomposable.

\begin{rem}
	 Since $W$ is a real simple object of $\widetilde{\mathcal C}_q$ in the sense of \cite{hele:cluster}, it follows from \cite[Theorem 3.12]{kkko} that $V$ and $V'$ have simple socle and head from where their indecomposability is easily deduced. The proof of \cite[Theorem 3.12]{kkko} is based on an analysis of the action of the $R$-matrix, which is not require in our approach.\endd
\end{rem}

\bibliographystyle{amsplain}

\end{document}